\def\blfootnote{\xdef\@thefnmark{}\@footnotetext}
\newcommand\ccnote{
    \blfootnote{\copyright\,\, Odysseas Bakas, Francesco Di Plinio, Ioannis Parissis, and Luz Roncal}
    \blfootnote{\ccLogo\, \ccAttribution\,\, Licensed under a \href{https://creativecommons.org/licenses/by/4.0/}{Creative Commons Attribution License (CC-BY)}.}
}
\numberwithin{equation}{section}
\renewcommand{\le}{\leqslant}
\renewcommand{\leq}{\leqslant}
\renewcommand{\ge}{\geqslant}
\renewcommand{\geq}{\geqslant}
\renewcommand{\mathbb}{\varmathbb}
\newtheorem{theorem}{Theorem}[section]
\newtheorem{lemma}[theorem]{Lemma}
\newtheorem{corollary}[theorem]{Corollary}
\newtheorem{proposition}[theorem]{Proposition}
\newtheorem{definition}[theorem]{Definition}
\newtheorem{remark}[theorem]{Remark}
  \def\b#1{{#1}}
\def\e{{\mathrm e}}
\def\eps{\varepsilon}
\def\d{{\mathrm d}}
\def\dist{{\mathrm {dist}}}
\def\supp{{\mathrm{supp}\, }}
\def\R{\mathbb{R}}
\def\N{\mathbb{N}}
\def\size{\mathrm{size}}
\def\dense{\mathrm{dense}}
\def\T {\mathbf{T}}
\def\Q {{\mathcal Q}}
\def\M {{\mathrm M}}
\def\SS {{\mathbb S}}
\def\Z {{\mathbb Z}}
\def\1 {{\mbox{\boldmath 1}}}
\def \l {\langle}
\def \r {\rangle}
\def \scl{\mathrm{scl}}
\def \Ann{\mathrm{Ann}}
\def\ind{\cic{1}}
\newcommand{\cic}{\bm}
\newcommand{\sh}{\mathbf{sh}}
\newcommand{\pr}{\Pi}
\def\Gr{\mathrm{\bf{Gr}}}
 \def\Xint#1{\mathchoice
	   {\XXint\displaystyle\textstyle{#1}}%
	   {\XXint\textstyle\scriptstyle{#1}}%
	   {\XXint\scriptstyle\scriptscriptstyle{#1}}%
	   {\XXint\scriptscriptstyle\scriptscriptstyle{#1}}%
	   \!\int}
	 \def\XXint#1#2#3{{\setbox0=\hbox{$#1{#2#3}{\int}$}
	     \vcenter{\hbox{$#2#3$}}\kern-.5\wd0}}
	 \def\avgint{\Xint-}
\address{Odysseas Bakas, Department of Mathematics, University of Patras, 26504 Patras, Greece}
\email{obakas@upatras.gr}
\address{Francesco Di Plinio, Dipartimento di Matematica e Applicazioni ``R.\ Caccioppoli'' Universit\`a di Napoli ``Federico II'', Via Cintia, Monte S.\ Angelo 80126 Napoli, Italy} 
\email{francesco.diplinio@unina.it}
\address{Ioannis Parissis, Departamento de Matem\'aticas, Universidad del Pa\'is Vasco, Aptdo. 644, 48080 Bilbao, Spain and Ikerbasque, Basque Foundation for Science, Bilbao, Spain}
\email{ioannis.parissis@ehu.es}
\address{Luz Roncal, BCAM - Basque Center for Applied Mathematics
48009 Bilbao, Spain,
 Ikerbasque, Basque Foundation for Science, Bilbao, Spain, and Departamento de Matem\'aticas, Universidad del Pa\'is Vasco, Aptdo. 644, 48080 Bilbao, Spain}
\email{lroncal@bcamath.org}
\begin{document}

\thispagestyle{empty}

\begin{minipage}{0.28\textwidth}
\begin{figure}[H]
%\centering
\includegraphics[width=2.5cm,height=2.5cm,left]{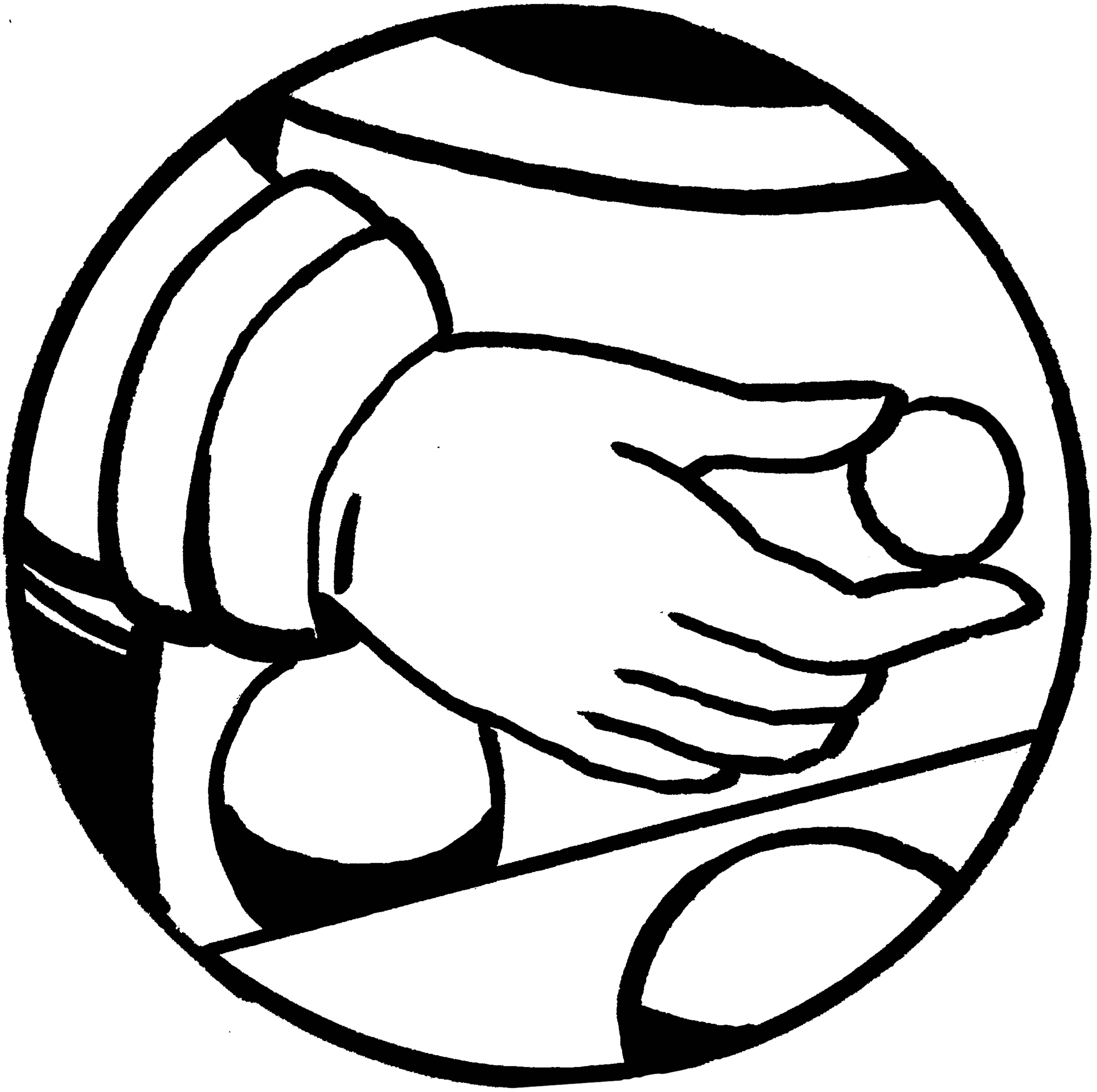}
\end{figure}
\end{minipage}
\begin{minipage}{0.7\textwidth} 
\begin{flushright}
%% The following metadata, in particular
%% the Paper No. and the DOI will be inserted by the journal
Ars Inveniendi Analytica (2024), Paper No. 2, 53 pp.
\\
DOI 10.15781/md28-ws10
\\
ISSN: 2769-8505
\end{flushright}
\end{minipage}

\ccnote

\vspace{1cm}

%%      -------------------------------------------------------------------------------
%%      -------------------------- TITLE ----------------------------
%%      -------------------------------------------------------------------------------
%% Authors, please put here the full title of the article

\begin{center}
\begin{huge}
\textit{Singular integrals along}

\textit{variable codimension one subspaces}

\end{huge}
\end{center}

\vspace{1cm}

%%      -------------------------------------------------------------------------------
%%      -------------------------- AUTHORS AND AFFILIATIONS ----------------------------
%%      -------------------------------------------------------------------------------
%% Authors, please put here your full names and affiliations

\begin{minipage}[t]{.28\textwidth}
\begin{center}
{\large{\bf{Odysseas Bakas}}} \\
\vskip0.15cm
\footnotesize{University of Patras}
\end{center}
\end{minipage}
\hfill
\noindent
\begin{minipage}[t]{.28\textwidth}
\begin{center}
{\large{\bf{Francesco Di Plinio}}} \\
\vskip0.15cm
\footnotesize{Universit\`a di Napoli ``Federico II''}
\end{center}
\end{minipage}
\hfill
\noindent
\begin{minipage}[t]{.28\textwidth}
\begin{center}
{\large{\bf{Ioannis Parissis}}} \\
\vskip0.15cm
\footnotesize{Universidad del Pa\'is Vasco and Ikerbasque} 
\end{center}
\end{minipage}

\medskip
\begin{center}
\begin{minipage}[t]{.28\textwidth}
\begin{center}
{\large{\bf{Luz Roncal}}} \\
\vskip0.15cm
\footnotesize{Basque Center for Applied Mathematics, Universidad del Pa\'is Vasco and Ikerbasque} 
\end{center}
\end{minipage}
\end{center}

\vspace{1cm}

%%% Please replace "James Mustard" below 
%%% with the name of the managing editor for your submission.
%%% If you are unsure about their identity
%%% please ask an editor-in-chief about.

\begin{center}
\noindent \em{Communicated by Monica Visan}
\end{center}
\vspace{1cm}

%%      -------------------------------------------------------------------------------
%%      -------------------------- BEGIN ABSTRACT ----------------------------
%%      -------------------------------------------------------------------------------
%% Authors, please put here the ABSTRACT and KEYBOARDS

\noindent \textbf{Abstract.} \textit{This article deals with maximal operators on $\R^n$ formed by taking arbitrary rotations of tensor products of a $d$-dimensional H\"ormander--Mihlin multiplier with the identity in $n-d$ coordinates, in the particular \emph{codimension 1} case $d=n-1$. These maximal operators are naturally connected to differentiation problems and maximally modulated singular integrals such as Sj\"olin's generalization of Carleson's maximal operator. Our main result, a weak-type $L^{2}(\R^n)$-estimate  on band-limited functions, leads to several corollaries. The first is a sharp $L^2(\R^n)$ estimate for the maximal operator restricted to a finite set of rotations in terms of the cardinality of the finite set. The second is a version of the Carleson--Sj\"olin theorem. In addition, we obtain that functions in the Besov space $B_{p,1}^0(\R^n)$, $2\le p <\infty$, may be recovered from their averages along a measurable choice of codimension $1$ subspaces, a form of Zygmund's conjecture in general dimension $n$.}
\vskip0.3cm

\noindent \textbf{Keywords.} Directional operators, Zygmund's conjecture, Stein's conjecture, maximally rotated singular integrals, time-frequency analysis. 
\vspace{0.5cm}

%%      -------------------------------------------------------------------------------
%%      -------------------------- BEGIN ARTICLE ----------------------------
%%      -------------------------------------------------------------------------------
%% Authors, copy the body of your paper here

\section{Introduction}

Let $n\geq 2$ be the linear dimension of the Euclidean space $\R^n$ endowed with the canonical basis $\{e_1,\ldots, e_n\}$ and let $d\in\{1,\ldots,n-1\}$. Singular integrals in $d$ variables embed in  $\R^n$ when made to act trivially in the $n-d$ variables perpendicular to a chosen $d$-dimensional subspace. More specifically,
 each pair $({U},\mathscr K)$ of  orientation preserving rotation ${U}\in \mathrm{SO}(n)$ and   tempered distribution $\mathscr{K}\in \mathcal S'(\R^d)$ induces a map $T: \mathcal S(\R^n) \to \mathcal S'(\R^n)$ by
\begin{equation}\label{eq:defdir-3} 
\langle T  f,g \rangle\coloneqq  \int_{\R^{n-d} } \left \langle  f\circ {U} (\cdot, y  ) * \mathscr{K}, g \circ {U} (\cdot,y)\right \rangle \d y =\left \langle f\circ  {U} * \left(\mathscr{K}\otimes \delta\right), g\circ {U} \right \rangle ,
\end{equation}
with $\delta\in \mathcal S'(\R^{n-d})$ denoting the Dirac distribution supported at $0\in\R^{n-d}$.
If the Fourier transform of $\mathscr{K}  $ is a bounded function $m\in L^\infty(\R^d)$ then  $Tf$ coincides with the continuous bounded  function
\begin{equation}\label{eq:defdir-2}
Tf(x)=\int_{\R^n} m (V \Pi_\sigma \xi) \widehat f(\xi) e^{2\pi i \langle x, \xi\rangle}\, \d \xi,\qquad x\in\R^n,
\end{equation} 
where $V=U^{-1}$, $\sigma$ is the preimage through {$V$} of the subspace $\R^d\coloneqq \mathrm{span}\{e_1,\ldots,e_d\}$, and $\Pi_\sigma$ denotes orthogonal projection on $\sigma$. 
Our interest lies in the further particular case where  $m$ is a  H\"ormander--Mihlin multiplier on $ \R^d $, namely \begin{equation}
\label{e:mihlin} \| m\|_{\mathcal M_A(d)} \coloneqq \sup_{0\leq |\alpha| \leq A}\, \sup_{\eta \in \R^d   } |\eta|^{|\alpha|} \left| D^\alpha   m(\eta )\right|
\end{equation}
is finite. If $A\geq d+1$ say,  an application of the H\"ormander--Mihlin theorem on each $\sigma$-fiber of $\R^n$ and Fubini's theorem   tell us that \eqref{eq:defdir-2} extends to a bounded operator on $L^p(\R^n), 1<p<\infty$.

The broad object of interest of this article are maximal versions of \eqref{eq:defdir-2} where the rotation $V\in \mathrm{SO}(n)$ depends measurably, or in some other specific fashion, on the point $x\in \R^n$. Note that the multiplier \eqref{eq:defdir-2} is allowed to exhibit singularities on the $(n-d)$-dimensional subspace
$
\sigma ^\perp = (V^{-1}\R^{d})^\perp$.   
In light of this fact and of the invariance of the Mihlin norms \eqref{e:mihlin}, an efficient parametrization of the multipliers \eqref{eq:defdir-2} must involve the corresponding oriented Grassmanian  $\Gr(d,n)$,  namely the space of oriented $d$-dimensional  subspaces of $\R^n$. When equipped with the canonical distance,  $\Gr(d,n)$ is a compact metric space which is isometrically isomorphic to the quotient group
\[\frac{\mathrm{SO}(n)}{ \mathrm{SO}(d) \otimes \mathrm{SO}(n-d)}.
\]
The description of the isomorphism is useful to us and can be loosely given as follows. Thinking again   $\R^d=\mathrm{span}\{e_1,\ldots,e_d\}\in\Gr(d,n)$, a subspace $\sigma  \in \Gr(d,n)$ is identified with  the class of matrices
  $\mathcal V(\sigma)\subset\mathrm{SO}(n)$ which map  $\sigma $ to $\R^d$.
  % and $\sigma^\perp $ to $(\R^d)^\perp$. OBVIOUS
  Any two  elements $V , O \in \mathcal V(\sigma) $ are related by
\begin{equation}
\label{e:SO(d)}V= ( Q \otimes P )O  , \qquad Q\in  \mathrm{SO}(d), \quad P  \in \mathrm{SO}(n-d).
\end{equation}
For this reason, fixing in the remainder of our discussion a family
\begin{equation}
\label{e:introOsig}
\{{O_\sigma}\in \mathcal V(\sigma):\,\sigma \in \Gr(d,n)\}
\end{equation}
of representatives, the class of rotated singular integrals  \eqref{eq:defdir-2} associated to a single multiplier $m\in \mathcal M_A(d)$ may be parametrized as
\begin{equation}\label{eq:defdir-1}
T_{m}f(x;\sigma,Q) \coloneqq \int_{\R^n} m (Q{O_\sigma}\Pi_\sigma \xi  ) \widehat f(\xi) e^{2\pi i \langle x, \xi\rangle}\, \d \xi,\qquad x\in\R^n,
\end{equation}
with parameters $\sigma \in \Gr(d,n)$ and $Q \in \mathrm{SO}(d)$.
Note that \eqref{eq:defdir-1} does not depend on the component $P\in \mathrm{SO}(n-d)$ in \eqref{e:SO(d)}, reflecting the trivial action of $T$ in the $\sigma^\perp$-variables, cf.\ \eqref{eq:defdir-2}. 
Also note that the family in \eqref{e:introOsig} can be chosen arbitrarily. A specific family is  explicitly constructed in Lemma~\ref{lem:rot} with the additional property  that the map $\sigma\mapsto O_\sigma$ is $C^1$; see  Remark~\ref{rmrk:Osmooth}. 
Finally, the directional operator \eqref{eq:defdir-1} may be further interpreted by rewriting formally  
\begin{equation}\label{eq:singlem}{}
T_{m} f(x;\sigma,Q)=\int_{\R^d}   f(x- {V^{-1}} t )\, {K} (t )  \d t, \qquad V= Q{O_\sigma},\quad x\in\R^n,
\end{equation} 
with ${K} (t)$ the  kernel of ${ \mathscr K=\widehat{m}}$. 

 The maps \eqref{eq:defdir-1}, or alternatively \eqref{eq:singlem}, are termed \emph{$\Gr(d,n)$-subspace singular integral operators.} For instance, they encompass $\sigma$-subspace averaging operators, corresponding to the choice 
$\mathscr{K}\in L^1(\R^d)$ in \eqref{eq:defdir-3}, as well as  the $d$-dimensional Riesz transform in the directions of  $\sigma$, obtained by choosing the vector distribution
\[
\mathscr{K}(y) \coloneqq \mathrm{p.v.} \frac{y}{|y|^{d+1}}\in \mathcal{S'}(\R^d)^d.
\]

Subspace singular integrals, along \emph{variable choices} of $d$-dimensional subspaces $\sigma$ are connected to several deep lines of investigations in harmonic analysis and partial differential equations. This article focuses on the   \emph{codimension 1} case $n=d+1$. Higher codimension cases are more singular and very few instances of operators have been treated, none beyond the $L^2$ threshold. A full account of the history of this circle of problems is postponed to the end of this introduction.
 
Below,  we consider  the maximal operator associated to a family of multipliers ${\bf{m}}=\{m_\sigma\in \mathcal M_A(d) :\,  \sigma \in \Gr(d,n)\}$, 
 \begin{equation} \label{e:thisismain}
T^\star_{{\bf{m}}} f(x) \coloneqq \sup_{\sigma\in \Gr(d,n)} \sup_{Q\in \mathrm{SO(d)}}  \left|T_{m_{\sigma} } f(x;\sigma, Q)\right|, \qquad x\in \R^n,
\end{equation}
or linearized variants thereof where the variable choices of $\sigma,Q$ are expressed by measurable functions $\sigma(\cdot):\R^n\to \Gr(d,n)$ and  $Q(\cdot):\R^n\to \mathrm{SO}(d)$, possibly under additional regularity assumptions on these functions. { When $d=1$, the operators $T_{{\bf{m}}} ^\star$ recover the familiar directional maximal averaging operator, and the maximal directional Hilbert transform, for suitable choices of symbols ${\bf{m}}$.} Concerning  \eqref{e:thisismain}, its maximal nature shows how the choice of the family ${O_\sigma}$ in \eqref{e:introOsig} is immaterial.

 Our main result concerns the action of  \eqref{e:thisismain} on  frequency band-limited functions. In order to state it we fix a smooth radial function $\zeta:\R^n\to \R_+$ such that $\mathrm{supp}(\zeta)\subset\{\xi\in\R^n:\, 1< |\xi|<\frac{3}{2}\}$ and define  for $k\in \R$
\begin{equation} \label{eq:Pann}
P_{k}  f(x)\coloneqq \int_{\R^n} \zeta\left( 2^{-k}|\xi| \right)\widehat f(\xi) e^{2\pi i {\langle x, \xi \rangle}}\, \d \xi,\qquad x\in\R^n.
\end{equation}

%%%%%%%%%%%%%%%%%%%%%%%%%%%%%% THEOREM THEOREM THEOREM
\begin{theorem} \label{thm:main}There exists $A=A(d)$ such that the following holds. Suppose that the family  ${\bf{m}}\coloneqq\{m_\sigma \in L^\infty( \R^d):\sigma \in \Gr(d,n)\}$ is such that 
\begin{equation}
\begin{split}
\label{e:admin4}  \|{\bf{m}}\|_{\mathcal M_A(\Gr(d,n) )} \coloneqq
\sup_{\sigma,\tau \in \Gr(d,n)  }\left[\|m_\sigma\|_{\mathcal M_A(d)} + \log\left(\e+ {\textstyle\frac{1}{\dist(\sigma,\tau)}}\right)\|m_\tau -m_\sigma\|_{\mathcal M_A(d)}  \right] & \leq 1.
\end{split}
\end{equation}
Referring to \eqref{e:thisismain}, there holds 
\begin{equation}
\label{eq:mainest}
\left\| T^\star_{{\bf{m}}} \circ P_0 :L^2(\R^n)\to L^{2,\infty}(\R^n) \right\| \lesssim 1, \qquad  \left\| T^\star_{{\bf{m}}} \circ P_0 :L^p(\R^n)  \right\|  \lesssim_p 1, \quad 2<p<\infty. 
\end{equation}
\end{theorem}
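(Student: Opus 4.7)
The plan has three stages: linearization of \eqref{e:thisismain}, a wave-packet decomposition adapted to the singular line $\sigma^\perp$, and a Carleson--Sj\"olin-style tree selection that exploits the log-Dini condition \eqref{e:admin4}. First, I linearize by choosing measurable selectors $\sigma:\R^n\to\Gr(n-1,n)$ and $Q:\R^n\to\mathrm{SO}(n-1)$ almost maximizing the supremum, and I discretize both parameters: $\sigma$ along a nested family of $2^{-k}$-nets in $\Gr(n-1,n)$, and $Q$ along nets in $\mathrm{SO}(n-1)$ at scales dictated by the wave-packet geometry below. The log-Dini weight in \eqref{e:admin4} is crucial, as it controls the oscillation of $\sigma\mapsto m_\sigma$ across the scales of the Grassmannian by a factor $\sim 1/k$ at level $k$, which is precisely what one hopes to sum against the combinatorics of a Carleson-type selection.

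Next, with a single multiplier frozen, I use that $P_0 f$ has Fourier support in the annulus $|\xi|\sim 1$ and that $m_\sigma(QO_\sigma\Pi_\sigma\xi)$ has singular set $\sigma^\perp$. A dyadic decomposition of $m$ produces, at each scale $j\le 0$, frequency pieces living in tubes of radius $\sim 2^j$ around the line $\sigma^\perp$, dual to plate-shaped wave packets of length $\sim 1$ along $\sigma^\perp$ and width $\sim 2^{-j}$ transverse to it. The rotation parameter $Q\in\mathrm{SO}(n-1)$ introduces a modulation on these plates that I would resolve, at each scale $j$, via an $\eps$-net in $\mathrm{SO}(n-1)$. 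The output is a wave-packet representation indexed by an enlarged tile space: positions, a hyperplane $\sigma$, a scale $j$, and a rotation $Q$ at the scale compatible with $j$. In this picture, $Q$ acts as an extra modulation layer, making the model an $(n-1)$-dimensional enrichment of the Carleson and Sj\"olin maximally modulated singular integrals.

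The main obstacle, and technical heart of the proof, is the time-frequency selection argument on this enlarged tile space. I expect to define sizes and densities tailored to each tile, establish a single-tree $L^2$ estimate, and then run a Bessel/Carleson selection algorithm on pairwise incomparable trees to assemble the weak $L^2$ bound on $P_0 f$. The difficulty is that the $Q\in\mathrm{SO}(n-1)$ modulation is genuinely higher dimensional, so neighboring rotations interact across many frequency scales, and the interplay between the Grassmannian scales and the tree combinatorics has to be tuned so that the $1/k$ decay in \eqref{e:admin4} is spent precisely where the selection produces logarithmic losses. Once the weak $L^2$ bound on $P_0 f$ is established, the strong $L^p$ bounds for $2<p<\infty$ follow by interpolation with a uniform $L^p$ estimate at large $p$, which derives from the $\mathcal M_A$-control built into the family $\mathscr m$.
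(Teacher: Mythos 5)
There is a genuine gap in your treatment of the rotation parameter $Q\in\mathrm{SO}(d)$. You propose to resolve $Q$ by $\eps$-nets at each scale and to carry it as an ``extra modulation layer'' inside the tile space and the tree selection. But $Q$ carries no time-frequency content: in \eqref{eq:defdir-1} it only rotates the multiplier within the fibre $\R^d$, so the singular set $\sigma^\perp$, the frequency support of each dyadic piece, and hence the wave-packet geometry are all independent of $Q$. Consequently there is no almost-orthogonality in $Q$ for a Bessel/Carleson selection to exploit, and a net discretization does not close either: the sup over a ball of radius $\eps$ in $\mathrm{SO}(d)$ of the error $T_{m_\sigma\circ Q}-T_{m_\sigma\circ Q_\nu}$ is again a maximal operator over a continuum, and the only way to control it is through derivative bounds in the chart parameters, i.e.\ a fundamental-theorem-of-calculus/averaging argument over the compact group. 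This is exactly how the paper proceeds (Section \ref{sec:redux}, Proposition \ref{p:redux}): the $\mathrm{SO}(d)$ supremum is removed \emph{before} any time-frequency analysis, by writing $Q=Q_j(\theta)$ in finitely many smooth charts and expressing $T_{m_\sigma}f(\cdot;\sigma,Q)$ as the value at $\theta=0$ plus integrals of operators whose multipliers $\partial_S\, m_\sigma(Q_j(\theta)\cdot)$ are again uniformly in $\mathcal M_{A-\#S}(d)$; this costs $d(d-1)/2$ derivatives and leaves only the canonical smooth rotations $O_\sigma$ of Lemma \ref{lem:rot} in the maximal operator. Your plan, which postpones $Q$ to the combinatorial stage, would not produce the claimed uniform bound.

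A second gap is the passage to $L^p$, $2<p<\infty$: you invoke ``interpolation with a uniform $L^p$ estimate at large $p$,'' but no such estimate is available a priori -- the operator is a supremum over an uncountable family of rotated singular integrals, and even for band-limited input there is no trivial large-$p$ endpoint (already for $n=2$ the strong bounds above $L^2$ require the full time-frequency argument). The paper instead extracts, from the very same size/density tree decomposition, a second restricted-type bound $\Lambda_{\mathbb P}(\cic{1}_F,g\cic{1}_G)\lesssim |F|^{1/p}|E|^{1-1/p}$ alongside the $L^2$-type bound \eqref{eq:model}, and concludes by restricted weak-type interpolation; with your machinery in place this is the natural fix, but it is a different mechanism from the one you describe. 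Apart from these two points, your skeleton for the core is close to the paper's: reduction to band-limited data near a fixed hyperplane, plates of thickness $1$ along $\sigma^\perp$ and width $\scl(t)$ transversally dual to frequency caps of radius $\scl(t)^{-1}$ about the singular line, a size/density/tree scheme, and the log factor in \eqref{e:admin4} spent against the $\sim\log$ many scales contributing inside a single tree (this is precisely condition (v) of the adapted classes and the $L_{P,2}$ estimate in the tree lemma, Lemma \ref{lem:tree}).
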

%%%%%%%%%%%%%%%%%%%%%%%%%%%%%% THEOREM THEOREM THEOREM

%%%%%%%%%%%%%%%%%%%%%%%%%%%%%% REMARK REMARK REMARK
\begin{remark} The norm \eqref{e:mihlin} is invariant under   isotropic scaling of $\R^d$.  
Condition \eqref{e:admin4} inherits such invariance, whence \eqref{eq:mainest} holds  with the same constants for $T^\star_{{\bf{m}}} \circ P_k$ as well, $k\in \R$.
\end{remark}
%%%%%%%%%%%%%%%%%%%%%%%%%%%%%% REMARK REMARK REMARK

%%%%%%%%%%%%%%%%%%%%%%%%%%%%%% REMARK REMARK REMARK
\begin{remark} A sufficient  condition for   assumption \eqref{e:admin4} to hold is the following.  Let 
\[
\|{\bf{m}}\|_{\mathcal M_{A,1}(\Gr(d,n))}\coloneqq \sup_{\sigma\in\Gr(d,n)} \sup_{0\leq |\beta|\leq 1 } \|{\mathscr D}_\sigma  ^\beta  m_\sigma \|_{\mathcal M_A(d)}
\]
where the differential operator $\mathscr{D}_\sigma$ is the natural invariant vector of $\sigma$-derivatives; an explicit description is given in Lemma~\ref{lem:alg}. Since 
\[
\|{\bf{m}}\|_{\mathcal M_{A}(\Gr(d,n))}\lesssim \|{\bf{m}}\|_{\mathcal M_{A,1}(\Gr(d,n))},
\]
 the conclusion of Theorem~\ref{thm:main}  holds under the assumption $\|{\bf{m}}\|_{\mathcal M_{A,1}(\Gr(d,n))}\leq 1$.
\end{remark}
%%%%%%%%%%%%%%%%%%%%%%%%%%%%%% REMARK REMARK REMARK
 
In order to further emphasize the interpretation of our operators as directional multipliers and draw a formal parallel to the two-dimensional case of \cite{LacLi:tams}, let us rewrite  \eqref{eq:singlem} in yet another form. Equip $\R^n$   with  the standard basis $(e_1,\ldots,e_d,e_n)$ and  define an orthonormal basis on $\sigma$ by setting 
\[
v_j ^\sigma\coloneqq V^{-1} e_j,\qquad j\in\{1,\ldots,d\}, \quad \mathrm{span}(v_1 ^\sigma,\ldots,v_d ^\sigma)= \sigma,\qquad   V\in\mathcal V(\sigma).
\]
Then, calculate
\[
V\Pi_\sigma\xi = V\sum_{j=1} ^d \l \xi,v_j ^\sigma\r v_j ^\sigma=\sum_{j=1} ^d \l \xi,v_j ^\sigma\r e_j = \left(\langle\xi, v_1 ^\sigma\rangle,\ldots,\langle\xi, v_d ^\sigma\rangle\right)\in\R^d 
\]
so that \eqref{eq:defdir-1} takes the form
\[
\begin{split}
T_{m}   f(x;\sigma,Q)=\int_{\R^n}  m \left(\langle\xi, v_1 ^\sigma\rangle,\ldots,\langle\xi, v_d ^\sigma\rangle \right) \widehat f (\xi) e^{2\pi i {\langle x, \xi \rangle}}\, \d \xi,\qquad  x\in \R^n.
\end{split}
\]
Then Theorem~\ref{thm:main} tells us that the operator
\[
f\mapsto \sup_{\sigma\in\Gr(d,n)} \sup_{v_1 ^\sigma ,\ldots,v_d ^\sigma} \left| \int_{\R^n}  m_\sigma \left(\langle\xi, v_1 ^\sigma\rangle,\ldots,\langle\xi, v_d ^\sigma\rangle \right) \widehat f (\xi) e^{2\pi i {\langle \cdot , \xi \rangle}}\, \d \xi \right|
\]
is of weak-type $(2,2)$ and strong type $(p,p)$, $p>2$, whenever ${\bf{m}}=\{m_\sigma\in L^\infty(\R^d): \,\sigma\in\Gr(d,n)\}$ satisfies $\|{\bf{m}}\|_{\mathcal M_A(\Gr(d,n))}\leq 1$; above, the inner supremum is over all orthonormal bases of $\sigma$. Specializing to a single multiplier $m\in \mathcal M_{ A}(d)$ we record below an immediate corollary.

%%%%%%%%%%%%%%%%%%%%%%%%%%%%%% COROLLARY COROLLARY COROLLARY
\begin{corollary} Let $m\in \mathcal M_A(d)$ and consider the maximal directional multiplier
\[
T^\star _m f(x)\coloneqq \sup_{(v_1,\ldots,v_d)}\left|\int_{\R^n}m\left(\l \xi, v_1\r,\ldots,\l\xi, v_d\r\right )\widehat f(\xi) e^{2\pi i \l x, \xi\r}\,\d \xi\right|,\qquad x\in\R^n,
\]
where the supremum is taken over all orthonormal $d$-tuples $(v_1,\ldots,v_d)\in (\R^n)^d$. Then $T^\star _m \circ P_0$ maps $L^2(\R^n)$ to $L^{2,\infty}(\R^n)$ and $L^p(\R^n)$ to $L^p(\R^n)$ for all $2<p<\infty$.
\end{corollary}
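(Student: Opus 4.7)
The corollary will follow directly from Theorem~\ref{thm:main} by specializing to the constant family
\[
\mathscr m = \{m_\sigma = m : \sigma \in \Gr(d,n)\},
\]
so the plan is essentially to identify the suprema in the two statements and verify that this constant family satisfies the hypothesis \eqref{e:admin4}.

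First, I would make precise the identification of parameters. Given any orthonormal $d$-tuple $(v_1,\ldots,v_d)$ in $\R^n$, set $\sigma\coloneqq \mathrm{span}(v_1,\ldots,v_d)\in\Gr(d,n)$. The $d$-tuple is then an ordered orthonormal basis of $\sigma$, and conversely every such ordered basis arises from some $(v_1,\ldots,v_d)$. With the fixed representatives $\{O_\sigma\}$ from \eqref{e:introOsig}, setting $v_j^\sigma\coloneqq O_\sigma^{-1}e_j$ gives one distinguished ordered basis, and any other is of the form $(Qv_1^\sigma,\ldots,Qv_d^\sigma)$ with $Q\in \mathrm{SO}(d)$ acting through $V=QO_\sigma$. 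Using the reformulation
\[
T_m f(x;\sigma,Q) = \int_{\R^n} m\bigl(\langle\xi,v_1^{\sigma,Q}\rangle,\ldots,\langle\xi,v_d^{\sigma,Q}\rangle\bigr)\widehat f(\xi) e^{2\pi i\langle x,\xi\rangle}\,\d\xi
\]
derived in the excerpt, the double supremum over $\sigma\in \Gr(d,n)$ and $Q\in\mathrm{SO}(d)$ in the definition of $T^\star_{\mathscr m}$ coincides exactly with the supremum over all orthonormal $d$-tuples $(v_1,\ldots,v_d)\in(\R^n)^d$ defining $T^\star_m$. Hence $T^\star_m = T^\star_{\mathscr m}$ pointwise for the constant family.

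Next, I verify \eqref{e:admin4} for this constant family. Since $m_\tau-m_\sigma\equiv 0$ for all $\sigma,\tau\in \Gr(d,n)$, the logarithmic term in \eqref{e:admin4} vanishes identically, so
\[
\|\mathscr m\|_{\mathcal M_A(\Gr(d,n))} = \|m\|_{\mathcal M_A(d)}.
\]
Hence, up to the harmless normalization $m\mapsto m/\|m\|_{\mathcal M_A(d)}$, the family $\mathscr m$ satisfies the hypothesis of Theorem~\ref{thm:main}, and the conclusion \eqref{eq:mainest} transfers verbatim to $T^\star_m\circ P_0$.

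There is no genuine obstacle here: the corollary is a packaging of Theorem~\ref{thm:main} for constant families. The only item requiring care is the bookkeeping of the parametrization $(v_1,\ldots,v_d)\leftrightarrow(\sigma,Q)$, which uses the explicit family $\{O_\sigma\}$ constructed in Lemma~\ref{lem:rot}; the maximal nature of both operators renders the specific choice of $O_\sigma$ irrelevant, as already observed just after \eqref{e:thisismain}.
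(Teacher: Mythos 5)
Your proposal is correct and matches the paper's own (implicit) argument: the corollary is recorded as an immediate specialization of Theorem~\ref{thm:main} to the constant family $m_\sigma\equiv m$, for which the logarithmic difference term in \eqref{e:admin4} vanishes, after the same rewriting of $T_m f(x;\sigma,Q)$ in terms of orthonormal bases $(v_1^\sigma,\ldots,v_d^\sigma)$ that identifies the supremum over $(\sigma,Q)$ with the supremum over orthonormal $d$-tuples. No further comment is needed.
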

%%%%%%%%%%%%%%%%%%%%%%%%%%%%%% COROLLARY COROLLARY COROLLARY
We encourage the reader to compare with the very familiar two-dimensional directional multipliers, \cite{Dem,LacLi:tams}, given in the form
\[
Tf(x,v)\coloneqq \int_{\R^2} m\left(\langle \xi, v\rangle \right) \widehat f(\xi) e^{2\pi i {\langle x, \xi \rangle}}\,\d \xi,\qquad (x,v)\in\R^2\times \mathbb S^1,
\]
when $ m\in \mathcal M_A(1)$ is a one-dimensional H\"ormander--Mihlin multiplier. In  the two-dimen\-sional case, $\mathrm{SO}(1)$ is trivial, so that such dependence may be omitted.

 When the band limited restriction imposed by precomposing with $P_k$ is lifted  and no  regularity assumptions are placed on the subspace choice function,   such function $\sigma(\cdot)$ is allowed to be  oriented along a suitable Kakeya--Nikodym set and the subspace  singular integrals \eqref{e:thisismain}  are in general unbounded on $L^p(\R^n)$.
 One particularly deep line of investigation is seeking for  suitable regularity assumptions on $\sigma$  bypassing  the above mentioned counterexample and ultimately leading to $L^p$-bounds.  For instance, if $\sigma(\cdot)$ is $1$-Lipschitz and $ m_\sigma(\cdot)$ is supported on frequency scales $\gg 1$, then  Kakeya-type counterexamples are avoided. Zygmund suggested that --- in this context ---  a suitably truncated version of the averaging directional operator along a Lipschitz choice of subspace should be bounded, at least above a critical $L^p$-space which can be identified for each dimension $n$ and codimension $n-d$,
see \cite{DPP_adv}. The corresponding version of this conjecture for multipliers $m_\sigma$,  allowed to be singular along $\sigma ^\perp$ is usually attributed to Stein. These are the Zygmund and Stein conjectures alluded to above and underlying the investigations in this paper.

 For the case of directional singular integrals, namely when $m_\sigma$ is allowed to be singular along $\sigma^\perp$, further counterexamples of non-Kakeya type do exist even when the range of $\sigma(\cdot)$ has special (e.g. lacunary) structure as has been exhibited in \cite{Karag,LMP}. For this reason it is customary, at least in the case $d=1$, to study $\sigma$ with finite range and prove optimal bounds in terms of the cardinality of such range. Optimal cardinality bounds for directional singular integrals are mostly known in the two-dimensional case, see \cite{Dem,DeDP}, while in higher dimensions $n\geq 2$ and $d=1$ optimal bounds for directional multipliers and $\sigma$ with lacunary range are contained in \cite{ADP,DPP2}.

In the present paper we prove the first such result in higher dimensions $n\geq 2$ and codimension $n-d=1$. It is worth mentioning that while most of the results in the literature deal with maximal singular integrals generated by a single multiplier, the statement below allows for a log-H\"older dependence of the family $m_\sigma$ in $\sigma\in \Gr(d,n)$. A two-dimensional partial analogue has appeared in \cite{KaLa}, where the authors even allow for a measurable with respect to $\sigma$ choice of multipliers, albeit with sub-optimal bounds in terms of the cardinality.

%%%%%%%%%%%%%%%%%%%%%%%%%%%%%% COROLLARY COROLLARY COROLLARY
\begin{corollary}\label{cor:Nmaximal} Suppose that the family  ${\bf{m}}\coloneqq\{m_\sigma \in L^\infty( \R^d):\, \sigma \in \Gr(d,n)\}$ satisfies \[\|{\bf{m}} \|_{\mathcal M_A(\Gr(d,n))} \leq 1.\] With reference to \eqref{eq:defdir-1}, the maximal operator
\begin{equation}
\label{e:thisismainsig}
T^\star_{\Sigma,{\bf{m}}} f(x) \coloneqq \sup_{\sigma \in \Sigma} \sup_{Q\in \mathrm{SO}(d)} \left| T_{m_\sigma}f(x; \sigma,Q)\right|, \qquad x\in \R^n,\qquad \Sigma\subset \Gr(d,n),
\end{equation}
satisfies the norm inequality
	\[
	\sup_{\substack{\Sigma\subset \Gr(d,n)\\ \#\Sigma\leq N}} \big\| T^\star_{\Sigma, {\bf{m}}} \big \|_{L^2(\R^n)} \lesssim_n \log N 
	\]
and this is best possible up to the implicit dimensional constant. 
\end{corollary}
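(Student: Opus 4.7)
The plan is to deduce the strong $L^2$-bound for the $N$-restricted maximal operator from the weak-$(2,2)$ band-limited estimate of Theorem~\ref{thm:main}, with a logarithmic loss attributable to the finite cardinality of $\Sigma$. By scale invariance of the norm in \eqref{e:admin4}, Theorem~\ref{thm:main} yields the uniform-in-$k$ estimates $\|T^\star_{\mathscr{m}}\circ P_k\|_{L^2\to L^{2,\infty}}\lesssim 1$ and $\|T^\star_{\mathscr{m}}\circ P_k\|_{L^p\to L^p}\lesssim_p 1$ for $p>2$; both transfer to $T^\star_{\Sigma,\mathscr{m}}$ via the pointwise majorization $T^\star_{\Sigma,\mathscr{m}}\le T^\star_{\mathscr{m}}$. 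A standard measurable selection then linearizes the maximal operator as $T^\star_{\Sigma,\mathscr{m}}f(x) = |L f(x)|$ with $Lf(x) = T_{m_{\sigma(x)}}(f;\sigma(x),Q(x))(x)$ for measurable $\sigma:\R^n\to\Sigma$ and $Q:\R^n\to \mathrm{SO}(d)$, and it suffices to bound $\|L\|_{L^2\to L^2}\lesssim \log N$ uniformly over such selections.

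The logarithmic factor is extracted at each frequency scale $k$ by combining three ingredients: the weak $(2,2)$ bound (constant $\lesssim 1$, from Theorem~\ref{thm:main}), the strong $(p,p)$ bound for $p>2$ (constant $\lesssim_p 1$, again from Theorem~\ref{thm:main}), and the trivial strong $(2,2)$ bound of size $\sqrt{N}$ obtained from the individual $L^2$-boundedness of each $T_{m_\sigma}(\cdot;\sigma,Q)$ via Fubini and one-dimensional Plancherel on the $\sigma$-fibres. A level-set interpolation of these three estimates will yield the single-scale bound $\|L\circ P_k\|_{L^2\to L^2}\lesssim \log N$, uniformly in $k$. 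The summation over scales is handled via the almost orthogonality of $\{L P_kf\}_k$ in $L^2$, which follows from the exact frequency localization of each $T_{m_\sigma}P_k$, up to controllable commutator terms coming from the spatial indicator functions $\mathbf{1}_{E_\sigma}$ inherent in the linearization. The main obstacle will be precisely this summation step: ensuring that the almost-orthogonal decomposition does not accumulate additional logarithmic losses beyond the single-scale $\log N$, despite the failure of $L$ to exactly preserve frequency localization.

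The sharpness of the $\log N$ bound will be obtained by a Kakeya-type construction in the spirit of Karagulyan's two-dimensional lower bounds: choose $N$ well-separated subspaces $\sigma_1,\ldots,\sigma_N\in\Gr(d,n)$, take $m$ to be a fixed non-trivial Mihlin multiplier such as the scalar Riesz transform in $\R^d$, and test against an indicator function of a Kakeya-type set in $\R^n$ adapted to the collection $\{\sigma_j\}$. The resulting logarithmic pile-up of the $N$ directional singular integrals produces an $L^2$-norm of the order $\log N$, thus generalizing to higher dimensions the prototypical two-dimensional mechanism for the directional Hilbert transform.
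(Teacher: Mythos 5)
There is a genuine gap, and it sits exactly at the step you flag as "the main obstacle": the summation over frequency scales. Your linearized operator $Lf(x)=T_{m_{\sigma(x)}}(f;\sigma(x),Q(x))(x)$ is not a Fourier multiplier; it is a sum of genuine multiplier operators cut off by rough measurable sets (the level sets of $\sigma(\cdot)$ and $Q(\cdot)$), and multiplication by such indicators destroys frequency localization completely. There is no almost orthogonality of $\{LP_kf\}_k$ "up to controllable commutator terms": an inequality of the form $\|Lf\|_{2}^{2}\lesssim\sum_k\|LP_kf\|_2^2$ with constants independent of $N$ cannot hold. Indeed, the single-scale interpolation you describe (weak $(2,2)$ with constant $O(1)$, strong $(p,p)$, and the trivial $\sqrt N$ bound) actually yields the better bound $\|L\circ P_k\|_{L^2\to L^2}\lesssim\sqrt{\log N}$ (this is Lemma~\ref{lem:weak2strong} in the paper, following \cite{Dem}*{Lemma 3.1}); if a lossless scale decomposition were available, you would conclude $\|T^\star_{\Sigma,\mathscr m}\|_{L^2}\lesssim\sqrt{\log N}$, contradicting the very sharpness of $\log N$ that the corollary (and your last paragraph) asserts. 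So the scale-summation step is not merely unproven, it is false as stated, and the extra $\sqrt{\log N}$ must come precisely from this step. The paper handles it with the Chang--Wilson--Wolff inequality in the form of Proposition~\ref{prop:cww} (the reduction of \cite{GHS}, see also \cites{Dem,DPGTZK}): for $N$ uniformly $L^2$-bounded multiplier operators one may commute the supremum with a Littlewood--Paley square function at a cost of $\sqrt{\log(N+10)}$, and combining this with the single-scale strong bound $\sqrt{\log N}$ gives the stated $\log N$. Your reduction to band-limited pieces via Theorem~\ref{thm:main} and the weak-to-strong upgrade per scale are fine and agree with the paper; what is missing is this CWW-type mechanism (or a substitute for it), without which the argument does not close.

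Two smaller points. First, the paper first removes the $\mathrm{SO}(d)$ supremum via Proposition~\ref{p:redux} and works with $U^\star_{\Sigma,\mathscr m}$; your alternative of dominating $T^\star_{\Sigma,\mathscr m}$ pointwise by $T^\star_{\mathscr m}$ and invoking Theorem~\ref{thm:main} directly is legitimate. Second, the sharpness discussion is only a sketch: a Kakeya-type construction in the spirit of the two-dimensional lower bounds is indeed the right source of the $\log N$ lower bound, but note that such examples are necessarily broad-band in frequency (this is another way of seeing why a lossless single-scale decomposition cannot exist), and your proposal does not supply the actual construction or computation in $\R^n$.
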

%%%%%%%%%%%%%%%%%%%%%%%%%%%%%% COROLLARY COROLLARY COROLLARY

From {the proof of} Theorem~\ref{thm:main}, it is also possible to deduce weak $(2,2)$ and strong $(p,p)$ bounds, with $2<p<\infty$, for maximally truncated versions of \eqref{e:thisismain}.
 When considering the multiplier $m_{\sigma}\equiv 1$, this corresponds to a maximal subspace averaging operator. The obtained mapping properties   can then be used  to deduce subspace Lebesgue differentiation theorems for functions in the Besov spaces $B_{p,1}^s(\R^n)$, $s\ge 0$. These recover past results due to  Murcko  \cite{Mur},   see also \cite{AFN,Naibo}, for the case $s>0,n=2$, and appear to be new in the cases $s=0$ or $n>2$.
 A detailed statement is given in Theorem \ref{thm:cvg},  Subsection \ref{sub:besov}.

 %%%%%%%%%%%%%%%%%%%%%%%%%%%%%% SECTION SECTION SECTION
Further motivation, as well as influence on the proof techniques, for Theorem \ref{thm:main} comes from the connection with maximally modulated singular integrals.   In the two-dimen\-sio\-nal case, $n=2=d+1$, there are well-explored   \cite{BaTh,LacLi:tams} ties between the band-limited behavior of the maximal directional Hilbert transform  and the boundedness of the Carleson maximal partial inverse Fourier transform operator \cite{car}. In a similar fashion,  Theorem~\ref{thm:main} implies   $L^p$-estimates for maximally modulated H\"ormander--Mihlin multipliers in the spirit of Sj\"olin \cite{sjolin}.

%%%%%%%%%%%%%%%%%%%%%%%%%%%%%% THEOREM THEOREM THEOREM
\begin{theorem}\label{thm:carsjo} Suppose $m \in \mathcal M_A(d)$.  
Then, the conclusion of Theorem~\ref{thm:main}  implies that the Carleson--Sj\"olin operator
\[
\mathrm{CS}[f](x)\coloneqq \sup_{N\in\R^d} \left|\int_{\R^d} \widehat {f}(\eta)m(\eta+N)  e^{2\pi i \langle \eta, x\rangle} \, \d \eta\right|, \qquad x\in \R^d,
\]
maps $L^2(\R^d)$ to $L^{2,\infty}(\R^d)$ and $L^p(\R^d)$ to $L^p(\R^d)$ for all $2<p<\infty$.
\end{theorem}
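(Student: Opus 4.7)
My plan is to prove Theorem~\ref{thm:carsjo} via the standard ``conic lift'' from $\R^d$ to $\R^{d+1}$, which realizes the Sj\"olin modulation $m(\cdot+N)$ as a tilted directional Mihlin multiplier in one higher dimension to which Theorem~\ref{thm:main} applies; this generalizes the Lacey--Li two-dimensional precedent to arbitrary dimension. By Littlewood--Paley decomposition in $\R^d$ together with the isotropic scale-invariance of $\mathcal M_A(d)$, I reduce to the case $\widehat f\subset\{|\eta|\sim 1\}$ and $|N|\lesssim 1$; the $|N|\gg 1$ contribution is dominated by $\|m\|_\infty Mf$ after Taylor expanding $m(\eta+N)$ around $N$, using the Mihlin derivative bounds. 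I will then fix an auxiliary $R\gg 1$ and $\phi\in\mathcal S(\R)$ with $\phi(0)=1$ and $\widehat\phi$ a smooth bump of total integral $1$, width $\sim 1/R$, centered at some $c\sim 1$, and set $F(x,x_{d+1})\coloneqq f(x)\phi(x_{d+1})$. Then $\widehat F$ lies in an annulus of $\R^{d+1}$, so $P_k F=F$ for a fixed $k$, and $\|F\|_{L^p(\R^{d+1})}\sim R^{1/p}\|f\|_{L^p(\R^d)}$.

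For each unit vector $v=(\alpha,\beta)\in\R^d\times(0,1]$ with $\beta\ge\beta_0>0$, I set $\sigma_v=v^\perp\in\Gr(d,d+1)$, take the canonical orthonormal frame $V_v\in\mathcal V(\sigma_v)$ adapted to $\hat\alpha\coloneqq\alpha/|\alpha|$, and define
\[
m_{\sigma_v}(\tilde\eta)\coloneqq m\bigl(R_v[\tilde\eta_1/\beta,\tilde\eta_2,\ldots,\tilde\eta_d]\bigr),
\]
where $R_v\in SO(d)$ sends the standard basis of $\R^d$ to an orthonormal basis $(\hat\alpha,e_2^\perp,\ldots,e_d^\perp)$ adapted to $\hat\alpha$. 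Extending $m_\sigma$ smoothly to all of $\Gr(d,d+1)$ by a cutoff in $\beta$, the family $\{m_\sigma\}$ is Lipschitz in $\sigma$ on the relevant range and hence log-H\"older admissible, $\|\mathscr m\|_{\mathcal M_A(\Gr(d,d+1))}\lesssim 1$. A direct Fourier computation based on the explicit form $V_v\Pi_{\sigma_v}(\eta,\xi_{d+1})=(\beta\,\eta\cdot\hat\alpha-|\alpha|\xi_{d+1},\eta_2^\perp,\ldots,\eta_d^\perp)$ yields the key identity
\[
T_{m_{\sigma_v}}F(x,y;\sigma_v,V_v)=\int_{\R}\widehat\phi(\xi_{d+1})\,e^{2\pi i y\xi_{d+1}}\, T_m^{N_v(\xi_{d+1})}f(x)\,\d\xi_{d+1},\qquad N_v(t)\coloneqq -\frac{t|\alpha|}{\beta}\hat\alpha.
\]

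For each target $N$ with $|N|\lesssim 1$, I pick $v_N\in S^d$ so that $N_{v_N}(c)=N$ and decompose $T_{m_{\sigma_{v_N}}}F(x,y)=T_m^N f(x)\phi(y)+E_{v_N}(x,y)$, where the error $E_{v_N}$ accounts for the finite width $1/R$ of $\widehat\phi$. On the slab $\{|y|\le R/2\}$, where $|\phi(y)|\ge 1/2$, taking the supremum over $v$ gives the pointwise bound $\mathsf{CS}[f](x)\le 2\,T^\star_{\mathscr m}F(x,y)+2\sup_v|E_v(x,y)|$. Raising to the $p$-th power, integrating over the slab and over $x\in\R^d$, invoking Theorem~\ref{thm:main} ($\|T^\star_{\mathscr m}F\|_{L^p(\R^{d+1})}\lesssim \|F\|_{L^p}\sim R^{1/p}\|f\|_{L^p}$), controlling the error via a Plancherel estimate giving $\|E_v\|_{L^2(\R^{d+1})}^2\lesssim (\log R)/R$ (together with interpolation for $p>2$), and dividing by $R$ produces $\|\mathsf{CS}[f]\|_{L^p(\R^d)}^p\lesssim \|f\|_{L^p}^p+o_{R\to\infty}(1)$; sending $R\to\infty$ closes the strong $(p,p)$ bound, while the weak-$L^2$ bound follows by an analogous layer-cake argument from the weak conclusion of Theorem~\ref{thm:main}. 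The main technical obstacle will be the error control for $E_v$: because $m(\cdot+N)$ is not Mihlin at the singular point $\eta=-N$, which may lie inside the annular frequency support of $\widehat f$ when $|N|\sim 1$, the pointwise modulus of continuity of $N\mapsto T^N_m f(x)$ is not Lipschitz. The Plancherel integral must therefore be split into the region near $\eta=-N$ (whose small measure bounds the contribution) and the region away from it (where the bound $|m(\eta+N')-m(\eta+N)|\lesssim |N'-N|/|\eta+N|$ applies), producing a mild logarithmic loss that is absorbed by taking $R$ large.
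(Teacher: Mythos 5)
Your overall strategy --- lift $f$ to $\R^{d+1}$, realize the modulation $m(\cdot+N)$ as a directional multiplier along a tilted hyperplane, and apply Theorem~\ref{thm:main} --- is exactly the transference the paper performs, but three of your steps do not hold up as written. First, the opening reduction ``by Littlewood--Paley \ldots reduce to $\widehat f\subset\{|\eta|\sim 1\}$ and $|N|\lesssim 1$'' is not a routine step for a modulation-invariant operator: $\mathsf{CS}$ does not commute with band restriction, and after writing $T^N_mf=\sum_j T^N_m P_jf$ you must compare the bands below $|N|$ with $m(N)\,\mathrm{Id}$, the bands above $|N|$ with the unmodulated operator $T_m$, and then recombine the critical-band maximal operators over all $j$ with only a \emph{weak} $L^2$ bound available for each band; none of this appears in your outline. (It can be repaired --- e.g.\ the weak $(2,2)$ recombination works because the $j$-th piece only sees $P_jf$, so one may sum the level-set measures --- but it is a genuine argument; the paper avoids it entirely by working with the dense class of $f$ with compact Fourier support and lifting to a single high-frequency annulus.) Second, your admissibility claim for the family $m_{\sigma_v}=m\circ R_v D_\beta$ is false: the frame $R_v$ depends on $\hat\alpha=\alpha/|\alpha|$, which is discontinuous at $v=e_{d+1}$, and directions with arbitrarily small $|\alpha|$ and essentially arbitrary $\hat\alpha$ are needed (they correspond to small $N$); for such pairs $\dist(\sigma_v,\sigma_{v'})$ is tiny while $\|m_{\sigma_v}-m_{\sigma_{v'}}\|_{\mathcal M_A(d)}\sim 1$, so \eqref{e:admin4} fails. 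This is fixable --- factor $m_{\sigma_v}=(m\circ D_{\beta,\hat\alpha})\circ R_v$ with $D_{\beta,\hat\alpha}=\mathrm{Id}+(\beta^{-1}-1)\hat\alpha\otimes\hat\alpha$, absorb $R_v$ into the $\mathrm{SO}(d)$-supremum in \eqref{e:thisismain} (for $d\geq 3$ a globally continuous adapted frame does not even exist, so a finite chart splitting is also needed), and note that the dilation-only family is genuinely Lipschitz because $\hat\alpha\otimes\hat\alpha$ is even in $\hat\alpha$ and $\beta^{-1}-1$ vanishes quadratically --- but as written the hypothesis of Theorem~\ref{thm:main} is simply not verified.

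The most serious gap is the error term. Your pointwise inequality involves $\sup_v|E_v(x,y)|$ (the linearizing choice $v_{N(x)}$ varies with $x$), yet you only estimate each \emph{fixed} $E_v$ in $L^2$; a fixed-$v$ bound cannot control the supremum, and discretizing $v$ at the natural scale $1/R$ produces $\sim R^d$ terms, so even granting $\|E_v\|_2^2\lesssim (\log R)/R$ the resulting bound $R^{d-1}\log R$ does not vanish after dividing by the slab measure $R$ once $d\geq 2$. Moreover the fixed-$v$ bound itself is not uniform over $f\in L^2$: near the singularity $\eta=-N$ the Plancherel integral is only small if $\int_{|\eta+N|\leq K/R}|\widehat f|^2$ is small, which requires a qualitative hypothesis such as $\widehat f\in L^\infty$ with compact support that you never impose. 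The paper's proof is engineered precisely to avoid both problems: the lift is $f(y)\psi_R(x_n)$ with $\widehat{\psi_R}$ of \emph{unit} width at height $R$, so all needed tilts are $O(\eps)$-small, the multiplier family is the single $m$ composed with the canonical rotations $O_\sigma$ (so \eqref{e:admin4} is trivial), and, after qualitatively assuming $m$ smooth and compactly supported (removed at the end by a limiting argument), the error is bounded pointwise, \emph{uniformly in} $\sigma$, by $\M F$. If you wish to keep your exact-identity, unit-height lift with a frequency bump of width $1/R$, you must either prove a $\sigma$-uniform pointwise bound for $E_v$ under similar qualitative assumptions on $m$ and $f$, or find another mechanism to pass the supremum over $v$; the quantitative Plancherel route you propose does not close.
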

%%%%%%%%%%%%%%%%%%%%%%%%%%%%%% THEOREM THEOREM THEOREM

The transference type argument leading to the proof of Theorem~\ref{thm:carsjo} is a modified version of the observation, commonplace in the literature, that any horizontal frequency cutoff to a half-line of a function supported on a thin vertical frequency strip may be obtained by applying a   directional Hilbert transform with a suitably chosen slope. The connection to Carleson-type theorems and, more generally, modulation invariant operators, is in fact also apparent from the proof of Theorem~\ref{thm:main} itself, which, much like its predecessors in \cite{LacLi:mem,BatRMI}, borrows from the Lacey--Thiele argument for Carleson's maximal operator, \cite{LTC}. 

Due to the higher dimensional nature of the problem considered, several new conceptual and technical difficulties arise. A first element of proof, which is only relevant in ambient dimension $n\geq 3$, addresses the possibility of choosing several rotations mapping $\R^d$ to a given $\sigma\in\Gr(d,n)$. The underlying rotation invariance allows for arbitrary choices of coordinates on each $\sigma$, introducing artificial discontinuities to the problem in hand and is readily appreciated in formula \eqref{eq:defdir-1}. The necessary reduction which allows us to dispose of the arbitrary rotation $Q$ in \eqref{eq:defdir-1} and instead use canonical smooth rotations is contained in \S\ref{sec:redux}; the study of the model case of maximally rotated translation invariant Calder\'on--Zygmund operators is instructive and is also presented in \S\ref{sec:redux}. Additionally, the higher dimensional time-frequency analysis of the current paper combined with the directional nature of the operator under study leads to a novel model sum in terms of wave-packets conforming to the geometry of the operator, and requiring us to work with a choice function, dictating the measurable choice of subspaces from $\Gr(d,n)$, which localizes in higher dimensional frequency caps.

%%%%%%%%%%%%%%%%%%%%%%%%%%%%%% SECTION SECTION SECTION

The final paragraph of this introduction serves as a more comprehensive historical account of past progress on  maximal directional singular and averaging operators. This subject has attracted  considerable attention in the last fifty years, mainly  in connection to the Kakeya maximal conjecture. In this context, the dimension $d$ of the averaging manifold is 1, and  the connection may be described through the $L^p$-bounds, $1<p<\infty$, for the single and multi-scale directional maximal functions
\[
\M_{V,1} f(x)\coloneqq \sup_{v\in V}  \int_{-\frac12}^{\frac12} |f(x-vt)|\, \d t,\quad 
\M_V f(x)\coloneqq \sup_{r>0}\sup_{v\in V} \ \frac{1}{2r}\int_{-r} ^r |f(x-vt)|\, \d t, \quad x\in \R^n
\]
associated to a generic  $V\subseteq \SS^{n-1}$.
The existence of Kakeya sets implies that these bounds blow up as $\delta\to 0^+$ when $V$ is a  $\delta$-net on $\SS^{n-1}$. When $n=2$, tight upper bounds for $\delta$-nets are known and due by Str\"omberg, \cite{Stromberg}, and C\'ordoba, \cite{Cor77}, who e.g.\ prove sharp logarithmic blowup in terms of $\delta$ above $L^2(\R^2)$, which is the critical space for the Kakeya maximal function in $\R^2$. These results imply the maximal Kakeya conjecture in two dimensions and for example we have
\[
\|M_{V,1}\|_{L^2(\R^2)}\eqsim    \sqrt{|\log \delta|},\qquad \|\M_V\|_{L^2(\R^2)}\eqsim |\log \delta|.
\]
 In parallel it was known that if $V$ has additional structure, namely if it is a \emph{lacunary set of directions}, then $\M_V$ is bounded on all $L^p(\R^2)$ spaces, $1<p\leq \infty$, as was shown collectively by \cite{CorFeflac,NSW,SS}. Bateman \cite{Bat} showed that in fact $\M_V$ is bounded on some (equivalently all) $L^p(\R^2)$ spaces, $1<p\leq \infty$, if and only if $V$ is a lacunary set. This means that if $V$ is an arbitrary infinite set of directions then $\M_V$ will be generically unbounded so that one can assume that $\#V<+\infty$ and seek for best possible bounds in terms of the cardinality $\#V$. This was done by Katz who showed the best possible bounds for $\M_{V,1}$ in \cite{KB}, and for $\M_V$ in \cite{Katz}.  These bounds match the optimal bounds for $\delta$-nets stated above.

 The maximal directional Hilbert transform 
\[
H_Vf(x)\coloneqq \sup_{v\in V} |H_v f(x)|,\qquad H_v f(x) \coloneqq \mathrm{p.v.} \int_{\R} f(x- v t)\frac {\d t}{t},\qquad x\in\R^2,
\]
is always unbounded if $\#V=\infty$; this was shown for $L^2(\R^2)$ by Karagulyan in \cite{Karag} and for all $L^p(\R^n)$ by {\L}aba, Marinelli and Pramanik in \cite{LMP}.  The upper bounds for $H_V$ and $V$ lacunary were shown by Demeter and one of the authors, \cite{DeDP}; see also \cite{DPP}, resulting to the sharp estimate
\[
\sup_{\substack {V \,\textrm{lacunary}\\ \#V \leq N}}
\|H_V\|_{L^2(\R^n)}\eqsim \sqrt{\log N}.
\]

The contributions \cite{Dem,DeDP}, containing what is essentially the $n=2$ case of Corollary \ref{cor:Nmaximal}, have already been recalled. Finally the $n=2$ case of Theorem~\ref{thm:main} has a partial analogue in a result of Lacey and X.\ Li \cite{LacLi:tams}. Therein, the authors pursued this question as an intermediate step towards the two-dimensional version of the Stein conjecture. Further results are available under other  specific structural assumptions on the linearizing choice function for the directions $v$. The articles   \cite{BatRMI} by Bateman and \cite{BaTh} by Bateman and Thiele deal with the case where $v$ is almost horizontal and does not depend on the vertical variable. In particular, the latter article showed that this structure entails  $L^p(\R^2)$-bounds  for all $3/2<p<\infty$. Variants of these assumptions have been dealt with by S.\ Guo \cite{Guo, Guo2};  see also \cite{DPGTZK} for a comprehensive result of this type. In a different direction Bourgain \cite{Bourgain} has proved that a suitably truncated version  of $M_{v(\cdot)}$ is bounded on $L^2(\R^2)$ when $v$ satisfies a certain curvature condition, which in particular is verified by real analytic vector fields; see  \cite{Guo3} for a geometric proof. Analogous results for    the corresponding truncated directional Hilbert transform  along $v$ real analytic fields are due to obtained by Stein and Street, \cite{StStr}.

In higher dimensions, $n\geq 3$ most of the known results concern the case of codimension $n-1$ so that $d=1$. Parcet and Rogers \cite{PR2} extended the notion of lacunarity to any dimension and showed that $\M_V$ is bounded on all $L^p(\R^n)$-spaces under the assumption that $V$ is lacunary. A partial converse in the spirit of Bateman is also proved in \cite{PR2} but a full characterization is still pending; for $d>1$ there is currently no notion of lacunary subsets of $\Gr(d,n)$ in the literature. For arbitrary $V\subset \SS^{n-1}$ sharp $L^2(\R^n)$-bounds were shown for $\M_{V,1}$ by  two of the authors using the polynomial method, \cite{DPPalg}. Regarding $\M_{V}$, the special case where $V\subset \SS^{n-1}$ is a $\delta$-net is considered in \cite{Kim}. Sharp $L^2(\R^n)$-bounds for maximal single-scale subspace averages are proved in \cite{DPP_adv} for any $1\leq d \leq n-1$; these include the case of codimension $n-d=1$ where $L^2(\R^n)$ is the critical space. 

Before this work, maximal directional singular multipliers had only been studied for $d=1$. When $V$ is lacunary and $n\geq 3$, Accomazzo and two of the authors have proved  sharp $L^p(\R^n)$-bounds  in \cite{ADP}. In \cite{KP}, Kim and Pramanik prove the sharp $L^2(\R^n)$-bounds for $V\subset \SS^{n-1}$ arbitrary with $\#V<+\infty$. It should be noted that lacunary sets yield the same bound of the order $\sqrt{\log \#V}$ for all $L^p$-norms of the maximal directional singular integral in all dimensions. For $V\subset \SS^{n-1}$ arbitrary this is no longer the case and logarithmic cardinality bounds can only be expected above the critical exponent $p=n$. This problem would however entail the resolution of the Kakeya conjecture and is currently wide open. Note that Corollary~\ref{cor:Nmaximal} above provides the first sharp bound for maximal directional multipliers and $d>1$; since in our case the codimension $n-d=1$ the critical space for this problem is still $L^2(\R^n)$, as in the case $n=2=d+1$.

\subsection*{Structure of the paper} Section \ref{sec:geom} contains a few geometric preliminaries on the metric and differential structure of $\Gr(d,n)$. In Section \ref{sec:redux}, Theorem \ref{thm:main} is reduced to a simpler version by removing the $\mathrm{SO}(d)$ maximality, see Proposition \ref{p:mainpf} for a precise statement. Section \ref{sec:maxcarleson} is devoted to the deduction of Corollary \ref{cor:Nmaximal} and Theorem  \ref{thm:carsjo}. Section \ref{sec:tiles+model} further reduces Proposition \ref{p:mainpf} to the estimation of a time-frequency model operator, which is finally performed in Section \ref{sec:trees}. The concluding Section \ref{sec:compl} is devoted to complementary results and questions on maximally truncated and bi-parametric   variable $\Gr(d,n)$-subspace singular integrals.

{
\subsection*{Acknowledgments} The authors would like to thank the anonymous referee for an expert reading and for providing several helpful and detailed comments.

O. Bakas was partially supported by the projects CEX2021-001142-S, RYC2018-025477-I,  PID2021-122156NB-I00/AEI/10.13039/501100011033 funded by Agencia Estatal de Investigaci\'on and acronym ``HAMIP'', Juan de la Cierva Incorporaci\'on IJC2020-043082-I, grant BERC 2022-2025 of the Basque Government, and by the funding programme ``MEDICUS'' of the University of Patras.

F. Di Plinio has been partially supported by the National Science Foundation under the grants  NSF-DMS-2000510, NSF-DMS-2054863, as well as  NSF-DMS-1929284 while in residence at the Institute for Computational and Experimental Research in Mathematics in Providence, RI, during the \emph{Harmonic Analysis and Convexity} program in Fall 2022. F.\ Di Plinio is also partially supported by the FRA 2022 Program of University of Napoli Federico II, project ReSinAPAS -
Regularity and Singularity in Analysis, PDEs, and Applied Sciences.

I. Parissis is partially supported by the project funded by Agencia Estatal de Investigaci\'on PID2021-122156NB-I00/AEI/10.13039/501100011033  and  with acronym ``HAMIP'', grant T1615-22 of the Basque Government and IKERBASQUE.

L. Roncal is partially supported by the projects CEX2021-001142-S, CNS2023-143893, RYC2018-025477-I and PID2020-113156GB-I00/AEI/10.13039/501100011033 and  with  ac\-ro\-nym ``HAPDE'' funded by Agencia Estatal de Investigaci\'on, grant BERC 2022-2025 of the Basque Government and IKERBASQUE.}

%%%%%%%%%%%%%%%%%%%%%%%%%%%%%% SECTION SECTION SECTION
\section{Geometric preliminaries}\label{sec:geom} 
% \subsection{\texorpdfstring{$\Gr(d,n)$}{Gr(d,n)} as a metric space and notational remarks}Recall that $\Gr(d,n)$, $1\leq d <n$ is a compact metric space 
% when endowed with 
% \begin{equation}\label{eq:dist}
% \dist'(\sigma,\tau)\coloneqq  \sup_{\omega\in \SS^{n-1}}|\Pi_\sigma \omega - \Pi_{\tau} \omega|, \qquad \sigma, \tau \in \Gr(d,n),
% \end{equation} 
% and the map
{The remainder of this article is concerned with the codimension $1$ case, $d=n-1$. In this case $\Gr(d,n)$ is naturally identified with $\mathbb S^d$ via the isometric isomorphism
\[
\sigma \mapsto \sigma^\perp, \qquad \Gr(d,n)\to \Gr(n-d,n),
\]
This identification is exploited with the notation $v_\sigma \in \mathbb S^d$ as the unit normal vector to $\sigma$. We equip $\Gr(d,n)=\Gr(n-1,n)$ with the metric
\begin{equation}\label{eq:dist3}
\dist(\sigma,\tau)\coloneqq |v_\sigma-v_\tau| = \sqrt{2}\sqrt{1-\cos[\theta(v_\sigma, v_\tau)]} \eqsim|\sin [\theta(v_\sigma, v_\tau)]| 
\end{equation}
where $\theta(u,v)$ stands for the convex angle between $u,v\in \mathbb S^d$. The equivalence constant implied by $\eqsim$ is obviously independent of the dimension.}
 % Because of \eqref{eq:dist2} above it will be convenient for us to redefine the distance on $\Gr(d,n)$ for the special case $d=n-1$ in hand as follows
 % \begin{equation}\label{eq:dist3}
 % 	\dist(\sigma,\tau)\coloneqq |v_\sigma-v_\tau|,\qquad \sigma,\tau\in \Gr(d,n),\qquad d=n-1.
 % \end{equation}
We set up a unified notation for balls in metric spaces. The standing convention is  that $\xi\in X$ then $B(\xi,r)$ is the ball in the metric space $X$, centered at $\xi$ and of radius $r>0$. The default metric on linear subspaces of  $\R^n$ and  $\mathbb S^d$ is the Euclidean metric and we write $B^m(R)$ for the Euclidean ball in $\R^m$, centered at $0$ and having radius $R>0$.  We also adopt the following notation for annuli in a generic metric space $X$  
\[
\Ann(x,r,R)\coloneqq B(x,R)\setminus \overline{ B(x,r)}, \qquad 0<r<R,\quad  x\in X.
\]
When $X$ is a linear space the shorthand   $\mathrm{Ann}(r,R)$  is also used in place of $\mathrm{Ann}(0,r,R)$ and $B(R)$ is used in place of $B(0,R)$. 

%%%%%%%%%%%%%%%%%%%%%%%%%%%%%% SECTION SECTION SECTION
\subsection{Smooth families of rotations} The next lemma constructs a typical family of Lipschitz rotations of $\R^n$.  These are essentially two-dimensional rotations between $\sigma$ and $\tau$ acting in the plane perpendicular to the common $(d-1)$-dimensional subspace $\sigma\cap \tau =(\mathrm{span}\{v_\sigma, v_\tau\})^\perp$. 

%%%%%%%%%%%%%%%%%%%%%%%%%%%%%% LEMMA LEMMA LEMMA
\begin{lemma}\label{lem:rot}Let $\sigma,\tau\in \Gr(d,n)$ {with $\dist(\sigma,\tau)<1/2$}. There exist $O_{ \tau,\sigma}\in \mathrm{SO}(n)$ such that 
\[
O_{ \tau,\sigma} \sigma =\tau, \qquad O_{ \tau,\sigma} v_\sigma =v_\tau, \qquad \left\|O_{ \tau,\sigma}-\mathrm{Id}\right\| =\dist(\sigma,\tau).
\]
%{
%Furthermore,  
%\[
%\rho\left(v_\sigma,\frac{(O_{ \tau,\sigma}-I)y}{|(O_{ \tau,\sigma}-I)y|}\right)\leq \frac{\theta}{2}, \qquad \forall y\in \sigma\quad\text{with} \quad(O_{ \tau,\sigma}-I)y\neq 0.
%\]}
\end{lemma}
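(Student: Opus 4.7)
The geometry of codimension one forces the problem to reduce to a single planar rotation, so the plan is to build $O_{\tau,\sigma}$ explicitly as a two-dimensional rotation in the plane spanned by the two normals, acting as the identity on the $(n-2)$-dimensional subspace $\sigma\cap\tau$.

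\emph{Setup of the rotation.} First assume the generic case $v_\sigma\neq \pm v_\tau$. I would let $P\coloneqq \mathrm{span}\{v_\sigma,v_\tau\}$, a 2-plane in $\R^n$, and observe that $P^\perp$ is $(n-2)$-dimensional and contained in both $\sigma$ and $\tau$ (since its vectors are orthogonal to both $v_\sigma$ and $v_\tau$). Then define $O_{\tau,\sigma}$ to act as the identity on $P^\perp$, and on $P$ as the unique orientation-preserving planar rotation $R_\theta$ with $\theta=\theta(v_\sigma,v_\tau)\in(0,\pi)$ that sends $v_\sigma$ to $v_\tau$. In the two degenerate cases I would set $O_{\tau,\sigma}=\mathrm{Id}$ when $v_\sigma=v_\tau$, and, when $v_\sigma=-v_\tau$ (so that $\sigma=\tau$ as unoriented hyperplanes), pick any unit $e\in\sigma$ and take the $\pi$-rotation in $\mathrm{span}\{v_\sigma,e\}$ composed with the identity on the orthogonal complement.

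\emph{Verification of the three properties.} The identity $O_{\tau,\sigma}v_\sigma=v_\tau$ holds by construction. For $O_{\tau,\sigma}\sigma=\tau$, I would decompose any $w\in\sigma$ as $w=w_1+w_2$ with $w_1\in P^\perp$ and $w_2\in P$; because $w\perp v_\sigma$ and $w_1\perp v_\sigma$, also $w_2\perp v_\sigma$, so $w_2$ lies on the unique line $\ell_\sigma\coloneqq P\cap v_\sigma^\perp$. The rotation $R_\theta$ maps $\ell_\sigma$ to $\ell_\tau\coloneqq P\cap v_\tau^\perp$ (since $R_\theta$ preserves angles and sends $v_\sigma$ to $v_\tau$), giving $O_{\tau,\sigma}w_2\perp v_\tau$; combined with $O_{\tau,\sigma}w_1=w_1\in P^\perp\subset\tau$, this gives $O_{\tau,\sigma}w\in\tau$. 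The dimension equality $\dim\sigma=\dim\tau=d$ upgrades the inclusion to equality.

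\emph{The norm identity.} Because $O_{\tau,\sigma}-\mathrm{Id}$ vanishes on $P^\perp$ and is block-diagonal with the remaining block acting on the 2-plane $P$, one has $\|O_{\tau,\sigma}-\mathrm{Id}\|=\|R_\theta-I\|_{\mathrm{op}}$. A direct $2\times 2$ computation gives
\[
(R_\theta-I)(R_\theta-I)^T=2I-R_\theta-R_\theta^T=2(1-\cos\theta)I,
\]
so $\|R_\theta-I\|_{\mathrm{op}}=\sqrt{2(1-\cos\theta)}=2\sin(\theta/2)=|v_\sigma-v_\tau|$, which by \eqref{eq:dist3} is exactly $\dist(\sigma,\tau)$.

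\emph{Main obstacle.} There is no substantial analytic obstacle; the statement is geometric and specifically tailored to codimension one, where the whole construction is dictated by the 2-plane $P$. The only delicate point is bookkeeping: one must make sure the \emph{exact} equality $\|O_{\tau,\sigma}-\mathrm{Id}\|=\dist(\sigma,\tau)$ (rather than a comparability) is recovered, which hinges on the identity $2\sin(\theta/2)=|v_\sigma-v_\tau|$ used in \eqref{eq:dist2}--\eqref{eq:dist3}, and on the careful handling of the two degenerate cases $v_\sigma=\pm v_\tau$ to ensure the formula $O_{\sigma,\tau}\sigma=\tau$ remains valid with the correct norm.
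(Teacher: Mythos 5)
Your proof is correct and takes essentially the same route as the paper's: you build $O_{\tau,\sigma}$ as the planar rotation in $\mathrm{span}\{v_\sigma,v_\tau\}$ (which is exactly the paper's $\zeta^\perp$ with $\zeta=\sigma\cap\tau$) sending $v_\sigma$ to $v_\tau$, extended by the identity on the orthogonal complement, and reduce the norm identity to the two-dimensional computation $\left\|R_\theta-\mathrm{Id}\right\|=\sqrt{2(1-\cos\theta)}=|v_\sigma-v_\tau|$. Your explicit handling of the antipodal case $v_\sigma=-v_\tau$ (where $\sigma\cap\tau$ is $d$-dimensional and the paper's choice of $u_\sigma$ is not determined) is a minor point of extra care beyond what the paper records, but otherwise the two arguments coincide.
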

%%%%%%%%%%%%%%%%%%%%%%%%%%%%%% LEMMA LEMMA LEMMA

%%%%%%%%%%%%%%%%%%%%%%%%%%%%%% PROOF PROOF PROOF
\begin{proof} If $\sigma =\tau$ take $O_{\tau,\sigma}=\mathrm{Id}$. Otherwise define $\zeta\coloneqq\sigma \cap \tau\in\Gr(d-1,n)$; see Figure~\ref{fig:rot}. Since $\zeta^\perp= \mathrm{span}\{v_\sigma, v_\tau\}$ and  $  \sigma \cap \zeta^\perp $ is a one-dimensional subspace of $\zeta^\perp$, we may pick the unique unit vector $u_\sigma \in \sigma \cap \zeta^\perp$ satisfying the equation
\[
v_\tau = -(\sin \theta)u_\sigma  + (\cos \theta) v_\sigma, \qquad \theta\coloneqq \arccos(\langle v_\sigma, v_\tau \rangle).
\]
Let $O_{\tau,\sigma}$ be the rotation that acts as the identity on $\zeta$ and such that
\[
O_{\tau,\sigma}u_\sigma =u_\tau \coloneqq (\cos \theta)u_\sigma  +(\sin \theta) v_\sigma, \qquad O_{\tau,\sigma}v_\sigma =v_\tau. 
\]

%%%%%%%%%%%%%%%%%%%%%%%%%%%%%% FIGURE FIGURE FIGURE
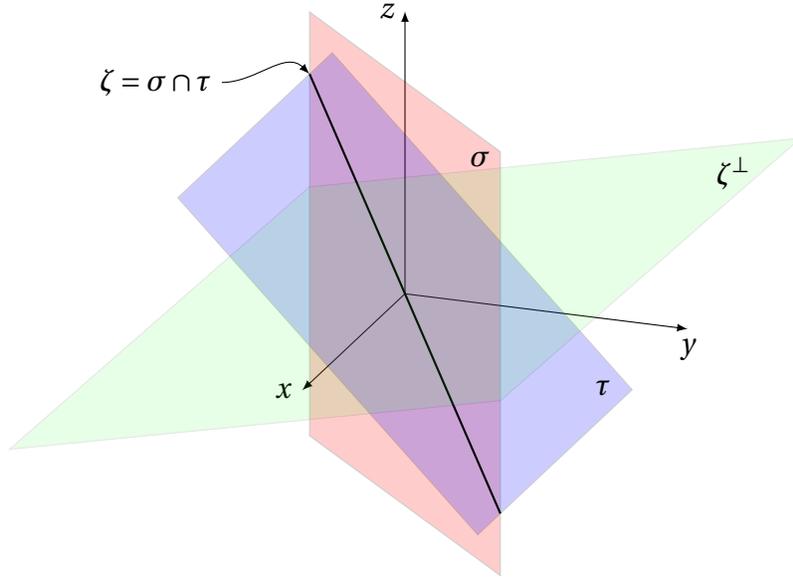
\begin{figure}[!ht]
\centering
\tdplotsetmaincoords{70}{110}
\begin{tikzpicture}[tdplot_main_coords,scale=1]
	\draw[-latex] (0,0,0) -- (4,0,0) node[left] {$x$};
	\draw[-latex] (0,0,0) -- (0,4,0) node[below] {$y$};
	\draw[-latex] (0,0,0) -- (0,0,4) node[left] {$z$};
\tdplotsetrotatedcoords{45}{0}{0}
\begin{scope}[tdplot_rotated_coords]
\draw[fill=red,opacity=0.2] (-3,0,-3) -- (-3,0,3) -- (3,0,3) -- (3,0,-3) -- cycle node at (-0.1,1.15,1.7) [opaque=0]{$\sigma$};
\end{scope}
 \tdplotsetrotatedcoords{90}{45}{0}
\begin{scope}[tdplot_rotated_coords]
\draw[fill=blue,opacity=0.2] (-3,-3,0) -- (-3,3,0) -- (3,3,0) -- (3,-3,0) -- cycle node at (2.7,1.1,.7) [opaque=0]{$\tau$} ;
\draw[thick](-3,{3/sqrt(2)},0) coordinate(x) --(3,{-3/sqrt(2)},0);
\draw[fill=green,opacity=0.1] (-2,{-4*sqrt(2)},-3) -- (2,{4*sqrt(2)},-3) -- (2,{4*sqrt(2)},3) -- (-2,{-4*sqrt(2)},3) -- cycle node at (1.7,1.7,4.0) [opaque=0]{$\zeta^\perp$}  ;
\end{scope}
\node[anchor=south east,align=center] (line) at (3,-1.5,3.5) {$\zeta=\sigma\cap \tau $};
\draw[-latex] (line) to[out=0,in=135] (x);
\end{tikzpicture}
\caption{A figure for the proof of Lemma~\ref{lem:rot}}
\label{fig:rot}
\end{figure}
%%%%%%%%%%%%%%%%%%%%%%%%%%%%%% FIGURE FIGURE FIGURE
Note that $u_\tau$ is a unit vector orthogonal to $\zeta$ and $v_\tau$, therefore $\tau = \textrm{span}\,\{\zeta,u_\tau\} =
O_{\tau,\sigma}\sigma$. As $O_{\tau,\sigma}-\mathrm{Id} = [O_{\tau,\sigma}-\mathrm{Id}] \Pi_{\zeta^\perp}$, the equality
\[
\left\|O_{ \tau,\sigma}-\mathrm{Id}\right\|  = \sqrt{2}\sqrt{1-\langle v_\sigma, v_\tau \rangle}= |v_\sigma-v_\tau| =  \dist(\sigma, \tau)
\]
follows by immediate two-dimensional trigonometry.
\end{proof}
%%%%%%%%%%%%%%%%%%%%%%%%%%%%%% PROOF PROOF PROOF

We give below an explicit description of the tangent bundle   $\mathrm{T}\Gr(d,n) $. This is used in the sequel to give a concrete notion of derivatives of functions defined on $\Gr(d,n)$. In turn this will help us establish the smoothness of the map $\Gr(d,n)\ni\sigma\mapsto O_{\rho,\sigma}$ for fixed $\rho\in\Gr(d,n)$.

%%%%%%%%%%%%%%%%%%%%%%%%%%%%%% LEMMA LEMMA LEMMA
\begin{lemma} \label{lem:alg}Let $\sigma\in  \Gr(d,n) $ and let $ \{v^ \sigma _1, \cdots, v^ \sigma _d\}$  be an orthonormal basis of $\sigma$ and as usual $\sigma^\perp=\R v_\sigma$. The tangent space $\mathrm{T}_\sigma \Gr(d,n) $ is the linear span of 
\[
X_{v_j ^\sigma}  \in\mathrm{so}(n),\quad j=1,\ldots, d, \qquad X_{v_j ^\sigma}  [v _k ^\sigma]=
\begin{cases} 
\vec 0, & k\neq j \vspace{.8em}
\\ 
v_\sigma, & k=j \end{cases}, \quad 1\leq k \leq d, \qquad X_{v_j ^\sigma} v_\sigma = - v^ \sigma _j,
\]
where $\mathrm{so}(n)$ stands for $n\times n$ real skew-symmetric matrices. Furthermore, if $\xi\in \R^n$ is a fixed  vector then the following hold,
\[
\partial_{ {v_j ^\sigma}	 } (\pr_\sigma \xi)  = X_{v_j ^\sigma} \xi, \qquad \partial_{ {v_j ^\sigma}} (\pr_{\sigma^\perp} \xi)  =- X_{v_j ^\sigma} \xi, \qquad \,j=1,\ldots,d,
\]
and the vector of derivatives $\mathscr D_\sigma$ can be described as
\[
\mathscr D _\sigma =\big(\partial_{ v_1 ^\sigma}	  ,\ldots,\partial_{ {v_d ^\sigma}	 } \big),\qquad \sigma\in\Gr(d,n).
\]
\end{lemma}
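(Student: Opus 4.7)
The plan is to exploit the diffeomorphism $\Gr(d,n)\cong \mathbb S^d$ valid in the codimension one setting $n=d+1$, sending $\sigma$ to its oriented unit normal $v_\sigma$, and to verify the three assertions in turn: the description of $\mathrm T_\sigma\Gr(d,n)$ as the span of $\{X_{v_j^\sigma}\}_{j=1}^d$, the derivative formulas for $\pr_\sigma\xi$ and $\pr_{\sigma^\perp}\xi$, and the expression of $\mathscr D_\sigma$. Throughout I would fix the orthonormal frame $(v_1^\sigma,\ldots,v_d^\sigma,v_\sigma)$ of $\R^n$ adapted to the splitting $\R^n = \sigma \oplus \R v_\sigma$.

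For the tangent space identification, the plan is to realize $\Gr(d,n)$ as the homogeneous space $\mathrm{SO}(n)/\mathrm{Stab}(\sigma)$ under the natural action of $\mathrm{SO}(n)$. In the codimension one case, the stabilizer of $\sigma$ reduces to $\mathrm{SO}(\sigma)\cong \mathrm{SO}(d)$, since the factor $\mathrm{SO}(\sigma^\perp)=\mathrm{SO}(1)$ is trivial; its Lie algebra $\mathfrak h \subset \mathfrak{so}(n)$ consists of skew-symmetric matrices supported on $\sigma$. Hence $\mathrm T_\sigma \Gr(d,n)\cong \mathfrak{so}(n)/\mathfrak h$, a space of dimension $\binom{n}{2}-\binom{d}{2}=d$. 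A natural complement of $\mathfrak h$ inside $\mathfrak{so}(n)$ is the subspace of skew matrices $Y$ with $\pr_\sigma Y \pr_\sigma =0$, i.e.\ those interchanging $\sigma$ and $\R v_\sigma$; an elementary inspection, relative to the adapted frame, produces the basis $\{X_{v_j^\sigma}\}_{j=1}^d$ described in the statement. Explicitly one recognizes each generator as the rank-two skew matrix $X_{v_j^\sigma} = v_\sigma (v_j^\sigma)^\top - v_j^\sigma (v_\sigma)^\top$.

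For the derivative identities, I would realize the directional derivative $\partial_{v_j^\sigma}$ via the one-parameter family $\sigma_t \coloneqq R_t \sigma \in \Gr(d,n)$ with $R_t \coloneqq \exp(tX_{v_j^\sigma}) \in \mathrm{SO}(n)$, satisfying $R_0 = \mathrm{Id}$ and $\dot R_0 = X_{v_j^\sigma}$. Orthogonal projections conjugate as
\[
\pr_{R_t\sigma} = R_t \pr_\sigma R_t^{-1}, \qquad \pr_{(R_t\sigma)^\perp} = R_t \pr_{\sigma^\perp} R_t^{-1},
\]
so differentiating at $t=0$ yields infinitesimal operators applied to $\xi$. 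Evaluating on the decomposition $\xi = \sum_k \langle \xi, v_k^\sigma\rangle v_k^\sigma + \langle \xi, v_\sigma\rangle v_\sigma$ and invoking the defining relations $X_{v_j^\sigma} v_j^\sigma = v_\sigma$, $X_{v_j^\sigma} v_\sigma = -v_j^\sigma$, $X_{v_j^\sigma}v_k^\sigma=\vec 0$ for $k\neq j$, the formulas reduce to a short linear algebra check. Moreover, the identity $\partial_{v_j^\sigma}(\pr_\sigma\xi) + \partial_{v_j^\sigma}(\pr_{\sigma^\perp}\xi)=0$ is automatic because $\xi$ does not depend on $\sigma$, so the two asserted formulas are equivalent.

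The description of $\mathscr D_\sigma$ is then merely a notational reformulation: having fixed the distinguished basis $\{X_{v_j^\sigma}\}_{j=1}^d$ of $\mathrm T_\sigma \Gr(d,n)$, the operator $\mathscr D_\sigma$ is by definition the $d$-tuple of the corresponding directional derivatives. The main obstacle I anticipate is a well-definedness check rather than a computational one, namely verifying that both the generators $X_{v_j^\sigma}$ and the formulas for $\partial_{v_j^\sigma}(\pr_\sigma\xi)$ transform covariantly under the residual $\mathrm{SO}(d)$ ambiguity in the choice of orthonormal basis of $\sigma$. This follows from the equivariance of the adjoint action of $\mathrm{Stab}(\sigma)$ on the complement of $\mathfrak h$ inside $\mathfrak{so}(n)$, and is ultimately what justifies regarding $\mathscr D_\sigma$ as an intrinsically defined vector of derivatives on $\Gr(d,n)$.
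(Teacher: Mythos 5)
Your proposal is correct, and it arrives at the same formulas as the paper while packaging the argument slightly differently. The paper simply \emph{defines} $X_{v_j^\sigma}$ as the velocity at $t=0$ of the explicit trigonometric curve $\sigma(t;j)=\mathrm{span}\{v_1^\sigma,\ldots,(\cos t)v_j^\sigma+(\sin t)v_\sigma,\ldots,v_d^\sigma\}$ and then computes $\Pi_{\sigma(t;j)}\xi-\Pi_\sigma\xi$ by expanding the projection by hand, finishing with the same cancellation $\partial_{v_j^\sigma}(\Pi_\sigma\xi)+\partial_{v_j^\sigma}(\Pi_{\sigma^\perp}\xi)=0$ that you invoke. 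Your route replaces the bare-hands expansion by the conjugation identity $\Pi_{R_t\sigma}=R_t\Pi_\sigma R_t^{-1}$ with $R_t=\exp(tX_{v_j^\sigma})$ (note that this is literally the same curve, since $\exp(tX_{v_j^\sigma})$ rotates the plane $\mathrm{span}\{v_j^\sigma,v_\sigma\}$ by angle $t$ and fixes its orthocomplement), so the derivative becomes the commutator $[X_{v_j^\sigma},\Pi_\sigma]$, whose evaluation on $\xi$ gives $\langle\xi,v_j^\sigma\rangle v_\sigma+\langle\xi,v_\sigma\rangle v_j^\sigma$ — exactly the expression the paper obtains. In addition, you justify the tangent-space claim through the homogeneous-space picture $\mathrm{SO}(n)/\mathrm{SO}(d)$ with the dimension count and the explicit reductive complement spanned by $X_{v_j^\sigma}=v_\sigma (v_j^\sigma)^\top - v_j^\sigma v_\sigma^\top$; the paper leaves this point essentially as an assertion, so your version buys a cleaner well-definedness/equivariance statement under the residual $\mathrm{SO}(d)$ ambiguity, at the cost of more Lie-theoretic overhead. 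One small caution your commutator computation makes visible: $[X_{v_j^\sigma},\Pi_\sigma]\xi=\langle\xi,v_j^\sigma\rangle v_\sigma+\langle\xi,v_\sigma\rangle v_j^\sigma$ differs in the sign of the $v_j^\sigma$-component from the literal action $X_{v_j^\sigma}\xi$ of the skew matrix (they agree only when $\langle\xi,v_\sigma\rangle=0$); the paper's own proof produces the same symmetric expression and labels it $X_{v_j^\sigma}\xi$, so this is a sign convention/typo in the statement rather than a defect of your argument, but when writing the "short linear algebra check" you should record the formula as the commutator output rather than as the raw matrix action.
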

%%%%%%%%%%%%%%%%%%%%%%%%%%%%%% LEMMA LEMMA LEMMA

% %%%%%%%%%%%%%%%%%%%%%%%%%%%%%% REMARK REMARK REMARK
% \red{
% \begin{remark}
% \label{rem:Dbeta}
% Given $\beta\in\{0,1\}^d$, we use the notation
% $$
% D_{\sigma}^{\beta}F(\sigma):=\partial_{ v_1 ^\sigma}^{\beta_1}	  \cdots\partial_{ {v_d ^\sigma}}^{\beta_d}	 F(\sigma), \qquad \sigma \in \Gr(d,n).
% $$
% \end{remark}}
% %%%%%%%%%%%%%%%%%%%%%%%%%%%%%% REMARK REMARK REMARK

%%%%%%%%%%%%%%%%%%%%%%%%%%%%%% PROOF PROOF PROOF
 \begin{proof}
 % [Proof of Lemma \ref{lem:alg}] 
 A basis of the tangent space $\mathrm{T}_\sigma \Gr(d,n)$ at $\sigma\in\Gr(d,n)$ can be given by $(X_{v_1 ^\sigma},\ldots, X_{v_d ^\sigma})$ where we define $X_{v_j ^\sigma}$ for $1\leq j \leq d$ to be the tangent vector at $t=0$ to the $\Gr(d,n)$-valued curve
\[
\sigma(t;j)\coloneqq \mathrm{span}\,\{v^\sigma _1,\ldots ,  (\cos t)v^\sigma _j +(\sin t) v_\sigma,\ldots , v^\sigma _d\}.
\]
For the second claim we compute
\[
\begin{split}
&\quad \Pi_{\sigma(t;j)} \xi - \Pi_\sigma\xi \\ &  =  \l \xi,(\cos t) v^\sigma _j +(\sin t )v_\sigma\r ((\cos t )v^\sigma _j +(\sin t) v_\sigma)-\l\xi,v_j ^\sigma\r v_j ^\sigma
\\
&= (\sin t \cos t)\left[\l \xi, v^ \sigma _j\r v_\sigma + \l \xi, v_\sigma\r v^\sigma _j \right]  +[(\cos t)^2-1]\l \xi,v^\sigma _j\r v^\sigma _j + (\sin t)^2\l \xi,v_\sigma\r v_\sigma
\\ 
&=  (\sin t \cos t)\left[\l \xi, v^\sigma_ j\r v_\sigma + \l \xi, v_\sigma\r v^\sigma _j \right] + O(t^2)
\end{split}
\] 
so that 
\[
\partial_{ v_j ^\sigma}(\pr_\sigma \xi) = \lim_{t\to 0}\frac{\Pi_{\sigma(t;j)} \xi - \Pi_\sigma\xi }{t} =  \l \xi, v^\sigma _j\r v_\sigma + \l \xi, v_\sigma\r v^\sigma _j =X_{v_j ^ \sigma}  \xi
\] 
as claimed. Finally, 
\[0=\partial_{{v_j ^\sigma}} \xi  =  \partial_{{v_j ^\sigma}}  (\pr_\sigma \xi)  +\partial_{{v_j ^\sigma}}(\pr_{\sigma^\perp} \xi)
\] 
whence the  corresponding conclusion.
\end{proof}
%%%%%%%%%%%%%%%%%%%%%%%%%%%%%% PROOF PROOF PROOF

The  last lemma of this short section contains a computation of  the $\sigma$-derivatives of the $\mathrm{SO}(n)$-valued map
\[
\Gr(d,n)\ni\sigma \mapsto
R_\sigma \coloneqq O_{\sigma,\rho} \qquad \sigma \in \mathrm{Gr}(d,n),
\]
where  $O_{\sigma,\rho}$ references  Lemma~\ref{lem:rot} and $\rho\in  \mathrm{Gr}(d,n)$ is kept fixed and will henceforth be omitted from the notation. In general, for $\rho\in \Gr(d,n)$ fixed we consider the following mappings:
\begin{itemize}
	\item [$\cdot$] $\sigma\mapsto R_\sigma$ is the rotation on the plane $\mathrm{span}\{v_\sigma,v\}$ with $R_\sigma v =v_\sigma$; remember that $v=v_\rho$.
  \item [$\cdot$] $\sigma\mapsto \theta_\sigma\coloneqq \arccos(\langle v _\sigma,v\rangle).$
  \item [$\cdot$] $\sigma \mapsto u_\sigma$, where $u_\sigma$ is the unit vector of $\rho$ which is perpendicular to $\sigma \cap \rho$ with $\l u_\sigma,v_\sigma\r<0$, and therefore lies in the plane $\mathrm{span}\{v_\sigma,v\}$.
  \item [$\cdot$] $\sigma \mapsto U_\sigma\coloneqq R_\sigma u_\sigma$.
\end{itemize}  
%%%%%%%%%%%%%%%%%%%%%%%%%%%%%%% FIGURE FIGURE FIGURE
\begin{figure}[ht]
\centering
\def\svgwidth{330pt}
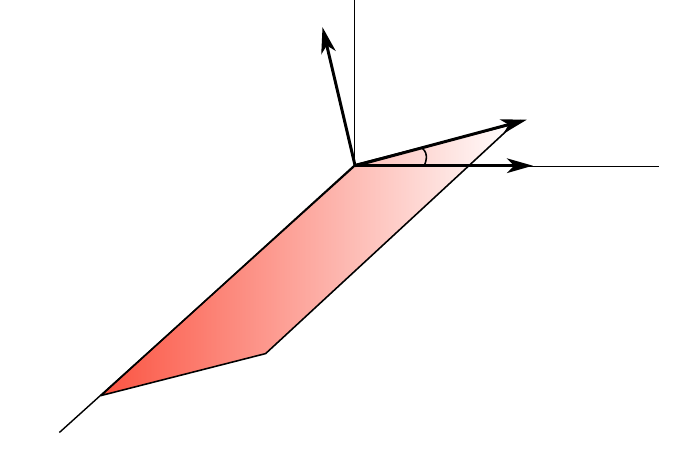
\caption{A figure for the proof of Lemma~\ref{l:derOsig}.}
\label{fig:smooth}
\end{figure}
%%%%%%%%%%%%%%%%%%%%%%%%%%%%%%% FIGURE FIGURE FIGURE
With these notations, depicted in Figure~\ref{fig:smooth}, there holds
\begin{equation}
\label{e:Osig}
U_\sigma= (\cos \theta_\sigma) u_\sigma + (\sin \theta_\sigma) v   , \quad 
R_\sigma v= v_\sigma, \quad R_\sigma w=w \quad \forall w \in \sigma \cap \rho.
\end{equation}
For the statement and proof of the lemma below it will be convenient to write $\R^n$ as the  direct sum 
\[
\R^n=(\sigma \cap \rho) \oplus \R u_\sigma \oplus \R  v.
\]
Note then that $\sigma$ can be described in the form $\sigma=\sigma\cap \rho \oplus \R U_\sigma$.  Now Lemma \ref{lem:alg} shows that in order to determine $\mathscr D _\sigma ^\beta O_\rho(\sigma)$ for $|\beta|\leq 1$ it suffices to determine $\partial_{v_j^\sigma} R_\sigma$ for each vector from a suitably chosen basis of $\sigma$. This is effectively accomplished by the next lemma, as one may complete $U_\sigma$ to an orthonormal basis of $\sigma$ by choosing $d-1$ orthonormal vectors in $\sigma \cap \rho$. 

%%%%%%%%%%%%%%%%%%%%%%%%%%%%%% LEMMA LEMMA LEMMA
\begin{lemma} \label{l:derOsig} Let $a,b,c\in \R$ and $\rho \in\Gr(d,n)$ be fixed. For every $\sigma\in\Gr(d,n)$ we have
\[
\partial_{U_\sigma} R_\sigma [w+ b u_\sigma + c v  ]= b v_\sigma -c U_\sigma, \qquad \forall w \in \sigma \cap \rho.
\]
Furthermore for all $ w \in \sigma \cap \rho$ there holds
\[
\partial_{w} R_\sigma [z+ a w+  b u_\sigma + c v  ]=   [b \varpi(\theta_\sigma)-c] w + a \varpi(\theta_\sigma) u_\sigma + a v   \qquad \forall z \in \sigma \cap \rho \cap w^\perp,
\]
for the bounded function $\varpi(\beta)\coloneqq \frac{\cos \beta -1}{\sin \beta}$.
\end{lemma}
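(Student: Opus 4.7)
The plan is to use the description of $T_\sigma \Gr(d,n)$ from Lemma~\ref{lem:alg}, parameterize $\sigma$-curves with prescribed tangent vector, and differentiate an explicit formula for $R_{\sigma_t}$ at $t=0$. By linearity of the directional derivative I may assume $|w|=1$ and complete $w$ to an orthonormal basis $\{w,e_1,\ldots,e_{d-2}\}$ of $\sigma\cap\rho$; then $\{U_\sigma,w,e_1,\ldots,e_{d-2}\}$ is an orthonormal basis of $\sigma$.

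For the first identity, consider the curve
\[
\sigma_t \coloneqq \mathrm{span}\bigl\{(\cos t)U_\sigma + (\sin t)v_\sigma,\ w,\ e_1,\ldots,e_{d-2}\bigr\},
\]
whose tangent at $t=0$ is $X_{U_\sigma}$. Its unit normal computes as $v_{\sigma_t}=(\cos t)v_\sigma-(\sin t)U_\sigma$, one verifies $\sigma_t\cap\rho=\sigma\cap\rho$ identically in $t$ (using $\sin\theta_\sigma\ne 0$), and $\cos\theta_{\sigma_t}=\cos(\theta_\sigma+t)$. Consequently $R_{\sigma_t}$ fixes $\sigma\cap\rho$ pointwise, and on the fixed plane $\mathrm{span}\{u_\sigma,v\}=\mathrm{span}\{U_\sigma,v_\sigma\}$ it is the rotation by angle $\theta_\sigma+t$ taking $v\mapsto v_{\sigma_t}$. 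A one-line 2D trigonometric computation gives $R_{\sigma_t}u_\sigma=(\cos t)U_\sigma+(\sin t)v_\sigma$ and $R_{\sigma_t}v=v_{\sigma_t}$, whence differentiation at $t=0$ followed by linearity produces the claimed $\partial_{U_\sigma}R_\sigma[w+bu_\sigma+cv]=bv_\sigma-cU_\sigma$.

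For the second identity, I would use the curve
\[
\sigma_t \coloneqq \mathrm{span}\bigl\{U_\sigma,\ (\cos t)w + (\sin t)v_\sigma,\ e_1,\ldots,e_{d-2}\bigr\},
\]
whose tangent at $t=0$ is $X_w$. Its normal is $v_{\sigma_t}=(\cos t)v_\sigma-(\sin t)w$, and $\cos\theta_{\sigma_t}=(\cos t)\cos\theta_\sigma$; crucially $\dot\theta_{\sigma_t}|_{t=0}=0$, so the \emph{rotation angle} is stationary to first order, and only the \emph{rotation plane} $\zeta_t^\perp=\mathrm{span}\{v,v_{\sigma_t}\}$ moves infinitesimally. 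For each input vector $\xi$ I would decompose $\xi=\Pi_{\zeta_t}\xi+\Pi_{\zeta_t^\perp}\xi$ and apply $R_{\sigma_t}$, which is the identity on the first summand and the rotation in $\zeta_t^\perp$ sending $v\mapsto v_{\sigma_t}$. Two cases are immediate: $z\in\mathrm{span}\{e_1,\ldots,e_{d-2}\}\subset\zeta_t$ for all $t$, so $R_{\sigma_t}z=z$ contributes $0$ (explaining the absence of $z$ on the right-hand side); and $v\in\zeta_t^\perp$ with $R_{\sigma_t}v=v_{\sigma_t}$, differentiating to $-w$ and producing the $-cw$ term.

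The main obstacle is the contribution from $\xi\in\{w,u_\sigma\}$, which sit in neither summand. For these I would work in the moving orthonormal basis $\{v,v_{\sigma_t}^\flat\}$ of $\zeta_t^\perp$ with $v_{\sigma_t}^\flat\coloneqq(v_{\sigma_t}-\cos\theta_{\sigma_t}v)/\sin\theta_{\sigma_t}$; short calculations yield $v_{\sigma_0}^\flat=-u_\sigma$ and $\tfrac{d}{dt}|_{t=0}v_{\sigma_t}^\flat=-w/\sin\theta_\sigma$. Expanding $R_{\sigma_t}\xi$ as a combination of $\xi$, $v$, $v_{\sigma_t}^\flat$ with coefficients involving only $\theta_{\sigma_t}$ and the inner products $\langle \xi,v\rangle$, $\langle\xi,v_{\sigma_t}\rangle$, and applying the product rule at $t=0$, every term that would invoke $\dot\theta_{\sigma_t}|_{t=0}$ drops out by stationarity of the angle. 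The surviving first-order contributions come exclusively from differentiating $v_{\sigma_t}^\flat$ and the numerator $\langle\xi,v_{\sigma_t}\rangle$, and they repackage into factors of $\varpi(\theta_\sigma)$ through the identity $-(1-\cos\theta_\sigma)/\sin\theta_\sigma=\varpi(\theta_\sigma)$. Specifically, I expect $\partial_wR_\sigma w=v+\varpi(\theta_\sigma)u_\sigma$ and $\partial_wR_\sigma u_\sigma=\varpi(\theta_\sigma)w$, which combined with the two easy cases assemble by linearity into $[b\varpi(\theta_\sigma)-c]w+a\varpi(\theta_\sigma)u_\sigma+av$, as claimed.
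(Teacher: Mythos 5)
Your proposal is correct, and for the first identity it coincides with the paper's argument: the same curve $t\mapsto(\sigma\cap\rho)\oplus\mathrm{span}\{(\cos t)U_\sigma+(\sin t)v_\sigma\}$, the observation that the rotation plane $\mathrm{span}\{u_\sigma,v\}$ stays fixed while the angle becomes $\theta_\sigma+t$, and differentiation at $t=0$. For the second identity you use the same curve as the paper (rotate $w$ toward $v_\sigma$, keep $U_\sigma$ and $\sigma\cap\rho\cap w^\perp$ fixed) and the same three easy facts ($\partial_w R_\sigma z=0$ on $\zeta$, $\partial_w R_\sigma v=-w$, $\cos\theta_{\sigma_t}=\cos t\cos\theta_\sigma$), but you execute the hard part differently: the paper expands $R_{\sigma_t}u_\sigma$ and $R_{\sigma_t}w$ in the moving orthonormal basis $\{\widetilde w,\widetilde u\}$ of $\mathrm{span}\{u_\sigma,w\}\subset\rho$, with the normalization $\beta(t)$, and then subtracts $R_\sigma u_\sigma$, $R_\sigma w$ keeping track of $o(t)$ errors; you instead expand $R_{\sigma_t}\xi$ in the moving frame $\{v,v^\flat_{\sigma_t}\}$ of the rotation plane and exploit that $\theta_{\sigma_t}$ is stationary at $t=0$, so only the derivatives of $v^\flat_{\sigma_t}$ and of the inner products $\langle\xi,v^\flat_{\sigma_t}\rangle$ survive. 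I checked your two key ingredients, $v^\flat_{\sigma_0}=-u_\sigma$ and $\tfrac{d}{dt}v^\flat_{\sigma_t}\big|_{t=0}=-w/\sin\theta_\sigma$, and the resulting values $\partial_wR_\sigma u_\sigma=\varpi(\theta_\sigma)w$, $\partial_wR_\sigma w=v+\varpi(\theta_\sigma)u_\sigma$ agree with the paper's, so the two routes give the same conclusion; your bookkeeping is somewhat leaner (no $\beta(t)$ expansions or $o(t)$ terms), while the paper's is more explicit and self-verifying since it writes out $R_{\sigma_t}$ on all of $\mathrm{span}\{u_\sigma,w,v\}$ at finite $t$.
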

%%%%%%%%%%%%%%%%%%%%%%%%%%%%%% LEMMA LEMMA LEMMA

%%%%%%%%%%%%%%%%%%%%%%%%%%%%%% REMARK REMARK REMARK
\begin{remark}\label{rmrk:Osmooth} In particular, $\sup_{0\leq |\beta|\leq 1}\|\mathscr D^\beta  _\sigma R_\sigma \|\lesssim  1+ |\varpi(\theta)| \lesssim_d 1 .$ Integrating along the geodesic connecting $\sigma,\tau \in \Gr(d,n)$, we obtain the operator norm estimate
\[
\left\| R_ \tau  -R_\sigma  \right\|= \left\| R_ \tau R_\sigma ^{-1} -\mathrm{Id} \right\|   \lesssim \dist(\tau,\sigma).
\]
Throughout the paper we reserve the notation $O_\sigma\in\mathcal V(\sigma)$ for the rotations
\[
O_\sigma = O_{\R^d,\sigma}\in \mathrm{SO}(n),\qquad O_\sigma \sigma=\R^d,\qquad O_\sigma v_\sigma=e_n.
\]
Then, if the family $\{O_\sigma\in\mathcal V(\sigma):\sigma \in \Gr(d,n)\}$ is constructed via Lemma~\ref{lem:rot}, we will have
\begin{equation}\label{e:opnormest}
\sup_{0\leq |\beta|\leq 1} \|\mathscr D^\beta _\sigma O_\sigma\| \lesssim  1,\qquad \|O_\sigma-O_\tau\|\lesssim \dist(\sigma,\tau),
\end{equation}
and these rotations will be used as the canonical family  in \eqref{e:SO(d)}.
\end{remark}
%%%%%%%%%%%%%%%%%%%%%%%%%%%%%% DEFINITION DEFINITION DEFINITION

%%%%%%%%%%%%%%%%%%%%%%%%%%%%%% PROOF PROOF PROOF
\begin{proof}[Proof of Lemma \ref{l:derOsig}] In order to compute $\partial_{U_\sigma } R_\sigma$, we need to compute the limit
\begin{equation}
\label{e:limiting}
\partial_{U_\sigma}R_\sigma=
\lim_{t\to 0} \frac{R_ {\tau(t)} - R_\sigma }{t}
\end{equation}
where $\tau:[0,\pi/2]\to \Gr(d,n)$ is the $\Gr(d,n)$-valued curve
\[
\begin{split}
\tau=\tau(t)\coloneqq(\sigma \cap \rho) \oplus & \mathrm{span}\{U_\tau \}, \quad   U_\tau \coloneqq (\cos t)U_\sigma +(\sin t)v_\sigma=R_\tau u_\sigma,
\\
&  v_{\tau}= -(\sin t)U_\sigma +(\cos t)v_\sigma,
\end{split}
\]  
and note that
\[
\tau(0)=\sigma,\qquad U _{\tau(0)}=U_\sigma,\qquad v_{\tau(0)}=v_\sigma,
\]
and that $R_\tau$ is a rotation by an angle $\arccos(\l v_\tau,v \r)=\theta_\sigma+t$ in the plane $\mathrm{span}\{U_\sigma,v_\sigma\}=\mathrm{span}\{v_\tau,v\}$, and $v_\tau=R_\tau v $. It is then quite easy to see that
\[
\partial_{U_\sigma} R_\sigma v = -U_\sigma,\qquad 
\partial_{U_\sigma} R_ \sigma  u_\sigma = v_\sigma,\qquad 
\partial_{U_\sigma}R_ \sigma  w = 0 \quad \forall w \in \sigma \cap \rho .
\]
This proves the first identity in the statement of the lemma.

In order to prove the second claim of the statement it will suffice to compute  $\partial_{w} R_\sigma$ for a generic vector $w\in\sigma \cap \rho $ of unit norm. Fix one such $w$ and set $\zeta\coloneqq \sigma \cap \rho \cap w^\perp $. The derivative $\partial_{w} R_\sigma$ will be calculated by repeating the limiting procedure \eqref{e:limiting} for
\[
\tau=\tau(t)\coloneqq \zeta \oplus \mathrm{span}\{w_{\tau(t)},U_\sigma\}, \quad 
w_{\tau} \coloneqq(\cos t) w+(\sin t) v_\sigma , \quad v_{\tau} \coloneqq -(\sin t) w+(\cos t) v_\sigma .
\]
Note again that we have
\[
\tau(0)=\sigma,\qquad w_{\tau(0)}=w,\qquad v_{\tau(0)}=v_\sigma.
\]
As $R_\sigma,R_\tau$ both act as the identity on $\zeta$, one has
\[
\partial_{w} R_\sigma z=0 \qquad \forall z\in \zeta
\]
and it suffices to test the action of $\partial_{w} R_\sigma$ on the basis vectors of $\zeta^\perp=\mathrm{span}\{u_\sigma,w,v\}$.
The vectors $\{\widetilde{w},\widetilde{u}\}$ given by
\begin{equation}
\label{e:wup}
\begin{split}&\widetilde{w}\coloneqq \frac{1}{\beta(t)} \left[(\cos t) w -\frac{\sin t}{\sin \theta_{\sigma}} u_\sigma\right], \qquad \widetilde{u}\coloneqq \frac{1}{\beta(t)} \left[\frac{\sin t}{\sin \theta_{\sigma}} w+ (\cos t) u_\sigma \right], \\ & \beta(t)\coloneqq \sqrt{1+(\sin t \cot \theta_{\sigma})^2}
\end{split}
\end{equation}
form an orthonormal basis of $\zeta^\perp \cap \rho=\mathrm{span}\{u_\sigma,w\}$ with, in addition, $\mathrm{span}\{\widetilde{w}\}=\zeta^\perp \cap\tau (t)\cap \rho$.  Therefore $R_\tau$ acts as a rotation by $\theta_\tau$ in the plane spanned by $\widetilde{u},v$, where
\[
\cos \theta_\tau = \langle v_{\tau},v \rangle = \cos t \cos \theta_\sigma, \qquad \sin \theta_\tau =\sqrt{1-(\cos t \cos \theta)^2},
\]
namely,
\[
R_\tau  \widetilde{w}=\widetilde{w}, \qquad R_\tau  \widetilde{u}=\cos [\theta_\tau] \widetilde{u} +\sin \theta_\tau v , \qquad R_\tau v=v_{\tau}.
\] 
Recalling our notation this means that $u_\tau=\widetilde{u}$ and $U_ \tau=R_\tau u_\tau=R_\tau \widetilde{u}$.
%%%%%%%%%%%%%%%%%%%%%%%%%%%%%% SECTION SECTION SECTION
\subsubsection*{Computing \texorpdfstring{$\partial_{w} R_\sigma  u_\sigma$}{}} The system \eqref{e:wup} may be easily inverted, giving
\begin{equation}
\label{e:wup2}
\begin{split}&w= \frac{1}{\beta(t)} \left[(\cos t) \widetilde{w} +\frac{\sin t}{\sin \theta} \widetilde{u}\right],
 \qquad u_\sigma=\frac{1}{\beta(t)} \left[-\frac{\sin t}{\sin \theta} \widetilde{w}+ (\cos t) \widetilde{u} \right].
\end{split}
\end{equation}
Also recalling \eqref{e:Osig}, \eqref{e:wup},
\[\begin{split} 
R_ \tau  u_\sigma &=   \frac{1}{\beta(t)} \left[-\frac{\sin t}{\sin \theta_\sigma} \widetilde{w}+ ((\cos t)^2\cos \theta_\sigma) \widetilde{u} + (\cos t)\sqrt{1-(\cos t\cos \theta_\sigma)^2} v \right] ,
\\
R_\sigma u_\sigma& = \frac{1}{\beta(t)} \left[-\frac{\cos \theta_\sigma \sin t}{\sin \theta_\sigma }\widetilde{w}+ (\cos t \cos \theta_\sigma )\widetilde{u} +  \beta(t) ( \sin \theta_\sigma) v \right].
\end{split}
\] 
Taking the difference and using some calculus
\[
[
R_\tau-R_\sigma] u_\sigma = \frac{(\sin t)\varpi(\theta_\sigma)}{\beta(t)} \widetilde{w} + \b{o(t )\widetilde{u}+o(t )v} .
 \]
Therefore, dividing by $t$ and  taking the limit 
\[
\partial_{w} R_\sigma  u_\sigma = \varpi(\theta_\sigma) w = \frac{\langle v_\sigma, v \rangle - 1 }{\sqrt{ 1 - \langle v_\sigma, v \rangle^2}}w.
\]

%%%%%%%%%%%%%%%%%%%%%%%%%%%%%% SECTION SECTION SECTION
\subsubsection*{Computing \texorpdfstring{$\partial_{w} R_\sigma w$}{}} Similarly
\[\begin{split}
R_\tau  w &=   \frac{1}{\beta(t)} \left[(\cos t)\widetilde{w}+ \frac{\sin t\cos t\cos \theta_\sigma}{\sin \theta_\sigma} \widetilde{u} +  \frac{\sin t}{\sin \theta_\sigma}\sqrt{1-(\cos t\cos \theta_\sigma)^2} v \right] ,
\\
R_ \sigma w &=w=  \frac{1}{\beta(t)} \left[(\cos t)\widetilde{w}+ \frac{\sin t}{\sin \theta_\sigma} \widetilde{u} \right].
\end{split}
\] 
Taking the difference and using some more calculus
\[
[
R_\tau -R_\sigma ] w= \frac{(\sin t)\varpi(\theta_\sigma)}{\beta(t)} \widetilde{u} +(\sin t) v+\b{ o(t ) v(t)}.
 \]
Therefore, dividing by $t$ and  taking the limit 
\[
\partial_{w} R_\sigma  w= \varpi(\theta_\sigma) u_\sigma+v  = \frac{\langle v_\sigma, v\rangle - 1 }{\sqrt{ 1 - \langle v_\sigma, v\rangle^2}}u_\sigma+ v.
\]

%%%%%%%%%%%%%%%%%%%%%%%%%%%%%% SECTION SECTION SECTION
\subsubsection*{Computing \texorpdfstring{$\partial_{w}R_\sigma v_\rho$}{}}
This one is the easiest, as
\[
\partial_{w} R_\sigma v =  \lim_{t\to 0}\frac{[(\cos t)-1 ]v_\sigma -(\sin t) w }{t}= -w
\]
and the proof is complete.
\end{proof}
%%%%%%%%%%%%%%%%%%%%%%%%%%%%%% PROOF PROOF PROOF

%%%%%%%%%%%%%%%%%%%%%%%%%%%%%% SECTION SECTION SECTION
\section{Removing the \texorpdfstring{$\mathrm{SO}(d)$}{SO(d)} invariance}\label{sec:redux}  The maximal operator \eqref{e:thisismain} is invariant under precompositions of the family $m_{\sigma}$ with elements $Q_\sigma \in \mathrm{SO}(d)$. In other words, $T^\star$ does not change if each $m_\sigma $ is replaced by $m_\sigma \circ Q_\sigma$ for some $Q_\sigma \in \mathrm{SO}(d) $ for each $\sigma \in \Gr(d,n)$. In this section,  Theorem \ref{thm:main} is deduced from an analogous estimate for the formally weaker maximal operator \eqref{e:thisissmooth} where $Q_\sigma\equiv $ the identity in $\mathrm{SO}(d)$. This deduction is summarized in  Proposition \ref{p:redux} below. The essential idea of the argument comes from an averaging argument showing the $L^p(\R^d)$-boundedness of maximally rotated $d$-dimensional H\"ormander--Mihlin multipliers, which is presented below as a warm-up.

%%%%%%%%%%%%%%%%%%%%%%%%%%%%%% PROPOSITION PROPOSITION PROPOSITION
\begin{proposition}
Let $m\in\mathcal M_A(d)$ for $A\geq  d+1 +\frac{d(d-1)}{2}$. Then  
\[
T^\star f(x)\coloneqq\sup_{Q\in\mathrm{SO}(d)}\left |\int_{\R^d} \widehat f(\eta) m(Q\eta) e^{2\pi i x\cdot \eta} \, \d\eta\right|,\qquad x\in\R^d,
\]
maps $L^p(\R^d) $ into itself for all $1<p<\infty$.
\end{proposition}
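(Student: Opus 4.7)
The plan is to combine a Sobolev embedding on the compact Lie group $\mathrm{SO}(d)$, of real dimension $\tfrac{d(d-1)}{2}$, with the classical H\"ormander--Mihlin multiplier theorem applied uniformly in $Q$. First I would introduce $T_Q f(x)\coloneqq\int_{\R^d} m(Q\eta)\widehat f(\eta)e^{2\pi i\langle x,\eta\rangle}\,\d\eta$ so that $T^\star f(x)=\sup_Q|T_Q f(x)|$. Since $\dim\mathrm{SO}(d)=\tfrac{d(d-1)}{2}$, the Sobolev embedding $W^{k,p}(\mathrm{SO}(d))\hookrightarrow L^\infty(\mathrm{SO}(d))$ holds whenever $kp>\tfrac{d(d-1)}{2}$, and in particular it holds for the critical choice $k=\tfrac{d(d-1)}{2}$ and every $p>1$. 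Applying this pointwise in $x$ to the function $Q\mapsto T_Q f(x)$ and raising to the $p$-th power yields
\[
|T^\star f(x)|^p\lesssim \sum_{|\alpha|\leq k}\int_{\mathrm{SO}(d)}\bigl|\partial_Q^\alpha T_Q f(x)\bigr|^p\,\d Q,
\]
where $\partial_Q^\alpha$ stands for compositions of left-invariant vector fields on $\mathrm{SO}(d)$. Integrating in $x$ and invoking Fubini reduces everything to uniform-in-$Q$ bounds for the operators $\partial_Q^\alpha T_Q$ for every $|\alpha|\leq k$.

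Next I would identify $\partial_Q^\alpha T_Q$ as the Fourier multiplier with symbol $\partial_Q^\alpha[m(Q\eta)]$ and verify that it still satisfies a H\"ormander--Mihlin condition on $\R^d$, with a controlled number of derivative losses. Parametrizing $Q$ locally by one-parameter subgroups $\exp(tX)$ with $X\in\mathfrak{so}(d)$, the chain rule gives $\partial_t[m(\exp(tX)Q\eta)]\big|_{t=0}=(\nabla m)(Q\eta)\cdot(XQ\eta)$. Iterating, an easy induction shows that $\partial_Q^\alpha[m(Q\eta)]$ is a finite linear combination of terms
\[
P_\beta(Q\eta)(\partial^\beta m)(Q\eta),\qquad |\beta|\leq|\alpha|,
\]
where $P_\beta$ is a polynomial of degree at most $|\beta|$ with coefficients uniformly bounded in $Q$. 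Since $Q$ preserves norms, combining $|P_\beta(Q\eta)|\lesssim|\eta|^{|\beta|}$ with the Mihlin condition and observing that every subsequent $\eta$-differentiation either reduces the degree of $P_\beta$ by one or raises $|\beta|$ by one, and in both cases produces a gain of $|\eta|^{-1}$, one obtains
\[
|\eta|^{|\gamma|}\bigl|\partial_\eta^\gamma\partial_Q^\alpha[m(Q\eta)]\bigr|\lesssim \|m\|_{\mathcal M_{|\alpha|+|\gamma|}(d)}
\]
for every multi-index $\gamma$, uniformly in $Q\in\mathrm{SO}(d)$.

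Under the hypothesis $A\geq d+1+\tfrac{d(d-1)}{2}$, choosing $k=\tfrac{d(d-1)}{2}$ and $|\gamma|\leq d+1$ shows that the symbol $\eta\mapsto\partial_Q^\alpha[m(Q\eta)]$ belongs to $\mathcal M_{d+1}(d)$ uniformly in $Q$, so the classical H\"ormander--Mihlin theorem provides $\|\partial_Q^\alpha T_Q f\|_{L^p(\R^d)}\lesssim_p\|f\|_{L^p(\R^d)}$ uniformly in $Q$. Plugging this back into the Sobolev inequality above closes the argument. The main technical obstacle is the combinatorial bookkeeping in the second paragraph, namely verifying that each $Q$-derivative costs exactly one extra derivative on the Mihlin scale for $m$, so that the threshold $A=d+1+\tfrac{d(d-1)}{2}$ is sharp for this strategy; once this accounting is performed, the proof is a clean marriage of Sobolev embedding on the compact group $\mathrm{SO}(d)$ with the H\"ormander--Mihlin theorem on $\R^d$.
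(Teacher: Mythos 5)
Your argument is correct and reaches the stated threshold $A\ge d+1+\tfrac{d(d-1)}{2}$, but it runs on a different engine than the paper's proof. The paper covers $\mathrm{SO}(d)$ by finitely many smooth charts $Q_j(\theta)$, $\theta\in[0,\varepsilon)^D$ with $D=\tfrac{d(d-1)}{2}$, and dominates the supremum pointwise through an iterated fundamental theorem of calculus: $\sup_\theta|Tf(x,\theta)|$ is bounded by $|Tf(x,0)|$ plus, for each nonempty $S\subseteq\{1,\dots,D\}$, the integral over $\prod_{k\in S}[0,\varepsilon]$ of $|T_Sf(x,\cdot)|$, where $T_S$ has symbol $\partial_S m(Q_j(\theta)\eta)$ — a mixed partial with each chart variable hit at most once, hence $cm_{\theta,S}\in\mathcal M_{A-\#S}(d)\subset\mathcal M_{A-D}(d)$ uniformly — and then Minkowski's inequality plus the H\"ormander--Mihlin theorem applied uniformly in the parameter closes the estimate; in effect the paper pays an $L^1$-in-parameter price for first-order mixed derivatives. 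You instead invoke the genuine Sobolev embedding $W^{D,p}(\mathrm{SO}(d))\hookrightarrow L^\infty$ (valid because $Dp>D$ for every $p>1$), raise it to the $p$-th power pointwise in $x$, and use Fubini, so you pay $L^p$-in-$Q$ norms of all invariant derivatives up to order $D$; your chain-rule bookkeeping, giving terms $P_\beta(Q\eta)(\partial^\beta m)(Q\eta)$ with $\deg P_\beta\le|\beta|\le|\alpha|$ and hence a loss of exactly one Mihlin derivative per group derivative, is the precise analogue of the paper's verification that each $\theta$-derivative lands in $\mathcal M_{A-\#S}(d)$. The trade-offs are minor: your route avoids the explicit chart-by-chart splitting and the $2^D$ FTC terms, at the cost of a constant that degenerates as $p\to1^+$ (the embedding is critical there) and of needing derivatives of all orders up to $D$ rather than only mixed first-order partials; both arguments lose the same number $D$ of derivatives on $m$, and both tacitly require the standard a priori reduction to Schwartz $f$ to justify differentiating under the integral and the measurability of the supremum.
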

%%%%%%%%%%%%%%%%%%%%%%%%%%%%%% PROPOSITION PROPOSITION PROPOSITION

%%%%%%%%%%%%%%%%%%%%%%%%%%%%%% PROOF PROOF PROOF
\begin{proof} The idea of the proof is the same in all dimensions $d>1$. In order to make the argument more concrete we first  prove the case $d=2$.  A generic element  $O\in \mathrm{SO}(2)$ may be written  in the canonical basis of $\R^2$ as 
\[
O= \left[ \begin{array}{cc} \cos \theta & -\sin \theta \\ \sin \theta & \cos \theta   \end{array}\right]
\]
for some $\theta \in [0,2\pi)$. Let $O(\theta)$ be the element corresponding to $\theta \in [0,2\pi)$. We note that the coordinates of $O(\theta)$ are smooth functions of $\theta$ and we adopt the notation
\[
 O'(\theta)= \left[ \begin{array}{cc} -\sin \theta & -\cos \theta \\ \cos \theta & -\sin \theta   \end{array}\right]
\]
for the derivative. It is then useful to  introduce the family of multipliers
\begin{equation}
\label{e:mtheta}m_\theta(\eta)\coloneqq \langle \nabla m ( O(\theta)\eta), O'(\theta) \eta\rangle, \qquad \eta \in \R^2.
\end{equation}
   It is immediate to see that there exists a constant $c$ independent of $\theta$ such that 
$cm_\theta \in \mathcal{M}_{A-1} (2)$. Defining
 \[
Tf(x,\theta) \coloneqq \int_{\R^2} \widehat f(\eta) m(O(\theta)\eta) e^{2\pi i x\cdot \eta} \, \d\eta, \quad Sf(x, \theta) \coloneqq \partial_\theta Tf(x,\theta) =  \int_{\R^2} \widehat f(\eta) m_\theta( \eta) e^{2\pi i x\cdot \eta} \, \d\eta,
\]
one has the equality
\[
Tf(x,\theta)=Tf(x,0) + \int_0^\theta Sf(x, \tau) \, \d \tau, \qquad 0\leq \theta <2\pi,
\]
whence
\[
|Tf(\cdot,\theta)| \leq| Tf(\cdot,0) | + \int_0^{2\pi}  |Sf(\cdot, \tau)| \, \d \tau, \qquad 0\leq \theta <2\pi.
\]
Therefore, using Minkowski's inequality
\[\begin{split}
\left\| T^* f \right\|_p & =  \left\| \sup_{\theta\in [0,2\pi)} \left|Tf(\cdot,\theta)\right| \right\|_p \leq \| Tf(\cdot,0)\|_p + \left\| \int_0^{2\pi}  |Sf(\cdot, \tau)| \, \d \tau \right\|_p \\ &\leq \| Tf(\cdot,0)\|_p + 2\pi\sup_{\tau \in [0,2\pi)}\| Sf(\cdot,\tau)\|_p  \lesssim \|f\|_p
\end{split}
\]
using that  $m\in\mathcal{M}_{A} (2) $, $cm_\theta \in \mathcal{M}_{A-1} (2)$ uniformly, and the H\"ormander--Mihlin theorem. 

We then sketch the argument for $d>2$. Recall that $\mathrm{SO}(d)$ is a compact Lie group of dimension $D=\frac{d(d-1)}{2}$. Hence, there exist  $C>0$ and $0<\eps<1 $ so  that $\mathrm{SO}(d)$ may be covered by smooth charts
\[
Q_j=Q_j( \theta)=Q_j(\theta_1,\ldots, \theta_D):{[0,\eps)}^D\to \mathrm{SO}(d), \qquad 1\leq j \leq C.
\]
For example, if $d=3$ one may use axis-angle pairs to parametrize rotations. Namely, $ O_j(\theta_1, \theta_2, \theta_3) $ is the rotation by  angle $\theta_3$ in the plane perpendicular to the unit vector  
$$ 
N(\theta_1,\theta_2)=(\cos \theta_1 \sin \theta_2,\sin \theta_1 \sin \theta_2 , \cos\theta_2).
$$ 
Then $\mathrm{SO}(3)$ may be covered by $O(1)$ smooth axis angle charts with parameter $\eps=2^{-3}\pi$. 
 
By finite splitting it thus suffices to bound the maximal operator
\[
\sup_{ \theta \in [0,\eps)^D } |Tf(x, \theta)|\,\,\, \text{ where } \,\,\, Tf(x, \theta)\coloneqq \int_{\R^d} \widehat f(\eta) m(Q_j(\theta)\eta) e^{2\pi i x\cdot \eta} \, \d\eta, \quad x\in \R^n
\]
for each $j=1,\ldots, C$ fixed. For $S=\{k_1<\ldots<k_{\#S}\}\subset \{1,\ldots, D\}$ write $\partial_S= \partial_{k_1}\cdots \partial_{k_{\#S}}$. Let also $\Pi_S$ be the orthogonal projection on $\R^S\coloneqq \mathrm{span}\{e_k: \,k\in S\}$, so that $\theta = \Pi_S \theta \oplus \Pi_{S^\mathrm{c}} \theta.   $ If $ \tau \in \R^S$, it makes sense to write $ \theta = \tau \oplus  0_{S^\mathrm{c}} \in \R^D$ with the meaning that $\Pi_S  \theta= \tau $ and $\Pi_{S^\mathrm{c}} \theta = 0_{S^\mathrm{c}}$. 
The key of our argument is again that the multiplier operators
\begin{equation}
\label{e:mthetad}  T_Sf(x, \theta) =  \int_{\R^d} \widehat f(\eta) m_{ \theta,S}( \eta) e^{2\pi i x\cdot \eta} \, \d\eta, \qquad m_{\theta,S}\coloneqq \partial_S  m(Q_j( \theta)\cdot) ,
\end{equation}
satisfy $cm_{ \theta, S}\in \mathcal M_{A-\#S}(d) \subset \mathcal M_{A-D}(d)$ uniformly. Then one has the equality
\[
Tf(x, \theta) = Tf(x, 0)  +\sum_{\varnothing \subsetneq S \subseteq \{1,\ldots, D\} } \int_{\prod_{k \in S} [0,\theta_k]}   T_Sf(x, \tau \oplus  0_{S^\mathrm{c}} ) \d \tau, \qquad  \theta \in [0,\eps)^D ,
\]
and similarly to what we have done before,
\[
\left\|\sup_{ \theta \in [0,\eps)^D } \left|Tf(\cdot, \theta) \right|\right\|_p \leq 
\left\| Tf(\cdot, 0) \right\|_p + 2^D \sup_{\varnothing \subsetneq S \subset \{1,\ldots, D\} } \sup_{\tau\in [0,\eps]^{\#S}}\left\| T_Sf(\cdot , \tau \oplus  0_{S^\mathrm{c}} ) \right\|_p
\]
so that we end up with the latter supremum being controlled by $C\|f\|_p$ via an application of  the H\"ormander--Mihlin theorem. The proof is complete.
\end{proof}
%%%%%%%%%%%%%%%%%%%%%%%%%%%%%% PROOF PROOF PROOF

We are now ready to state and prove the anticipated reduction of Theorem \ref{thm:main}. 

%%%%%%%%%%%%%%%%%%%%%%%%%%%%%% PROPOSITION PROPOSITION PROPOSITION
 \begin{proposition} \label{p:redux} Let $\Sigma\subset \Gr(d,n)$ be arbitrary. Let $O_\sigma=O_{\R^d,\sigma}$, with reference to the family of rotations constructed in Lemma \ref{lem:rot}; cf. Remark~\ref{rmrk:Osmooth}. Consider the maximal operator
\begin{equation}\label{e:thisissmooth}
U^\star_{\Sigma,{\bf{m}}} f(x) \coloneqq \sup_{\sigma\in \Sigma}   | T_{m_\sigma}f(x,\sigma)|
\end{equation} 
where
$$
T_{m_\sigma}f(x,\sigma)\coloneqq  \int_{\R^n} m_\sigma (O_\sigma \Pi_\sigma \xi) \widehat f(\xi) e^{2\pi i \langle x, \xi\rangle}\, \d \xi, \quad x\in \R^n.
$$
For $1<p<\infty,1\leq q\leq \infty$, $D=\frac{d(d-1)}{2}$ and $T^\star_{\Sigma,{\bf{m}}}$ from \eqref{e:thisismainsig}, there holds 
\begin{multline*}
\sup\{\|T^\star _{\Sigma,{\bf{m}}}\|_{L^p(\R^n)\to L^{p,q}(\R^n)}: \, \|{\bf{m}}\|_{\mathcal M_{A+D}(\Gr(d,n))}\leq 1\}\\
\lesssim \sup\{\|U^\star _{\Sigma,{\bf{m}}}\|_{L^p(\R^n)\to L^{p,q}(\R^n)}: \, \|{\bf{m}}\|_{\mathcal M_A(\Gr(d,n))}\leq 1\}
\end{multline*}
with implicit constant depending upon dimension only. The same  statement holds if  $U^\star_{\Sigma, {\bf{m}}}$ is replaced by $U^\star_{\Sigma, {\bf{m}}}\circ P_0$ in the right hand side and $T^\star _{\Sigma,{\bf{m}}}$ is replaced by $T^\star_{\Sigma,{\bf{m}}}\circ P_0$ in the left hand side of the estimate above.
\end{proposition}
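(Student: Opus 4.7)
\textbf{Proof proposal for Proposition~\ref{p:redux}.} The idea is to absorb $Q$ into the multiplier and then integrate out the $Q$-dependence via the fundamental theorem of calculus. Since $\mathrm{SO}(d)$ is a compact Lie group of dimension $D=d(d-1)/2$, I will cover it by finitely many smooth charts $Q_j:[-\eps,\eps]^D \to \mathrm{SO}(d)$, $j=1,\ldots,C$, chosen so that each $Q_j$ has bounded derivatives of all orders on its domain and the images cover $\mathrm{SO}(d)$. By finite splitting it suffices to estimate, for each $j$,
$$
W_jf(x) \coloneqq \sup_{\sigma\in \Sigma}\sup_{\theta \in [-\eps,\eps]^D} \bigl| T_{m_\sigma}f(x;\sigma, Q_j(\theta)) \bigr|.
$$
Fixing $j$ and writing $Q=Q_j$, I would set $F(\theta) \coloneqq T_{m_\sigma}f(x;\sigma, Q(\theta))$ and apply the iterated one-dimensional FTC:
$$
F(\theta)=F(0) + \sum_{\emptyset\neq S \subseteq \{1,\ldots,D\}} (\pm)\int_{\prod_{k\in S}[0,\theta_k]} \partial_S F(\tau \oplus 0_{S^\mathsf{c}}) \,\d\tau,
$$
mirroring precisely the expansion used in the warmup. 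Here $\tau\oplus 0_{S^\mathsf{c}}$ denotes the vector with $\tau$-entries in coordinates from $S$ and zeros elsewhere.

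Next I would compute $\partial_S F(\theta)$. The chain rule gives that $\partial_{\theta_k}[m_\sigma(Q(\theta)\eta)]=\langle (\nabla m_\sigma)(Q(\theta)\eta), (\partial_{\theta_k} Q)(\theta)\eta\rangle$; performing the substitution $\zeta=Q(\theta)\eta$ and writing the result as $\widetilde m_{\sigma,k,\theta}(Q(\theta)\eta)$ yields a multiplier $\widetilde m_{\sigma,k,\theta}$ with $\|\widetilde m_{\sigma,k,\theta}\|_{\mathcal M_{A-1}(d)}\lesssim \|m_\sigma\|_{\mathcal M_A(d)}$ uniformly in $\theta$. Iterating $|S|$ times, one obtains
$$
\partial_S F(\theta)= T_{\widetilde m_{\sigma,S,\theta}} f(x;\sigma,Q(\theta)) = T_{\check m_{\sigma,S,\theta}} f(x,\sigma), \qquad \check m_{\sigma,S,\theta}(\eta)\coloneqq \widetilde m_{\sigma,S,\theta}(Q(\theta)\eta),
$$
with a multiplier $\check m_{\sigma,S,\theta}$ satisfying, uniformly in $\theta$,
$$
\|\check m_{\sigma,S,\theta}\|_{\mathcal M_{A+D-|S|}(d)}\lesssim \|m_\sigma\|_{\mathcal M_{A+D}(d)},
$$
the key point being that precomposition by a rotation leaves the Hörmander--Mihlin norm invariant. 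The differences $\check m_{\sigma,S,\theta} - \check m_{\sigma',S,\theta}$ obey the identical estimate with $m_\sigma-m_{\sigma'}$ on the right, since the chain rule is linear in the multiplier. Combined with the log-weighted difference part of \eqref{e:admin4}, this gives $\|\check{\mathscr m}_{S,\theta}\|_{\mathcal M_A(\Gr(d,n))}\lesssim \|\mathscr m\|_{\mathcal M_{A+D}(\Gr(d,n))}$ with the implied constant independent of $\theta$ and $|S|\leq D$.

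Taking absolute values in the FTC expansion, passing to the supremum over $(\sigma,\theta)$, and interchanging the supremum in $\sigma$ with the integrals via the triangle inequality yields the pointwise bound
$$
W_jf(x) \leq U^\star_{\Sigma,\check{\mathscr m}_{\emptyset,0}}f(x) + \sum_{\emptyset\neq S}\int_{[-\eps,\eps]^{|S|}} U^\star_{\Sigma,\check{\mathscr m}_{S,\tau\oplus 0_{S^\mathsf{c}}}}f(x)\,\d\tau.
$$
Taking the $L^{p,q}(\R^n)$-quasi-norm and applying Minkowski's integral inequality (valid under an equivalent Banach renorming of $L^{p,q}$ for $1<p<\infty$, $1\leq q\leq \infty$) then produces the claimed bound, since each integrand is uniformly controlled by the supremum of $\|U^\star_{\Sigma,\mathscr n}\|_{L^p\to L^{p,q}}\|f\|_p$ over families with $\|\mathscr n\|_{\mathcal M_A(\Gr(d,n))}\leq C$. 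The $P_0$ version is immediate because $P_0$, being a Fourier multiplier, commutes with every $T_{m_\sigma}$, so the entire FTC decomposition can be applied to $P_0f$ in place of $f$ without further comment. The main technical hurdle is the bookkeeping confirming that the log-Hölder in $\sigma$ component of the $\mathcal M_A(\Gr(d,n))$ norm survives the $\theta$-differentiation procedure; this is essentially cost-free because the $\theta$- and $\sigma$-dependencies in $m_\sigma(Q(\theta)\cdot)$ are separated, so differentiation in $\theta$ commutes with taking differences in $\sigma$.
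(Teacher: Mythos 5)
Your proposal is correct and follows essentially the same route as the paper: chart-covering of $\mathrm{SO}(d)$, the iterated fundamental-theorem-of-calculus expansion over subsets $S\subseteq\{1,\ldots,D\}$, verification that each $\theta$-differentiated family stays in the unit ball of $\mathcal M_A(\Gr(d,n))$ (including the log-weighted $\sigma$-difference part, which is linear in the family), and then the triangle/Minkowski inequality in $L^{p,q}$, with the $P_0$ case handled by applying the argument to $P_0f$. No gaps.
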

%%%%%%%%%%%%%%%%%%%%%%%%%%%%%% PROPOSITION PROPOSITION PROPOSITION

%%%%%%%%%%%%%%%%%%%%%%%%%%%%%% PROOF PROOF PROOF
\begin{proof} Fix a family ${\bf{m}}$ with $ \|{\bf{m}}\|_{\mathcal M_{A+D}(\Gr(d,n))}\leq 1$. 
By finite splitting, we obtain the claimed estimate for \eqref{e:thisismainsig} from the same claim on the restricted   maximal operator
\[
T^\star_{\Sigma,{\bf{m}}}  f(x) \coloneqq \sup_{\sigma \in \Sigma}
\sup_{Q\in \mathcal Q}|T_{m_\sigma}f(x;\sigma,Q)| 
\]
with
\[
T_{m_\sigma}f(x;\sigma,Q)\coloneqq \int_{\R^n} m_\sigma (QO(\sigma)\Pi_\sigma \xi) \widehat f(\xi) e^{2\pi i \langle x, \xi\rangle}\, \d \xi, \quad x\in \R^n, 
\]
where $\mathcal Q $ is range of the chart $Q_j$ for some $j=1,\ldots, C$;
the overloading of the symbol $T^\star$ creates no confusion. At this point, for $Q\in \mathcal Q$, let  $  \theta(Q)$ be such that $Q=  Q_j( \theta)$. 
Define the families 
\[
{\bf{m}}_{\theta, S}=\{m_{\sigma,  \theta, S}:\sigma \in \Gr(d,n)\}, \quad 
m_{\sigma,  \theta, S}(  \eta) \coloneqq \partial^S {m_\sigma( Q_j(\vec \theta) \eta)}, \quad  \theta \in [0, \eps)^D,\,S\subset\{1,\ldots, D\}.
\]
With this definition, observe that 
\[
T_{m_\sigma}f(x;\sigma,Q) = T_{m_{\sigma,  \theta(Q), \varnothing} }f(x;\sigma, \mathrm{Id}_{\R^d})= T_{m_{\sigma,  \theta(Q), \varnothing}} f(x,\sigma). 
\]
It is then routine to verify that
\begin{equation}
\label{e:averageest1}
\sup_{\theta\in  [0, \eps)^D} \sup_{S\subset \{1,\ldots,d\} }  \left\|{\bf{m}}_{\theta, S}\right\|_{\mathcal{M}_A(\Gr(d,n))} \lesssim \left\|{\bf{m}}\right\|_{\mathcal{M}_{A+D}(\Gr(d,n))}\leq 1.
\end{equation}
Arguing as in the previous proof we then have 
\[
T^\star_{{\bf{m}},\Sigma}  f(x) \leq U^\star_{
{\bf{m}}_{0,\varnothing },\Sigma} f(x) +  \sum_{\varnothing \subsetneq S \subseteq \{1,\ldots, D\} }   \int_{\prod_{k \in S} [0,\eps]}  U^\star_{\Sigma, 
{\bf{m}}_{  \tau \oplus   0_{S^\mathrm{c}},S}}f(x) \d \tau, \qquad \vec \theta \in [0,\eps)^D ,
\]
which turns into the norm inequality
\[
\|T^\star_{{\bf{m}}} f\|_{p,q} \lesssim_{p,q} \|U^\star_{
{\bf{m}}_{0,\varnothing },\Sigma} f\|_{p,q}  + 2^D \sup_{\varnothing \subsetneq S \subset \{1,\ldots, D\} } \sup_{\tau\in [0,\eps]^{\#S}}\left\|U^\star_{\Sigma, 
{\bf{m}}_{  \tau \oplus   0_{S^\mathrm{c}},S}}f \right\|_{p,q}.
\]
The norms above are then controlled by $\|f\|_p$ by assumption in view of estimate \eqref{e:averageest1}, and  the proof of the first claim is complete. For the second claim, it suffices to apply the above argument with $P_0 f$ in place of $f$.
\end{proof}
 %%%%%%%%%%%%%%%%%%%%%%%%%%%%%% PROOF PROOF PROOF
 Because of the reduction devised in Proposition \ref{p:redux} above, Theorem \ref{thm:main} with $A=A(d)+\frac{d(d-1)}{2}$ is obtained from the following proposition.

%%%%%%%%%%%%%%%%%%%%%%%%%%%%%% PROPOSITION PROPOSITION PROPOSITION
 \begin{proposition}\label{p:mainpf} Let $U^\star _{\Sigma,{\bf{m}}}$ be defined as in the statement of Proposition~\ref{p:redux}. There exists $A=A(d) $ such that the   operator $U^\star_{\Sigma,{\bf{m}}}  \circ P_0$ for $\Sigma=\Gr(d,n)$ maps $L^2(\R^n)$ to $L^{2,\infty}(\R^n)$ and  $L^p(\R^n)$ to $L^{p}(\R^n)$, $p>2$, uniformly over all families ${\bf{m}}$ satisfying $\|{\bf{m}}\|_{\mathcal M_A(\Gr(d,n))}\leq 1$.
\end{proposition}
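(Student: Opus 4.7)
The plan is to linearize the maximal operator via a measurable choice function $\sigma(\cdot):\R^n\to \Gr(d,n)$ and to discretize the resulting operator into a time-frequency model sum adapted to the codimension $1$ geometry. Since we precompose with $P_0$, the input has frequency support in the annulus $\{1<|\xi|<3/2\}$. For each $\sigma\in\Gr(d,n)\simeq \SS^d$, the multiplier $m_\sigma(O_\sigma\Pi_\sigma\xi)$ has its only possible singularity on $\R v_\sigma$. A first Littlewood--Paley decomposition in the ``radial'' variable $|\Pi_\sigma\xi|$ therefore produces pieces $m_\sigma^{(k)}$ supported where $|\Pi_\sigma\xi|\eqsim 2^{-k}$, for $k\geq 0$, corresponding under $|\xi|\sim 1$ to spherical caps of angular opening $\sim 2^{-k}$ centered at $\pm v_\sigma$. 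Using the log-H\"older condition \eqref{e:admin4}, we may replace $m_\sigma^{(k)}$ by $m_\tau^{(k)}$ for $\tau$ in a $2^{-k}$-separated net of $\Gr(d,n)$, incurring an error summable in $k$ thanks to the logarithmic weight.

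At each scale $k$, we construct a collection of tiles $p=(I_p,\omega_p)$ where $\omega_p\subset \SS^d$ is a cap of radius $\sim 2^{-k}$ (with the associated frequency slab of radial thickness $\sim 2^{-k}$) and $I_p$ is a dual spatial plate of dimensions $\sim 2^k\times 1\times \cdots\times 1$, elongated in the direction of $v_{\omega_p}$. Smoothness of $\sigma\mapsto O_\sigma$ from Remark~\ref{rmrk:Osmooth} and the derivative formulas of Lemma~\ref{l:derOsig} guarantee that wave-packets $\varphi_p$ adapted to each tile satisfy the expected spatial localization on $I_p$, frequency localization in $\omega_p$, and smooth transition properties between neighbouring tiles. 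The linearized maximal operator $U^\star_{\Gr(d,n),\mathscr m}\circ P_0$ will then be pointwise dominated, through the measurable selection $\sigma(x)$, by a discrete model sum of the form
\[
\mathscr M f(x)=\sum_{p\in\mathbf P}a_p(x)\,\langle f,\varphi_p\rangle\,\varphi_p(x),
\]
with $|a_p(x)|\lesssim \cic{1}_{E_p}(x)$ for sets $E_p$ encoding the condition that $\sigma(x)$ selects the cap $\omega_p$.

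The heart of the argument then runs, in the spirit of Lacey--Thiele and \cite{LacLi:tams,BatRMI}, by organizing $\mathbf P$ into trees with a common ``top'' tile, and associating to any sub-collection two parameters: a size $\size(\mathbf P)$, defined as the supremum over trees $\mathbf T\subset\mathbf P$ of a normalized $\ell^2$-sum of $|\langle f,\varphi_p\rangle|^2$ weighted by $|I_p|^{-1}$, and a density $\dense(\mathbf P)$ measuring, via a maximal function, the overlap of selector sets $E_p$ with $I_{\mathbf T}$. The two pillars of the estimate are: first, a single-tree inequality bounding $\|\mathscr M_{\mathbf T} f\|_{L^p(\R^n)}$ in terms of $\size(\mathbf T)\dense(\mathbf T)^{1/2}|I_{\mathbf T}|^{1/p}$, proved by exploiting orthogonality of wave-packets supported on disjoint spherical sub-caps (a higher-dimensional Bessel inequality) together with a John--Nirenberg argument within the tree; second, a forest decomposition lemma splitting $\mathbf P$ into $O(\log)$-many forests where either size or density halves at each stage. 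These two, combined with a preliminary Calder\'on--Zygmund exceptional-set reduction on the right-hand side $\{T^\star P_0 f>\lambda\}$, deliver the weak $L^2$ bound and hence the $L^p$ bounds for $p>2$ via interpolation with the trivial $L^\infty$ estimate (from H\"ormander--Mihlin on each fiber).

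The principal obstacle I anticipate is purely geometric: building a tile family on the higher-dimensional ``directional sphere'' $\Gr(d,n)\simeq \SS^d$ that simultaneously (i) respects the nesting structure needed for trees, (ii) matches the metric entropy of the log-H\"older norm \eqref{e:admin4}, and (iii) interacts cleanly with the $C^1$ rotation family $\sigma\mapsto O_\sigma$ of Lemma~\ref{lem:rot}. In the two-dimensional case of \cite{LacLi:tams} the directional parameter lives in $[0,2\pi)$ and the grid is a straightforward dyadic division; in our setting the caps must be constructed on $\SS^d$ with controlled derivatives, and orthogonality estimates between wave-packets supported over distinct caps on $\SS^d$ must be bootstrapped from the planar case using the smoothness estimate \eqref{e:opnormest}. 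This directional cap geometry and the attendant tree orthogonality constitute the novel element of the model sum mentioned in the introduction.
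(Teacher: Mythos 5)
Your high-level architecture (linearize the direction, decompose into single-scale caps on $\SS^d$, build tiles with plates dual to the caps, sort into trees, run a size/density iteration against a tree estimate) is indeed the paper's strategy, but three of your steps, as stated, do not work. The most serious is the treatment of the $\sigma$-dependence of the family: replacing $m_\sigma^{(k)}$ by $m_\tau^{(k)}$ with $\tau$ in a $2^{-k}$-net costs, by \eqref{e:admin4}, an error of Mihlin norm $\sim 1/\log(\e+2^{k})\sim 1/k$, and $\sum_k 1/k$ diverges, so the error is \emph{not} summable in $k$ and this scale-by-scale reduction collapses. The paper never performs such a replacement globally; instead the log-H\"older modulus is built into the wave packet class (condition (v) of \S\ref{sec:classes}, verified for the discretized kernel in Lemma~\ref{l:annulus}), and the errors are summed only \emph{within a single tree}, in the bound for $L_{P,2}$ inside Lemma~\ref{lem:tree}: at a point $x$ only the scales $\scl(t)\lesssim \dist(\sigma(x),\R^d)^{-1}$ are active, and for those the factor $\max\{\scl(t)\dist,\,1/\log(\e+\dist^{-1})\}$ sums to $O(1)$. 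Without some version of this per-tree bookkeeping your outline has no mechanism to control the variation of $m_\sigma$ in $\sigma$.

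Two further gaps: (a) the $L^p$ bounds for $p>2$ cannot come from ``interpolation with the trivial $L^\infty$ estimate'' --- H\"ormander--Mihlin gives no $L^\infty$ endpoint, and already a single band-limited operator $T_{m_\sigma}\circ P_0$ is a (smoothly truncated) Calder\'on--Zygmund operator in the $\sigma$-fibers, hence unbounded on $L^\infty$, let alone its supremum over $\sigma$. The paper instead proves a second restricted-type estimate \eqref{eq:model2}, rerunning the size/density decomposition for $f=\cic{1}_F$ and using the Carleson-type size bound $\size(\mathbb P)\lesssim \|\cic{1}_F\|_\infty$ from Lemma~\ref{lem:ortho}, which yields $|E|(1+\log(|F|/|E|))\lesssim |F|^{1/p}|E|^{1-1/p}$; this is a statement about tree sizes, not about $L^\infty$ boundedness of the operator. (b) Your tile geometry is the wrong dual geometry for codimension one: a wave packet with frequency support $\{|\xi|\sim 1,\ \xi'\in B(v,2^{-k})\}$ has frequency extent $\sim 1$ along $v$ and $\sim 2^{-k}$ in the $d$ transverse directions, so its spatial plate must be of thickness $\sim 1$ along $v$ with $d$ long sides of length $\sim 2^{k}$ spanning $v^\perp\approx\sigma$ (the rotated plates $\mathcal R_\beta$ of \S\ref{sec:dyadic}), not a needle of dimensions $2^{k}\times 1\times\cdots\times 1$ elongated along $v_{\omega_p}$. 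With needle-shaped $I_p$ the wave packets are not adapted to your boxes, so the model domination, the Bessel/tree orthogonality and the density estimates are asserted for the wrong regions, and the resulting combinatorics is that of the $d=1$ (Kakeya-critical) regime rather than the codimension-one one. Relatedly, the paper's model keeps the frequency localization (the $\kappa$-center $Q_t^{\circ}$) disjoint from the directional localization (the peripheral children forming $\alpha_t$), which is what makes the directional support condition and the overlapping/lacunary tree dichotomy function; your caps ``centered at $\pm v_\sigma$'' do not record this separation.
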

%%%%%%%%%%%%%%%%%%%%%%%%%%%%%% PROPOSITION PROPOSITION PROPOSITION
The main line of proof of Proposition \ref{p:mainpf} occupies \S\ref{sec:tiles+model} and \S\ref{sec:trees}.

%%%%%%%%%%%%%%%%%%%%%%%%%%%%%% SECTION SECTION SECTION
\section{Maximal codimension-\texorpdfstring{$1$}{} multipliers and the Carleson--Sj\"olin theorem}\label{sec:maxcarleson} In this section we discuss the proofs of some of the consequences of our main result. 

%%%%%%%%%%%%%%%%%%%%%%%%%%%%%% SECTION SECTION SECTION
\subsection{Maximal codimension-\texorpdfstring{$1$}{1} multipliers and the proof of Corollary~\texorpdfstring{\ref{cor:Nmaximal}}{}}Let us recall our setup. Given $\Sigma\subset \Gr(d,n)$ with $\#\Sigma=N$, consider the maximal directional multiplier operator
\[
f\mapsto T ^\star _{\Sigma,{\bf{m}}} f(x)\coloneqq \sup_{\sigma \in\Sigma} \sup_{Q\in\mathrm{SO}(d)}|T_{m_\sigma} f(x;\sigma,Q)|,\qquad x\in \R^n,\qquad \Sigma\subset \Gr(d,n),
\]
where $T_{m_\sigma} f(x,\sigma,Q)$ is given by \eqref{eq:defdir-1} and the family of multipliers ${\bf{m}}=\{ m_ \sigma\in\mathcal M_{A}(d):\, \sigma\in\Gr(d,n)\}$ satisfies $\|{\bf{m}}\|_{\mathcal M_A(\Gr(d,n))}\leq 1$. We will prove that
\[
\sup_{\substack{\Sigma\subset \Gr(d,n)\\\#\Sigma\leq N}
}  \left \|  U_{\Sigma,{\bf{m}}} ^\star  f  \right \|_{L^2(\R^n)}  \lesssim \log N \|f\|_{L^2(\R^n)}
\]
which implies the conclusion of Corollary~\ref{cor:Nmaximal} by Proposition~\ref{p:redux}.

A first well known consequence of Theorem~\ref{thm:main} is that, in the case of $N$-multipliers, we can upgrade the single-annulus weak $(2,2)$ bound to a strong $(2,2)$ single-annulus  bound, at a cost of $\sqrt{\log N}$.

%%%%%%%%%%%%%%%%%%%%%%%%%%%%%% LEMMA LEMMA LEMMA
\begin{lemma}\label{lem:weak2strong} We have the bound
\[
\sup_{\substack{\Sigma\subset \Gr(d,n)\\\#\Sigma\leq N}} \sup_{k\in\Z} \|  U^\star_{\Sigma,{\bf{m}}} \circ P_kf  \|_{L^2(\R^n)}\lesssim \sqrt{\log N} \|f\|_{L^2(\R^n)},
\]
with implicit constant depending only upon dimension.

\end{lemma}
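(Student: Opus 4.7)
The plan is to combine the weak $(2,2)$ bound from Proposition~\ref{p:mainpf} with trivial endpoint estimates, which are $N$-dependent at $L^1$ and $N$-independent at $L^\infty$, via Marcinkiewicz and Riesz--Thorin interpolation. By the scale-invariance of the norm $\|\cdot\|_{\mathcal M_A(\Gr(d,n))}$ noted after Theorem~\ref{thm:main}, it suffices to treat $k=0$; set $T := U^\star_{\Sigma,\mathscr m} \circ P_0$ throughout.

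First I would establish endpoint $L^1$ and $L^\infty$ bounds. For each $\sigma \in \Sigma$, the composition $T_\sigma \circ P_0$ is Fourier multiplication by $m_\sigma(O_\sigma \Pi_\sigma \xi)\zeta(|\xi|) \in C_c^\infty(\R^n)$, whose $C^\infty$-seminorms are bounded uniformly in $\sigma$ by the assumption $\|\mathscr m\|_{\mathcal M_A(\Gr(d,n))} \leq 1$. Its kernel is therefore Schwartz, with $L^1$-norm $C_0 = O(1)$ independent of $\sigma$. Trivial summation and maximization then yield
\[
\|Tf\|_{L^1(\R^n)} \leq \sum_{\sigma \in \Sigma}\|T_\sigma P_0 f\|_{L^1(\R^n)} \leq N C_0 \|f\|_{L^1(\R^n)}, \quad \|Tf\|_{L^\infty(\R^n)} \leq C_0 \|f\|_{L^\infty(\R^n)}.
\]

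Next I would invoke Marcinkiewicz interpolation between the strong $(L^1, L^1)$ bound of constant $NC_0$ and the weak $(L^2, L^{2,\infty})$ bound of Proposition~\ref{p:mainpf} with constant $A$ independent of $N$. This produces, for each $q\in(1,2)$, a strong $(L^q,L^q)$ bound of the form
\[
\|T\|_{L^q(\R^n)\to L^q(\R^n)} \lesssim \frac{(NC_0)^{2/q - 1} A^{2 - 2/q}}{[(q-1)(2-q)]^{1/q}}.
\]
The delicate point is choosing $q$ so that the Marcinkiewicz blow-up $(2-q)^{-1/q}$ balances against the mild growth $(NC_0)^{2/q - 1}$. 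Taking $q = q_N := 2 - 1/\log N$ gives $(NC_0)^{2/q_N - 1} = O(1)$ and $(2-q_N)^{-1/q_N} \lesssim \sqrt{\log N}$, hence $\|T\|_{L^{q_N} \to L^{q_N}} \lesssim \sqrt{\log N}$.

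To pass to $L^2$, I would linearize the supremum by selecting a measurable $\sigma:\R^n \to \Sigma$ so that $|T_{\sigma(x)} P_0 f(x)| = Tf(x)$, and form the linear operator $Lf(x) := T_{\sigma(x)} P_0 f(x)$. Since $|Lf|\leq Tf$ pointwise, both $\|L\|_{L^{q_N}\to L^{q_N}} \lesssim \sqrt{\log N}$ and $\|L\|_{L^\infty \to L^\infty} \lesssim 1$ transfer from $T$ to $L$ uniformly in the linearization. Riesz--Thorin interpolation at the intermediate exponent $1/2 = (1-\theta)/q_N$ (so $\theta = O(1/\log N)$) then yields
\[
\|L\|_{L^2(\R^n)\to L^2(\R^n)} \lesssim (\sqrt{\log N})^{1-\theta} \cdot 1^{\theta} \lesssim \sqrt{\log N}.
\]
Since $|Lf| = Tf$ for the optimal selection of $\sigma(\cdot)$ (depending on $f$), the desired $L^2$ bound on $T$ follows. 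The main obstacle is the tracking of the Marcinkiewicz constant near $q=2$: the blow-up $(2-q)^{-1/q}$ must exactly counterbalance the weak $(NC_0)^{2/q-1}$ growth so that at $q_N=2-1/\log N$ all $N$-dependence collapses to the single factor $\sqrt{\log N}$.
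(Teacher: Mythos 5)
There is a genuine gap, and it is at the very first step: both of your endpoint estimates are false. You assert that the symbol $m_\sigma(O_\sigma\Pi_\sigma\xi)\,\zeta(|\xi|)$ lies in $C_c^\infty(\R^n)$ with seminorms uniform in $\sigma$, hence has a uniformly Schwartz (in particular $L^1$) kernel. But precomposing with $P_0$ only removes the radial singularity at the origin; the Mihlin-type singularity of $m_\sigma(O_\sigma\Pi_\sigma\cdot)$ sits on the whole subspace $\sigma^\perp$, which meets the annulus $\supp\zeta(|\cdot|)$. Near $\sigma^\perp\cap\{1<|\xi|<\tfrac32\}$ the derivatives of the symbol blow up like $|\Pi_\sigma\xi|^{-|\alpha|}$, and for genuinely singular members of $\mathcal M_A(d)$ (e.g.\ directional-Hilbert-type $m_\sigma$) the kernel of $T_{m_\sigma}(\cdot,\sigma)\circ P_0$ decays only like $(1+|\Pi_\sigma x|)^{-d}$ in the $\sigma$-directions, which is not integrable. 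Consequently neither the strong $(1,1)$ bound with constant $NC_0$ nor, crucially, the $L^\infty\to L^\infty$ bound with constant $C_0$ holds: $T_{m_\sigma}(\cdot,\sigma)\circ P_0$ is in general unbounded on $L^1$ and on $L^\infty$ even for a single $\sigma$ (this surviving singularity is exactly why Theorem~\ref{thm:main} is nontrivial on band-limited functions). Since your final Riesz--Thorin step between $L^{q_N}$ and $L^\infty$ hinges on the $L^\infty$ bound, the argument collapses as written.

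The interpolation architecture itself (balance a polynomial-in-$N$ bound against the $N$-free weak $(2,2)$ bound, with interpolation weight $\theta\sim 1/\log N$ so that $N^{O(\theta)}=O(1)$) is sound, but the admissible endpoints are different. What is actually available, and what the paper uses, are the \emph{trivial} bounds with polynomial dependence on $N$ on both sides of $2$: a weak $(1,1)$ (or strong $(q,q)$, $1<q<2$) bound of size $\lesssim N^{1/q}$ and a strong $(p,p)$ bound for a fixed $p>2$ of size $\lesssim N^{1/p}$, obtained by dominating the supremum by the $\ell^{p}$-sum over $\sigma\in\Sigma$ and applying the H\"ormander--Mihlin theorem fiberwise to each $T_{m_\sigma}(\cdot,\sigma)$; these are then combined with the weak $(2,2)$ bound of Theorem~\ref{thm:main} through a Marcinkiewicz-type argument as in \cite{Dem}*{Lemma 3.1}, the $\sqrt{\log N}$ arising from the $\sim\log N$ dyadic levels on which the weak $(2,2)$ estimate is invoked. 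Your scheme could be repaired in the same spirit by replacing the false $L^\infty$ endpoint with such an $L^p$, $p>2$, bound of size $N^{1/p}$, and the false strong $L^1$ endpoint with the trivial $N$-dependent bound below $2$; but the justification via a uniformly Schwartz kernel, and hence both endpoint claims as stated, are incorrect.
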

%%%%%%%%%%%%%%%%%%%%%%%%%%%%%% LEMMA LEMMA LEMMA

The lemma follows easily by the weak $(2,2)$ bound 
\[
\sup_{k\in\R}\| U^\star _{\Sigma,{\bf{m}}} (P_kf) \|_{L^{2,\infty}(\R^n)}\lesssim \|f\|_{L^2(\R^n)},
\]
  which is a special case of Theorem~\ref{thm:main}, together with trivial weak $(1,1)$ and strong $(p,p)$ estimates for $U ^\star _{\Sigma,{\bf{m}}} \circ  P_k$ for $2<p<\infty$, with bounds which are polynomial in $N$, and a Marcinkiewicz interpolation-type argument. The details can be found for example in \cite{Dem}*{Lemma 3.1}, but essentially the same argument has been used in several places, for example in \cite{Katz,Stromberg}.

With a strong $(2,2)$ bound in hand the proof of Corollary~\ref{cor:Nmaximal} follows a well known reduction, based on the Chang--Wilson--Wolff inequality \cite{CWW}, which allows us to essentially commute the supremum over $N$ Fourier multipliers with a Littlewood--Paley square function. The argument leading to this reduction was introduced in \cite{GHS}, while in the context of directional multipliers it has been extensively used; see for example \cite{ADP,Dem}. The punchline proposition is stated below. Here,  we use a smooth Littlewood--Paley partition of $\R^n$ given by means of
\[
( S_t f)^\wedge(\xi)\coloneqq \Psi\left(\frac{|\xi|}{t}\right )\widehat f(\xi),\qquad \xi\in\R^n,\quad t\in 2^\Z,
\]
where $\Psi$ is a smooth radial function with compact support $\mathrm{supp} \Psi \subset\{\frac12< |\xi|<2\}$ with the additional requirement that
\[
\sum_{t\in 2^\Z} \Psi\left(\frac{|\xi|}{t}\right)=1,\qquad \xi\in \R^n\setminus\{0\}.
\]

%%%%%%%%%%%%%%%%%%%%%%%%%%%%%% PROPOSITION PROPOSITION PROPOSITION
\begin{proposition}\label{prop:cww} Let $\{T_1,\ldots,T_N\}$ be Fourier multiplier operators in $\R^n$ with uniform bound
\[
\sup_{1\leq \nu \leq N} \|T_\nu\|_{L^2(\R^n)\to L^2(\R^n)}\leq 1.
\]
If $\{  S_t\}$ is a smooth Littlewood--Paley decomposition as described above then for $1<p<\infty$ there holds
\[
\left\| \sup_{1\leq \nu \leq N} |T_\nu f| \right \|_{L^p(\R^n)} \lesssim_{p,n}  \|f\|_{L^p(\R^n)}+	\sqrt{\log(N+10)}  \left\| \left( \sum_{t\in 2^\Z} \, \sup_{1\leq \nu \leq N} |(T_\nu  \circ S_t) f|^2 \right) ^{\frac12} \right\|_ {L^p(\R^n)} .
\]
\end{proposition}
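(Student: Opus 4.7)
The proposition is a sharp Littlewood--Paley reduction in the Chang--Wilson--Wolff (CWW) spirit, and the strategy is to apply to each $T_\nu f$ a sharp Littlewood--Paley inequality with $\sqrt{q}$ growth in the constant, namely
\[
\|g\|_{L^q(\R^n)}\lesssim \sqrt{q}\,\Bigl\|\bigl(\textstyle\sum_{t\in 2^\Z}|S_t g|^2\bigr)^{1/2}\Bigr\|_{L^q(\R^n)}+\|g\|_{L^q(\R^n)}^{\mathrm{low}},\qquad q\ge 2,
\]
where the low-frequency residue $\|g\|_{L^q}^{\mathrm{low}}$ arises because the smooth family $\{S_t\}_{t\in 2^\Z}$ is not a resolution of the identity at $\xi=0$. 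The two ingredients to combine with this are the trivial sub-linearization $\sup_\nu|\cdot|\le \bigl(\sum_\nu|\cdot|^q\bigr)^{1/q}$ and the optimization $q\sim \log N$, which turns the polynomial blow-up $N^{1/q}$ into a logarithmic factor.

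Concretely, I would argue as follows. Because $T_\nu$ and $S_t$ are both Fourier multipliers, they commute, and $S_t T_\nu f=T_\nu S_t f$. Applying the sharp LP inequality above to $g=T_\nu f$ gives, for every $q\ge 2$,
\[
\|T_\nu f\|_{L^q(\R^n)} \le C\sqrt{q}\,\|G\|_{L^q(\R^n)}+C\,\|T_\nu f\|_{L^q(\R^n)}^{\mathrm{low}},\qquad G\coloneqq \Bigl(\textstyle\sum_t\sup_{1\le\mu\le N}|T_\mu S_t f|^2\Bigr)^{1/2},
\]
where the first inequality used $|T_\nu S_tf|^2\le \sup_\mu|T_\mu S_tf|^2$ pointwise. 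The low-frequency residue is estimated by noting that $T_\nu$ acting on functions with frequencies in a fixed compact set is uniformly bounded on $L^q(\R^n)$ by the uniform $L^2$-boundedness of $T_\nu$ together with Bernstein, yielding a total contribution $\lesssim_q\|f\|_{L^q(\R^n)}$. Sub-linearizing,
\[
\|\sup_\nu|T_\nu f|\|_{L^q(\R^n)}^q\le \sum_{\nu=1}^N\|T_\nu f\|_{L^q(\R^n)}^q \le N(C\sqrt{q})^q\|G\|_{L^q(\R^n)}^q + N C^q\|f\|_{L^q(\R^n)}^q,
\]
hence $\|\sup_\nu|T_\nu f|\|_{L^q(\R^n)} \le N^{1/q}C\sqrt{q}\,\|G\|_{L^q(\R^n)}+N^{1/q}C\,\|f\|_{L^q(\R^n)}$. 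Choosing $q_0=\log(N+10)$ makes $N^{1/q_0}\le e$, producing the desired $\sqrt{\log N}$ factor at the level of $L^{q_0}(\R^n)$.

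To pass from $L^{q_0}$ to an arbitrary $L^p$, $1<p<\infty$, I would invoke the underlying good-$\lambda$/distributional form of CWW rather than run a separate interpolation: for each $\nu$,
\[
\bigl|\{|T_\nu f|>\alpha\lambda,\ \bigl(\textstyle\sum_t|T_\nu S_tf|^2\bigr)^{1/2}\le\beta\lambda\}\bigr| \le C\exp(-c\alpha^2/\beta^2)\,|\{\mathrm{M}f>\lambda\}|,
\]
with $\mathrm{M}$ the Hardy--Littlewood maximal operator. A union bound over $\nu$ introduces the factor $N\exp(-c\alpha^2/\beta^2)$, which is absorbed by choosing $\alpha\gtrsim \beta\sqrt{\log N}$; integrating against $\lambda^{p-1}\,\d\lambda$ then delivers
\[
\|\sup_\nu|T_\nu f|\|_{L^p(\R^n)}\lesssim_{p,n}\sqrt{\log N}\,\|G\|_{L^p(\R^n)}+\|f\|_{L^p(\R^n)}
\]
for all $1<p<\infty$ simultaneously. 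The main technical obstacle is transferring the originally dyadic/martingale CWW statement to the smooth Littlewood--Paley setting employed here; this is standard and is typically handled either via a dyadic grid comparison in the spirit of Pipher, or directly through the exponential-square integrability of the smooth LP square function conditional on an $L^\infty$ bound.
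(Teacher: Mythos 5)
The paper itself does not prove Proposition \ref{prop:cww}; it quotes it from the literature (the proof is cited from \cite{DPGTZK}, proof of Corollary 1.14, see also \cite{Dem}), and those arguments are indeed of the Chang--Wilson--Wolff good-$\lambda$ type you outline, so your overall strategy is the intended one. However, your sketch has a genuine gap at its central step. The inequality you invoke,
\[
\bigl|\{|T_\nu f|>\alpha\lambda,\ \bigl(\textstyle\sum_t|T_\nu S_tf|^2\bigr)^{1/2}\le\beta\lambda\}\bigr|\le C e^{-c\alpha^2/\beta^2}\,|\{\mathrm{M}f>\lambda\}|,
\]
is not the (transferred) Chang--Wilson--Wolff good-$\lambda$ inequality: CWW controls the exceptional set by the measure of $\{\mathrm{M}(T_\nu f)>\lambda\}$, i.e.\ by the maximal function of the \emph{same} function whose square function appears, not by $\{\mathrm{M}f>\lambda\}$. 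You offer no argument for the version with $\mathrm{M}f$, and that substitution is exactly what makes the union bound over $\nu$ painless. With the correct inequality the union bound leaves $\sum_{\nu}|\{\mathrm{M}(T_\nu f)>\lambda\}|$, and since the $T_\nu$ are only assumed $L^2$-bounded (they need not be bounded on $L^p$ for $p\neq 2$ at all), converting this term into an admissible contribution is the actual crux of the proof, on which your sketch is silent. It can be handled, e.g.\ by noting $\|T_\nu f\|_{L^p}\lesssim_p\|(\sum_t|S_tT_\nu f|^2)^{1/2}\|_{L^p}$ (reverse Littlewood--Paley), $|S_tT_\nu f|\lesssim\sum_{t'\sim t}\mathrm{M}(T_\nu S_{t'}f)\le C\,\mathrm{M}\bigl(\sup_{\mu}|T_\mu S_{t'}f|\bigr)$, and Fefferman--Stein, which give $\sup_\nu\|\mathrm{M}(T_\nu f)\|_{L^p}\lesssim_p\|(\sum_t\sup_\mu|T_\mu S_tf|^2)^{1/2}\|_{L^p}$ uniformly in $\nu$; only then does the choice $\beta\sim(\log N)^{-1/2}$ beat the factor $N$ from the union bound. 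Some argument of this kind must be supplied.

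Two further points. Your first route, optimizing $q_0=\log(N+10)$, only yields the estimate in $L^{q_0}$, not in a prescribed $L^p$ --- you conflate the $\ell^q$ sub-linearization exponent with the Lebesgue exponent --- so it cannot substitute for the good-$\lambda$ argument, as you implicitly concede. Moreover, the $\|f\|_{L^p}$ term on the right-hand side is not a ``low-frequency residue'': the family $\{S_t\}_{t\in 2^{\Z}}$ in the paper resolves the identity on $\R^n\setminus\{0\}$, so no such residue exists, and in any case your proposed treatment of it (uniform $L^q$ bounds for $T_\nu$ on band-limited functions via ``$L^2$ boundedness plus Bernstein'') does not work, since Bernstein moves from lower to higher Lebesgue exponents and carries band-dependent constants. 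In the cited proofs the $\|f\|_{L^p}$ term originates from the maximal-function side of the good-$\lambda$ estimate, not from low frequencies.
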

%%%%%%%%%%%%%%%%%%%%%%%%%%%%%% PROPOSITION PROPOSITION PROPOSITION
The proof of this proposition can be found within \cite{DPGTZK}*{Proof of Corollary 1.14}; see also \cite{Dem}. Now note that our main result, Theorem~\ref{thm:main}, readily implies that
\[
\sup_{t\in2^{\Z}}\left \|  U^\star _{\Sigma,{\bf{m}}} \circ S_t f  \right \|_{L^{2,\infty}(\R^n)} \lesssim \|f\|_{L^2(\R^n)}
\]
by splitting the support of $\Psi$ into $O(1)$ pieces that match the size of the support of the auxiliary function $\zeta$ in the statement of Theorem~\ref{thm:main}, applying the conclusion of the theorem to each one of them, and adding them appropriately. Thus Lemma~\ref{lem:weak2strong} yields the strong $L^2(\R^n)$-bound
\[
\sup_{t\in2^{\Z}}\left \|  U^\star _{\Sigma,{\bf{m}}}\circ S_t f \right \|_{L^{2}(\R^n)} \lesssim
(\log \#\Sigma)^{\frac12} \|f\|_{L^2(\R^n)}
\]
for $N=\#\Sigma$ large. The conclusion of Corollary~\ref{cor:Nmaximal} now follows immediately by inserting the $L^2$-bound above in the conclusion of Proposition~\ref{prop:cww} for $p=2$. Note that the uniform $L^2$-bound, which is necessary for the application of Proposition~\ref{prop:cww},  is an immediate consequence of the standing assumption $\|{\bf{m}}\|_{\mathcal M_A(\Gr(d,n) )}\leq 1$ for the family of multipliers ${\bf{m}}$.

%%%%%%%%%%%%%%%%%%%%%%%%%%%%%%% SECTION SECTION SECTION
\subsection{The Carleson--Sj\"olin theorem} The purpose of this subsection is to provide the connection of our main theorem, Theorem~\ref{thm:main}, with the Carleson--Sj\"olin theorem, as formulated in Theorem~\ref{thm:carsjo}. We begin with a simple but useful geometric lemma.
%%%%%%%%%%%%%%%%%%%%%%%%%%%%%% LEMMA LEMMA LEMMA
\begin{lemma} \label{l:N} Let $\eps>0$ be a small parameter,  $\Sigma\subset \Gr(d,n)$ be a $\eps$-neighborhood of  \hspace{.2em}$\R^d=e_n^\perp $. For all $R>0$ 
the map \[
N:\Sigma \to \R^d, \qquad N(\sigma)\coloneqq R \left[\langle v_\sigma, e_n \rangle e_n-v_\sigma\right]
\] 
is onto the ball $B^d(c_o\eps R)\subset \R^d$, for some dimensional constant ${0<c_o<1}$. Furthermore
\[
|N(\sigma)-N(\tau)| \lesssim R \dist(\sigma, \tau), \qquad \sigma,\tau \in \Sigma
\]
with implicit absolute constant.
\end{lemma}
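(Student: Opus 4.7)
The plan is to recognize $N$ as (a scalar multiple of) the projection on $\R^d$ of the unit normal, which makes both claims essentially geometric. Decompose $v_\sigma = a e_n + w$, where $a = \langle v_\sigma, e_n\rangle \in \R$ and $w = \Pi_{\R^d} v_\sigma \in \R^d$. Plugging this into the definition of $N$ cancels the $ae_n$ contribution and yields
\[
N(\sigma) = R[a e_n - (ae_n + w)] = -R\, \Pi_{\R^d} v_\sigma,
\]
so $N(\sigma) \in \R^d$ and $|N(\sigma)| = R|w|$.

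From this identity the Lipschitz estimate is immediate: since $\Pi_{\R^d}$ is an orthogonal projection, hence a contraction, and since the metric on $\Gr(d,n)$ is $\dist(\sigma,\tau) = |v_\sigma - v_\tau|$ by \eqref{eq:dist3}, I would write
\[
|N(\sigma) - N(\tau)| = R\,|\Pi_{\R^d}(v_\sigma - v_\tau)| \leq R |v_\sigma - v_\tau| = R \dist(\sigma,\tau).
\]

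For surjectivity onto $B^d(c_o \eps R)$, the strategy is to invert the map explicitly. Given $y \in \R^d$ with $|y| \leq c_o \eps R$, I set $w \coloneqq -y/R \in \R^d$, so $|w| \leq c_o \eps$, and define $v_\sigma \coloneqq \sqrt{1-|w|^2}\, e_n + w$, choosing the positive square root so that $v_\sigma$ lies near $e_n$. This is a unit vector; let $\sigma$ be the codimension-$1$ subspace with this normal. By construction $N(\sigma) = -Rw = y$. To verify $\sigma \in \Sigma$, I would use the elementary bound $(1-\sqrt{1-t})^2 \leq t$ valid for $t \in [0,1]$ to estimate
\[
|v_\sigma - e_n|^2 = \bigl(1 - \sqrt{1-|w|^2}\bigr)^2 + |w|^2 \leq 2|w|^2 \leq 2 c_o^2 \eps^2,
\]
so choosing $c_o \coloneqq 1/\sqrt{2}$ gives $|v_\sigma - e_n| \leq \eps$ and hence $\sigma \in \Sigma$, as required. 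There is no substantive obstacle: once $N(\sigma)$ is identified as $-R\,\Pi_{\R^d} v_\sigma$, both properties reduce to one-line calculations; the only minor point is the implicit smallness condition $\eps \leq 1$ needed so that the square root in the construction above is well-defined, which is harmless.
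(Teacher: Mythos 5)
Your proof is correct and follows essentially the same route as the paper: the explicit inverse $v_\sigma = -y/R + \sqrt{1-|y|^2/R^2}\,e_n$ is exactly the paper's construction, and your identification $N(\sigma) = -R\,\Pi_{\R^d} v_\sigma$ just makes the verification (and the Lipschitz bound, which the paper omits as "immediate") transparent via the contraction property of the orthogonal projection. The only cosmetic point is that $|v_\sigma - e_n| \leq \eps$ may land on the boundary of the (open) $\eps$-neighborhood, which is fixed by taking $c_o$ slightly smaller than $1/\sqrt{2}$.
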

%%%%%%%%%%%%%%%%%%%%%%%%%%%%%% LEMMA LEMMA LEMMA

%%%%%%%%%%%%%%%%%%%%%%%%%%%%%% PROOF PROOF PROOF
\begin{proof} Clearly  $N(\R^{d})=0$. If $N\neq 0$ belongs to the ball $B^d(c_o\eps R)$, set
\[
v_\sigma\coloneqq-\frac{N}{R}  +\sqrt{1-\frac{|N|^2}{R^2}} e_n,\qquad \sigma\coloneqq v_\sigma ^\perp.
\] 
As $\dist(\sigma,\R^d)\sim \arccos\left( \langle v_\sigma,e_n \rangle \right) \sim \frac{|N|}{R}$, and ${|N|}\leq c_o\eps R$, it follows that $\sigma \in \Sigma$ if $c_o$ is suitably chosen. Also,
it is immediate to verify that $N(\sigma)=N$. 
The proof of the surjectivity claim is thus complete. The Lipschitz estimate is immediate, and its proof is thus omitted.
\end{proof}
%%%%%%%%%%%%%%%%%%%%%%%%%%%%%% PROOF PROOF PROOF
Note that the unit vector
\[
U_\sigma\coloneqq \frac{ \left[e_n-\langle v_\sigma, e_n \rangle v_\sigma\right]}{\sqrt{1-\langle v_\sigma, e_n \rangle^2}}, \qquad v_\sigma \neq e_n
\]
spans $\sigma \cap \mathrm{span}\{v_\sigma, e_n\}$. For $\sigma \neq e_n$, it is also convenient to define the unit vector $u_\sigma \coloneqq {N(\sigma)}/{|N(\sigma)|}$ spanning $\R^d \cap \mathrm{span}\{v_\sigma, e_n\}$. Below, let $\{O_\sigma:\, \sigma\in\Gr(d,n)\}$ be the $C^1$ family of rotations of Remark~\ref{rmrk:Osmooth}; we remember that $O_{\R^d}\equiv \mathrm{Id}$ and each $O_\sigma$ acts as the identity on $\sigma\cap \R^d$ and, in the orthogonal complement $\mathrm{span}\{v_\sigma, e_n\}$ as the rotation mapping $U_\sigma$ to $u_\sigma$, and consequently $v_\sigma$ to  $e_n$. With the notation of Lemma \ref{l:N} we have that, for ${|N|}\leq c_o\eps R$, ${\cos(|N|/R)\sim\langle v_{\sigma},e_n\rangle}$ and $\sin(|N|/R)\sim |N|/R$. A direct calculation entails the equality
\begin{equation}\label{eq:proj}
O_\sigma \Pi_{\sigma}\xi=\Pi_{\R^d}\xi+\left[\langle v_\sigma, e_n \rangle-1\right]\langle \xi, u_\sigma \rangle u_\sigma +\langle \xi, e_n\rangle \frac{N(\sigma)}{R}.
\end{equation}
The following definitions and observations readily yield the following lemma.

%%%%%%%%%%%%%%%%%%%%%%%%%%%%%% LEMMA LEMMA LEMMA
\begin{lemma}\label{lem:fam} Let $m\in \mathcal M_A(d)$ {be supported in $B^d(R_o)$ for some $R_o>1$} and consider the $\eps$-neighborhood  $\Sigma$ of $\,\R^d$ in $\Gr(d,n)$, constructed in Lemma~\ref{l:N} with $\eps$ sufficiently small depending upon dimension only. Let also $\{O_ \sigma: \,\sigma\in\Sigma\}$ be the family defined in Remark~\ref{rmrk:Osmooth}. Suppose  $F$ is a Schwartz function on $\R^n$ with $\supp \widehat{F} \subset \{\xi \in \R^n: \,  R/2<|\xi_n|< 2 R\}$ {for some $R>R_o$}. Define
\[
M(\xi,\sigma)\coloneqq m\left(\Pi_{\R^d}\xi+\left[\langle v_\sigma, e_n \rangle-1\right]\langle \xi, u_\sigma\rangle u_\sigma +\xi_n \frac{N(\sigma)}{R}\right),\qquad (\xi,\sigma)\in\R^n\times \Sigma,
\]
and let $T_{{\Sigma}}F(x,\sigma)\coloneqq (M(\cdot,\sigma)\widehat F)^\vee(x)$. There holds
\[
\left\|\sup_{\sigma\in\Sigma} |T_{{\Sigma}}F(\cdot,\sigma)|\,\right\|_{L^{2,\infty}(\R^n)}\lesssim \|F\|_{L^2(\R^n)},\quad \left\|\sup_{\sigma\in\Sigma}	 |T_{{\Sigma}}F(\cdot,\sigma)|\,\right\|_{L^p(\R^n)}\lesssim \|F\|_{L^p(\R^n)},\quad 2<p<\infty.
\]
\end{lemma}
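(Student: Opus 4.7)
The main observation is that, by the projection identity \eqref{eq:proj} recorded just above the statement, the multiplier $M(\xi,\sigma)$ equals $m(O_\sigma \Pi_\sigma \xi)$. Hence $TF(\cdot,\sigma) = T_{m_\sigma} F(\cdot,\sigma)$ from \eqref{eq:defdir-1} with the \emph{constant} family $\mathscr m = \{m_\sigma \equiv m\}_{\sigma\in\Sigma}$. For such a constant family the log-H\"older difference term in \eqref{e:admin4} vanishes identically, so $\|\mathscr m\|_{\mathcal M_A(\Gr(d,n))}\lesssim \|m\|_{\mathcal M_A(d)}$ and the hypothesis of Theorem~\ref{thm:main} is met.

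Theorem~\ref{thm:main} only provides estimates at a single annular frequency scale, whereas the $F$ of the lemma is constrained to the infinite slab $\{R/2<|\xi_n|<2R\}$. To bridge this gap I would apply a radial Littlewood--Paley decomposition $F = \sum_{k\geq k_0} P_k F$, with $2^{k_0}\sim R$, since only scales $2^k\gtrsim R$ participate owing to the slab support. The scale-invariance remark recorded immediately after Theorem~\ref{thm:main} then yields, uniformly in $k$,
\begin{equation*}
\bigl\|\sup_{\sigma\in\Sigma}|T(P_k F)(\cdot,\sigma)|\bigr\|_{L^{2,\infty}(\R^n)} \lesssim \|P_k F\|_{L^2(\R^n)}, \quad \bigl\|\sup_{\sigma\in\Sigma}|T(P_k F)(\cdot,\sigma)|\bigr\|_{L^{p}(\R^n)} \lesssim \|P_k F\|_{L^p(\R^n)},
\end{equation*}
for $2<p<\infty$, with constants independent of $k$.

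The remaining, and principal, task is recombining these single-scale estimates. The key structural input is that, for each fixed $\sigma \in \Sigma$, the operator $T(\cdot,\sigma)$ is an $\R^n$-Fourier multiplier and hence commutes with the $P_k$; in particular $\{T(P_k F)(\cdot,\sigma)\}_k$ is precisely the annular Littlewood--Paley decomposition of $TF(\cdot,\sigma)$, with the usual frequency disjointness at each fixed $\sigma$. I would combine the single-scale bounds through a vector-valued Littlewood--Paley square-function argument applied to the output, coupled with the pointwise $\ell^\infty\to\ell^2$ domination $\sup_\sigma (\sum_k |a_k(\sigma)|^2)^{1/2} \le (\sum_k \sup_\sigma |a_k(\sigma)|^2)^{1/2}$ and Minkowski's inequality in $L^{p/2}$ for $p\ge 2$; this delivers the strong $L^p$ bound for $p>2$. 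The weak $(2,2)$ endpoint is then recovered through a Marcinkiewicz-type interpolation argument in the spirit of \cite{Dem}*{Lemma~3.1} (the same device invoked later in Lemma~\ref{lem:weak2strong}), coupling the strong $L^p$ bound with the single-scale weak-$(2,2)$ estimate. The main technical hurdle I anticipate is the rigorous justification of the vector-valued Littlewood--Paley combination for the sublinear operator $\sup_{\sigma\in\Sigma}|T(\cdot,\sigma)|$: the supremum generally prevents the maximal output from enjoying a clean annular frequency localization when the almost-extremizing choice of $\sigma$ is allowed to vary with $x$, and one must therefore work either through a judicious linearization of the supremum (choosing a measurable $\sigma^\star:\R^n\to\Sigma$) or via a direct vector-valued extension of the multiplier bound.
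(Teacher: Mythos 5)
Your opening reductions are right: by \eqref{eq:proj} one has $M(\xi,\sigma)=m(O_\sigma\Pi_\sigma\xi)$, the constant family $m_\sigma\equiv m$ trivially satisfies \eqref{e:admin4}, and the single-annulus estimate of Theorem~\ref{thm:main} is scale invariant. The genuine gap is the step you yourself flag as the "main technical hurdle": decomposing $F=\sum_{2^k\gtrsim R}P_kF$ and recombining the single-scale bounds for $\sup_{\sigma\in\Sigma}|T(P_kF)(\cdot,\sigma)|$ through a vector-valued Littlewood--Paley argument. This recombination cannot be carried out as described. Once the supremum is linearized by a measurable choice $\sigma(x)$, the function $x\mapsto T(P_kF)(x,\sigma(x))$ is no longer frequency localized, so the reverse square-function inequality is unavailable; and controlling $\big\|\big(\sum_k\sup_\sigma|T(P_kF)(\cdot,\sigma)|^2\big)^{1/2}\big\|_p$ by $\|F\|_p$ would require a vector-valued strengthening of Theorem~\ref{thm:main} that is neither proved in the paper nor true in the generality you invoke it: summing single-annulus bounds over infinitely many scales is precisely the removal of the band-limitation, which is obstructed by Kakeya--Nikodym examples (this is why the theorem is stated only for $T^\star_{\mathscr m}\circ P_0$, and why Corollary~\ref{cor:Nmaximal} needs Chang--Wilson--Wolff and pays $\sqrt{\log N}$ even for finitely many directions). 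The endpoint claim is also backwards: the interpolation device invoked in Lemma~\ref{lem:weak2strong} upgrades a weak $(2,2)$ bound to a strong $(2,2)$ bound at a logarithmic cost; it does not produce a weak $(2,2)$ estimate for the full operator from strong $L^p$, $p>2$, bounds on the pieces.

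What your scheme misses is the structural fact that makes the lemma essentially a one-scale statement. For $\sigma\in\Sigma$ the normal $v_\sigma$ is $\eps$-close to $e_n$, so on the slab $\{|\xi_n|\sim R\}$ the symbol $m(O_\sigma\Pi_\sigma\xi)$ can only be singular where $|\Pi_{\R^d}\xi|\lesssim \eps R$, i.e.\ on the single annulus $|\xi|\sim R$. The paper accordingly inserts a cutoff $\Gamma\big(|\Pi_{\R^d}\xi|/R\big)$: on the complementary region $|\Pi_{\R^d}\xi|\gtrsim R$ --- which is exactly the union of your higher annuli --- the multiplier is a smooth symbol at scale $R$, uniformly in $\sigma\in\Sigma$, and that whole contribution is dominated pointwise by the strong maximal function $\mathrm{M}_{\mathrm S}F$, with no scale-by-scale summation and no loss from the supremum. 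The remaining part of $\widehat F$ is supported in $\{R/3<|\xi|<3R\}$, so a single application of the rescaled Theorem~\ref{thm:main}, via \eqref{eq:proj}, yields both the weak $(2,2)$ and the strong $L^p$, $p>2$, bounds. Without this reduction to one annulus plus a maximal-function-dominated remainder, your argument does not close.
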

%%%%%%%%%%%%%%%%%%%%%%%%%%%%%% LEMMA LEMMA LEMMA

%%%%%%%%%%%%%%%%%%%%%%%%%%%%%% PROOF PROOF PROOF
\begin{proof} {Consider the multiplier operator
% Let $\Gamma\in\mathcal S(\R)$ have compact support in $[-2^{-10},2^{-10}]$ and $\Gamma=1$ in $[-2^{-15},2^{-15}]$. By virtue of the support condition on $\widehat F$ we will have that
\[
F\mapsto \int_{\R^n}\widehat F(\xi) m( O_\sigma  \Pi_\sigma \xi)\e^{2\pi i \l \xi, x\r } \, \d \xi .
\]
For $\xi\in\R^n$ such that $m( O_\sigma \Pi_\sigma \xi)\widehat{F}(\xi)\neq 0$, the assumptions on the support on $m$ and $\widehat{F}$ imply that
\[
|\Pi_{\R^d}\xi|\leq |v_\sigma-e_n||\xi|+|\Pi_\sigma\xi|\leq \eps |\xi|+R_o \leq \eps|\Pi_{\R^d}\xi| +\eps|\xi_n|+R_o \leq \frac12 |\Pi_{\R^d}\xi|+2R
\]
if $\eps<1/2$. This shows that $|\Pi_{\R_d}\xi|\leq 4R$ and thus $R/2<|\xi|<6R$ for $\xi$ as above. It is then clear that $T_\Sigma F=T_\Sigma S_R F$, with $S_R$ a smooth frequency projection onto the annulus $\{R/10<|\xi|<10R \}$ whose symbol is identically one on  $\{R/6\leq |\xi|\leq 6R\}$. Because of \eqref{eq:proj} the lemma follows by a suitable application of Theorem~\ref{thm:main}.}
\end{proof}
%%%%%%%%%%%%%%%%%%%%%%%%%%%%%% PROOF PROOF PROOF

	%%%%%%%%%%%%%%%%%%%%%%%%%%%%%%% PROOF PROOF PROOF
	\begin{proof}[Proof of Theorem~\ref{thm:carsjo}] Let us fix a H\"ormander--Mihlin multiplier $  m:\R^d \to \mathbb C$ as in the statement. We will make the qualitative assumption that $  m$ is a smooth function with compact support in $\R^d$, which is always possible by a suitable approximation; this assumption will be removed at the end of the proof. Our goal is to show that the operator
	\[
	\mathrm {CS}[f](x)\coloneqq \sup_{N\in\R^d} |\mathrm {CS}_{N} [f](x)|,\qquad \mathrm {CS}_{N} [f](x)\coloneqq \int_{\R^d}\widehat f (\eta) m (\eta+N )e^{2\pi i \l \eta, x\r}\, \d \eta, \qquad x\in\R^d,
	\]
	maps $L^2(\R^d)$ to $L^{2,\infty}(\R^d)$ and $L^p(\R^d)$ to itself for all $2<p<\infty$. To that end let us also fix $p\in[2,\infty)$ and a function $f\in\mathcal S(\R^d)$ with compact frequency support.	By our qualitative assumptions there exists $R_o>0$ such that 
	\[
	\supp(\widehat f)\subset B^d(R_o),\qquad \supp(  m) \subset B^d(R_o),
	\]
	and thus only values $N\in B^d(2R_o)$ need be considered in the definition of $\mathrm {CS}[f]$. We now fix $\eps>0$ sufficiently small, as coming from the statement of Lemma~\ref{lem:fam}, but also satisfying the smallness condition
\[
	{\eps <  \left[\max\left(1,100 R_o ^d  \|  m\|_{C^{20d+1}},(100R_o)\right)\right]^{-1} .}
\]
Above we have denoted for positive integers $\nu$
\[
\|g\|_{C^\nu}\coloneqq \sum_{0\leq |\alpha|\leq \nu} \|\partial^\alpha g\|_{L^\infty(\R^d)}.
\]

We will use an auxiliary function $\psi$ to lift $f$ from $\R^d$ to $\R^n$. More precisely we let $\psi:\R\to\R$ be a smooth function with compact frequency support $\mathrm{supp}(\widehat \psi)\subset (-1,1)$ and such that $\psi\geq0$, $\psi \geq 1$ on $(-2^{-3},2^{-3})$, and $\widehat \psi\ge0$, and define
	\[
	\psi_R (t)\coloneqq  \int_{\R}  \widehat\psi \left(u-R\right) e^{2\pi i t u }\, \d u,\qquad t\in \R,
	\]
	 with $R>0$ to be chosen momentarily. We readily see that $\mathrm{supp}(\widehat \psi_R)\subset (R-1,R+1)$. We then define the smooth function with compact frequency support
	\[
	F (x)\coloneqq f(y)\psi_R (x_n),\qquad x=(y,x_n)\in\R^n=\R^d\times \R,
	\]
	where we use the conventions $x=(\Pi_{\R^d}x,x_n)\eqqcolon (y,x_n)\in \R^n$ and $\xi=(\Pi_{\R^d}\xi,\xi_n)\eqqcolon(\eta,\xi_n)\in\R^n$ in order to simplify our formulas.

	For any $N\in B^d(2R_o)$ we apply Lemma \ref{l:N} with parameters $\eps,R$ such that $2R_o < c_o \eps R<R$. We then know that there exists $\sigma=\sigma(N)\in\Sigma$ such that $N(\sigma)=N$. We fix such a sufficiently large value of $R$ and note that 
  \[
   \operatorname{supp}(\widehat \psi_R )\subset \{t:\, R/2 <t<2R\}.
  \]
 Now set
	\[
	\mathcal E(\xi,\sigma)\coloneqq \left[\l v_\sigma,e_n\r -1\right]\l\xi,u_\sigma\r u_\sigma +\left(\frac{\xi_n}{R}-1\right)N(\sigma), \qquad (\xi,\sigma)\in\R^n\times \Sigma,
	\]
	and immediately observe that for $\xi\in\mathrm{supp}(\widehat{F})$ we have, taking into account the facts in Lemma~\ref{l:N},
	\begin{equation}\label{eq:error}
	\begin{split}
|	\mathcal E(\xi,\sigma)| &\lesssim \dist(\sigma,\R^d)^2 R_o+ \dist(\sigma,\R^d) ,
\\
	|\partial_{\xi_j}   \mathcal E(\xi,\sigma)| &\lesssim \dist(\sigma,\R^d)^2 ,\qquad 1\leq j \leq d,
	\\
	|\partial_{\xi_n}  \mathcal E(\xi,\sigma)| &\lesssim  \dist(\sigma,\R^d),
	\\
	|\nabla ^\alpha   \mathcal E(\xi,\sigma)| &=0 \quad\text{if}\quad |\alpha|>1.
\end{split}
	\end{equation}
	Defining $TF(x,\sigma)$ as in the statement of Lemma~\ref{lem:fam}, with the choice of $\sigma\in\Sigma$ as above, we calculate 
	\[
	\begin{split}
	  TF(x,\sigma) &=\int_{\R} \int_{\R^d} m\left(\eta+N(\sigma)+\mathcal E(\xi,\sigma)\right)\widehat f(\eta)\widehat{\psi}_R(\xi_n)e^{2\pi i \l \eta , y\r} e^{2\pi i x_n \xi_n}\, \d \eta \, \d \xi_n
	\\
	& =\mathrm{CS}_N [f](y) \psi_R (x_n)+\int_{\R}\int_{\R^d}	\widehat f(\eta) \widehat{\psi}_R(\xi_n)\Delta(\xi,\sigma)e^{2\pi i \l\eta , y\r} e^{2\pi i x_n \xi_n}\, \d \eta \, \d \xi_n,
	\end{split}
	\]
	where 
	\[
	\Delta(\xi,\sigma)\coloneqq m(\eta+N(\sigma)+\mathcal E(\xi,\sigma))- m(\eta+N(\sigma)),\qquad (\xi,\sigma)\in \R^n \times \Sigma.
	\]
	Note that $\mathrm{supp}( \widehat {F} )\subseteq B^d(R_o)\times\{R-1<|\xi_n|<R+1\}$ which we want to keep memory of. For that reason we let $\lambda\in\mathcal S(\R^n)$ be identically one on $\mathrm{supp}( \widehat {F} )$ and vanish outside $ B^d(10R_o)\times\{R-10<|\xi_n|<R+10\}$, with the natural derivative bounds.
	Define the operator
	\[
	\mathrm{Err}_{\sigma}F(x)\coloneqq \int_{\R^n}	\widehat F(\xi)  \lambda(\xi)\Delta(\xi,\sigma)e^{2\pi i \l\xi , x\r}  \, \d \xi,\qquad x \in\R^n.
	\]
	We then have
	\[
	|\mathrm{CS}[f](y)\psi_R(x_n)|\leq \sup_{\sigma\in\Sigma}  |T F(x,\sigma)|+\sup_{\sigma\in\Sigma} |\mathrm{Err}_{\sigma}F(x)|
	\]
and the proof of the theorem will be complete once we control the error term $\mathrm{Err}_{\sigma}F $ in $L^p(\R^n)$.	We will do so by proving
\begin{equation}\label{eq:max}
\sup_{\sigma\in\Sigma} |\mathrm{Err}_{\sigma}F(x)| \lesssim \M F(x),\qquad x\in \R^n;
\end{equation}
here $\M$ denotes the $n$-dimensional Hardy--Littlewood maximal function. To that end let us define
\[
E(x,\sigma)\coloneqq \int_\R \int_{\R^d}\lambda(\xi)\Delta(\xi,\sigma)e^{2\pi i \l\eta,y\r}e^{2\pi i x_n\xi_n}\,\d \eta\, \d \xi_n,\qquad x\in \R^n,
\]
and note that \eqref{eq:error} remains valid for $\xi\in\mathrm{supp}(\lambda)$. Estimates \eqref{eq:error} combined with a calculation involving the chain rule, {the mean value theorem}, and the derivatives of $m$,  $\mathcal E(\xi,\sigma)$ and $\lambda$, imply 
\[
\begin{split}
   |\mathrm{supp}(\lambda(\cdot) \Delta(\cdot,\sigma))|&\lesssim R_o ^d\quad (\text{using that } \operatorname{supp}m\subset B^d(R_o)),
   \\
	|\nabla  ^\gamma (\lambda(\xi)\Delta(\xi,\sigma))| &\lesssim  \|m\|_{C^{|\gamma|+1}}\left[\dist(\sigma,\R^d)^2R_o+\dist(\sigma,\R^d)\right].
	\\
\end{split}
\]
Using these bounds and integrating by parts inside the integral defining $E(x,\sigma)$ we gather
\[
|E(x,\sigma)|\lesssim \frac{\|m\|_{{C^{20d+1}}}R_o ^d  \left[\dist(\sigma,\R^d)R_o+1\right]\dist(\sigma,\R^d) }{(1+|y|)^{10d} (1+|x_n|)^{10d}},\qquad x=(y,x_n)\in\R^d \times \R.
\]
Remembering our choice of $\eps>0$ and that $\dist(\sigma,\R^d)\lesssim \eps$, this proves \eqref{eq:max} with implicit constant depending only upon dimension. Thus the error term is under control
\[
	\|\sup_{\sigma\in\Sigma}|\mathrm{Err}_{\sigma}F|\|_{L^p(\R^n)}\lesssim \| F\|_{L^p(\R^n)}\eqsim\|f\|_{L^p(\R^d)}.
\]
We have proved the estimates
	\[
	\|\mathrm{CS}[f]\|_{L^{2,\infty}(\R^d)}\lesssim  \|f\|_{L^2(\R^d)},\qquad \|\mathrm{CS}[f]\|_{L^{p}(\R^d)}\lesssim  \|f\|_{L^p(\R^d)},\qquad 2<p<\infty,
	\]
	whenever  $f\in\mathcal S(\R^d)$ has compact Fourier support and $ m$ is a ${C^{20d+1}(\R^d)}$-function with compact support. One easily removes these assumptions on $m$ by approximating with a sequence $  m_k$ which is smooth and has compact Fourier support, and converges pointwise to $m$ a.e. in $\R^d$, and satisfies $\| m_k\|_{\mathcal M_A (d)}\lesssim \| m\|_{\mathcal M_A (d)}$ uniformly in $k$. Then dominated convergence shows that for any measurable map $\R^d\ni y\mapsto N(y)$ we have
	\[
	\int_{\R^d}  m(\xi+N(y))\widehat f(\xi)e^{2\pi i \l y, \eta\r}\,\d \eta =\lim_k \int_{\R^d}  m_k(\xi+N(y))\widehat f(\xi)e^{2\pi i \l y, \eta\r}\,\d \eta
	\]
	whenever $f\in\mathcal S(\R^d)$ has compact Fourier support.	Taking absolute values and applying Fatou's lemma inside the $L^p(\R^d)$-norm yields the conclusion of the theorem for $f\in\mathcal S(\R^d)$ with compact Fourier support. As such functions are dense in $L^p(\R^d)$ the proof is complete.
	\end{proof}
	 %%%%%%%%%%%%%%%%%%%%%%%%%%%%%% SECTION SECTION SECTION
\section{Tiles, adapted functions, and model operator} \label{sec:tiles+model}
This and the following sections are dedicated to the proof of Proposition~\ref{p:mainpf} which, by the reductions of \S\ref{sec:redux}, implies the main theorem, Theorem~\ref{thm:main}. In the remaining of the paper we use the $C^1$ family of rotations $\{O_\sigma:\, \sigma\in\Gr(d,n)\}$ of Remark~\ref{rmrk:Osmooth}, underlying the definition of the operator $U^\star _{\Sigma,{\bf{m}}}$ in \eqref{e:thisissmooth}.

%%%%%%%%%%%%%%%%%%%%%%%%%%%%%% SECTION SECTION SECTION
\subsection{Preliminaries} The family ${\bf{m}}=\{m_\sigma\in L^\infty(\R^d):\, \sigma\in\Gr(d,n)\}$ satisfying \eqref{e:admin4} is fixed for the rest of the paper. Recall, cf.\ \eqref{e:thisissmooth}, that the definition of  $
U^\star_{\Sigma,{\bf{m}}}
$
involves the singular integrals
\begin{equation}
\label{eq:Tsigma}
T_{m_\sigma}f(x,\sigma)\coloneqq  \int_{\R^n}  m_\sigma (O_\sigma \Pi_\sigma \xi) \widehat f(\xi) e^{2\pi i \langle x, \xi\rangle}\, \d \xi,\qquad x\in \R^n.
\end{equation}
The subscript $m_\sigma$ in \eqref{eq:Tsigma} will be omitted from now on.
Let ${\upalpha}\in 3^{-\mathbb N}$ and denote by
\[
 \Sigma_{\alpha}\coloneqq \{\sigma \in \Gr(d,n):\, |v_\sigma-e_n|=\dist(\sigma, \R^{d}\otimes \{0\}) < {\upalpha}\},
 \]
a small neighborhood of $e_n^{\perp}$.  The small  constant ${\upalpha}$ will depend on dimension only and its choice will be explained below. With such a choice, the parameter $\upalpha$ will be fixed along the paper, so we will drop the subindex $\upalpha$ and we will write $\Sigma$ from now on. Consider the maximal operator
\begin{equation}\label{eq:Tstar}
f\mapsto \sup_{\sigma\in \Sigma} |T [P_0 f](\cdot, \sigma)|
\end{equation}
with $Tf(\cdot, \sigma)$ as defined in \eqref{eq:Tsigma} and $P_0$ given by \eqref{eq:Pann}. Note that  the operator $U_{\Gr(d,n),{\bf{m}}} ^\star$ of Proposition \ref{p:redux} is controlled by $O_{\upalpha} (1)$ rotated copies of \eqref{eq:Tstar}. In order to prove Proposition \ref{p:mainpf}, it thus suffices to obtain the corresponding estimate for \eqref{eq:Tstar}. Restricting the choice of variable subspace to $\Sigma$ allows us to precompose $T\circ P_0$ in  \eqref{eq:Tstar} with a smooth restriction to a small frequency cone about $e_n$ as follows. 
Let ${ \Psi_{\mathrm{cn}}} \in\mathcal S(\R^n)$ satisfying
\begin{equation}{\label{eq:Pcone}}
\begin{split} 
{\Psi_{\mathrm{cn}} }\equiv 1 \; \textrm{on}\;\,   \Gamma_0 \cap \mathrm{Ann}\left(\textstyle 1,\frac32\right), \qquad 
&\Gamma_{0}\coloneqq\{\xi\in\R^n\setminus\{0\}: \,  |\xi' -e_n | <  3^4 {\upalpha} \}, 
\\ 
\supp {\Psi_{\mathrm {cn}}}\subset \Gamma_1 \cap   \mathrm{Ann}\left(\textstyle\frac12,2\right), \qquad  &\Gamma_1\coloneqq\{\xi\in\R^n\setminus\{0\}: \,  |\xi'-e_n|<  3^{5}{\upalpha}\},
\end{split}
\end{equation}
where $\xi'\coloneqq \xi/|\xi|$, and let ${P}_{\mathrm {cn}} f $ be the corresponding  frequency cutoff ${P}_{\mathrm {cn}} f\coloneqq ({\Psi_{\mathrm cn}} \widehat f)^\vee$. Then
\begin{equation}\label{eq:nonsing}
 \sup_{\sigma\in\Sigma}|T[(\mathrm{Id}-  P _{\mathrm{cn}}) \circ P_0 f](\cdot, \sigma)|\lesssim \M f
\end{equation}
where $\M$ denotes the Hardy--Littlewood maximal operator; indeed, the singularities of the symbols of the operators $f\mapsto Tf(\cdot,\sigma)$  for $\sigma\in\Sigma$ are all contained in $\Gamma_0$, namely away from the frequency support of $(\mathrm{Id}- P _{\mathrm{cn}} )\circ  P_0 f$. This reduces  the proof of  Proposition \ref{p:mainpf} to the proof of the corresponding  bound  for the maximal operator
\begin{equation}
\label{eq:realTstar}
U^\star f\coloneqq \sup_{\sigma \in \Sigma} \left|T [   P_0 \circ  P _{\mathrm{cn}}  f](\cdot, \sigma) \right|.
\end{equation}
The multiplier operators $g\mapsto T [   P_0	 \circ  P _{\mathrm{cn}}g](\cdot, \sigma)$ have Fourier support in the truncated cone 
\begin{equation}\label{eq:truncone}
\Delta\coloneqq \Gamma_1 \cap \mathrm{Ann}\left(1,\frac32\right).
\end{equation}
 In our analysis,  in accordance with the uncertainty principle and the Fourier support of the restriction to $\Delta$, the relevant spatial scales for each $\sigma\in\Sigma$ will then be
\begin{equation}
\label{e:spatialscales} s\in \mathbb{S}\coloneqq\left\{s\in 3^{\mathbb Z}, \, 3^{6} s \cdot {\upalpha} \geq 1\right\}
\end{equation} 
in the $\sigma$ variables, while the spatial scale will always be $\sim 1$ in the coordinate $\sigma^\perp$. Note that by taking $
\upalpha$ sufficiently small, depending on dimension only, we can and will always assume that $s\geq 3^{10d}$ for all relevant scales $s\in \mathbb S$.
The rest of the section is devoted to construction of a  model sum representation for $U^\star $.

%%%%%%%%%%%%%%%%%%%%%%%%%%%%%% SECTION SECTION SECTION
\subsection{Grids and tiles} \label{sec:dyadic}   One of the ways in which the simultaneous localization to  $\sigma\in \Sigma$  and to the homogeneous Fourier region $\Delta'=\{\xi':\xi \in \Delta\}\subset \mathbb S^d$ is exploited below is to conflate either region with a copy of $\R^d$ as follows. Let
$\widetilde {\Delta'}\subset \mathbb S^d$ be the $3^{6}\sqrt{d}$-neighborhood of $\Delta'$. Then,  {provided the constant ${\upalpha}$ is chosen sufficiently small depending on dimension, there is an approximate isometry 
\[
\xi \in \widetilde {\Delta'} \mapsto \Pi_{e_n^\perp}  \xi \in  e_n^\perp \equiv \R^d
\] 
in the sense that
\begin{equation}\label{e:ncomp}
\b{\frac{\theta(\xi,\eta)}{\left|\Pi_{e_n^\perp} \xi - \Pi_{e_n^\perp} \eta\right|}  },\, \frac{|\xi-\eta|}{\left|\Pi_{e_n^\perp} \xi - \Pi_{e_n^\perp} \eta\right|}  \in \left[\textstyle \frac13,3\right], \qquad \xi,\eta  \in \widetilde {\Delta'},\,\,\, \xi\neq \eta,
\end{equation}
where we remember that $\theta(\xi,\eta)$ denotes the convex angle between $\xi,\eta\in\SS^d$. This together with our previous restriction in the remark following \eqref{e:spatialscales} fixes the choice of ${\upalpha}$ to be a small dimensional constant.}
When $\sigma \in \Sigma$ we have that $v_\sigma \in \Delta'$ and the approximate isometry extends to $\Sigma$ as well. This allows us to construct a grid structure localizing the $\Sigma$ and $\Delta'$ components by pulling back the corresponding Euclidean structure. Rotated Euclidean grids will also localize the spatial component: relevant definitions follow.

 A standard triadic grid $\mathcal G$ on $\R^d$ of pace $(K,m)$, where $K\geq 1$ is an integer and $m\in [0 ,K)$ is \emph{not necessarily an integer},  is a collection of cubes $Q\subset \R^d$  satisfying the properties \vspace{.3em}
 \begin{itemize}  \setlength\itemsep{.5em}
 \item[g1.] (quantized length) $\mathcal G =\bigcup_{j\in \mathbb Z} \mathcal G_{Kj+m}$, where $\mathcal G_{u}=\{Q\in \mathcal G: \,\ell(Q) =3^{u}\}$,
 \item[g2.] (partition) $\displaystyle \mathbb \R^d=\bigcup_{Q\in \mathcal G_{Kj+m} } Q$ for all $j\in \mathbb Z$,
 \item[g3.] (grid) $L\cap Q \in\{L,Q,\varnothing\}$ for every $L,Q\in\mathcal G$.
  \end{itemize}
In particular, $\mathcal G$ is partially ordered by inclusion and we may define for each $Q\in \mathcal G$ 
\[
Q^{(1)}\coloneqq \textrm{minimal } L\in \mathcal G  \textrm{ with } Q\subsetneq L, 
\]
and inductively $Q^{(j)}\coloneqq [Q^{(j-1)}]^{(1)}$ for all $j> 1$ to be the $j$-th parent of $Q$ in the grid $\mathcal G$. With these definitions, properties g2.\ and g3.\ yield for any positive integer $\kappa$ the partition
\[
L=\bigsqcup_{Q\in \mathrm{ch}_\kappa (L)} Q, \qquad \mathrm{ch}_\kappa (L)\coloneqq\{Q\in \mathcal G:\, Q^{(\kappa)}=L\}
\]
with the collection  $\mathrm{ch}_\kappa (L)$ being referred to as the $\kappa$-th generation children of $L\in \mathcal G$. Triadic grids have been chosen because of the following convenient property. For each $L\in \mathcal G$ there is a unique $Q=L^{\circ,\kappa}\in \mathrm{ch}_\kappa (L)$ which is concentric with $L$, which is referred to as the $\kappa$-\emph{center} of $L$.
The next   definition highlights   a different role for certain other elements of $\mathrm{ch}_\kappa (L)$.  Define
\[
\begin{split} &
\mathrm{ch}_{\kappa, \square} (L) \coloneqq\{Q\in \mathrm{ch}_{\kappa} (L): \, \mathrm{dist}(Q,L^{\circ,\kappa}) \geq 3^{3-\kappa} \ell(L)\}
\end{split}
\]  
referring to this collection as the \emph{peripheral} $\kappa$-th children of $L$. In the following lemma and hereafter we fix a large value of ${\kappa\coloneqq 12+\log_3 d }$.

%%%%%%%%%%%%%%%%%%%%%%%%%%%%%% LEMMA LEMMA LEMMA
\begin{lemma} \label{l:dyadic}  Let ${\kappa\coloneqq 12+\log_3 d }$. There exist standard triadic grids $\mathcal G_1,\ldots, \mathcal G_{C}$  of pace $(1,m_j)$ on $\R^{d}\equiv e_n^{\perp}$, $1
\leq j \leq C$,  $C=C_{\kappa,d}$, with the following property: For each $\beta\in \Delta'$, $s\in\mathbb S$, there exists $j=j(\beta,s)\in \{1,\ldots, C\}$ and a cube $L=L(\beta,s)\in \mathcal{G}_j$ such that
\begin{itemize} 
\item[\emph{1.}] $3^{3}s^{-1} \leq \ell(L) \leq 3^4s^{-1}$,\vspace{.4em}
\item[\emph{2.}] $ B\left(\beta, 3^{-\kappa} s^{-1}\right) \subset \Pi_{e_n^\perp}^{-1}(L^{\circ,\kappa})$,
\item[\emph{3.}] $ \mathrm{Ann}\left(\beta, \textstyle 3^{-1}s^{-1}  ,3^{2}s^{-1}\right)  \subset \bigcup\left\{ \Pi_{e_n^\perp}^{-1}(Q):\, Q \in \mathrm{ch}_{\kappa, \square} (L)\right\}$.
\end{itemize}
\end{lemma}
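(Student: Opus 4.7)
The plan is to transfer the problem from the sphere to $\R^d$ via the approximate isometry $\Pi_{e_n^\perp}\colon \widetilde{\Delta'}\to e_n^\perp$ established in \eqref{e:ncomp}. Setting $p \coloneqq \Pi_{e_n^\perp}(\beta) \in \R^d$, I would rewrite the three conditions as a statement on the Euclidean geometry of $p$ relative to a cube $L$ of size $\sim s^{-1}$. With the factor $3$ blow-up of the approximate isometry taken into account, condition \emph{2.} becomes $|p - c_L|_\infty \leq c_1 \cdot 3^{-\kappa}\ell(L)$ for an absolute constant $c_1 < 1/2$ (extracted from bounding the radius $3^{-\kappa}s^{-1}$ against the side $\ell(L^{\circ,\kappa})=3^{-\kappa}\ell(L)\ge 3^{3-\kappa}s^{-1}$), and condition \emph{3.} reduces to requiring that the Euclidean annulus $\mathrm{Ann}(p, c_2 s^{-1}, c_3 s^{-1})$ sits inside $L$ and outside a $3^{3-\kappa}\ell(L)$-neighborhood of $L^{\circ,\kappa}$, with $c_2,c_3$ absolute.

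For the scale coverage, I would choose a finite set of pace shifts $m_j \in [0, 1)$ forming a sufficiently dense subdivision so that for every $s = 3^k \in \mathbb S$, the cubes of $\mathcal{G}_j$ at level $3-k+m_j$ have side in the required window $[3^3 s^{-1}, 3^4 s^{-1}]$, supplying condition \emph{1.} For the position coverage, I would exploit the self-similarity of triadic grids under base-$3$ scaling and enlarge the family by adjoining, for each pace $m_j$, translates of the pace-$(1,m_j)$ skeleton by shifts $\alpha_j$ drawn from a finite lattice $3^{-M}\mathbb{Z}^d \cap [0,1)^d$ with $M = M(\kappa, d)$ large enough. The key point is that the required relative centering precision $\sim 3^{-\kappa}$ is \emph{scale-independent}, so the self-similar nesting of triadic cubes across scales allows one finite shift family to produce, for every $p$ and every admissible $s$, some grid cube $L$ with $|p-c_L|_\infty \leq c_1 \cdot 3^{-\kappa}\ell(L)$.

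Once a pair $(\mathcal{G}_j, L)$ realizing condition \emph{2.} is located, I would verify condition \emph{3.} as a direct geometric consequence: the centering bound $|p - c_L|_\infty \leq c_1 \cdot 3^{-\kappa}\ell(L) \ll 3^{-1}s^{-1}$ combined with $\ell(L) \geq 3^3 s^{-1}$ ensures that the annular inner radius $3^{-1}s^{-1}$ safely exceeds the $3^{3-\kappa}\ell(L) \leq 3^{7-\kappa}s^{-1}$-neighborhood of $L^{\circ,\kappa}$ (here the hypothesis $\kappa \ge 9+\log_3 d$ enters crucially, also absorbing the $\sqrt{d}$ mismatch between $\ell^2$ and $\ell^\infty$ norms), while the outer radius $3^2 s^{-1}$ stays comfortably inside $L$. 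The main obstacle, and the step I would devote the most care to, is the position coverage step: verifying that a single finite shift family, through the self-similar structure of standard triadic grids, indeed centralizes every $p$ at every scale in $[3^3 s^{-1},3^4 s^{-1}]$ with $s \in \mathbb{S}$, keeping the total number of grids $C$ bounded by $C_{\kappa,d}$.
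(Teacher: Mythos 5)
Your reduction to a Euclidean statement via the approximate isometry, and your derivation of conditions 2.\ and 3.\ from a centering bound of the form $\|\Pi_{e_n^\perp}\beta-c(L)\|_\infty\lesssim 3^{-\kappa}\ell(L)$ together with $\kappa\ge 9+\log_3 d$, is exactly how the paper proceeds. The genuine gap is in the step you yourself flag as the crucial one: producing the finite family of grids. A family consisting of translates of a pace-$(1,m_j)$ triadic skeleton by \emph{fixed} shifts $\alpha\in 3^{-M}\Z^d\cap[0,1)^d$ does not give the required relative centering at all scales, and self-similarity works against you rather than for you. Indeed, translating a triadic grid by $\alpha$ affects its cubes of sidelength $\ell$ only through $\alpha \bmod \ell$; for the pace-$(1,0)$ skeleton and scales $\ell=3^{k}\le 3^{-M}$ every shift in $3^{-M}\Z^d$ is congruent to $0$ modulo $\ell$, so all your translated grids have \emph{identical} cubes at those scales. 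Taking $p=\Pi_{e_n^\perp}\beta$ at (or arbitrarily near) a point of $3^{-M}\Z^d$, the point sits at a corner of its scale-$\ell$ cube in every grid of the family, so $\|p-c(L)\|_\infty\eqsim\ell(L)$ and the required precision $3^{-\kappa}\ell(L)$ is unattainable. Since $\mathbb S$ contains arbitrarily large $s$, hence $\ell(L)\eqsim s^{-1}$ arbitrarily small, no fixed $M=M(\kappa,d)$ can work; more generally, for fixed absolute shifts the set of admissibly centered points at scale $\ell$ has spacing governed by $3^{-M}\bmod\ell$, which degenerates for small $\ell$. Making the shifts scale-dependent (proportional to $\ell$) repairs a single scale but destroys the cross-scale nesting property g3, and restoring it is precisely the nontrivial content of the construction.

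The paper avoids this by invoking the standard adjacent-grids construction (citing Lerner--Nazarov): there exist triadic grids $\mathcal G_1,\ldots,\mathcal G_C$, $C=C_{\kappa,d}$, such that every cube $P\subset\R^d$ satisfies $P\subset L\subset(1+3^{-(\kappa+9)})P$ for some $L$ in some $\mathcal G_j$; these grids are not plain translates of one skeleton, as their cubes at different scales are mutually shifted in a structured way. Applying this to $P=Q(\Pi_{e_n^\perp}\beta,3^3 s^{-1})$ gives at once condition 1.\ and the centering bound $\|\Pi_{e_n^\perp}\beta-c(L)\|_\infty\le 3^{-(\kappa+9)}\ell(L)$, after which the verifications of 2.\ and 3.\ are the elementary computations you outline. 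So to close your argument you should either quote such a tight-wrapping lemma or carry out an explicit construction of grids whose scale-$\ell$ cubes are $c3^{-\kappa}\ell$-densely positioned \emph{uniformly in $\ell$}; the fixed-lattice-shift recipe as stated does not achieve this.
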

%%%%%%%%%%%%%%%%%%%%%%%%%%%%%% LEMMA LEMMA LEMMA

%%%%%%%%%%%%%%%%%%%%%%%%%%%%%% REMARK REMARK REMARK
\begin{remark} \label{r:bijgrid} With reference to the previous lemma,  say that the pair $(\beta,s)\in \Delta'\times \mathbb S$ is of type $j\in\{1,\ldots,C\}$ if $j(\beta,s)=j$.  The restriction $s\in\mathbb S$, see \eqref{e:spatialscales},  and properties 1.\ and 2.\ ensure the inclusion 
\[
L(\beta,s)\subset B\left(\Pi_{e_n^\perp} \beta, 3^{5} \sqrt{d}{s}^{-1}	\right) \subset \Pi_{e_n^\perp}  \widetilde{\Delta'}.
\]
\end{remark}
%%%%%%%%%%%%%%%%%%%%%%%%%%%%%% REMARK REMARK REMARK

%%%%%%%%%%%%%%%%%%%%%%%%%%%%%% PROOF PROOF PROOF
\begin{proof}[Proof of Lemma \ref{l:dyadic}] For this proof we denote by $Q(\xi,r)$ the cube with center $\xi\in\R^d$ and sidelength $2r$ and by $\|\cdot\|_{\infty}$ the $\ell^\infty$-norm on $\R^d$. First of all, using standard grid techniques as in \cite{LeNa} we may find triadic grids $\mathcal G_1,\ldots, \mathcal G_{C}$ with the property that for each cube $P\subset \R^d$ there is $j\in\{1,\ldots, C\}$ and $L(P)\in \mathcal G_j$ with $P\subset L(P)\subset (1+3^{-(\kappa+9)}) P$. As a particular consequence 
\[
\begin{split}
|c(P)- c(L(P))| &\leq \sqrt{d} 3^{-(\kappa+9)} \ell(P) \leq  \sqrt{d} 3^{-(\kappa+9)}\ell(L(P)),
\\
\|c(P)- c(L(P))\|_\infty& \leq3^{-(\kappa+9)} \ell(P)\leq  3^{-(\kappa+9)}\ell(L(P)).
\end{split}
\]
 We have that $\Pi_{e_n ^\perp} B(\beta,3^2s^{-1}) \subset  B(\Pi_{e_n ^\perp} \beta, 3^3 s^{-1})\subset Q(\Pi_{e_n ^\perp} \beta, 3^3 s^{-1})$. There exists $L=L(\beta,s)\in \mathcal G_j$ for some $j\in\{1,\ldots,C\}$ such that 
\[
Q(\Pi_{e_n ^\perp} \beta, 3^3 s^{-1}) \subseteq L \subseteq (1+3^{-(\kappa+9)})Q(\Pi_{e_n ^\perp} \beta, 3^3 s^{-1}) 
\]
from which 1.\ follows immediately. In order to see 2.\ let $\eta\in B(\beta,3^{-\kappa}s^{-1})$ so that $\Pi_{e_n ^\perp}\eta \in B(\Pi_{e_n ^\perp} \beta, 3^{-\kappa+1} s^{-1})$. Then we have the chain of inequalities
\[
\|\Pi_{e_n ^\perp} \eta-c(L)\|_\infty \leq |\Pi_{e_n ^\perp} \beta-\Pi_{e_n ^\perp}\eta|+\|\Pi_{e_n ^\perp} \beta-c(L)\|_\infty\leq 3^{-\kappa+1} s^{-1} +3^{-(\kappa+9)}\ell(L)\leq 3^{-\kappa-1}\ell(L).
\]
This shows that $\Pi_{e_n ^\perp } \eta \in Q(c(L),\, 3^{-\kappa-1}\ell(L))=3^{-1} L^{\circ,\kappa}$ and so $\eta\in \Pi_{e_n ^\perp} ^{-1}(L^{\circ,\kappa})$ as desired.

In order to complete the proof it suffices to show that if $\eta\in B(\beta,3^2s^{-1})\setminus B(\beta,3^{-1}s^{-1})$ then $\Pi_{e_n ^\perp}\eta \in Q$ for some $Q\in \mathrm{ch}_{\kappa, \square} (L)$. To that end let $\eta\notin B(\beta,3^{-1}s^{-1})$; we will have that $\Pi_{e_n ^\perp} \eta \notin B(\Pi_{e_n ^\perp}\beta,3^{-2}s^{-1})$ and so
\[
\begin{split}
\|\Pi_{e_n ^\perp} \eta -c(L)\|_\infty&\geq \frac{1}{\sqrt d}|\Pi_{e_n ^\perp} \eta-\Pi_{e_n ^\perp} \beta| -\|\Pi_{e_n ^\perp} \beta -c(L)\|_\infty \\ &\geq {\frac{3^{-2}s^{-1}\sqrt{d}}{d}-3^{-(\kappa+9)}3^4s^{-1}}  \geq \sqrt d 3^{5-\kappa}\ell(L).\end{split}
\]
Since $\Pi_{e_n ^\perp}\eta \in B(\Pi_{e_n ^\perp}\beta,3^3 s^{-1})\subseteq L$ there exists $Q\in \mathrm{ch}_{\kappa, \square} (L)$ such that $\Pi_{e_n ^\perp}\eta\in Q$. Then for all $y_1\in Q$ and $y_2\in L^{\circ,\kappa}$ we calculate
\[\begin{split}
\|y_1 -y_2\|_\infty&\geq \sqrt{d}3^{5-\kappa}\ell(L)-\|y_1-\Pi_{e_n ^\perp}\eta\|_\infty-\|y_2-c(L)\|_\infty \geq \sqrt{d} 3^{5-\kappa}\ell(L)- 23^{-\kappa}\ell(L)\\ &\geq \sqrt{d} 3^{4-\kappa}\ell(L)
\end{split}
\]
which in turn implies that $\dist(Q,L^{\circ,\kappa})\geq 3^{4-\kappa}\ell(L)$. We have showed that  $Q\in \mathrm{ch}_{\kappa, \square} (L)$ so the proof is complete. 
\end{proof}
%%%%%%%%%%%%%%%%%%%%%%%%%%%%%% PROOF PROOF PROOF
More generally,  if $X$ is any set, any collection  $\mathcal G\subset \mathcal P(X)$ satisfying the grid property g3.\ is referred to as a \emph{grid}. The forthcoming definitions  construct suitable rotated grid systems  in $\R^{n}$ whose purpose is to provide spatial localization. 

%%%%%%%%%%%%%%%%%%%%%%%%%%%%%% SECTION SECTION SECTION
\subsubsection{Spatial grids} Fix a standard triadic grid $\mathcal L$ of pace $(1,0)$ in $\R^d$ and define the subsets of $\R^n$
\[
\mathcal R \coloneqq\left\{L\times [j,j+1)):\; L\in\mathcal L,\, \ell(L)\geq 1, j\in\mathbb Z\right \}.
\]
The collection  $\mathcal R =\mathcal R_
{e_n}$ of \emph{plates parallel to $e_n^\perp\in \Gr(d,n)$} obviously inherits property g3.\ from $\mathcal L$ and the unit length partition of $\R$.
If $\mathcal R \ni R = L\times J$, call $L$ the \emph{horizontal component} and $J$ the \emph{vertical component} of $R$. The condition $\ell(L)\geq 1$ in the definition of $\mathcal R$ guarantees that the elements $L\in\mathcal R$ have horizontal components of larger scale than their vertical components, so they can be in general understood as plates in $\R^{d+1}$ of \emph{scale} $\scl(R) \coloneqq \ell(L)=  3^{k} $, $k\in \N$,  and thickness $1$  along the direction $e_n$.
For $\beta\in \mathbb S^d$  the rotated grid $\mathcal R _{\beta}$ of \emph{plates parallel to $\beta^\perp\in \Gr(d,n)$}  is then defined by
\[
\mathcal R_{\beta}  \coloneqq \left\{O_{\beta^\perp,e_n ^\perp} Q:\, Q\in\mathcal R \right\}.
\]
It is convenient to associate to an element $R\in \mathcal R_{\beta}$   the translation and    anisotropic rescaling operator
\[
\mathrm{Sy}_{R}^{p} f (x) \coloneqq \frac{1}{|R|^{\frac{1}{p}}} f\left( \frac{\Pi_{\beta^{\perp}}(x-c(R))}{\b{|\Pi_{\beta^\perp}R|}}+
\frac{\Pi_{\beta}(x-c(R))}{\b{|\Pi_\beta(R)|}}   \right), \qquad x\in \R^n. 
\]
The sense of this definition is that if  $f$ is a bump function around the origin of $\R^n$ then $\mathrm{Sy}_{R}^{p} f$ is an $L^p$-normalized bump adapted to the  plate $R\in \mathcal R_{\beta}$.

%%%%%%%%%%%%%%%%%%%%%%%%%%%%%% SECTION SECTION SECTION
\subsubsection{Tiles}\label{sec:tiles} Let $\mathcal G$ be any standard triadic grid on $\mathbb R^d$. Define the \emph{admissible tiles}  $t\in \mathcal T_{\mathcal G}$   generated by $\mathcal G$ as the collection of cartesian products
$
t=R_t \times Q_t
$
where
\vspace{.3em}
 \begin{itemize}  \setlength\itemsep{.2em}
 \item[1.] $Q_t \in \mathcal G$, $Q_t \subset  \Pi_{e_n^\perp} \widetilde {\Delta'}$;
 \item[2.]  $R_t=L_t \times I_t\in \mathcal R_{v_t}$, where  {$v_t$ is any fixed choice of vector in the set $\Pi^{-1}_{{e_n ^\perp}} Q_t ^{\circ,\kappa} \subset \mathbb S^d$; we will make that choice specific in \eqref{eq:tile} below;}
 \item[3.] $\scl(R_t) \ell(Q_t )\in [1,3)$.
 \end{itemize} The \emph{scale} of $t\in \mathcal T_\mathcal G$ is $\scl(t)\coloneqq \scl (R_t)$. 
  The \emph{frequency support} $\omega_t\subset \R^n$ of a tile $t$ is given by
\[
\omega_t\coloneqq \left\{\xi \in \mathrm{Ann}\textstyle(\frac12,2): \, \Pi_{e_n^\perp}\xi' \in Q_{t}^{\circ,\kappa}\right\}\subset \R^n.
\]
Namely, $\omega_t$ consists of those points of  $\mathrm{Ann}\textstyle(\frac12,2)$ whose projection on the sphere is contained in the preimage of the $\kappa$-th center of $Q_t$, denoted now by $Q_{t}^{\circ,\kappa}$; see Figure~\ref{fig:tiles}. Note that the collection
$
\Omega_{\mathcal G}\coloneqq\{\omega_t: \, t\in \mathcal T_{\mathcal G}\}
$
is a grid on $\R^n$, as it inherits property g3.\ from $\mathcal G$. 

\begin{figure}[!ht]
\centering
\includegraphics[scale=0.5]{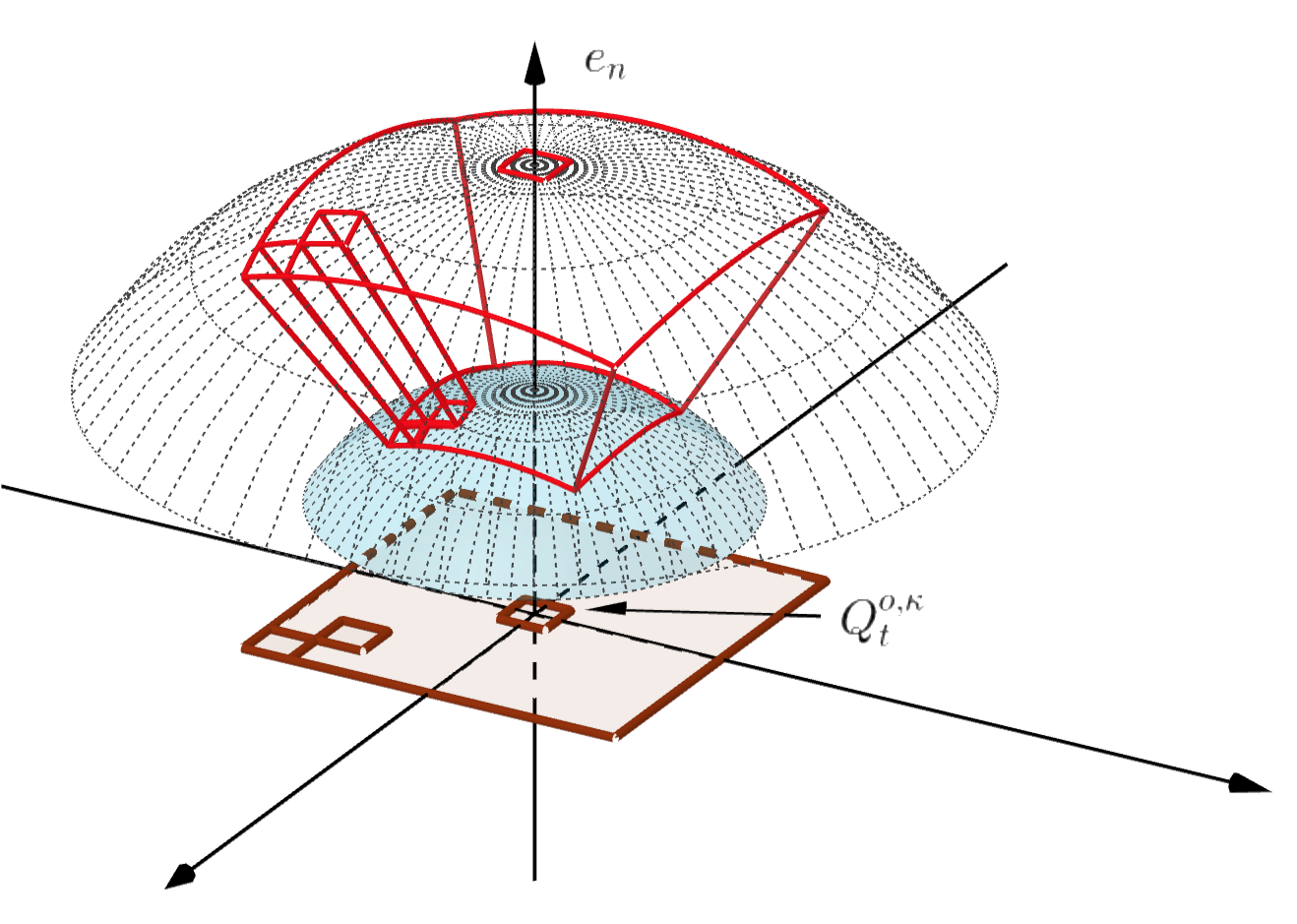}
\label{fig:bolas5}
\caption{The frequency component of a tile and its $\kappa$-children}
\label{fig:tiles}
\end{figure}

Let $Q_0=[0,1)^d$ be considered as a triadic cube and let us fix an arbitrary enumeration of the elements of $\mathrm{ch}_{\kappa, \square}(Q_0)$
\[
\mathrm{ch}_{\kappa, \square}(Q_0)=\bigsqcup_{\tau=1} ^{N} Q_{0,\tau},\qquad N=c_d 3^{\kappa d},
\]
where $c_d$ is a dimensional constant. Note that this indexing of the peripheral $\kappa$-children of $Q_0$ induces an indexing of the peripheral $\kappa$-children in $\mathrm{ch}_{\kappa, \square}(Q_t)$ for any tile $t$, in a way that is independent of the particular choice of tile $t$ and grid $\mathcal G$. The   \emph{directional support}  $\alpha_t \subset \mathbb S^d$ of a tile $t$ is given by
\begin{equation}\label{eq:alfat}
\begin{split}&
\alpha_t\coloneqq \bigsqcup_{\tau=1,\ldots, N} \alpha_{t,\tau}, \\ &\alpha_{t,\tau}\coloneqq \Pi^{-1}_{e_n^\perp}( Q_{t;\tau}), \qquad Q_{t;\tau}\in \mathrm{ch}_{\kappa, \square}(Q_t),\qquad \tau\in\{1,\ldots,N\}.
\end{split}
\end{equation}
The collections 
$
A_{\mathcal G,\tau}\coloneqq\{\alpha_{t,\tau}: \, t\in \mathcal T_{\mathcal G}\}
$
are  grids on $\mathbb S^d$, for each $1\leq \tau \leq N$. 

%%%%%%%%%%%%%%%%%%%%%%%%%%%%%% SECTION SECTION SECTION
\subsubsection{Adapted classes and model sums}\label{sec:classes} Hereafter, let $\Theta_M$ stand for the unit ball of the Banach space of functions
\[
u\in \mathcal C^{M}(\R^n), \qquad \|u\|_{\star,M}\coloneqq \sup_{0\leq |\alpha |\leq M} \left\| \left\langle x \right\rangle^{M} \partial_x ^\alpha u(x)  \right\|_\infty<\infty,
\]
where $\l x\r\coloneqq (1+|x|^2)^\frac12$. We will often use the special bump function $\chi_M\in \mathcal C^\infty$ given by 
\[
\chi_M(x)\coloneqq \l x\r^{-M}.
\]

Given a tile $t=R_t\times Q_t$ and a large positive integer $M\gg 1$ we define $\mathcal F_t^M$ as the collection of complex-valued functions $\varphi$ on $\R^n$ with the properties ($\varphi$ are $L^2$-normalized bump functions adapted to $R_t$)
\begin{itemize}
\item[(i)] $\varphi\in\left\{\mathrm{Mod}_{v_t} \mathrm{Sy}_{R_t}^{2} \Phi:\Phi \in \Theta_M\right\}, \qquad \mathrm{Mod}_v g(x)\coloneqq e^{2\pi i \l x, v\r} g(x),\qquad x\in \R^n, \quad v\in \SS^d$; 
\item[(ii)]  $\mathrm{supp}\,\widehat {\varphi}\subseteq \omega_t$.
\end{itemize}
 Further, define  $\mathcal  A_t ^{M}$ as the  collection of complex-valued  functions ${\vartheta}={\vartheta}(x,\sigma)$ on  $\R^{n} \times \Sigma$  satisfying  
\begin{itemize}
\item[(iii)] ${\vartheta}(\cdot,\sigma)\in \mathcal F_t^M$ for all fixed $\sigma\in \Sigma$;
\item[(iv)] ${\vartheta}(\cdot,\sigma)=0$ for all $\sigma \in \Sigma$ such that $v_\sigma \notin \alpha_t$;
\item[(v)] $\vartheta$ verifies the  adaptedness condition
\[
|{\vartheta} ( \cdot,\sigma) - {\vartheta} (\cdot,\rho) |\leq  \max\left\{    \scl(t) \dist(\sigma,\rho)  , \frac{1}{\log(\e+ \left[\dist(\sigma,\rho) \right]^{-1})} \right\} \mathrm{Sy}_{R_t}^{2} \chi_M
\]
for all $\sigma,\rho \in \Gr(d,n)$ with $\scl(t) \dist(\sigma ,\rho) \lesssim  1$.
\end{itemize} 
Note that both classes are $L^2$-normalized. 

%%%%%%%%%%%%%%%%%%%%%%%%%%%%%% REMARK REMARK REMARK
\begin{remark} The point of condition (v) is that in a tree, cf. Definition~\ref{def:trees},  with top direction $\sigma$  there are $\lesssim |\log(\dist(\sigma(x),\sigma))|$  frequency scales contributing at each point, and the total contribution of the difference $ |{\vartheta}_t ( x,\sigma(x)) - {\vartheta}_t (x,\sigma) | $ is summable.
\end{remark}
%%%%%%%%%%%%%%%%%%%%%%%%%%%%%% REMARK REMARK REMARK
At this point, define the tile coefficient maps 
\begin{equation}
\label{e:wpt}
F_M[f](t) \coloneqq  \sup_{\varphi \in \mathcal {F}_{t} ^M } \left|\langle f, \varphi\rangle \right|
\end{equation}
and, for a fixed measurable choice $\sigma:\R^n\to \Gr(d,n)$ with the property that $\sigma(x) \in \Sigma$ for all $x\in \R^n$, and $1\leq \tau \leq N$
\begin{equation}
\label{e:wpt2}
A_{\sigma,\tau,M}[g] (t) \coloneqq \sup_{\vartheta \in \mathcal {A}_{t} ^M } \left| \left\langle g, \vartheta\left(\cdot, \sigma(\cdot) \right) \cic{1}_{\alpha_{t,\tau}}\left(v_{\sigma(\cdot)}\right)\right\rangle \right|.
\end{equation}
 The model bisublinear operator
 \begin{equation}
\label{e:wp3}
\Lambda_{\mathbb P;\sigma,\tau,M} (f,g) =\sum_{t\in \mathbb P}F_M[f](t) A_{\sigma,\tau,M}[g] (t) 
\end{equation}
is then associated to each subset $\mathbb P\subset \mathcal T_{\mathcal G}$.  Momentarily, it will be shown that the duality form for the maximal operator \eqref{eq:realTstar} is controlled by a suitable combination of  finitely many forms \eqref{e:wp3}.

%%%%%%%%%%%%%%%%%%%%%%%%%%%%%% SECTION SECTION SECTION
\subsection{A homogeneous partition of unity}\label{sec:partition}   Our goal here is to produce a single scale Gabor decomposition of the frequency region $\Delta$ defined in \eqref{eq:truncone} which contains the Fourier support of the multiplier $ P_0 \circ P_{\mathrm{cn}} $. 

Fix a  (spatial) scale $s\in \mathbb S $, cf. \eqref{e:spatialscales}, and recall that if $\beta \in \Delta'$, this choice guarantees that $B ( \beta, 3^{6}s^{-1})\subset \widetilde {\Delta'}$. We remember that {$\kappa=12+\log_3 d$ is fixed.}  Let $  \mathcal B _{s^{-1}}$ be a $3^{-\kappa}s^{-1}$-net on $\Delta'$, in the sense that  
\begin{itemize}
\item [-] t{}he sets $ \left\{    B ( \beta, 3^{-\kappa} s^{-1} )  :\beta \in \mathcal B _{s^{-1}}   \right\}$ form a finitely overlapping cover of $\Delta'$;
\item [-] the  sets $ \left\{ B ( \beta, 3^{-(\kappa+1)}s^{-1})  :\beta \in \mathcal B _{s^{-1}}\right\} $ are pairwise disjoint.
\end{itemize}
%Here, for $x \in \mathbb R^n$ and $r>0$, $B(x,r)$ denotes the open ball in $\mathbb R^n$ centered at $x$ with radius $r$.  
Now let $\{ \theta_{\beta} \}_{\beta \in \mathcal B _{s^{-1}}}$ be a smooth, $0$-homogeneous partition of unity on $\Gamma_1$, consisting of nonnegative real-valued functions, which is subordinate to $ \{  {B ( \beta, {3^{-\kappa}s^{-1}})}:\beta \in \mathcal B _{s^{-1}} \}$. By this, we mean 
\begin{equation}
 \textrm{supp}\, \theta_{\beta} \subset B ( \beta, {3^{-\kappa}s^{-1}})    \qquad {\textrm{and}}\qquad \sum_{\beta \in \mathcal B _{s^{-1}}} \left[ \theta_{ \beta} (\eta' ) \right]^2 = 1 \qquad \forall \eta \in \Gamma_1,\quad \eta'\coloneqq\frac{\eta}{|\eta|}, \label{eq:partition}
\end{equation}
and  that the $k$-th order {tangential} derivatives of $\theta_{\beta} $ are of the order $O_{k,n,\delta}(s^{k})$ uniformly in $\beta \in \mathcal B _{s^{-1}}$. Note also that \eqref{eq:partition} implies that $ \theta_{\beta}\equiv  1$ on $ B( \beta, {3^{-(\kappa+1)}s^{-1}}) $.
For $\beta \in \mathcal B _{s^{-1}}$, consider the function $\phi_{\beta} \in \mathcal S ( \mathbb R^n )$ whose Fourier transform is given by
\begin{equation}
\label{eq:phihat}
\widehat{{\phi}_{ \beta}} (\xi ) \coloneqq \theta_{\beta} \left( \xi' \right) \zeta (|\xi|), \qquad \xi \in \mathbb R^n \setminus \{ 0 \} .  
\end{equation}
By construction,
\begin{equation}\label{eq:phibetasupp}
\supp \widehat{\phi_{\beta}}\subset  \left\{  \xi \in \mathrm{Ann}\left(\textstyle \frac12 ,2\right):\, \xi'\in B\left(\beta, {3^{-\kappa}s^{-1}}\right)  \right\} .
\end{equation}
and \eqref{eq:partition} then ensures
\[
(P_0\circ  P _{\mathrm{cn}} )	f = \sum_{\beta \in \mathcal B _{s^{-1}}}  P_0\circ  P _{\mathrm{cn}}f \ast \phi_{\beta} \ast \phi_{\beta} .
\]
For each $\beta \in \mathcal B _{s^{-1}}$, consider the lattice $ Z(\beta) \coloneqq O_{\beta^\perp,e_n^{\perp}} \left[    s  \mathbb Z^{d}  \times    \mathbb  Z \right] $.  Using Fourier series on the support of each $\widehat{\phi_{s,\beta}}$ we obtain
\begin{equation}
\label{eq:decomp1}
P_0\circ  P _{\mathrm{cn}}f= \lim_{T\to+\infty}\frac{1}{T^n} \int_{[0,T]^n}s^d \sum_{\beta\in \mathcal B _{s^{-1}}}  \sum_{z\in Z(\beta)}    \langle (P_0\circ P _{\mathrm{cn}})f , \mathrm{Tr}_{z+y} \phi_{\beta} \rangle \mathrm{Tr}_{z+y}\phi_{\beta} \, \d y, 
\end{equation}  for all $f\in\mathcal S(\R^n)$, where $\mathrm{Tr}_u h(x)\coloneqq h(x-u)$ for $x,u\in\R^n$.

%%%%%%%%%%%%%%%%%%%%%%%%%%%%%% SECTION SECTION SECTION
\subsection{Discretization of the kernel}\label{sec:model}  We turn to the discretization of the variable kernels in \eqref{eq:realTstar}. Choose a non-negative, smooth radial function $\Psi$ on $\R^n$ with 
\[
\supp  {\Psi} \subset \mathrm{Ann} \left({\textstyle \frac{8}{9}  , \frac{28}{9}    }\right), \qquad 
\sum_{s\in 3^{\mathbb Z}}   {\Psi}(s\xi) = 1 \qquad \forall \xi \in \R^n\setminus\{ 0\},
\]
while $\Phi$ is a smooth radial cutoff equal to $1$ on $\mathrm{Ann}(1,\frac32)$ and vanishing off $\mathrm{Ann}(\frac12,2)$.
These definitions entail
\begin{equation}\label{e:LPtheory}
\begin{split}
& T[ P_0\circ  P _{\mathrm {cn}} f](\cdot,\sigma)= \sum_{s\in 3^{\mathbb Z}} P_0\circ P _{\mathrm {cn}} f* \psi_{s}( \cdot,\sigma), 
\\
 & \psi_{s}(x,\sigma)\coloneqq \int_{\R^n} m_\sigma(O_\sigma \Pi_\sigma \xi)   {\Psi}(s\pr_\sigma \xi) \Phi(\xi)  e^{2\pi i {\langle x, \xi \rangle}} \, \d \xi,\qquad (x,\sigma)\in \R^n\times \Sigma.
\end{split}
\end{equation}
First, we use condition \eqref{eq:Pcone} on the frequency support of $P _{\mathrm {cn}} f$ to deduce that $  P_{\mathrm {cn}} f* \psi_{s}=0$ unless $s\in \mathbb S$, cf.\ \eqref{e:spatialscales}, and consequently restrict the sum in \eqref{e:LPtheory} to $s\in \mathbb S$.
Subsequently,  use \eqref{eq:decomp1} to estimate
\[
\begin{split}
 |T [ P_0\circ P_{\mathrm {cn}} f])(\cdot,\sigma  )|\leq  \lim_{T\to \infty} \int\displaylimits_{[0,T]^{n}} \bigg|\sum_{s\in \mathbb S} s^d\sum_{\beta\in \mathcal B _{s^{-1}}} \sum_{z\in Z(\beta)}    \langle \mathrm{Tr}_{-y} g ,  \mathrm{Tr}_{z} \phi_{\beta} \rangle  \mathrm{Tr}_{y}( \mathrm{Tr}_z\phi_{\beta}*\psi_{s} (\cdot,\sigma))\bigg| \, \frac{\d y}{T^n},
\end{split}
\]
with ${g\coloneqq P_0\circ P_{\mathrm{cn}} f}$. The norm estimates for $U^\star$ of \eqref{eq:realTstar} will follow from corresponding estimates for the maximal operator
\begin{equation}\label{eq:modwotiles}
f\mapsto \sup_{\sigma\in\Sigma} \bigg|\sum_{{s\in \mathbb S}} s^d\sum_{\beta\in \mathcal B _{s^{-1}}} \sum_{z\in Z(\beta)}    \langle   f ,\mathrm{Tr}_{z} \phi_{\beta} \rangle \mathrm{Tr}_{z}\phi_{\beta}*\psi_s (\cdot,\sigma)\bigg|.
\end{equation}
To each triplet  $(\beta,s,z)$ appearing in the sum \eqref{eq:modwotiles}, assume $(\beta,s)$ is of type $j$ in accordance to Remark \ref{r:bijgrid} and associate a tile 
\begin{equation}
\label{eq:tile}
t=R_t\times Q_t=t(\beta,s,z)\in {\mathcal T_{\mathcal G_j}, }
\end{equation}
where {$\mathcal G_j$} is one of the grids appearing in the conclusion of Lemma \ref{l:dyadic}, as follows. Firstly, $Q_t=L(\beta,s)$ {and $v_t\coloneqq \beta \in \Pi^{-1}_{{e_n ^\perp}} Q_t ^{\circ,\kappa} \subset \mathbb S^d$}. Secondly, $R_t$ is the unique element of $\mathcal R_\beta$ with $z\in R_t $ and $\scl(R_t)\ell(L(\beta,s))\in \b{[1,3)}$. Of course, $\scl(t)\sim s$. 
Recall the definitions of the collections  $\mathcal F_t^{M}$ and $\mathcal A_t^{M}$ from  \S\ref{sec:classes}. The following lemma will be used in \S\ref{sec:trees} for $M=50(d+1)$.

%%%%%%%%%%%%%%%%%%%%%%%%%%%%%% LEMMA LEMMA LEMMA
\begin{lemma}  \label{l:annulus} Let $M$ be a large integer. There is $A=A(M,d)$ such that if ${\bf{m}}$ satisfies assumption \eqref{e:admin4} the following holds.  
With reference to the expressions in \eqref{eq:modwotiles}, if $t=t(\beta,s,z)$ define the functions 
\[
\varphi_t(x)\coloneqq \scl(t)^{\frac{d}{2}}\mathrm{Tr}_{z} \phi_{\beta} (x), \quad {\vartheta}_t(x,\sigma)\coloneqq \scl(t)^{\frac{d}{2}}\int_{\R^n
} \phi_{\beta}(y-z)\psi_s (x-y,\sigma)\, \d y, \quad (x,\sigma) \in \R^n \times \Sigma.
\] Then 
\begin{align}
\label{e:uff}
& c\varphi_t \in  \mathcal F_t^{M}, \qquad c{\vartheta}_t \in  \mathcal A_t^{M-1} 
\end{align}
where the constant $c>0$ may be chosen uniformly in $\beta,s,z$. 
%\b{and $A$ is such that the assumption of Theorem~\ref{thm:main} is satisfied.} 
\end{lemma}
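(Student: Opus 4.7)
Both inclusions are to be verified by explicit Fourier-analytic bookkeeping based on the construction of the tile $t=t(\beta,s,z)$. For $c\varphi_t\in\mathcal F_t^M$, I would start by computing $\widehat{\varphi_t}(\xi)=\scl(t)^{d/2}\theta_\beta(\xi')\zeta(|\xi|)e^{-2\pi i \langle z,\xi\rangle}$. Property (ii) then follows at once from \eqref{eq:phibetasupp} combined with item 2 of Lemma~\ref{l:dyadic}: the angular support $B(\beta,3^{-\kappa}s^{-1})$ projects into $Q_t^{\circ,\kappa}$, so $\supp\widehat{\varphi_t}\subset\omega_t$. For (i), I would apply the uncertainty principle: $\widehat{\phi_\beta}$ has diameter $\sim s^{-1}$ in the directions tangent to $\mathbb S^d$ at $\beta$ and diameter $\sim 1$ radially, hence $\phi_\beta$ is a $\beta$-modulated bump whose physical profile is adapted to a plate of horizontal dimensions $s$ in $\beta^\perp$ and vertical dimension $1$ along $\beta$. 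The mild angular discrepancy $|v_t-\beta|\lesssim s^{-1}$ produces only a bounded modulating factor that is absorbed into the bump, the translation by $z\in R_t$ shifts the center by at most an $R_t$-adapted unit, and normalizing by $\scl(t)^{d/2}\eqsim |R_t|^{-1/2}$ exhibits $\varphi_t$ in the canonical form $c^{-1}\mathrm{Mod}_{v_t}\mathsf{Sy}_{R_t}^2\Phi$ with $\Phi\in\Theta_M$.

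For $\vartheta_t=\scl(t)^{d/2}\phi_\beta(\cdot-z)*\psi_s(\cdot,\sigma)$, I would factor the Fourier transform as $\widehat{\vartheta_t}(\xi,\sigma)=\widehat{\varphi_t}(\xi)n_\sigma(\xi)$, where $n_\sigma(\xi)\coloneqq m_\sigma(O_\sigma\Pi_\sigma\xi)\Psi(s\Pi_\sigma\xi)\Phi(\xi)$. A chain-rule calculation on $\mathrm{Ann}(1/2,2)$ using $\|m_\sigma\|_{\mathcal M_A(d)}\leq 1$, the smoothness of $O_\sigma,\Pi_\sigma$ from Remark~\ref{rmrk:Osmooth}, and $|\partial^\alpha\Psi(s\,\cdot)|\lesssim s^{|\alpha|}$ yields the symbol estimate $|\partial^\alpha n_\sigma(\xi)|\lesssim s^{|\alpha|}$ for $0\leq|\alpha|\leq A$. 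Convolving $\varphi_t$ with the rapidly decaying inverse Fourier transform of $n_\sigma$ preserves the Fourier support in $\omega_t$ and retains plate adaptedness to $R_t$ with a loss of one order of smoothness; this proves (iii). For (iv), if $\xi'\in B(\beta,3^{-\kappa}s^{-1})$ and $\Psi(s\Pi_\sigma\xi)\neq 0$ then the identity $|\Pi_\sigma\xi|=|\xi|\sin\theta(\xi',v_\sigma)$ forces $|v_\sigma-\beta|\eqsim s^{-1}$, which by item 3 of Lemma~\ref{l:dyadic} places $v_\sigma\in\alpha_t$.

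The main effort is condition (v). The plan is to represent $\vartheta_t(x,\sigma)-\vartheta_t(x,\rho)$ as an oscillatory integral against the symbol $\widehat{\varphi_t}(\xi)(n_\sigma(\xi)-n_\rho(\xi))$ and to telescope
\begin{align*}
n_\sigma-n_\rho =\; & [m_\sigma-m_\rho](O_\sigma\Pi_\sigma\xi)\,\Psi(s\Pi_\sigma\xi)\Phi(\xi) \\
 &+ m_\rho(O_\sigma\Pi_\sigma\xi)\,[\Psi(s\Pi_\sigma\xi)-\Psi(s\Pi_\rho\xi)]\Phi(\xi) \\
 &+ [m_\rho(O_\sigma\Pi_\sigma\xi)-m_\rho(O_\rho\Pi_\rho\xi)]\,\Psi(s\Pi_\rho\xi)\Phi(\xi).
\end{align*}
The first term produces a symbol with Mihlin norm $\lesssim 1/\log(\e+1/\dist(\sigma,\rho))$ by the logarithmic summand in \eqref{e:admin4}. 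The second and third are estimated through the Lipschitz bounds $\|\Pi_\sigma-\Pi_\rho\|\lesssim\dist(\sigma,\rho)$ and $\|O_\sigma-O_\rho\|\lesssim\dist(\sigma,\rho)$ supplied by Lemma~\ref{lem:alg} and Remark~\ref{rmrk:Osmooth}; the factor $s$ coming from differentiating $\Psi(s\,\cdot)$ yields symbol norms $\lesssim s\dist(\sigma,\rho)=\scl(t)\dist(\sigma,\rho)$. Each of the three symbols, multiplied by $\widehat{\varphi_t}$ and inverse-Fourier-transformed, produces an $L^2$-normalized bump adapted to $R_t$ with $M-1$ orders of decay. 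Summing gives a pointwise bound by $(\scl(t)\dist(\sigma,\rho)+1/\log(\e+1/\dist(\sigma,\rho)))\mathsf{Sy}_{R_t}^2\chi_{M-1}$, which is dominated by twice the $\max$ appearing in (v). The chief obstacle is the chain-rule bookkeeping for the second and third terms: one must ensure that the scale-invariant Mihlin estimates for $m_\rho$ on $|\xi|\eqsim 1$ combine with the $s$-derivatives generated by $\Psi(s\,\cdot)$ and the bounded $\sigma$-derivatives of $(O_\sigma,\Pi_\sigma)$ to produce exactly the factor $\scl(t)\dist(\sigma,\rho)$, with no spurious powers of $s$; the hypothesis $\scl(t)\dist(\sigma,\rho)\lesssim 1$ then guarantees that the resulting expression is indeed an $R_t$-adapted bump of controlled amplitude.
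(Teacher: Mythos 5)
Your outline for (i), (ii) and (iv) matches the paper's proof, and your telescoping of $n_\sigma-n_\rho$ for (v) is essentially the paper's three mean-value estimates \eqref{e:mve1} combined with the logarithmic part of \eqref{e:admin4}. However, there is a genuine gap at the step that is supposed to deliver adaptedness to $R_t$, i.e.\ conditions (iii) and (v) themselves. The symbol bound you invoke, $|\partial^\alpha n_\sigma(\xi)|\lesssim s^{|\alpha|}$, is the \emph{isotropic} estimate, and from it one only gets that the kernel $\check n_\sigma=\psi_s(\cdot,\sigma)$ decays at the isotropic spatial scale $s$, with $\|\psi_s(\cdot,\sigma)\|_{L^1}\lesssim s$ rather than $O(1)$ (the frequency support has measure $\eqsim s^{-d}$, so integrating the isotropic scale-$s$ decay over $\R^n$ loses a factor $s$). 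Convolving $\varphi_t$ with such a kernel produces a function localized in a box of dimensions $s\times\cdots\times s\times s$ and with an extra factor $s$ in amplitude, which is \emph{not} of the form $c^{-1}\mathrm{Mod}_{v_t}\mathsf{Sy}^2_{R_t}\Phi$ with $\Phi\in\Theta_{M-1}$: membership in $\mathcal A_t^{M-1}$ requires thickness $\sim 1$ in the normal direction and a uniform constant $c$. So "retains plate adaptedness to $R_t$" does not follow from the estimate you state.

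What is needed, and what the paper supplies, is the \emph{anisotropic} refinement: since $\Pi_\sigma\xi$ is constant in the $v_\sigma$-direction, derivatives of $n_\sigma$ along $v_\sigma$ cost only $O(1)$ (only $\Phi$ is hit), while derivatives along $\sigma$ cost $s$ (from $\Psi(s\Pi_\sigma\cdot)$ and from the Mihlin bound for $m_\sigma$ at $|\Pi_\sigma\xi|\eqsim s^{-1}$). This gives a kernel adapted to an $s\times\cdots\times s\times 1$ plate in the $\sigma$-frame, and one must then convert between the $\sigma$-frame and the $(\beta^\perp,\beta)$-frame of $R_t$, using that the tilt is $\lesssim s^{-1}$ (this is legitimate only after the directional support condition (iv) has been secured, so the order of the verifications matters). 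The paper implements exactly this via the anisotropic dilation $\mathrm{Dil}^s_\sigma$, the support observation \eqref{eq:support}, and the frame-comparison computations \eqref{eq:compar}--\eqref{e:q}; the same device is what introduces the correction term $B(\xi)$ in the difference symbol for (v), which your decomposition avoids but whose anisotropic bookkeeping you would still have to carry out for each of your three terms. Once you replace the isotropic symbol estimate by this anisotropic one (and record it for the differences in (v) as well, so that each term yields a bump bounded by the correct multiple of $\mathsf{Sy}^2_{R_t}\chi_{M-1}$), your argument closes and coincides in substance with the paper's.
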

%%%%%%%%%%%%%%%%%%%%%%%%%%%%%% LEMMA LEMMA LEMMA

%%%%%%%%%%%%%%%%%%%%%%%%%%%%%% PROOF PROOF PROOF
\begin{proof}Firstly, property (ii) in the definition of the class $\mathcal F_t ^M$ follows by \eqref{eq:phibetasupp} together with condition 2. of Lemma~\ref{l:dyadic}. We now check condition (i) for $\varphi_t$. By rotation and translation we can assume that $t=t(e_n,s,0)${, namely $v_t=\beta=e_n$,} and write $\R^d\otimes \R\ni x\eqqcolon(y,x_n)$, where $s=\scl(t)$. Using property (ii) we readily see that the function 
\[
\widetilde {\mathbf 1}_B(x)\coloneqq \mathrm{Mod}_{-e_n} s^{d/2} \varphi_t(sy,x_n),\qquad x=(y,x_n)\in \R^d \times \R,
\]
 is a $\mathcal C ^\infty$ bump function with $\|\widetilde {\bm 1}_B\|_1\eqsim 1$, which is adapted of any order to the unit ball of $\R^n$, centered at the origin. Thus $c\widetilde {\bm 1}_B \in \Theta_M$ as defined in \S\ref{sec:classes}  \b{for any order $M$}, where $c>0$ is a normalization constant depending only upon dimension. We get that 
 \[
 c\varphi_t(x)=c \mathrm{Mod}_{e_n} s^{-d/2}\, \widetilde {\mathbf 1}_B(y/s,x_n),\qquad x=(y,x_n)\in \R^d \times \R,
 \]
which shows (i). These two remarks show that $c\varphi_t \in  \mathcal F_t ^M$ for any desired large positive integer $M$.

We move to the verification of property $c\vartheta_t \in\mathcal A_t ^M$. For this let us fix $\sigma \in \Sigma$ and note that $\vartheta_t(\cdot,\sigma)=\phi_\beta(\cdot-z)*\psi_s({\cdot,\sigma})$. In view of \eqref{eq:phihat} and \eqref{e:LPtheory}, this immediately implies the Fourier support condition 
\[
\supp\widehat{\vartheta_t(\cdot,\sigma)}\subseteq \supp \widehat{\phi_t} \subseteq  \omega_t.
\]
We now check condition (iv) for $\vartheta_t$. Recall that the tile $t$ has been chosen so that $\alpha_t$  contains $  \mathrm{Ann}\left(\beta, \textstyle 3^{-1}s^{-1}  ,3^{2}s^{-1}\right)  $, and   that {$\supp \widehat \phi_\beta \subset  \{  \xi \in \mathrm{Ann}( 1/2 ,2):\,\xi'\in B(\beta, {3^{-\kappa}s^{-1}})  \}$} which tells us that whenever $\xi \in \supp ( \mathrm{Tr}_{z}\phi_{\beta}*\psi_s{(\cdot,\sigma)} )^\wedge$ the following inequalities  
\[
\frac49 <s|\Pi_\sigma \xi'|<\frac{56}{9},  \qquad |\xi' 	-\beta |\leq {3^{-\kappa}s^{-1}},\qquad \xi'\coloneqq \xi/|\xi|\in\SS^d,
\]
must hold. Furthermore, {we have}
\[
\begin{split}
|\Pi_\sigma \xi'| +{3^{-\kappa}s^{-1}} \geq |\Pi_\sigma \xi'| + |\xi'-\beta|\geq |\Pi_\sigma \beta|\geq |\Pi_\sigma \xi'| - | \xi' -   \beta| \geq |\Pi_\sigma \xi'| -{3^{-\kappa}s^{-1}} 
\end{split}
\]
which, in view that {$\kappa=12+\log_3 d$} yields
{\[
|\Pi_\sigma \beta| \in s^{-1} \left [\textstyle\frac13,7 \right]\quad\text{which implies}\quad v_\sigma\in \mathrm{Ann}\left(\beta, \textstyle \frac{1}{3} s^{-1}, 9s^{-1}\right).
\]}
This proves that $\vartheta_t(\cdot, \sigma)=0$ unless $v_\sigma\in \mathrm{Ann}\left(\beta, \textstyle 3^{-1} s^{-1}, 3^{2}s^{-1}\right).$
Condition 3.\ of Lemma \ref{l:dyadic} then entails (iv) for $\vartheta=\vartheta_t$. 

We now move to our main task which is to verify that $c\vartheta_t(\cdot,\sigma)$ verifies conditions (iii) and (v) in the definition of the class $\mathcal A_t ^M$. We can again reduce to the case ${v_t}=\beta=e_n$ and $z=0$, {where we recall that $t=t(\beta,s,z)$}; we write $x=({y},x_n)\in \R^d \otimes \R$ and $\sigma\in\Sigma$, {and consider} $v_\sigma\in \mathrm{Ann}\left(e_n, \textstyle 3^{-1} s^{-1}, 3^{2}s^{-1}\right)$. Now, express $x$ in terms of $\Pi_{\sigma}x,\Pi_{\sigma^\perp}x$ as in Lemma \ref{lem:alg}, 
\begin{equation}\label{eq:compar}
\begin{split}
	y - \Pi_\sigma x &=q_1(\langle x, u\rangle,\langle x,v_\sigma\rangle)u+q_2(\langle x, u\rangle, \langle x, v_\sigma\rangle)v_\sigma,
\\
    x_n - \Pi_{\sigma^\perp} x& = q_2(\langle-x,v_\sigma\rangle,\langle x, u\rangle)u +q_1(\langle x, v_\sigma\rangle,\langle-x, u\rangle)v_\sigma,
	\\
	q_1(t,w) &\coloneqq \left[\frac{\sin2\theta}{2}w - (\sin\theta)^2 t\right],\qquad q_2(t,w)\coloneqq \left[\frac{\sin2\theta}{2}t+(\sin\theta)^2w\right].
\end{split}
\end{equation}
Above {$\theta$ is the angle between $v_\sigma$ and $e_n$} and $u\in \sigma$ is a unit vector orthogonal to $\sigma\cap e_n ^\perp$. By the already verified directional support condition (iv) we know that {$\theta\simeq|v_\sigma-e_n|\lesssim s^{-1}$}, which implies that
\begin{equation}\label{e:q}
|q_1(y,w)|+|q_2(y,w)|\lesssim s^{-1} \big( |y|+|w| \big).
\end{equation}
Combining \eqref{eq:compar} with \eqref{e:q} shows that the decay rates
\begin{equation}\label{eq:decayreates}
\left[\left(1+s^{-1}|\Pi_\sigma x|\right)\left(1+|\Pi_{\sigma^\perp} x|\right)\right]^{-M},\qquad \left[\left(1+s^{-1}|y|\right)\left(1+|x_n|\right)\right]^{-{M}}
\end{equation}
{are equivalent}. {Let us define the dilation operator $\mathrm{Dil}_\sigma ^s$ for $\sigma\in\Gr(d,n)$ as 
\[
\mathrm{Dil}_\sigma ^s g(x)\coloneqq s^{d/2}g(s\Pi_\sigma x+\Pi_{\sigma^\perp}x), \qquad x=(\Pi_\sigma x,\Pi_{\sigma^\perp} x)\in  \R^n.
\]}
{In view of \eqref{eq:decayreates} it will be enough to show} that for some dimensional constant $c>0$, the rescaled function
$
\R^n\ni x \mapsto  c\mathrm{Mod}_{-e_n} \mathrm{Dil}_\sigma ^s F(x)
$
is a bump function adapted to the unit ball of $\R^n$, centered at the origin, where 
\[
F=\vartheta_t (\cdot,\sigma) \qquad\text{or}\qquad F= \b{\vartheta_t (\cdot,\sigma)-\vartheta_t(\cdot,\rho),}
\qquad \rho,\sigma \in \Sigma,\quad \rho\neq \sigma.
\]
We begin with the first case where $F=\vartheta_t(\cdot,\sigma)$. Assuming again that $t=t(e_n,s,0)$ we note that for every multiindex $\alpha$
\[
\begin{split}
\partial_x ^\alpha [\mathrm{Mod}_{-e_n} \mathrm{Dil}_\sigma ^s \vartheta_t  (\cdot,\sigma)]& = [\partial_x ^\alpha \mathrm{Mod}_{-e_n} \mathrm{Dil} _\sigma ^s \varphi_t] *  \mathrm{Mod}_{-e_n} s^{d/2}  \mathrm{Dil} _\sigma ^s\psi_s(\cdot,\sigma) 
\\
&\eqqcolon [\partial_x ^\alpha \mathrm{Mod}_{-e_n} \mathrm{Dil} _\sigma ^s \varphi_t] * \mathrm{Mod}_{-e_n} G_1.
\end{split}
\]
We have already seen that the function $\mathrm{Mod}_{-e_n} \b{s^{d/2}} \varphi_t(sy,x_n)$, with $x=(y,x_n)\in\R^d\times \R$ is a smooth bump adapted at every order to the unit ball of $\R^n$ centered at the origin. By the remark following \eqref{e:q} this is equivalent to the fact that the function $ \mathrm{Mod}_{-e_n} \mathrm{Dil} _\sigma ^s \varphi_t(x)$ has the same adaptedness property. Thus, in order to show that $ \mathrm{Mod}_{-e_n} \mathrm{Dil}_\sigma ^s \vartheta_t$ is adapted to the unit ball of $\R^n$ centered at the origin \b{at order $M$} it suffices to show that the function $G_1$ \b{has decay of order $M$ at scale $1$ away from the origin}.
Then
\[
\begin{split}
 \widehat{ G_1}(\xi)\coloneqq ( s^{d/2}  \mathrm{Dil} _\sigma ^s\psi_s(\cdot,\sigma))^\wedge  (\xi)
 =    m_\sigma(\textstyle{\frac1s}O_\sigma\Pi_\sigma\xi)\Psi(\Pi_\sigma\xi)\Phi(\textstyle{\frac1s} \Pi_\sigma\xi+\Pi_{\sigma^\perp}\xi).
\end{split}
\]
We record the easy estimate $|\mathrm{supp}  \, \widehat G_1 |\lesssim_n 1$ which is due to the fact that on the support of the function $\widehat G_1$ we have
\begin{equation}\label{eq:support}
|\Pi_\sigma \xi|\eqsim 1\quad\text{and}\quad |\Pi_\sigma \xi|^2/s^2+|\Pi_{\sigma ^\perp }\xi|^2 \eqsim 1\implies |\Pi_{\sigma^\perp }\xi|\eqsim 1.
\end{equation}
It is then   apparent that for $M\leq A$ we have
\begin{equation}
\label{e:mainbd}
 \left\| \partial_{\xi}^{\gamma} \widehat G_1 (\xi) \right\|_{L^\infty_\xi}
 \lesssim 1, \qquad 0\leq   |\gamma| \leq M.
\end{equation}
 Here, we crucially use \eqref{eq:support}, allowing the exploitation of the H\"ormander--Mihlin condition on   $ m_\sigma$ in the form
 \[
 \left|\partial^{\gamma}_\xi   m_\sigma(s^{-1}O_\sigma \Pi_\sigma \xi) \right| \lesssim s^{-|\gamma|}\left |\textstyle{\frac{1}{s}} \Pi_\sigma \xi\right|^{-|\gamma|}  \|   m_\sigma\|_{\mathcal M_A(d)}  \lesssim 1,\qquad |\gamma|\leq A.
 \]
This completes the proof of adaptation of $F=\vartheta_{t}(\sigma,\cdot)$.
 
{ Let now $F=\vartheta_t(\cdot,\sigma)-\vartheta_t(\cdot,\rho)$. Firstly note that we can assume that $\rho,\sigma$ are such that $v_\sigma,v_\rho \in \alpha_t$ (which was defined in \eqref{eq:alfat}) for $t=t(e_n,s,0)$, see \eqref{eq:tile}. Indeed, if say $\rho\notin \alpha_t$ then $\vartheta_t(\cdot,\rho)=0$ by the directional
support condition (iv), which is already verified. Then $\dist(\rho,\sigma)\gtrsim \scl(t)^{-1}$ and the desired adaptedness property follows from the case $F=\vartheta_t(\cdot,\sigma)$ proved above.

Now, assume that $\rho,\sigma\in\alpha_t$. Using \eqref{e:ncomp} and the definition of $\alpha_t$ in \eqref{eq:alfat}, $\dist(\rho,\sigma)\leq 3\sqrt{d} 3^{-\kappa}\scl(t)^{-1}\leq 3^{-8}s^{-1}$. 
We can apply the same argument as above to write    
\[
\begin{split}
 \partial_x ^\alpha [\mathrm{Mod}_{-e_n}\mathrm{Dil}_\sigma ^s F ]&=\partial_x ^\alpha [\mathrm{Mod}_{-e_n}\mathrm{Dil}_\sigma ^s \varphi_t ]* \mathrm{Mod}_{-e_n}s^{d/2} \mathrm{Dil}_\sigma ^s [  \psi_s  (\cdot,\sigma)-\psi_s  (\cdot,\rho) ]
 \\
 &\eqqcolon \partial_x ^\alpha[\mathrm{Mod}_{-e_n} \mathrm{Dil}_\sigma ^s \varphi_t] * \mathrm{Mod}_{-e_n}G_2.
 \end{split}
\]
As before it will be enough to show that the function $G_2$ {decays with order $M$ at scale $1$ away from the origin; more precisely we will show that $G_2\lesssim c(\sigma,\rho,s)\chi_M$, with $c(\sigma,\rho,s)$ the multiplicative factor appearing in condition (v) of \S\ref{sec:classes}. We will do so by estimating derivatives of sufficiently high order of the Fourier transform of $G_2$.} To calculate the Fourier transform of $G_2$ it is convenient to first note that the anisotropic rescaling of $\psi_s  (\rho,\cdot) $ in the directions of $\sigma$ creates a small error that will be kept under control by the closeness of $\sigma$ and $\rho$.  
Setting
\[
B(\xi)\coloneqq \Pi_\rho \Pi_\sigma \xi +s \Pi_\rho \Pi_{\sigma^\perp}\xi - \Pi_\rho \xi = (s-1)\Pi_\rho \Pi_{\sigma ^\perp }\xi,\quad |B(\xi)|\lesssim s \dist(\sigma,\rho)|\xi| \ll |\xi|,
\]
an explicit computation tells us that 
\begin{multline*}
\widehat{G_2}(\xi) =   \left[     m_\sigma\left(\textstyle{\frac{1}{s}}  O_\sigma \Pi_\sigma \xi\right) \Psi(\Pi_\sigma \xi) -
  m_\rho\left(\textstyle{\frac{1}{s}}O_\rho(\Pi_\rho \xi + B(\xi) )\right) \Psi(\Pi_\rho \xi + B(\xi) ) \right] 
\\ 
\times \Phi\left(\textstyle{\frac{1}{s}}\Pi_\sigma\xi+ \Pi_{\sigma^\perp}\xi\right).
\end{multline*}
Note that since $\sigma \in \Sigma$ we always have that $|\xi|\eqsim |\Pi_{\sigma^\perp} \xi|\eqsim 1$ for $s^{-1}\Pi_\sigma \xi +\Pi_{\sigma^\perp} \xi \in \mathrm{supp}(\Phi)$ and in particular we recover that $|\Pi_\sigma \xi|\eqsim |\Pi_\rho \xi|\eqsim |\xi|\eqsim 1$ on the support of $\widehat {G_2}$.
Three mean value estimates combined with \eqref{e:admin4} and \eqref{e:opnormest} tell us that
\begin{equation}
\label{e:mve1}
\begin{split} &  
\left|
\partial^\gamma_\xi \left[\Psi(\Pi_\rho \xi + B(\xi)  ) -\Psi(\Pi_\sigma \xi )  \right] \right| \lesssim |B(\xi)| + |\Pi_\rho \xi-\Pi_\sigma \xi | \lesssim  s\dist(\sigma,\rho) ,
\\ 
&   \left|
\partial^\gamma_\xi \left[  m_{\rho}\left(\textstyle{\frac{1}{s}}O_{\sigma}\Pi_\sigma \xi\right) - 
\ m_\rho\left(\textstyle{\frac{1}{s}} O_{\rho}\Pi_\sigma \xi\right) \right] \right| 
\lesssim  \| O_{\rho}-O_{\sigma}\|\lesssim \dist(\sigma,\rho),
\\ 
&  \left|
\partial^\gamma_\xi \left[ m_{\rho}\left(\textstyle{\frac{1}{s}}O_{\rho}\Pi_\sigma \xi\right)- 
\ m_\rho\left(\textstyle{\frac{1}{s}} O_{\rho}(\Pi_\rho \xi + B(\xi))\right) \right] \right| 
 \lesssim |B(\xi)| + |\Pi_\rho\xi-\Pi_\sigma \xi |  \lesssim s\dist(\sigma,\rho) .
\end{split}
\end{equation}
 Hence, we may replace $ \widehat{G_2}$ by 
\[ 
\begin{split}
\widehat{G_3}(\xi) &\coloneqq   \left[  m_\sigma\left(\textstyle{\frac{1}{s}} O_{\sigma}\Pi_\sigma \xi\right)   -
  m_\rho\left(\textstyle{\frac{1}{s}} O_{\sigma}\Pi_\sigma \xi\right) \right] \Psi(\Pi_\rho \xi + B(\xi) ) \Phi(s^{-1}\Pi_\sigma\xi+ \Pi_{\sigma^\perp}\xi)
\end{split}
\]
 controlling  the difference by means of \eqref{e:mve1}. Finally, the main assumption \eqref{e:admin4} on the family ${\bf{m}}$ tells us that
 \[ \partial^\gamma_\xi\left[   m_\sigma\left(\textstyle{\frac{1}{s}} O_{\sigma}\Pi_\sigma \xi\right)   -
  m_\rho\left(\textstyle{\frac{1}{s}} O_{\sigma}\Pi_\sigma \xi\right)   \right] \lesssim  \frac{1}{\log\left(\e+ [\dist(\sigma,\rho)]^{-1}\right)}
 \] 
 on the support of $\widehat {G_3}$;  collecting the above inequalities leads to the conclusion
 \begin{equation}
\label{e:quotdiff}
 \left\| \partial_{\xi}^{\gamma} \widehat G_2 (\xi) \right\|_{L^\infty_\xi}
 \lesssim (1+s) \dist(\sigma,\rho)+ \frac{1}{\log\left(\e+ [\dist(\sigma,\rho)]^{-1}\right)}, \qquad 0\leq   |\gamma| \leq A-1,
\end{equation}
which completes the proof of (v).
}
\end{proof}
%%%%%%%%%%%%%%%%%%%%%%%%%%%%%% PROOF PROOF PROOF

With the notation introduced in Lemma \ref{l:annulus}, the right hand side of  \eqref{eq:modwotiles} may be rewritten as, and subsequently bounded by, as follows:
\begin{equation}\label{eq:modwtiles1}
\begin{split} &
\sup_{\sigma\in\Sigma} \bigg|\sum_{{s\in \mathbb S}} \sum_{\beta\in \mathcal B _{s^{-1}}} \sum_{z\in Z(\beta)}    \langle   f ,\varphi_{t(\beta,s,z)} \rangle   {\vartheta}_{t(\beta,s,z)} (\cdot,\sigma)\bigg| \leq \sum_{j=1}^{C}  \sum_{\tau=1} ^N \mathrm{T}_{j,\tau} f,
\\ 
&\mathrm{T}_{j,\tau} f\coloneqq \sup_{\sigma\in\Sigma} \left|\sum_{\substack{(\beta,s,z)\\ j(\beta,s)=j}}    \langle   f ,\varphi_{t(\beta,s,z)} \rangle   {\vartheta}_{t(\beta,s,z) }(\cdot,\sigma) \cic{1}_{\alpha_{t(\beta,s,z),\tau}} (v_\sigma) \right|.
\end{split}
\end{equation}
Property (iv) of the $\mathcal A_t$ class was used above. It thus suffices to control one of the {$ \mathrm{T}_{j_o,\tau_o} f$} summands for an arbitrary but fixed value of {$(j_o,\tau_o)\in \{1,\ldots,C\}\times\{1,\ldots,N\}$.}
Choose a measurable function $\sigma=\sigma(x):\R^{n}\to \Sigma$, linearizing the corresponding supremum in the definition of $\mathrm{T}_{j_o,\tau_o} f$, and $g\in\mathcal S(\R^n)$ of unit norm in $ L^{p',q'}(\R^n)$ so that  
{\[
\begin{split}
\|\mathrm{T}_{j_o,\tau_o}f \|_{L^{p,q}(\R^n)}& \leq 2 \left|\int  
\sum_{{s\in \mathbb S}} \sum_{\beta\in \mathcal B _{s^{-1}}} \sum_{z\in Z(\beta)}    \langle   f ,\varphi_{t(\beta,s,z)} \rangle   {\vartheta}_{t(\beta,s,z)} (x,\sigma(x))\cic{1}_{\alpha_{t(\beta,s,z),\tau_o}} (v_{\sigma(x)}) g(x)\,\d x \right|
\\
& \lesssim \Lambda_{\mathcal T_{\mathcal G_{j_o},M}; \sigma, \tau_o} (f,g)
\end{split}
\]
}
\subsection{Final reductions to a scale-separated model sum}
By a limiting argument and restricted weak-type interpolation, Proposition \ref{p:mainpf} is reduced to proving the two estimates
\begin{align}\label{eq:model}
&\Lambda_{\mathbb P; \sigma, \tau_o,M} (f,g\cic{1}_E)\lesssim \|f\|_{2} |E|^{\frac12}
\\ &\Lambda_{\mathbb P; \sigma, \tau_o,M} (\cic{1}_F,g\cic{1}_{{E}})\lesssim |F|^{\frac1p}|E|^{1-\frac{1}{p}}
\label{eq:model2}
\end{align}
uniformly over all $f,g\in L^\infty_0(\R^n)$  with the normalization $\|g\|_\infty=1$, all sets $F,E\subset \R^d$ of finite measure, a fixed choice of triadic grid $\mathcal G=\mathcal G_{j_o}$ and  $\mathbb P\subset \mathcal T_{\mathcal G}$.

 {Henceforth, we fix $M=50n$ and a pair $(j_o,\tau_o)$ as above} and write {$\Lambda_{\mathbb P;\tau_o}(f,g)$ in place of $\Lambda_{\mathbb P; \sigma, \tau_o,M} (f,g)$.} {By an additional splitting of the dyadic grid $\mathcal G,$ we can and will assume that the scales are $3^\kappa$ separated: namely if $Q,Q' \in\mathcal G$ with $\ell(Q)<\ell(Q')$ then $\ell(Q)<3^{-\kappa}\ell(Q')$. Note that this implies a corresponding separation of the spatial scales of tiles in $\mathcal T_{\mathcal G}$ because of the uncertainty constraint. We will also make the qualitative assumption that $\mathbb P$ is a finite collection and prove estimates which are uniform in $\mathbb P$.} The proofs of \eqref{eq:model} and \eqref{eq:model2} will be sketched at the end of \S\ref{sec:trees}, relying upon the size, density and tree lemmas, also proved in \S\ref{sec:trees} below.

%%%%%%%%%%%%%%%%%%%%%%%%%%%%%% SECTION SECTION SECTION
\section{Trees, size and density}\label{sec:trees} We fix a finite collection of tiles $\mathbb P\subset \mathcal T_{\mathcal G}$, for a fixed choice of triadic grid $\mathcal G$. We remember that tiles are sets of the form $t=R_t \times Q_t$ where $Q_t\in\mathcal G$ is  a triadic cube in $\R^d \times \{0\}$.  Recall the roles of $Q_t ^{\circ,\kappa}$ and $\mathrm{ch}_{\kappa, \square}(Q)$; since $\kappa$ is a fixed large dimensional constant, we will omit it from the notation from now on and write instead $Q_t ^\circ$ and $\mathrm{ch}_{\square}(Q)$ for the \emph{center} and the \emph{peripheral children} of $Q$, respectively.

We will be sorting these arbitrary collections of tiles into \emph{trees}, defined below. {It will be useful to remember the definition of the directional support $Q_{t,\tau}$ from \eqref{eq:alfat}.}

%%%%%%%%%%%%%%%%%%%%%%%%%%%%%% DEFINITION DEFINITION DEFINITION
\begin{definition}[trees]\label{def:trees} A \emph{tree} is a collection of tiles $\mathbf{T}\subseteq \mathbb P$ such that, there exists a pair  $(\xi_\mathbf{T},R_{\mathbf T})$ with $\xi_\mathbf{T}\in\R^d$ and $R_{\mathbf T}\in \{R_t:\, t\in\mathcal T_{\mathcal G}\}$  such that
\begin{itemize}
	\item [1.] $\xi_{\mathbf T} \in Q_t $ for all $t\in\mathbf T$;
	\item [2.] \b{$\scl(R_t)\leq \scl(R_{\mathbf T})$ and $R_t\cap R_{\mathbf T}\neq \varnothing $ for every $t\in\mathbf T$.
	}
\end{itemize}
{We call $(\xi_\T,R_\T)$ the \emph{top data} of $\T$. Let $\tau\in\{1,\ldots,N\}$; a \emph{tree} $\mathbf T$ will be called $\tau$-\emph{lacunary} if {$\xi_{\mathbf T} \in Q_{t,\tau} \in \mathrm{ch}_{\square}(Q_t)$} for all $t\in\mathbf T$. The tree $\T$ will be called \emph{lacunary} if it is $\tau$-lacunary for some $\tau\in\{1,\ldots,N\}$ while it will be called \emph{overlapping} if $\xi_{\mathbf T}\in Q_t\setminus \cup_{\tau=1} ^N Q_{t,\tau}$.}
\end{definition}
%%%%%%%%%%%%%%%%%%%%%%%%%%%%%% DEFINITION DEFINITION DEFINITION

The following relation of order will be useful: For two tiles $t,t'\in \mathcal T_{\mathcal G}$ we will define
\[
t\leq t' \overset{\mathrm{def}}{\iff}  Q_{t'} \subseteq Q_t   \quad \text{and}\quad \b{R_t \cap R_{t'}\neq \varnothing}
\]
Note that this is \emph{not} a partial order as it is not transitive.  The following geometric lemma will be used in several places throughout the paper.

%%%%%%%%%%%%%%%%%%%%%%%%%%%%%% LEMMA LEMMA LEMMA
\begin{lemma}\label{lem:geom} There exists a constant $K_n$ depending only upon dimension, such that for every $t,t'\in \mathbb P$ with $Q_{t'}\subseteq Q_t$, the following hold.
\begin{itemize}
\item[-] If $R_t \cap R_{t'}\neq \varnothing$ then $R_t \subseteq K_n R_{t'}$.
\item[-]  If $K_n R_t \cap K_n R_{t'}\neq \varnothing$ then $K_n R_t \subseteq K_n ^2 R_{t'}$.
\end{itemize}
\end{lemma}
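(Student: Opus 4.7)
The plan is to extract two essentially independent pieces of information from the hypothesis $Q_{t'}\subseteq Q_t$: a \emph{scale comparison} and an \emph{angle comparison}, then perform a single bookkeeping of the plate $R_t$ after expressing it in the coordinate frame adapted to $R_{t'}$.

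First, from condition 3.\ in the definition of admissible tile, $\scl(R_t)\eqsim \ell(Q_t)^{-1}$ and $\scl(R_{t'})\eqsim \ell(Q_{t'})^{-1}$. Since $Q_{t'}\subseteq Q_t$ gives $\ell(Q_{t'})\le\ell(Q_t)$, one reads off $\scl(R_t)\le 3\scl(R_{t'})$. Second, by definition $v_t=\Pi_{e_n}^{-1}c(Q_t)$ and the quasi-isometry estimate \eqref{e:ncomp} (applied on $\widetilde{\Delta'}$), the inclusion $Q_{t'}\subseteq Q_t$ yields $|c(Q_{t'})-c(Q_t)|\lesssim \ell(Q_t)$, whence
\[
|v_t-v_{t'}|\,\lesssim\,\ell(Q_t)\,\eqsim\,\scl(R_t)^{-1}.
\]
This is the crucial fact: the planes $v_t^\perp$ and $v_{t'}^\perp$ are tilted by an angle at most of order $1/\scl(R_t)$.

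Now parametrize a generic $x\in R_t$ as $x=y+av_t$ with $y\in v_t^\perp$ satisfying $|y-c_h(R_t)|\le \sqrt{d}\,\scl(R_t)$ (where $c_h(R_t)$ denotes the horizontal center of $R_t$ in its own frame) and $a$ in an interval of length $1$. Decomposing in the rotated frame $(v_{t'}^\perp,v_{t'})$ and using $|\Pi_{v_{t'}}v_t|\le |v_t-v_{t'}|$ together with the previous bound,
\[
|\Pi_{v_{t'}}(x-c(R_t))|\,\le\, \tfrac12+|y-c_h(R_t)|\,|v_t-v_{t'}|\,\lesssim\, 1,
\]
while $|\Pi_{v_{t'}^\perp}(x-c(R_t))|\lesssim \scl(R_t)+|a|\,|v_t-v_{t'}|\lesssim \scl(R_t)\le 3\scl(R_{t'})$. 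In other words, $R_t$ fits, up to an absolute dimensional factor, into a plate of scale $\scl(R_{t'})$ parallel to $v_{t'}^\perp$ and centered at $c(R_t)$.

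The two bullets of the lemma then follow by a triangle inequality: under $R_t\cap R_{t'}\neq\varnothing$, the two centers $c(R_t),c(R_{t'})$ are within $\scl(R_{t'})$ horizontally and $O(1)$ vertically (in the $v_{t'}$ frame), so choosing $K_n$ large enough depending only on the implicit constants gives $R_t\subseteq K_nR_{t'}$. The second claim is identical in spirit: $K_nR_t\cap K_nR_{t'}\neq\varnothing$ improves only the separation between centers by a factor $K_n$, and the same parametrization puts $K_nR_t$ inside $K_n^2 R_{t'}$. The main (minor) obstacle is to keep track that the tilt between frames contributes only an $O(1)$ error to the $v_{t'}$-component of $R_t$, which is exactly ensured by the matching between $\scl(R_t)$ and $|v_t-v_{t'}|^{-1}$ dictated by the tile selection rule; once this is in hand, the rest is bookkeeping of dimensional constants in the definition of $K_n$.
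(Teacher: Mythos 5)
Your argument is correct, and in fact the paper states Lemma~\ref{lem:geom} without proof; your scale-plus-tilt bookkeeping is exactly the elementary argument being taken for granted there: condition 3.\ of the tile definition gives $\scl(R_t)\le 3\,\scl(R_{t'})$, the containment $Q_{t'}\subseteq Q_t$ together with \eqref{e:ncomp} gives $|v_t-v_{t'}|\lesssim \scl(R_t)^{-1}$, and since (long side of $R_t$)$\,\times\,$(tilt)$\,=O(1)$ the plate $R_t$ sits inside a bounded dilate of a plate oriented along $v_{t'}^\perp$, after which both bullets follow by comparing centers and choosing $K_n$ large. One small slip to fix: the inequality you invoke, $|\Pi_{v_{t'}}v_t|\le |v_t-v_{t'}|$, is false as written (that projection has length close to $1$); what your displayed estimates actually use, and what is true, is $|\Pi_{v_{t'}^{\perp}}v_t|\le |v_t-v_{t'}|$ for the vertical part and $|\langle y,v_{t'}\rangle|=|\langle y,v_{t'}-v_t\rangle|\le |y|\,|v_t-v_{t'}|$ for $y\in v_t^{\perp}$ for the horizontal part; with that wording corrected the proof is complete.
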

%%%%%%%%%%%%%%%%%%%%%%%%%%%%%% LEMMA LEMMA LEMMA

%%%%%%%%%%%%%%%%%%%%%%%%%%%%%% REMARK REMARK REMARK
\begin{remark}\label{rmrk:treefacts} The following facts about trees will be used without particular mention.
\begin{itemize}
	\item [-] If $\mathbf T$ is a tree then there exists a \emph{top tile} $\mathrm {top}(\mathbf T)=R_{\mathbf T}\times Q_{\mathbf T}$ such that $\xi_\T\in Q_t$ and $t\leq \mathrm{top}(\mathbf T)$ for all $t\in\mathbf T$.  {In particular we have that for lacunary $\mathbf T$ there holds $t\leq \mathrm{top}(\T)$ and $Q_\T\subseteq Q_{t}\setminus Q_{t} ^\circ$ for all $t\in\T$. Indeed, note that if $\T$ is lacunary then $\ell(Q_t)>3^{\kappa}\ell(Q_\mathbf{T})$ for all $t\in \mathbf{T}$ because of the separation of scales assumption for the grid $\mathcal G$.} 
	\item [-]  Because of the point above, we can and will always assume that $\xi_\T$ has no triadic coordinates.
	\item [-] If $\T$ is a lacunary tree then the collections $\{ Q:\, Q=Q_t ^\circ\,\,\,\text{for some} \,\,\, t\in\T\}$ and  $\{ \omega:\, \omega=\omega_t \,\,\,\text{for some} \,\,\, t\in\T\}$ are pairwise disjoint.
\end{itemize}
\end{remark}
%%%%%%%%%%%%%%%%%%%%%%%%%%%%%% REMARK REMARK REMARK

For the following definition, $E\subset\R^n$ is a measurable set of finite measure. This set remains fixed throughout the paper. Also for any collection {$\mathbf R$ consisting of of rectangular parallelepipeds in $\R^n$} we define the \emph{shadow} of the collection
{
\[
\sh(\mathbf R) \coloneqq  \bigcup_{R\in\mathbf R} R.
\]
}
Recall also the definition of the intrinsic coefficients $F[f](t)$ from \eqref{e:wpt}. The next step consists of defining two coefficient maps respectively tied to the Carleson measure properties of the coefficients \eqref{e:wpt}, \eqref{e:wpt2}

%%%%%%%%%%%%%%%%%%%%%%%%%%%%%% DEFINITION DEFINITION DEFINITION
\begin{definition}[Size] Let $\mathbb P\subset \mathcal T_{\mathcal G}$ be a finite collection of tiles. For fixed $f\in L^\infty_0(\R^n)$, define the map
\[
\size: \mathcal P(\mathbb P) \to [0,\infty), \qquad \size(\mathbb Q) \coloneqq \sup_{\substack{{\mathbf T}\subseteq \mathbb Q\\\mathbf T\text{ lac. tree}}} \Big(\frac{1}{|R_\T|}\sum_{t\in\T} F_{10n}[f](t)^2 \Big)^{\frac12}.
\]
 \end{definition}

\begin{definition}[Density] Let $\mathbb P$ be a finite collection of tiles and $t\in\mathbb P$. For a fixed measurable set $E\subset \R^n$ of finite measure, we define
	\begin{equation} \label{e:densedef}	E_t \coloneqq \left\{x\in E:\, v_{\sigma (x)}\in \alpha_t \right\}, \qquad	\dense (t) \coloneqq { \sup_{\substack{t'\geq t\\t'\in\mathcal T_{\mathcal G}}}\int_{E_{t'}}  \mathrm{Sy}_{R_{t'}} ^{1} \chi_{10n},}
\end{equation}
as well as the map
\[
\dense: \mathcal P(\mathbb P) \to [0,\infty), \qquad
 \dense(\mathbb Q) \coloneqq \sup_{t\in \mathbb Q}\dense(t).
\]
\end{definition}
%%%%%%%%%%%%%%%%%%%%%%%%%%%%%% DEFINITION DEFINITION DEFINITION

%%%%%%%%%%%%%%%%%%%%%%%%%%%%%% SECTION SECTION SECTION
\subsection{Orthogonality estimates for lacunary trees} We will use different orthogonality estimates for wave packets adapted to special collections of tiles. Most of them are rather standard in the literature but we include them here for completeness.

%%%%%%%%%%%%%%%%%%%%%%%%%%%%%% LEMMA LEMMA LEMMA
\begin{lemma}\label{lem:ortho} Let $\mathbf T\subset\mathbb P$ be a lacunary tree. Then the following hold
\begin{itemize}
  \item [1.]  For adapted families $\{\varphi_t:\, \varphi_t\in\mathcal F_t ^M,\, t\in \T\}$, {$\{\psi_t:\, \psi_t\in\mathcal F_t ^M,\, t\in \T\}$} with $M>2n$, we have
\[
\sup_{|\eps_t|=1}\left \|\sum_{t\in\mathbf T} \eps_t\l f,\varphi_t\r \psi_t \right\|_{L^2(\R^n)} \lesssim \left(\sum_{t\in\T} F_M[f](t) ^2 \right)^{\frac12} \lesssim \|f\|_{L^2(\R^n)} .
\]
with implicit dimensional constants.
\item[2.] There holds
\[
\sum_{t\in\T} F_{10n}[f](t)^2 \lesssim \|f\|_{L^\infty(\R^n)} ^2|R_\T|.
\]

\end{itemize}

\end{lemma}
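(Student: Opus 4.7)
The lemma hinges on two structural features of a lacunary tree $\mathbf{T}$. By Remark~\ref{rmrk:treefacts}, for each spatial scale $s$ realized in $\mathbf{T}$ the frequency region $\omega_t$ is uniquely determined (call it $\omega^{(s)}$), and the family $\{\omega^{(s)}\}_s$ is pairwise disjoint. By Lemma~\ref{lem:geom}, every $R_t$ with $t\in\mathbf{T}$ lies in $K_n R_{\mathbf{T}}$. Both facts will be used repeatedly.

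For the rightmost inequality in part~1, the plan is to argue scale by scale: at each fixed $\omega=\omega^{(s)}$, the wave packets $\{\varphi_t:\omega_t=\omega\}$ are $L^2$-normalized, Fourier-supported in $\omega$, and localized via the envelope $\mathsf{Sy}_{R_t}^2\chi_M$ to translates of $R_t$ on a lattice of density $\sim 1$, so a standard Gabor Bessel bound yields
\[
\sum_{t\in\mathbf{T},\,\omega_t=\omega^{(s)}} F_M[f](t)^2 \lesssim \|P_{\omega^{(s)}} f\|_2^2;
\]
the disjointness of the $\omega^{(s)}$ and Plancherel then produce the total bound. For the middle inequality, duality reduces matters to the dual Bessel bound
\[
\sum_{t\in\mathbf{T}}\bigl|\langle g,\psi_t(\cdot,\sigma(\cdot))\cic{1}_{\alpha_t}(v_{\sigma(\cdot)})\rangle\bigr|^2 \lesssim \|g\|_2^2,
\]
and this is the main obstacle of the lemma: the linearized packets lack clean Fourier support because of the coupling with $\sigma(\cdot)$. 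The approach is a $TT^*$ argument, using as the key inputs the spatial envelope supplied by~(iii) together with the regularity in~(v). The indicator $\cic{1}_{\alpha_t}(v_{\sigma(x)})$ forces $\dist(\sigma(x),\sigma_{\mathbf{T}})\lesssim \scl(t)^{-1}$ for the reference direction $\sigma_{\mathbf{T}}$ associated to the top of the tree. Off-diagonal inner products $\langle \psi_t(\cdot,\sigma(\cdot))\cic{1}_{\alpha_t},\psi_{t'}(\cdot,\sigma(\cdot))\cic{1}_{\alpha_{t'}}\rangle$ are controlled by spatial decay of $\mathsf{Sy}_{R_t}^2\chi_M$ when $R_t,R_{t'}$ are far apart at the same frequency scale, by near-orthogonality of the frequency-localized components $\psi_t(\cdot,\sigma_{\mathbf{T}})$ across different $\omega_t$, and by the logarithmic-H\"older perturbation from~(v) for the $\sigma$-correction; summing over $t'$ at each $t$ gives $O(1)$, which is the $TT^*$ condition.

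For part~2, the crude estimate $F_{10n}[f](t)\lesssim \|f\|_\infty |R_t|^{1/2}$ summed across $\mathbf{T}$ would produce a spurious factor equal to the number of scales in the tree. Instead, introduce a smooth spatial localizer $w_{\mathbf{T}}$ adapted to $K_n R_{\mathbf{T}}$ and decompose $f=f w_{\mathbf{T}}+f(1-w_{\mathbf{T}})$. The contribution from $f(1-w_{\mathbf{T}})$ is handled by the fast spatial decay of $\varphi_t\in\mathcal{F}_t^{10n}$ outside $R_t\subset K_nR_{\mathbf{T}}$, and sums geometrically to $\|f\|_\infty^2 |R_{\mathbf{T}}|$. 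For $f w_{\mathbf{T}}$, the scale-by-scale Bessel bound from part~1 together with disjointness of the $\omega^{(s)}$ and Plancherel yields
\[
\sum_{t\in\mathbf{T}} F_{10n}[f w_{\mathbf{T}}](t)^2 \lesssim \|f w_{\mathbf{T}}\|_2^2 \leq \|f\|_\infty^2 |K_n R_{\mathbf{T}}| \lesssim \|f\|_\infty^2 |R_{\mathbf{T}}|,
\]
which completes the proof.
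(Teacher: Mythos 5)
Your rightmost inequality in part 1 and your part 2 are essentially correct and match the paper's mechanism (for part 2 the paper divides the packets by the bump $t_\T=\mathsf{Sy}^\infty_{K_nR_\T}\chi_{4n}$ rather than splitting $f$, but the localization to $K_nR_\T$ via Lemma~\ref{lem:geom} is the same idea; your tail term does require a boundary-layer summation over scales, which you gloss but which does work). The gap is in your treatment of the first inequality. In this lemma the functions $\psi_t$ are \emph{fixed} wave packets: by property (iii) each slice $\psi_t(\cdot,\sigma)$ at a frozen $\sigma$ lies in $\mathcal F_t^{M}$, hence has Fourier support in $\omega_t$ and spatial adaptation to $R_t$, and this is exactly how the lemma is later invoked (e.g.\ with $\widetilde\vartheta_t=c\,t_{R_\T}^{-1}\vartheta_t(\cdot,\R^d)$ in the tree lemma). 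The proof is then the standard two-step orthogonality: split the tree according to the pairwise disjoint $Q_t^\circ$ (equivalently $\omega_t$, Remark~\ref{rmrk:treefacts}) and use Plancherel across scales, then within a fixed scale use the disjointness of the congruent plates $R_t$ and the decay $M>2n$ for a Schur/Bessel estimate. No measurable choice $\sigma(\cdot)$, no indicator $\cic{1}_{\alpha_t}$, and no use of condition (v) enters here.

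You instead dualize to a Bessel bound for the linearized packets $\psi_t(\cdot,\sigma(\cdot))\cic{1}_{\alpha_t}(v_{\sigma(\cdot)})$, which is a different and substantially harder statement, and your $TT^*$ sketch for it does not close: the assertion that ``summing over $t'$ at each $t$ gives $O(1)$'' is precisely the hard point and is not justified. On the support of $\cic{1}_{\alpha_t}(v_{\sigma(\cdot)})$ one only knows $\dist(\sigma(x),\sigma_{\T})\lesssim \scl(t)^{-1}$, so condition (v) bounds the correction $\psi_t(\cdot,\sigma(\cdot))-\psi_t(\cdot,\sigma_{\T})$ merely by $O(1)$ times the envelope $\mathsf{Sy}^2_{R_t}\chi_M$ --- no smallness --- and these corrections carry no Fourier localization whatsoever. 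Consequently, for a fixed $t$ the cross terms against the smaller-scale tiles $t'$ occupying the spatial support of $R_t$ cannot be summed by size and decay alone: at scale $s'\ll\scl(t)$ there are about $(\scl(t)/s')^{d}$ such tiles while each envelope pairing is only of size $(s'/\scl(t))^{d/2}$, so the naive Schur sum diverges. The genuine gain from the log-H\"older hypothesis --- that at a fixed point only $\lesssim\log\big(\e+\dist(\sigma(x),\sigma_{\T})^{-1}\big)$ scales are active, each contributing $\lesssim 1/\log$ --- is a pointwise mechanism which the paper exploits later, in the tree lemma (the $L_{P,2}$ term), together with the density function; it is not an $L^2$ almost-orthogonality statement, and using it as such here is where your argument has a genuine gap. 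Proving the lemma for fixed-$\sigma$ packets, as the paper does, avoids the issue entirely.
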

%%%%%%%%%%%%%%%%%%%%%%%%%%%%%% LEMMA LEMMA LEMMA

%%%%%%%%%%%%%%%%%%%%%%%%%%%%%% PROOF PROOF PROOF
\begin{proof} Let $S_\T f \coloneqq\sum_{t\in\T}  \eps_t\l f,\varphi_t\r \psi_t$ and  $Q(\T)\coloneqq\{Q_t ^\circ:\, t\in\T\}$. For $Q\in Q(\T)$ we also write $\T(Q)\coloneqq\{t\in\T:\, Q_t ^\circ=Q\}$. Then
\[
S_\T f =\sum_{Q\in Q(\T)} \sum_{t\in\T(Q)} \eps_t\l f,\varphi_t\r \psi_t\eqqcolon \sum_{Q\in Q(\T)} S_Q(f)
\]
and because of Remark~\ref{rmrk:treefacts} we have
\[
\|S_\T f\|_2 ^2 \leq \sum_{Q\in Q(\T)} \| S_Q f \|_2 ^2.
\]
{We next show} that for each $Q\in Q(\T)$ we have
\[
\|S_Q(g)\|_2 ^2 \lesssim \sum_{t\in\T(Q)} F_M[g](t) ^2.
\]
Expanding the square we get for each $Q\in Q(\T)$ {
\[
\|S_Q(g)\|_2 ^2 \leq \sum_{t,t'\in \mathbf T(Q)}| \l g,\varphi_t\r| | \l g,\varphi_{t'}\r| | \l\psi_t,\psi_{t'}\r | \lesssim \sum_{t\in\T(Q)} F_M[g](t)^2  \sup_{{t'}\in\T(Q)}	\sum_{t\in\T(Q)}|\l \psi_t,\psi_{t'}\r|.
\]
However $\sup_{t'\in\T(Q)}\sum_{t\in\T(Q)}|\l \psi_t,\psi_{t'}\r|\lesssim 1$ by the fast decay of the wave packets $\{\psi_t:\, t\in\mathbf T(Q)\}$} and the fact that the spatial components $\{R_t:\, t\in\mathbf T(Q)\}$ tile $\R^n$. This proves the first estimate of 1.

To see the second estimate {we pick for each $t\in\T$ a $\varphi_t\in \mathcal F_t ^M$} and write
\[
\sum_{t\in\T} {\left| \l f,\varphi_t\r \right|^2} =\left \l f,\sum_{{t}\in \T}\l f,\varphi_t\r \varphi_t\right\r\lesssim \|f\|_{L^2(\R^n)} \left(\sum_{t\in\T} F_M[f](t)^2\right)^{\frac12}
\]
where we used 1. in the last approximate inequality. This readily yields the desired estimate.

Finally for 2. let {$\varphi_t \in \mathcal F_t ^{10n}$ for each $t\in\T$} and note that $R_t\subseteq K_n R_\T$ by Lemma~\ref{lem:geom} and let $t_\T\coloneqq \mathrm{Sy}_{K_nR_\T} ^\infty \chi_{4n}$. Now for $c>0$ we use 2. to estimate{
\[
\sum_{t\in\T}  \left| \l f,\varphi_t\r \right|^2 =\sum_{t\in\T} \left| \l c^{-1}t_\T f, c t_{\T} ^{-1}\varphi_t\r \right|^2\lesssim \|f t_\T\|_{L^2(\R^n)} ^2
\]
}
by noticing that for a suitable choice of $c$ the wave packets $c t_{\T} ^{-1}\varphi_t \in \mathcal F_t ^{6n}$ for each $t\in\T$. The estimate in the last display readily yields 3. 
\end{proof}
%%%%%%%%%%%%%%%%%%%%%%%%%%%%%% PROOF PROOF PROOF

%%%%%%%%%%%%%%%%%%%%%%%%%%%%%% SECTION SECTION SECTION
\subsection{Strongly disjoint families of trees and almost orthogonality} In what follows we will be sorting our collection of tiles $\mathbb P$ into trees selected by means of a greedy algorithm. The selection process will imply certain disjointness properties which we encode in the definition below.

%%%%%%%%%%%%%%%%%%%%%%%%%%%%%% DEFINITION DEFINITION DEFINITION
\begin{definition}[strongly disjoint lacunary families]\label{def:strodis} Let $\mathscr T$ be a family of trees with $\mathbf T\subset \mathbb P$ for every $\mathbf T\in\mathscr T$. The family $\mathscr T$ is called (lacunary) strongly disjoint if
\begin{itemize}
	\item [1.] Every tree $\mathbf T\in\mathscr T$ is lacunary.
	\item [2.] If $t\in \mathbf T{\in\mathscr T}$ and $t'\in \mathbf T'\in\mathscr T$ with $\mathbf T \neq \mathbf T'$ and $Q_{t}\subseteq Q_{t'} ^\circ$ then $R_{t'}\cap  K_n ^2 R_{\mathbf T}= \varnothing$
\end{itemize}
with $K_n$ the constant of $Lemma~\ref{lem:ortho}$.
\end{definition}
%%%%%%%%%%%%%%%%%%%%%%%%%%%%%% DEFINITION DEFINITION DEFINITION
The point of the definition above is that if conclusion 2. failed then the tile $t'$ would essentially qualify to be included in a completion of the tree $\T$, if that tree was chosen first via a greedy selection algorithm. This point will become apparent in the proof of the size lemma, Lemma~\ref{lem:size} below. Furthermore, the definition above implies the following property.

%%%%%%%%%%%%%%%%%%%%%%%%%%%%%% LEMMA LEMMA LEMMA
\begin{lemma}\label{lem:easydisj} Let $\mathbb T\coloneqq \cup_{\T\in\mathscr T} \T$ where $\mathscr T$ is a lacunary strongly disjoint family. Then the collection 
\[
\mathbb T^\circ\coloneqq \{R_t \times Q_t ^\circ:\, t\in\mathbb T\}
\]
is pairwise disjoint.
\end{lemma}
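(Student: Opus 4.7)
The sets $R_t \times Q_t^\circ$ and $R_{t'} \times Q_{t'}^\circ$ have empty intersection exactly when $R_t \cap R_{t'} = \varnothing$ or $Q_t^\circ \cap Q_{t'}^\circ = \varnothing$, so the plan is to verify one of these two alternatives for each pair of distinct tiles $t, t' \in \mathbb T$.

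The easy case is $t, t' \in \mathbf T$ for a single tree $\mathbf T \in \mathfrak T$: by Remark~\ref{rmrk:treefacts}, the collection $\{Q_s^\circ : s \in \mathbf T\}$ is already pairwise disjoint as a consequence of the lacunarity of $\mathbf T$, giving $Q_t^\circ \cap Q_{t'}^\circ = \varnothing$ immediately.

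For distinct trees $\mathbf T \ne \mathbf T'$ containing $t$ and $t'$ respectively I would argue by contradiction, assuming $R_t \cap R_{t'} \ne \varnothing$ and $Q_t^\circ \cap Q_{t'}^\circ \ne \varnothing$. After swapping if necessary, let $\ell(Q_t) \le \ell(Q_{t'})$. Equality $Q_t = Q_{t'}$ is impossible for $t \ne t'$ because then $v_t = v_{t'}$ and $\scl(R_t) = \scl(R_{t'})$, placing the plates in the common rotated triadic grid $\mathcal R_{v_t}$ at a single scale, where the grid structure forces $R_t = R_{t'}$ upon nonempty intersection. Thus $Q_t \subsetneq Q_{t'}$, and the triadic grid property combined with $\ell(Q_t^\circ) < \ell(Q_{t'}^\circ)$ promotes $Q_t^\circ \cap Q_{t'}^\circ \ne \varnothing$ to the strict nesting $Q_t^\circ \subsetneq Q_{t'}^\circ$. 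The same grid argument applied to $Q_t \cap Q_{t'}^\circ \supseteq Q_t^\circ \ne \varnothing$ yields $Q_t \subseteq Q_{t'}^\circ$, which is precisely the hypothesis of Definition~\ref{def:strodis}(2); invoking it gives $R_{t'} \cap K_n^2 R_{\mathbf T} = \varnothing$. On the other hand, $t \in \mathbf T$ together with $Q_{\mathbf T} \subseteq Q_t$ and $R_t \cap R_{\mathbf T} \ne \varnothing$ triggers Lemma~\ref{lem:geom}, from which $R_t \subseteq K_n R_{\mathbf T} \subseteq K_n^2 R_{\mathbf T}$. The two inclusions together force $R_t \cap R_{t'} = \varnothing$, contradicting the initial assumption and closing the argument.

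The substantive step is the purely combinatorial reduction, via the nested triadic structure of $\kappa$-iterated centers, of overlap of $\kappa$-centers to the nesting $Q_t \subseteq Q_{t'}^\circ$ required by strong disjointness; once this is in place the rest is a near-mechanical assembly of the tree axioms, Lemma~\ref{lem:geom}, and Definition~\ref{def:strodis}.
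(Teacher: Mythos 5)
Your argument is correct and essentially the paper's own proof: coinciding centers are disposed of by grid rigidity of the plates, nested centers are promoted to the inclusion $Q_t\subseteq Q_{t'}^\circ$, lacunarity forces the two tiles into distinct trees, and strong disjointness combined with Lemma~\ref{lem:geom} produces the contradiction $R_{t'}\cap K_n^2 R_{\T}\neq\varnothing$. The one caveat is your ``easy'' same-tree case: when $t\neq t'$ lie in one tree with $Q_t=Q_{t'}$ (same frequency cube, different plates) the centers coincide rather than being disjoint, so Remark~\ref{rmrk:treefacts} does not literally give $Q_t^\circ\cap Q_{t'}^\circ=\varnothing$; what is needed there is precisely your own single-scale observation that $R_t,R_{t'}\in\mathcal R_{v_t}$ at equal scale are either equal or disjoint, which is how the paper handles the case $Q_t^\circ=Q_{t'}^\circ$.
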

%%%%%%%%%%%%%%%%%%%%%%%%%%%%%% LEMMA LEMMA LEMMA

%%%%%%%%%%%%%%%%%%%%%%%%%%%%%% PROOF PROOF PROOF
\begin{proof} Let $t,t'\in\mathbb T$. If $Q_t ^\circ=Q_{t'} ^\circ$ then either $t\equiv t'$ or $R_{t'}\cap R_t=\varnothing$. Assume then that $Q_t ^\circ \subsetneq Q_{t'} ^\circ$ so that {$\ell(Q_t )<3^{-\kappa}\ell(Q_{t'})$, because of separation of scales assumption, and consequently} $Q_t \subseteq Q_{t'} ^\circ$. Since the trees in the collection are lacunary this implies that $t,t'$ must come from different trees, say $\T\neq \T'$, respectively. If $R_t \cap R_{t'}\neq \varnothing$ then by consecutive applications of Lemma~\ref{lem:geom} we would have that $R_{t'}\subseteq K_n ^2 R_{\T}$, contradicting the definition of a strongly disjoint family.
\end{proof}
%%%%%%%%%%%%%%%%%%%%%%%%%%%%%% PROOF PROOF PROOF

It is well known that strongly disjoint families of trees obey certain almost orthogonality estimates for the corresponding tree projections. The precise statement in the context of this paper is contained in Lemma~\ref{lem:strodis} below.

%%%%%%%%%%%%%%%%%%%%%%%%%%%%%% LEMMA LEMMA LEMMA
\begin{lemma}\label{lem:strodis} Let $M> 4n$. For a (lacunary) strongly disjoint family $\mathscr T$ with $\mathbb T\coloneqq \cup_{\T\in\mathscr T}\T$, there holds
\[
\left(\sum_{\T\in\mathscr T} \sum_{t\in\T} F_M[f](t)^2 \right)^{\frac12}\lesssim \|f\|_{L^2(\R^n)}+\left(\sup_{t\in\mathbb T}\frac{F_M[f](t)}{|R_t|^{\frac12}} \left[\sum_{\T\in\mathscr T}|R_{\T}|\right]^{\frac12}\right)^{\frac13}\|f\|_{L^2(\R^n)} ^{\frac23}.
\]
\end{lemma}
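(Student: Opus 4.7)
The plan is to run a $TT^\ast$-duality argument reducing the claim to an estimate on $\|\sum_\T \phi_\T\|_2^2$ for suitable wave-packet sums, and then to interpolate between a Bessel-type diagonal bound and a size-based cross-term bound. Put $a_t:=F_M[f](t)$, $S:=\sum_{\T,t\in\T}a_t^2$, $D:=\sum_\T|R_\T|$, and $\sigma:=\sup_{t\in\mathbb T}a_t/|R_t|^{1/2}$. For each $t$, choose $\varphi_t\in\mathcal F_t^M$ realizing (up to a phase) the supremum in the definition of $F_M[f](t)$; set $\phi_\T:=\sum_{t\in\T}\langle f,\varphi_t\rangle\varphi_t$ and $\Phi:=\sum_\T\phi_\T$. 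The identity $S=\langle f,\Phi\rangle\leq\|f\|_2\|\Phi\|_2$ reduces the task to controlling
\begin{equation*}
\|\Phi\|_2^2=\sum_\T\|\phi_\T\|_2^2+\sum_{\T\neq\T'}\langle\phi_\T,\phi_{\T'}\rangle.
\end{equation*}

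The diagonal sum is immediately $\lesssim S$ by applying Lemma~\ref{lem:ortho}.1 to each lacunary tree. For the off-diagonal, $\langle\varphi_t,\varphi_{t'}\rangle\neq 0$ forces $\omega_t\cap\omega_{t'}\neq\varnothing$; the grid nesting of $\{\omega_t\}$ then puts one of $Q_t^\circ, Q_{t'}^\circ$ inside the other, say $Q_{t'}\subseteq Q_t^\circ$. Strong disjointness (Definition~\ref{def:strodis}) delivers $R_t\cap K_n^2 R_{\T'}=\varnothing$; combined with $R_{t'}\subseteq K_n R_{\T'}$ from Lemma~\ref{lem:geom}, this produces spatial separation of order $\scl(R_{\T'})$ between $R_t$ and $R_{t'}$. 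Unpacking the $\mathcal F_{t'}^M$-adaptation of $\varphi_{t'}$ then yields, for $M$ sufficiently large in terms of $n$, the decay
\begin{equation*}
|\langle\varphi_t,\varphi_{t'}\rangle|\lesssim\left(\frac{|R_t|}{|R_{t'}|}\right)^{1/2}\left(\frac{\scl R_{\T'}}{\scl R_{t'}}\right)^{-M+O(n)}.
\end{equation*}

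Denote the off-diagonal absolute sum by $E$. I would then use $a_t\leq\sigma|R_t|^{1/2}$ in one slot, retain $a_{t'}$ in the other, and apply Cauchy--Schwarz:
\begin{equation*}
E\leq\sigma\left(\sum_{t,t'}|R_t|\,|\langle\varphi_t,\varphi_{t'}\rangle|\right)^{1/2}\left(\sum_{t,t'}a_{t'}^2|\langle\varphi_t,\varphi_{t'}\rangle|\right)^{1/2}.
\end{equation*}
A Schur-type geometric summation against the decay above---using that the plates $R_{t'}$ at each scale partition $K_nR_{\T'}$, that $Q_{t'}$ is uniquely determined by $\xi_{\T'}$ at each scale, and summing the ratio $r=\scl R_{\T'}/\scl R_{t'}$ geometrically---should bound the first factor by $D$ and the second by $S$, yielding $E\lesssim \sigma\sqrt{SD}$. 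Inserting this into $S^2\lesssim \|f\|_2^2(S+E)$ and splitting by cases forces either $S\lesssim\|f\|_2^2$ or $S^{3/2}\lesssim \|f\|_2^2\sigma D^{1/2}$, so $\sqrt{S}\lesssim\|f\|_2+\sigma^{1/3}D^{1/6}\|f\|_2^{2/3}$ as desired.

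The main obstacle is the Schur-type estimate: it requires treating both nesting cases $Q_{t'}\subseteq Q_t^\circ$ and $Q_t\subseteq Q_{t'}^\circ$ symmetrically by strong disjointness, and the counting of cross-tree pairs with $\xi_{\T'}\in Q_t^\circ$ must genuinely leverage the strongly disjoint geometry to collapse cleanly into $\sum_\T|R_\T|$ rather than producing an uncontrolled count of overlapping tops.
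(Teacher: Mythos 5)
Your skeleton is the same as the paper's: the $TT^{\ast}$ reduction $S\le\|f\|_2\|\Phi\|_2$, the diagonal bound through Lemma~\ref{lem:ortho}, strong disjointness for the cross-tree pairs, and the final case balancing. The gap sits exactly in the step you flag as "the main obstacle", and it is not a technicality: the per-pair decay $|\langle\varphi_t,\varphi_{t'}\rangle|\lesssim(|R_t|/|R_{t'}|)^{1/2}\,(\scl R_{\T'}/\scl R_{t'})^{-M+O(n)}$ does not follow from what Definition~\ref{def:strodis} gives you. The conclusions $R_t\cap K_n^2R_{\T'}=\varnothing$ and $R_{t'}\subseteq K_nR_{\T'}$ force a separation of order $\scl(R_{\T'})$ only when $R_t$ escapes along the long directions of the plate $R_{\T'}$; $R_t$ may instead sit just beyond $K_n^2R_{\T'}$ in the thin (normal) direction, at distance $O(1)$ in units of the unit thickness, so $\rho_{R_{t'}}$ on $R_t$ is only $\gtrsim 1$ and there is no gain in the ratio $\scl R_{\T'}/\scl R_{t'}$. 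With the correct decay, $|\langle\varphi_t,\varphi_{t'}\rangle|\lesssim(|R_t|/|R_{t'}|)^{1/2}\inf_{R_t}(1+\rho_{R_{t'}})^{-M+n}$, your second Cauchy--Schwarz factor is not $\lesssim S$: for a fixed coarse tile $t'$, the pairwise disjoint fine plates $R_t$ stacked in the thin direction just outside $K_n^2R_{\T'}$ but within the long extent of $R_{t'}$ contribute, at a single fine scale, about $(|R_{t'}|/|R_t|)^{1/2}$ to $\sum_t|\langle\varphi_t,\varphi_{t'}\rangle|$, which is unbounded in the scale ratio and even diverges geometrically over scales. So the weight split $E\le\sigma(\sum|R_t||\langle\cdot\rangle|)^{1/2}(\sum a_{t'}^2|\langle\cdot\rangle|)^{1/2}$ cannot be closed as proposed.

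The paper closes the cross term differently, and the difference is essential: disjointness is exploited collectively, not pairwise. After pulling out $\sigma$ against the fine-plate coefficient, the fine index is summed first with the \emph{full} measures $|R_t|$ as weights (this is why no Cauchy--Schwarz is applied at the level of pairs); by Lemma~\ref{lem:easydisj} the fine plates paired with a fixed coarse tile are pairwise disjoint and lie in the complement of $K_n^2R_{\T'}$, so that sum is dominated by $\int_{(K_n^2R_{\T'})^{\mathsf c}}(1+\rho_{R_{t'}})^{-M+n}$, and only then is Cauchy--Schwarz performed in the coarse index; the quantity $\sum_{\T}|R_\T|$ emerges from summing these integrals tree by tree, not from any geometric decay in $\scl R_{\T'}/\scl R_{t'}$. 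A secondary, fixable point: your dichotomy from $\omega_t\cap\omega_{t'}\neq\varnothing$ is too strong. Cross-tree pairs with $Q_t=Q_{t'}$, and pairs whose cube scales differ by a factor at most $3^{\kappa}$ (where only the $\kappa$-centers are nested, so the hypothesis $Q_{t'}\subseteq Q_t^{\circ}$ of Definition~\ref{def:strodis} is unavailable), are not reached by strong disjointness; they are harmless, but they require the fixed-scale tiling/Schur argument that the paper absorbs into its "equal cube" term, and your off-diagonal analysis as written does not cover them.
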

%%%%%%%%%%%%%%%%%%%%%%%%%%%%%% LEMMA LEMMA LEMMA

%%%%%%%%%%%%%%%%%%%%%%%%%%%%%% PROOF PROOF PROOF
\begin{proof} Let $\{\varphi_t:\, \varphi_t\in\mathcal F_t ^M,\, t\in\mathbb T\}$ be any adapted family and note that
\[
S(f)^2\coloneqq\sum_{t\in\mathbb T} \left| \l f,\varphi_t\r \right|^2\leq \|f\|_2 \left\| \sum_{t\in\mathbb T} \l f,\varphi_t\r\varphi_t\right\|_2.
\]
Letting $Q(\mathbb T)\coloneqq \{Q_t:\, t\in\mathbb T\}$ and $\mathbb T(Q)\coloneqq\{t:\, Q_t=Q\}$ for $Q\in Q(\mathbb T)$, we have
{
\[
\begin{split}
\left\| \sum_{t\in\mathbb T} \l f,\varphi_t\r\varphi_t\right\|_2 ^2& \leq \sum_{Q\in Q(\mathbb T)} \sum_{\substack{t,t'\in\mathbb T\\Q_t=Q_{t'}=Q}} \left| \l f,\varphi_t\r \right| \left| \l f,\varphi_{t'}\r \right| \left| \l \varphi_t,\varphi_{t'}\r \right|
\\
&\qquad +2 \sum_{t\in\mathbb T} \sum_{\substack{t'\in\mathbb T\\ Q_{t'}\supsetneq Q_t}}  \left| \l f,\varphi_t\r \right| \left| \l f,\varphi_{t'}\r \right| \left| \l \varphi_t,\varphi_{t'}\r \right|
\\
&\coloneqq S_1(f)^2+2S_2(f)^2.
\end{split}
\]
}
As in the proof of Lemma~\ref{lem:ortho} it is relatively easy to see that $|S_1(f)|\lesssim S(f)$ since for a fixed $Q\in Q(\mathbb T)$, the spatial components $\{R_t:\, Q_t=Q\}$  tile $\R^n$. We thus focus on $S_2(f)$ which can be estimated in the form
\[
\begin{split}
S_2(f)^2 &\leq \sup_{t'\in\mathbb T}\frac{|\l f,\phi_{t'}\r|}{|R_{t'}|^{\frac12}} \sum_{t\in\mathbb T}    |\l f,\varphi_t\r|   \sum_{\substack{t'\in\mathbb T\\ Q_{t'} ^\circ \supseteq Q_t}}  |R_{t'}|^{\frac12} |\l \varphi_t,\varphi_{t'}\r|.
\end{split}
\]
Note that for fixed $t\in \mathbb T$ the collection {$\mathbf R_{\mathbb T}(t)\coloneqq \{R_{t'}:\, t'\in\mathbb T,\, Q_t\subseteq Q_{t'} ^\circ\}$} is pairwise disjoint and {$\sh(\mathbf R_{\mathbb T}(t))\subseteq (K_n ^2 R_\T)^{\mathrm c}$}; this follows immediately from Lemma~\ref{lem:easydisj} since $\mathscr T$ is a (lacunary) strongly disjoint family, and from the definition of a strongly disjoint family itself. At this point we use the following estimate: For $t,t'\in\mathbb P$ such that $R_t\cap R_{t'}= \varnothing$ and $Q_{t}\subseteq Q_{t'}$ there holds
\[
|\l \varphi_t,\varphi_{t'}\r|\lesssim \left(\frac{|R_{t' }|}{|R_{t}|}\right)^{\frac12} \inf_{x\in R_{t'}}(1+\rho_{R_t}(x))^{-M+n}
\]
 where $\rho_{R}(x)\coloneqq \inf\{r>0:\, x\in r R\}$ for any rectangular parallelepiped $R$ and $ M> 2n$, say. Combining these observations and estimates with the estimate for $S_2(f)^2$ above yields
\[
\begin{split}
S_2(f)^2&\lesssim\sup_{t'\in\mathbb T}\frac{|\l f,\phi_{t'}\r|}{|R_{t'}|^{\frac12}} \sum_{\T\in\mathscr T} \sum_{t\in \T}    |\l f,\varphi_t\r|  |R_t|^{-\frac12}  \sum_{\substack{t'\in\mathbb T\\ Q_{t'} ^\circ \supseteq Q_t}} |R_{t'}|   \inf_{R_{t'}}(1+\rho_{R_t})^{-M+n}
\\
&\lesssim \sup_{t'\in\mathbb T}\frac{|\l f,\phi_{t'}\r|}{|R_{t'}|^{\frac12}} \sum_{\T\in\mathscr T} \sum_{t\in \T}   |\l f,\varphi_t\r|  |R_t|^{-\frac12}    \int_{(K_n ^2R_{\T(t)}) ^\mathrm{c}}  (1+\rho_{R_t} (x))^{-M+n}\, \d x
\\
&\lesssim \sup_{t'\in\mathbb T}\frac{|\l f,\phi_{t'}\r|}{|R_{t'}|^{\frac12}} S(f)\left(\sum_{t\in\mathbb T} \int_{(K_n ^2R_{\T(t)}) ^\mathrm{c}}  (1+\rho_{R_t} (x))^{-M+n}\right)^{\frac12},
\end{split}
\]
where $\T(t)$ denotes the unique tree $\T\in\mathscr T$ such that $t\in\T$. In passing to the last line we used the Cauchy--Schwarz inequality together with the simple estimate
\[
\int_{(K_n ^2R_{\T}) ^\mathrm{c}}  (1+\rho_{R_t} (x))^{-M+n}\lesssim |R_t|, \qquad M> 2n.
\]
The estimate for $S_2$ is completed by involving the estimate
\[
\sum_{t\in\mathbb T}\int_{(K_n ^2R_{\T(t)}) ^\mathrm{c}}  (1+\rho_{R_t} (x))^{-M+n}=\sum_{\T\in\mathscr T}\sum_{t\in\T} \int_{(K_n ^2R_{\T}) ^\mathrm{c}}  (1+\rho_{R_t} (x))^{-M+n}\lesssim \sum_{\T\in\mathscr T} |R_\T|
\]
for $M> 2n$.  Balancing the estimates for $S(f)$ in terms of $S_1(f),S_2(f)$  according to which term dominates, and taking into account \eqref{e:wpt}, yield the desired bound.
\end{proof}
%%%%%%%%%%%%%%%%%%%%%%%%%%%%%% PROOF PROOF PROOF

{
%%%%%%%%%%%%%%%%%%%%%%%%%%%%%% SECTION SECTION SECTION
\subsection{Selection by density} The sorting of a collection of trees by density follows the standard procedure as for example in \cite{LTC}; we include the proof for completeness.

%%%%%%%%%%%%%%%%%%%%%%%%%%%%%% LEMMA LEMMA LEMMA
\begin{lemma}[density lemma]\label{lem:dense} Let $\mathbb T\subset \mathbb P$ be a finite collection of tiles. There exists a disjoint decomposition
\[
\mathbb T= \mathbb T^{\mathrm{light}} \cup \bigcup_{\tau=1}^N \T_{{\tau}}
\]
where   $\{\T_\tau :\, 1\leq \tau \leq N\}$ is a finite collection of trees and
\[
\sum_{\tau=1}^N |R_{\T_\tau}|  \lesssim  \dense(\mathbb T)^{-1} |E|
,\qquad  {\dense}(\mathbb T^{\mathrm{light}}) \leq \frac12 \dense(\mathbb T),
\]
where $E$ refers to the measurable $E\subset \R^d$ in the definition of density, with implicit constant depending only upon dimension.
%where $E\subset\R^n$ is the set used in the definition of density.
\end{lemma}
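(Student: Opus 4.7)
The plan is to run the standard greedy stopping-time selection that has become customary for density lemmas in time-frequency analysis (cf.\ \cite{LTC}), and then to carefully exploit the grid structure of the directional supports $\alpha_t$ recorded in Lemma~\ref{l:dyadic} to obtain the required packing bound. Set $\mu\coloneqq \dense(\mathbb T)$; we may assume $\mu>0$ as otherwise the statement is vacuous.

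First I would iteratively select the trees $\T_1,\T_2,\ldots$ as follows. Let $\mathbb T_1\coloneqq \mathbb T$. At step $k$, if every $t\in \mathbb T_k$ satisfies $\dense(t)\leq \mu/2$, stop and set $\mathbb T^{\mathrm{light}}\coloneqq \mathbb T_k$. Otherwise, by definition \eqref{e:densedef} there is some $t\in \mathbb T_k$ and some $t^{\star}_k\in \mathbb P$ with $t^{\star}_k\geq t$ satisfying $\int_{E_{t^{\star}_k}}\mathsf{Sy}^1_{R_{t^{\star}_k}}\chi_{10n} > \mu/2$; among all such candidates I would choose $t^{\star}_k$ with maximal $\scl(R_{t^{\star}_k})$. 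Declare $R_{\T_k}\coloneqq R_{t^{\star}_k}$, $\xi_{\T_k}\coloneqq c(Q_{t^{\star}_k})$, and collect into $\T_k$ all tiles $t\in \mathbb T_k$ with $t\leq t^{\star}_k$; this is a tree by Definition~\ref{def:trees}. Put $\mathbb T_{k+1}\coloneqq \mathbb T_k\setminus \T_k$. Finiteness of $\mathbb T$ forces termination in $N$ steps, and the residual density control $\dense(\mathbb T^{\mathrm{light}})\leq \mu/2$ holds by construction.

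The nontrivial half is the packing bound. For each selected $t^\star_k$, the defining inequality $\int_{E_{t^{\star}_k}}\mathsf{Sy}^1_{R_{t^{\star}_k}}\chi_{10n} > \mu/2$, together with the fast decay and $L^1$-normalization of $\mathsf{Sy}^1_{R}\chi_{10n}$, yields via a standard tail split
\[
\mu\, |R_{\T_k}|\lesssim \int_{E_{t^{\star}_k}} \cic{1}_{C R_{\T_k}}(x)\,\d x
\]
for some dimensional dilate $C=C(n)$. Summing over $k$ and interchanging gives
\[
\mu \sum_k |R_{\T_k}|\lesssim \int_E \Big(\sum_k \cic{1}_{E_{t^{\star}_k}\cap C R_{\T_k}}(x)\Big)\,\d x,
\]
so the conclusion reduces to a bounded-overlap statement for the sets $E_{t^{\star}_k}\cap C R_{\T_k}$.

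The main obstacle is precisely this bounded-overlap claim, which is where the geometry of the operator enters. Fix $x\in E$ and let $\mathcal K(x)\coloneqq\{k: x\in C R_{\T_k},\ v_{\sigma(x)}\in \alpha_{t^\star_k}\}$. The first condition places the spatial rectangles $C R_{\T_k}$ in a nested-by-scale chain through $x$ (up to $O(1)$ overlaps, by the grid property of $\mathcal R_{v_{\sigma(x)}}$ and \eqref{e:ncomp}), while the second forces the directional caps $\alpha_{t^\star_k}$ through $v_{\sigma(x)}$ to form, up to the type-$j$ classification of Remark~\ref{r:bijgrid} and property g3.\ of the grids $A_{\mathcal G,\tau}$, a nested chain as well. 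The key observation is that within such a nested chain, the scale-maximality rule in the selection prevents accumulation: had two selected tops $t^{\star}_k, t^{\star}_{k'}$ (with $k<k'$) been simultaneously admissible with $R_{\T_{k'}}\subset C R_{\T_k}$ and $\alpha_{t^{\star}_{k'}}\subset \alpha_{t^{\star}_k}$, the smaller-scale one would have been available at step $k$ and, by maximality, the selection would have already absorbed $t^\star_{k'}$ into $\T_k$, contradicting $k'>k$. Consequently $\#\mathcal K(x)\lesssim 1$, and integrating over $E$ gives $\mu\sum_k |R_{\T_k}|\lesssim |E|$, as required.
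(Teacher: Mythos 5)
Your selection algorithm and the bound $\dense(\mathbb T^{\rm light})\leq \tfrac12\dense(\mathbb T)$ are fine (this part is the standard greedy step, and your verification that $\{t\in\mathbb T_k:\,t\le t^\star_k\}$ is a tree with top $(c(Q_{t^\star_k}),R_{t^\star_k})$ is correct), but the packing bound — which is the whole content of the lemma — has two genuine gaps. First, the ``standard tail split'' $\mu\,|R_{\T_k}|\lesssim \int_{E_{t^\star_k}}\cic{1}_{CR_{\T_k}}$ with a \emph{dimensional} dilate $C=C(n)$ is false: the tail of $\mathsf{Sy}^1_{R}\chi_{10n}$ outside $CR$ has total mass $\sim C^{-9n}$, a fixed quantity independent of $\mu$, and since in the application $\mu=\dense(\mathbb T)$ is taken arbitrarily small (cf.\ \eqref{eq:decomp}, where $\dense(\T_{k,\tau})\le 2^{-k}|E|$), the witness inequality $\int_{E_{t^\star_k}}\mathsf{Sy}^1_{R_{t^\star_k}}\chi_{10n}>\mu/2$ can hold with $E_{t^\star_k}\cap CR_{\T_k}=\varnothing$. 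The standard remedy (as in the Lacey--Thiele mass lemma the paper points to) is to stratify: for each selected top there is $j\ge 0$ with $|E_{t^\star_k}\cap 2^jR_{\T_k}|\gtrsim 2^{(10n-1)j}\mu|R_{\T_k}|$, one sorts the tops by $j$, and the overlap/counting loss, which grows like $2^{O(nj)}$, is absorbed by the gain $2^{-(10n-1)j}$ and summed over $j$; enlarging $C$ to depend on $\mu$ instead does not work, because the same-scale overlap of the $C(\mu)$-dilates already costs $C(\mu)^n$.

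Second, even granting a fixed dilate, the bounded-overlap claim $\#\mathcal K(x)\lesssim 1$ is not proved by your maximality argument. Scale-maximality only gives that $\scl(R_{t^\star_k})$ is non-increasing in $k$. To derive a contradiction at step $k'$ you would need that the \emph{heavy tile} $t\in\mathbb T_{k'}$ whose witness is $t^\star_{k'}$ satisfies $t\le t^\star_k$ (and hence was removed into $\T_k$); the witness $t^\star_{k'}$ itself lives in $\mathbb P$, not in $\mathbb T$, so ``absorbing $t^\star_{k'}$ into $\T_k$'' is not the relevant event. The frequency half of $t\le t^\star_k$ does follow (from $v_{\sigma(x)}\in\alpha_{t^\star_k}\cap\alpha_{t^\star_{k'}}$, the grid property of the peripheral children, and $t\le t^\star_{k'}$), but the spatial half $R_t\cap R_{t^\star_k}\neq\varnothing$ does not: knowing only $x\in CR_{\T_k}\cap CR_{\T_{k'}}$ and $R_t\cap R_{t^\star_{k'}}\neq\varnothing$ (so $R_t\subseteq K_nR_{t^\star_{k'}}$ by Lemma~\ref{lem:geom}) allows $R_t$ to sit inside the dilate $CR_{\T_k}$ while being disjoint from $R_{t^\star_k}$, so the tile survives and no contradiction arises. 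The correct counting in the standard proof does not attempt a pointwise $O(1)$ overlap; it selects witnesses that are maximal with respect to the tile order within each annulus class $j$, uses the resulting pairwise disjointness (here via the grid structure of the $Q$'s, the nestedness of the cubes through $\Pi_{e_n^\perp}v_{\sigma(x)}$, and Lemma~\ref{lem:geom}) to bound $\sum_k|R_{\T_k}|$ for each fixed $j$ with a loss polynomial in $2^j$, and then sums the geometric series in $j$. As written, your proof establishes the decomposition and the light-part estimate but not the inequality $\sum_\tau|R_{\T_\tau}|\lesssim\dense(\mathbb T)^{-1}|E|$.
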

%%%%%%%%%%%%%%%%%%%%%%%%%%%%%% LEMMA LEMMA LEMMA
 
%%%%%%%%%%%%%%%%%%%%%%%%%%%%%% PROOF PROOF PROOF
\begin{proof} We let $\mathbb T^{\text{heavy}}$ be the collection of tiles
\[
\mathbb T^{\mathrm{heavy}}\coloneqq \left\{t\in\mathbb T:\, \dense(t)>\dense(\mathbb T)/2\right\}.
\]
By the definition of density, for each tile $t\in\mathbb T^{\mathrm{heavy}}$ there exists some $\mathcal T_{\mathcal G} \ni t'\geq t$ such that
\[
\int_{E_{t'}}  \mathrm{Sy}_{R_{t'}} ^{1} \chi_{10n}>\dense(\mathbb T)/2.
\]
We let $\mathbb T'$ denote the maximal, with respect to $\leq$, elements of $\{t':\, t'\in \mathbb T^{\mathrm{heavy}}\}$: namely, for all $t'\in \mathbb T'$ there does not exist $s'\in \mathbb T^{\mathrm{heavy}}$ with $t'\leq s'$. It will then suffice to prove
\[
\sum_{t'\in\mathbb T'}|R_{t'}| \lesssim \dense(\mathbb T)^{-1}|E|
\]
as the tiles in $\mathbb T^{\mathrm{heavy}}$ can be organized into trees with top datas in $\mathbb T'$ and by definition 
\[
\dense(\mathbb T^{\mathrm{light}})=\dense(\mathbb T\setminus \mathbb T^{\mathrm{heavy}})\leq \frac12 \dense(\mathbb T).
\]
To that end we define for $k\geq 0$ the collection $\mathbb T_k '$ to be set of all tiles $t'\in\mathbb T'$ such that
\begin{equation}\label{eq:dense}
\left|E_{t'} \cap 2^k R_{t'}\right|\geq \frac14  2^{5kn}\dense(\mathbb T) |R_{t'}|.
\end{equation}
Each element of $\mathbb T'$ is contained in at least one of the sets $\mathbb T_k '$; indeed if not we would have
\[
\int_{E_{t'}} \mathrm{Sy}_{R_{t'}} ^{1} \chi_{10n}\leq  \frac{|E_{t'}\cap R_{t'}|}{|R_{t'}|}+\sum_{k\geq 0}2^{-10kn}\frac{|{E_{t'}\cap (2^{k+1}R_{t'}\setminus 2^k R_{t'}})|}{|R_{t'}|}<\frac12 \dense(\mathbb T).
\]
We fix $k$ for the moment. We will decompose each collection $\mathbb T_k$ into subcollections of tiles with pairwise disjoint spatial components. We do this by choosing $t'\in\mathbb T' _k$ such that  $R_{t'}$ has maximal length and let 
\[
\mathcal E(t')\coloneqq\left\{t''\in \mathbb T' _k:\, 2^kR_{t'}\cap 2^kR_{t''}\neq \varnothing,\, Q_{t'}\cap Q_{t''}\neq \varnothing \right \}.
\]
Note that the collection $\{R_{t''}:\, t''\in\mathcal E(t')\}$ is pairwise disjoint. Indeed if for some $t'' _1,t'' _2\in \mathcal E(t')$ we had $R_{t'' _1}\cap R_{t'' _2}\neq \varnothing$ then, since $Q_{t'' _1}\cap Q_{t'' _2}\neq \varnothing $ we would conclude that either $t'' _1\leq t'' _2$ or $t'' _1\geq t'' _2$ which is impossible because all the tiles in $\mathbb T_k$ were maximal to begin with. Furthermore, we have that for all $t''\in\mathcal E(t')$ there holds $2^kR_{t'}\cap 2^kR_{t''}\neq \varnothing$ and $Q_{t'}\subseteq Q_{t''}$ and so a variation of Lemma~\ref{lem:geom} implies that $2^{2k}R_{t''}\subseteq K_n2^{2k}R_{t'}$ for all $t''\in\mathcal E(t')$. Now we replace $\mathbb T_k$ by $\mathbb T_k \setminus \mathcal E(t')$ and repeat so that $\mathbb T_k =\cup_{j} \mathcal E(t' _j)$ with $\{2^kR_{t' _j}\}_j$ disjoint and
\[
\begin{split}
\sum_{t'\in\mathbb T_k}|R_{t'}|&\leq \sum_j \sum_{t''\in\mathcal E(t' _j)}|R_{t''}|\leq K_n ^n 2^{2kn}\sum_j |R_{t' _j}|
\\
&\lesssim 2^{-3kn} \dense(\mathbb T)^{-1}\sum_j |2^kR_{t' _j} \cap E_{t' _j}|\lesssim 2^{-3kn} \dense(\mathbb T)^{-1}|E|
\end{split}
\]
where we used \eqref{eq:dense} in passing to the second line, and the fact that $\{2^kR_{t' _j}\}_j$ is a pairwise disjoint collection in the last approximate inequality. The conclusion now follows by summing in $k$.
\end{proof}
%%%%%%%%%%%%%%%%%%%%%%%%%%%%%% PROOF PROOF PROOF
}

%%%%%%%%%%%%%%%%%%%%%%%%%%%%%% SECTION SECTION SECTION
\subsection{Selection by size} We describe in the subsection the suitable version of the size (or energy) lemma, that will eventually allows us to sort a collection of tiles into trees of controlled size. The argument is standard in the literature but we introduce a small variation to account for the lack of transitivity in the relation of order that is implicit in the definition of trees. A technical device used in order to prioritize the selection of trees appears only in our higher dimensional setting and is defined below.

%%%%%%%%%%%%%%%%%%%%%%%%%%%%%% DEFINITION DEFINITION DEFINITION
\begin{definition}(signature) Let $\mathcal G$ be a triadic grid of {pace $(\kappa,m)$ in $\R^d$, $m\in [0,\kappa)$,} and $\xi\in \R^d$ having no triadic coordinates. The \emph{signature} of $\xi$ is the number $\mathrm{sig}(\xi)$ defined as follows
\[
\mathrm{sig}(\xi)\coloneqq \sum_{j=1} ^\infty\frac{a_j}{3^j},\qquad a_j\coloneqq 
\begin{cases} 
& 0,\qquad\text{if}\quad \xi\in Q^o\quad\text{for some}\quad Q\in {\mathcal G_{-\kappa j+m}},
\\
& 1,\qquad\text{if}\quad \xi \notin Q^o\quad\forall Q\in {\mathcal G_{-\kappa j+m},}
\end{cases}
\]
where we remember that {$\mathcal G_{u}=\{Q\in\mathcal G,\, \ell(Q)=3^{u}\}$;} see Subsection \ref{sec:dyadic}.
\end{definition}
%%%%%%%%%%%%%%%%%%%%%%%%%%%%%% DEFINITION DEFINITION DEFINITION

%%%%%%%%%%%%%%%%%%%%%%%%%%%%%% REMARK REMARK REMARK
\begin{remark}\label{rmrk:sign} The following point motivates the definition above: if $\T,\T'$ are two different {lacunary} trees and $t\in\T,t'\in\T'$ with $Q_{t}\subseteq Q_{t'} ^\circ$, then $\mathrm{sig}(\xi_{\T})<\mathrm{sig}(\xi_{\T'})$. This fact will play a role in proving that the family of trees extracted in the size lemma below is strongly disjoint.
\end{remark}
%%%%%%%%%%%%%%%%%%%%%%%%%%%%%% REMARK REMARK REMARK

%%%%%%%%%%%%%%%%%%%%%%%%%%%%%% LEMMA LEMMA LEMMA
\begin{lemma}[size lemma]\label{lem:size} Let $\mathbb T\subset \mathbb P$ be a finite collection of tiles. There exists a disjoint decomposition
\[
\mathbb T= \mathbb T^{\mathrm{small}} \cup \bigcup_{\tau =1}^N \T_\tau
\]
where  $\{\T_\tau : 1\leq \tau \leq N\}$ is a finite collection of trees and
\[
\sum_{\tau =1}^N  |R_{\T_\tau}|  \lesssim \size(\mathbb T)^{-2} \|f\|_{{L^2(\R^n)}} ^2,\qquad \size(\mathbb T^{\mathrm{small}}) \leq \frac{ \size(\mathbb T)}{\sqrt{2}},
\]
with implicit constant depending only upon dimension.
\end{lemma}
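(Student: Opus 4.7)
The argument is a greedy iteration in the style of the classical energy/size lemma of time-frequency analysis, cf.\ \cite{LTC}, enhanced with the signature prioritization of Remark~\ref{rmrk:sign} to cope with the higher-dimensional frequency geometry. Set $\delta \coloneqq \size(\mathbb T)$ and $\mathbb T^{(0)} \coloneqq \mathbb T$. At stage $\tau \ge 1$, provided $\size(\mathbb T^{(\tau-1)}) > \delta/\sqrt{2}$, the definition of size yields some lacunary tree $\T \subseteq \mathbb T^{(\tau-1)}$ realizing $\sum_{t \in \T} F_{10n}[f](t)^2 > \frac{\delta^2}{2}|R_\T|$. Among all such witnesses I select the \emph{core} $\T'_\tau$ by minimizing $\mathrm{sig}(\xi_\T)$ (a well-defined minimum since $\mathbb P$ is finite and we can take $\xi_\T$ with no triadic coordinates), and form the completion
\[
\T_\tau \coloneqq \bigl\{t \in \mathbb T^{(\tau-1)} : \xi_{\T'_\tau} \in Q_t,\ \scl(R_t) \le \scl(R_{\T'_\tau}),\ R_t \cap K_n^2 R_{\T'_\tau} \neq \varnothing\bigr\},
\]
which decomposes into $O(1)$ trees (one per translate of $R_{\T'_\tau}$ needed to cover $K_n^2 R_{\T'_\tau}$). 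Setting $\mathbb T^{(\tau)} \coloneqq \mathbb T^{(\tau-1)} \setminus \T_\tau$ and iterating, finiteness of $\mathbb T$ forces termination at some stage $N$, and $\mathbb T^{\mathrm{small}} \coloneqq \mathbb T^{(N)}$ satisfies the second clause of the lemma by construction.

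\textbf{Strong disjointness of the cores.} The principal technical step, and the main obstacle peculiar to the higher-dimensional setting, is to verify that $\mathfrak T \coloneqq \{\T'_\tau\}_{\tau=1}^N$ is (lacunary) strongly disjoint per Definition~\ref{def:strodis}. Lacunarity is automatic; for clause~2, suppose $t \in \T'_\tau$, $t' \in \T'_{\tau'}$ with $\tau \neq \tau'$ and $Q_t \subseteq Q_{t'}^\circ$. The tile admissibility $\scl(R)\ell(Q) \in [1,3)$ coupled with $Q_t \subseteq Q_{t'}$ forces $\scl(R_{t'}) \le \scl(R_t) \le \scl(R_{\T'_\tau})$, so the scale constraint for $t' \in \T_\tau$ is automatic. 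Remark~\ref{rmrk:sign} yields $\mathrm{sig}(\xi_{\T'_\tau}) < \mathrm{sig}(\xi_{\T'_{\tau'}})$, and the minimal-signature rule then forces $\tau < \tau'$, so $t' \in \mathbb T^{(\tau-1)}$. Furthermore $\xi_{\T'_\tau} \in Q_t \subseteq Q_{t'}$, so the frequency condition for $t' \in \T_\tau$ is also met. The only way $t'$ can escape $\T_\tau$ is through failure of the spatial condition, which reads precisely $R_{t'} \cap K_n^2 R_{\T'_\tau} = \varnothing$ — exactly clause~2 of strong disjointness. The signature is essential here: unlike in the one-dimensional model, scalar scale alone does not suffice to order two cores with distinct non-nested top cubes.

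\textbf{Shadow estimate.} With $\mathfrak T$ strongly disjoint, the almost-orthogonality Lemma~\ref{lem:strodis} applies. Using the singleton-tile bound $\sup_{t \in \mathbb T} F_{10n}[f](t)/|R_t|^{1/2} \le \delta$, obtained by testing the size on the lacunary tree $\{t\}$, I deduce
\[
\frac{\delta^2}{2} \sum_{\tau=1}^N |R_{\T'_\tau}| \le \sum_{\tau=1}^N \sum_{t \in \T'_\tau} F_{10n}[f](t)^2 \lesssim \|f\|_{L^2(\R^n)}^2 + \delta^{2/3} \Bigl(\sum_{\tau=1}^N |R_{\T'_\tau}|\Bigr)^{1/3} \|f\|_{L^2(\R^n)}^{4/3}.
\]
Setting $S \coloneqq \sum_\tau |R_{\T'_\tau}|$ and considering separately the two regimes in which either term on the right dominates, elementary algebra yields $S \lesssim \delta^{-2}\|f\|_{L^2(\R^n)}^2$ in both cases. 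Since the completion enlarges each $R_{\T'_\tau}$ by an $O(1)$ factor and produces $O(1)$ trees per core, the target bound $\sum_\tau |R_{\T_\tau}| \lesssim \delta^{-2}\|f\|_{L^2(\R^n)}^2$ follows, finishing the proof.
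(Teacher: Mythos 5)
Your proof is correct and follows essentially the same route as the paper's: greedy extraction of lacunary trees exceeding half the size threshold, prioritized by minimal signature, removal of a spatially enlarged completion to force strong disjointness of the cores, an $O(1)$-tree splitting of each completion, and the shadow bound via Lemma~\ref{lem:strodis} together with the singleton-tile size estimate. Your two small deviations --- adding the scale constraint $\scl(R_t)\le \scl(R_{\T'_\tau})$ to the completion (and checking it is automatic in the disjointness step) and dropping the maximality of the witness tree --- are harmless and do not change the argument.
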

%%%%%%%%%%%%%%%%%%%%%%%%%%%%%% LEMMA LEMMA LEMMA

%%%%%%%%%%%%%%%%%%%%%%%%%%%%%% PROOF PROOF PROOF
\begin{proof} Throughout the proof we abbreviate $\sigma\coloneqq\size(\mathbb T)$. We remove trees from $\mathbb T$ via the following recursive procedure. First consider all the maximal (with respect to set inclusion) lacunary trees $\T\subseteq \mathbb T$ such that
\begin{equation}\label{eq:largetree}
\sum_{t\in\T}F[f](t)^2 > \frac{\sigma^2 }{2}|R_{\T}|
\end{equation}
{and among those let $\T_1$ be the one with $\xi_{\T_1}$ having the \emph{smallest signature}, where $(\xi_{\T_1},R_{\T_1})$ is the {top data} of $\T_1$. We define} the collection of tiles
{\[
\mathcal E(\T_1)\coloneqq\{t\in\mathbb T:\, \xi_{\T_1}\in Q_t,\, \scl(R_t)\leq \scl(R_{\T_1}),\, R_t \cap K_n ^2 R_{\T_1}\neq \varnothing\}.
\]
}
Clearly $\T_1\subseteq \mathcal E(\T_1)$. Now replace $\mathbb T$ by $\mathbb T\setminus \mathcal E(\T_1)$ and recurse. The selection algorithm terminates when $\size(\mathbb T^{\mathrm small})=\size (\mathbb T\setminus  \bigcup_k \mathcal E(\T_k))<{\sigma/\sqrt{2}}$. 

Note that the {elements of the} collection $\{\mathcal E(\T_k)\}_k$ are not --- strictly speaking --- trees according to Definition~\ref{def:trees} but we can easily remedy that. Indeed consider the rectangular parallelepipeds $R_{\T_{k,j}}$ which belong to the same grid as $R_{\T _k}$ and are such that $R_{\T_{k,j}}\cap K_n ^2R_{\T_k}\neq \varnothing$. Clearly there are at most $O_d(1)$ indices $j$ for which this happens, uniformly in $k$. Now we set 
\[
{\T_{k,1}\coloneqq \left\{t\in\mathcal E(\T_k):\,  R_t \cap  R_{\T_{k,1}}\neq \varnothing\right\} }
\]
 and recursively let $\T_{k,j+1}$ be the maximal tree with top $({\xi_{\T_{k}}} , R_ {\T_{k,j+1}})$ contained in $\mathcal E(\T_k)\setminus\cup_{j'\leq j}\T_{k,j'}$. Each $\T_{k,j}$ is a tree and we have $\sum_k\sum_j |R_{\T_{j,k}}|\lesssim \sum_k |R_{\T_k}|$ so the proof will be complete once we estimate the latter sum. An important fact is that the collection $\mathscr T\coloneqq \{\T_k\}_k$ is strongly disjoint. Indeed if $t\in\T\neq \T' \ni t'$ with $\T,\T'\in\mathscr T$ {and} $Q_t \subseteq Q_{t'} ^\circ$, {then the tree $\T$} was selected first because of Remark~\ref{rmrk:sign}. Thus if we had that $R_{t'}\cap K_n ^2 R_\T\neq \varnothing$ then the tile $t'$ would have been included in the family $\mathcal E(\T)$ and consequently would not be available for inclusion in the tree $\T'$. Now using the selection condition \eqref{eq:largetree} together with Lemma~\ref{lem:strodis} we get
\[
\sum_{\T\in\mathscr T} |R_{\T }| \lesssim \sigma^{-2} \sum_{\T\in\mathscr T}\sum_{t\in\T} F[f](t)^2 \lesssim \sigma^{-2} \max\left(\|f\|_2 ^2,\sigma^{\frac23}\left(\sum_{\T\in\mathscr T}|R_\T|\right)^{\frac13}\|f\|_2 ^{\frac43} \right)
\]
which readily yields the desired estimate.
\end{proof}
%%%%%%%%%%%%%%%%%%%%%%%%%%%%%% PROOF PROOF PROOF

%%%%%%%%%%%%%%%%%%%%%%%%%%%%%% SECTION SECTION SECTION
\subsection{The single tree estimate} The size and density lemmas combined with the discretization of the operator in \eqref{eq:model} reduce matters to the estimate for a single tree which is stated and proved below.

%%%%%%%%%%%%%%%%%%%%%%%%%%%%%% LEMMA LEMMA LEMMA
\begin{lemma}[tree lemma]\label{lem:tree}  {For every tree $\T$ and every $\tau_o\in\{1,\ldots,N\}$, there holds}
\[
{\Lambda_{\T;\tau_o}(f,g\ind_E)}\lesssim \size(\T) \dense(\T)|R_T|
\]
where $\size$ is defined with respect to $f\in L^\infty_0(\R^n)$, $\dense$ is defined with respect to $E\subset \R^n$ and $g\in L^\infty_0(\R^n)$ with $\|g\|_\infty=1$ is arbitrary.
\end{lemma}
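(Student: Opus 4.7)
\textbf{Proof plan for Lemma~\ref{lem:tree}.} By homogeneity, we may normalize so that $\size(\T)=\dense(\T)=1$ and aim for $\Lambda_{\T}(f,g\ind_E)\lesssim |R_\T|$. Decompose $\T = \T^{\mathrm{ov}}\sqcup \T^{\mathrm{lac}}$ into the overlapping and lacunary subtrees of Definition~\ref{def:trees}; both inherit the top data of $\T$, and by Lemma~\ref{lem:geom} every $R_t$ with $t\in\T$ lies inside $K_n R_\T$, so all estimates localize to $K_nR_\T$. We handle the two contributions separately.

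For the lacunary subtree, apply termwise Cauchy--Schwarz to decouple the two coefficient maps:
\[
|\Lambda_{\T^{\mathrm{lac}}}(f,g\ind_E)| \leq \Big(\sum_{t\in \T^{\mathrm{lac}}} F_{10n}[f](t)^2\Big)^{1/2}\Big(\sum_{t\in \T^{\mathrm{lac}}} A_{\sigma,\tau,M}[g\ind_E](t)^2\Big)^{1/2}.
\]
The first factor is at most $\size(\T)|R_\T|^{1/2}=|R_\T|^{1/2}$ directly from the definition of $\size$. For the second factor, one exploits the pairwise disjointness of $\{\omega_t:t\in\T^{\mathrm{lac}}\}$ (Remark~\ref{rmrk:treefacts}) to extract a Bessel-type inequality for the wave packets $\vartheta_t(\cdot,\sigma)$ in the spirit of Lemma~\ref{lem:ortho}. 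The measurable dependence $\sigma=\sigma(\cdot)$ is absorbed using the adaptedness condition (v) of $\mathcal A_t^M$: the logarithmic $\sigma$-modulus of continuity built into the class is precisely so that, as $t$ ranges over the tree, the $\sigma$-errors telescope into a geometric series in scale. After this reduction, the indicator $\ind_{\alpha_{t,\tau}}(v_{\sigma(\cdot)})$ restricts integration of each term to the set $E_t$, and the density bound $\int_{E_t}\mathsf{Sy}^1_{R_t}\chi_M\leq \dense(\T)$ combined with the per-scale spatial tiling by $\{R_t\}$ converts the Bessel estimate into $\sum_{t\in\T^{\mathrm{lac}}} A_{\sigma,\tau,M}[g\ind_E](t)^2\lesssim \dense(\T)^2|R_\T|$.

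For the overlapping subtree, the frequency cubes $\{Q_t^\circ:t\in \T^{\mathrm{ov}}\}$ are totally ordered by inclusion (all contain $\xi_\T$ in their interior), producing a nested martingale-type structure for the wave packets $\varphi_t$ analogous to a Littlewood--Paley decomposition of $f$ about $\xi_\T$. Since $\alpha_t\subseteq Q_t\setminus Q_t^\circ$ sits at frequency distance $\gtrsim \ell(Q_t)$ from $\xi_\T$, each factor $\vartheta_t(x,\sigma(x))\ind_{\alpha_{t,\tau}}(v_{\sigma(x)})$ records a genuine frequency increment. A Carleson-type maximal/martingale argument then provides the pointwise bound
\[
\Big|\sum_{t\in\T^{\mathrm{ov}}}\langle f,\varphi_t\rangle\,\vartheta_t(x,\sigma(x))\ind_{\alpha_{t,\tau}}(v_{\sigma(x)})\Big|\lesssim \size(\T)\,\mathsf{Sy}^\infty_{K_nR_\T}\chi_{10n}(x)\,\ind_{\{v_{\sigma(x)}\in \bigcup_{t}\alpha_{t,\tau}\}}(x),
\]
where the bound $|\langle f,\varphi_t\rangle|/|R_t|^{1/2}\leq \size(\T)$ is obtained by viewing each single tile as a one-element lacunary tree. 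Pairing against $|g|\ind_E$ and invoking the density of $E$ on the relevant directional region yields $\lesssim \size(\T)\dense(\T)|R_\T|$.

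The main obstacle sits in the lacunary step: the orthogonality of $\{\vartheta_t(\cdot,\sigma)\}$ that one extracts from $\omega_t$-disjointness is straightforward at any \emph{fixed} $\sigma$, but here $\sigma=\sigma(x)$ genuinely varies pointwise. The logarithmic modulus of continuity of $\vartheta\in\mathcal A_t^M$ codified in condition (v) is tailor-made so that the resulting defect sums geometrically over the scales of a tree, which is exactly what is needed to upgrade the fixed-$\sigma$ Bessel inequality to the variable-$\sigma$ estimate driving the lacunary bound.
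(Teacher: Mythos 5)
Your lacunary step contains the critical gap. After Cauchy--Schwarz you need the bound $\sum_{t\in\T^{\mathrm{lac}}}A_{\sigma,\tau,M}[g\ind_E](t)^2\lesssim \dense(\T)^2|R_\T|$, and neither of the two mechanisms you invoke, nor any combination of them that you describe, produces it. The per-tile density bound gives $A_{\sigma,\tau,M}[g\ind_E](t)\lesssim |R_t|^{1/2}\dense(\T)$, hence only $\dense(\T)^2\sum_{t\in\T^{\mathrm{lac}}}|R_t|$, which overshoots $\dense(\T)^2|R_\T|$ by the number of scales in the tree (for a lacunary tree, unboundedly many scales can be active at a single point $x$, since the condition $v_{\sigma(x)}\in\alpha_t$ holds for a whole interval of scales). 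A Bessel-type inequality from the disjointness of $\{\omega_t\}$, on the other hand, would produce $\|g\ind_E\|_{L^2(CR_\T)}^2\approx |E\cap CR_\T|$, and this is \emph{not} controlled by $\dense(\T)|R_\T|$: the density only sees the portions of $E$ on which $v_{\sigma(x)}$ lies in the tree's directional supports, so $E$ can fill $R_\T$ while $\dense(\T)$ is tiny. Moreover the cutoffs $\ind_{E_t}$ and the pointwise-varying $\sigma(x)$ destroy the frequency disjointness on which any Bessel inequality would rest, and condition (v) (whose role you correctly identify in spirit) controls differences $\vartheta_t(\cdot,\sigma)-\vartheta_t(\cdot,\rho)$ pointwise; it does not restore an $L^2$ almost-orthogonality for the density coefficients. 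Density is an $L^1$/mass-type quantity on tiles, and decoupling it from the size by termwise Cauchy--Schwarz is exactly the move that cannot work here; this is why the paper (as in Lacey--Thiele) never forms $\sum_t A(t)^2$.

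The paper's proof instead performs a Whitney-type decomposition $\mathcal P$ of space relative to the tree, splits $\T=\T_P^-\cup\T_P^+$ by scale on each $P$, and proves $L^\infty(P)$ bounds: the small scales are summed directly against the density; for the large scales the overlapping part contributes a single scale at each point (by the directional support condition (iv)), while the lacunary part is compared with the fixed-direction packets $\vartheta_t(\cdot,\R^d)$ -- the nestedness of the sets $E_t$ turns the variable truncation into a difference of two smooth convolutions, bounded by a strong maximal function of $\sum_t\eps_t\langle f,\varphi_t\rangle\widetilde\vartheta_t$, whose $L^2$ norm is $\lesssim\size(\T)|R_\T|^{1/2}$ by Lemma~\ref{lem:ortho}, and the error is summable precisely because of condition (v). The density then enters only through the measure bound $|G_P|\lesssim(1+\rho_{R_\T}(P))^{10n}\dense(\T)|P|$ of Lemma~\ref{lem:support}, which requires constructing a comparison tile $t\geq t_0$ at the scale of $P$; your overlapping-part step ``invoking the density of $E$ on the relevant directional region'' silently needs exactly this construction and the spatial localization you have omitted. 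So while your treatment of the $F$-coefficients and the single-tile size bound are fine, the overall scheme must be replaced by the localize-then-estimate-pointwise argument rather than repaired within the Cauchy--Schwarz framework.
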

%%%%%%%%%%%%%%%%%%%%%%%%%%%%%% LEMMA LEMMA LEMMA

%%%%%%%%%%%%%%%%%%%%%%%%%%%%%% PROOF PROOF PROOF
\begin{proof}  
In view of the definition of the form ${\Lambda_{\mathbf{T};\tau_o}}$ it suffices to prove that 
\begin{equation}\label{tree_estimate}
\sum_{t \in \mathbf{T} } \left| \langle f, \varphi_t \rangle \right| \left| \left\langle \vartheta_t \left(\cdot, \sigma(\cdot) \right) ,{g\ind_{{E_{t,\tau_0}}} } \right\rangle  \right| \lesssim \size (\mathbf{T}) \dense (\mathbf{T}) | R_{\mathbf{T} } |,
\end{equation}
where $E_{t,\tau_0}:=\{x\in E: v_{\sigma}(x) \in \alpha_{t,\tau_0}\}$,
uniformly in choices of adapted families $\{\varphi_t:\, \varphi_t\in\mathcal F_t ^M,\, t\in \T\}$ and $\{\psi_t:\, \varphi_t\in\mathcal A_t ^M,\, t\in \T\}$. We remember that we have fixed $M=50n$.

We begin the proof setup with a reduction. Since the single tree estimate is rotation invariant we can and will assume that $\xi_\T=\Pi_{e_n^\perp} e_n=0\in\R^d$. {A simple geometric argument similar to the one needed in the proof of} Lemma~\ref{lem:geom}  implies that there exists a dimensional constant $C_n >0$ such that for any tile $t \in \mathbf{T}$ there exists a rectangle ${R}_t  ^*$ with sides parallel to the coordinate axes, $\Pi_{e_n ^\perp} ( {R}_{t} ^*) $ is a cube in $\mathbb{R}^d$ and $R_t \subset  R_t  ^* \subset C_n R_t$. 
Furthermore, by e.g. splitting the tree into $O _n(1)$ trees we may assume that all $R_t ^*$ come from a  single grid $\mathcal{L}$ in $\mathbb{R}^n$ that consists of rectangles with sides parallel to the axes and is such that  
\[
\{ \Pi_{e_n^{\perp}} (R) :R \in \mathcal{L}\}\subseteq {\mathcal D_{(\kappa,m)},}
\]
where {$\mathcal D_{(\kappa,m)}$ is a dyadic grid of pace $(\kappa,m)$ in $\mathbb{R}^d$ with $m\in[0,\kappa)$}, and that $\Pi_{e_n}(R^*)$ are {standard} dyadic intervals {from $\mathcal D$} of length $4$. We also fix ${R}_{\mathbf{T} } ^* \in \mathcal{L}$ with $ R_{\mathbf{T}} \subseteq R ^* _{\mathbf{T}} \subseteq C_n R_{\mathbf{T}}$ and $R_t ^*\subseteq R_\T ^*$ for all $t\in\T$. 

We note that $\T^*\coloneqq\{R_t ^* \times Q_t:\, t\in\T\}$ is not --- strictly speaking --- a collection of tiles according to our definition in \S\ref{sec:tiles}. However the previously defined relation of order is still meaningful with the same formal definition and we have that 
\[
\size(\T)\eqsim \size(\T^*),\qquad \dense(\T)\eqsim \dense (\T^*)
\]
with implicit dimensional constants, where in the definitions for $\size(\T^*),\dense(\T^*)$ above we use $t\in\T^*$ instead of the actual tiles of $\T$. With these reductions and remarks taken as understood we will henceforth write $\T=\T^*$ and assume that $\{R_t:\, t\in\T\}\subseteq \mathcal L$ with $\mathcal L$ being a grid as  above. Note that the uncertainty relation will change in an inconsequential way as we will now have $\scl(R_t) \ell(Q_t)\in [c_n ^{-1},c_n)$ for some dimensional constant $c_n>3$.

Now let $\mathcal{P}(e_n ^\perp)$ be the maximal dyadic cubes $Q$ in $\R^d$ such that $3Q\nsupseteq \Pi_{e_n ^\perp}R_t $ for any $t \in \mathbf{T}$; note that $\mathcal{P}(e_n ^\perp)$ partitions $\mathbb{R}^d$. Let $\mathcal P(e_n)$ be the dyadic intervals of length $2$ on the real line and define $\mathcal P\coloneqq \mathcal P(e_n ^\perp)\times \mathcal P(e_n)$.  We will write $\scl(P)\coloneqq \ell(\Pi_{e_n ^\perp} P)$. For $P \in \mathcal{P}$ we write $\mathbf{T}  = \mathbf{T} _P^+ \cup \mathbf{T} _P^-$, where
\[
\mathbf{T} _P^- \coloneqq \left\{ t \in \mathbf{T} : \, \scl( R_t ) \leq  \scl(P)  \right\}    \quad \text{and} \quad \T_P^+ \coloneqq \left\{ t \in \mathbf{T} :\, \scl(R_t)  > \scl(P)   \right\} .
\]
We write the basic estimate
\[
\sum_{t \in \mathbf{T}} \left| \langle f, \phi_t \rangle  \right| \left|  \left\langle   \vartheta_t \left(\cdot, \sigma(\cdot) \right), g\ind_{{{E_{t,\tau_0}}}}\right\rangle \right|  
 = \sum_{P \in \mathcal{P}} \int_P L_P^-  (x) \, \d x +  \sum_{P \in \mathcal{P}} \int_P L_P^+ (x) \, \d x,
\]
where
\[
L_P^{\omega} (x) \coloneqq  \sum_{t \in { \mathbf{T}_P ^\omega}} \eps_t \langle f, \varphi_t  \rangle    \vartheta_t (x, \sigma(x)) g(x)\cic{1}_{{{E_{t,\tau_0}}}}(x)  ,  \quad \omega \in \{ -,+ \},\quad x\in\R^n,
\]
for an appropriate sequence $\{ \eps_t \}_{t \in \mathbf{T}}$ on the unit circle in $\mathbb C$. We shall prove that
 \begin{equation}\label{eq:L-+}
\left| \sum_{P \in \mathcal{P}} \int_P L_P^{\omega} (x) \, \d x \right| \lesssim \size (\mathbf{T})  \dense (\mathbf{T}) | R_{\mathbf{T} } | , \quad \omega \in \{-,+\} . 
\end{equation}

%%%%%%%%%%%%%%%%%%%%%%%%%%%%%% SECTION SECTION SECTION
\subsubsection*{Proof of \texorpdfstring{\eqref{eq:L-+}}{(6.17)} for \texorpdfstring{$\omega=-$}{ω=-}} For $P \in \mathcal{P}$ and  $\|g\|_\infty\leq 1$
 \[
 \begin{split}
\left| \int_P L_P^{-} (x) \, \d x \right|  &\leq \size (\mathbf {T}) {\sum_{t\in \mathbf{T}_P^-}} \int_{P \cap E_{t} } |R_t|^{1/2} \left|  \vartheta_t \left(x, \sigma(x) \right) \right| |g(x)| \, \d x 
\\
&\lesssim \size(\T)\dense(\T){\sum_{t\in \mathbf{T}_P^-}}|R_t|\sup_{x\in P} \mathrm{Sy} ^\infty _{R_t} \chi_{40n}(x).
\end{split}
\]
For $R\in\{R_t:\, t \in  \mathbf{T}_P^-\}$, set
\[
\rho_R (P) \coloneqq {\inf_{x \in P}}\, \rho_R (x),\qquad \rho_R (x) \coloneqq \inf \{r>0: \,x \in r R \},
\]
and note that $\rho_{R_t}(P)\geq \max(1,\rho_{R_\T}(P))\eqqcolon \rho(P,\T)$ by the construction of $\mathcal P$ and the facts that {$\scl(P)\geq \scl(R_t)$ and $R_t\subseteq R_\T$ for $t\in\T_P ^-$.} It follows that
 \[
 \begin{split}
\left| \sum_{P \in \mathcal{P}} \int_P L_P^- (x) \, \d x \right| & {\lesssim} \size (\mathbf {T}) \dense (\mathbf{T})   \sum_{P \in \mathcal{P}}  \sum_{t \in \mathbf{T}_P^-}  |R_t|  (1+\rho_{R_t} (P))^{-40n} .
\end{split}
\]
Note that for $t\in \T_P ^-$ with ${\rho_{R_t}(P)}\in [2^\ell,2^{\ell+1})$ and $\ell(R_t)=2^m$ we have 
 \[
R_t \subseteq 2^{\ell+4} \frac{ 2^{m+1} }{\ell \big( \Pi_{e_n^{\perp}} (P) \big) } \Pi_{e_n^{\perp}} ( P ) \times 2^{\ell+4} \Pi_{e_n} ( P).
\]
We can thus estimate
\[
\begin{split}
\sum_{P \in \mathcal{P}} \sum_{t \in \mathbf{T}_P^-}  |R_t| {\rho_{R_t} (P)^{-40n}} & \lesssim \sum_{P \in \mathcal{P}} \sum_{{2^{\ell+1}} \geq \rho (P,\T)} 2^{-40n\ell } \sum_{{2^m=4}}^{\ell ( \Pi_{e_n^{\perp}} (P) )  }   \left( \frac{ {2^{\ell+1}} 2^m }{ \ell \big( \Pi_{e_n^{\perp}} (P) \big) } \right)^d 2^\ell |P|
% \\ & \lesssim \sum_{P \in \mathcal{P}} \sum_{{2^{\ell+1}} \geq \rho(P,\T)} 2^{-39n\ell}  |P|
\\
&= \sum_{ \substack{ P \in \mathcal{P} \\  P \subseteq 100   R _\T}} \sum_{{\ell \geq -1}} 2^{-39n\ell}  |P| + \sum_{ \substack{ P \in \mathcal{P} \\  P  \nsubseteq  100   R _{\mathbf{T}} }} \sum_{{2^{\ell+1}} \geq \rho_{R_\T} (P)} 2^{-39n\ell }  |P| \eqqcolon \mathrm{I}+\mathrm{II}.
\end{split}
\]
Since the collection $\{P\}_{P\in\mathcal P}$ is pairwise disjoint we readily get that $\mathrm{I}\lesssim | R_{\mathbf{T} } |$.  For $\mathrm{II}$ note that $P \nsubseteq  100 R_{\mathbf{T}}$ implies that $P \subseteq (  4 R_\T)^{\mathrm{c}}$ so that
\[
\mathrm{II} \lesssim \sum_{\substack{P\in\mathcal P\\ P\subseteq  ( 4R_\T) ^{\mathrm c} }}\int_P \rho_{R_\T}(x)^{-39n}\, \d x \lesssim \int_{ (4 R_\T)^{\mathrm c}}\rho_{R_\T}(x)^{-39n}\, \d x\lesssim |R_\T|
\]
and the proof of \eqref{eq:L-+} for $\omega=-$ is complete.

%%%%%%%%%%%%%%%%%%%%%%%%%%%%%% SECTION SECTION SECTION
\subsubsection*{Proof of \texorpdfstring{\eqref{eq:L-+}}{(6.17)} for \texorpdfstring{$\omega=+$}{ω=+}} We make the preliminary observation that if $\T_P ^+\neq \varnothing$ then $\scl(R_\T)\geq 2 \scl(P)$ and $\Pi_{e_n ^\perp} P\subseteq 5\Pi_{e_n ^\perp} R_\T$. For each $P\in\mathcal P$ we define
\[
G_P \coloneqq P \cap \bigcup_{t \in \mathbf{T}_P^+} E_t .
\]

%%%%%%%%%%%%%%%%%%%%%%%%%%%%%% LEMMA LEMMA LEMMA
\begin{lemma}\label{lem:support} {For $P\in\mathcal P$ there holds} 
$ 
| G_P | \lesssim (1+\rho_{R_\T}(P))^{10n} \dense (\mathbf{T}) |P| . 
$
\end{lemma}
%%%%%%%%%%%%%%%%%%%%%%%%%%%%%% LEMMA LEMMA LEMMA

%%%%%%%%%%%%%%%%%%%%%%%%%%%%%% PROOF PROOF PROOF
\begin{proof}  
Let $\widehat{P} \in \mathcal{L}$ be such that $ \Pi_{e_n^{\perp}}(\widehat{P})$ is the dyadic parent of $ \Pi_{e_n^{\perp}}(P)$. By the maximality of $P$ one has $ 3 \Pi_{e_n^{\perp}}(\widehat{P}) \supseteq \Pi_{e_n^{\perp}} ( R_{t_0} )$ for some $t_0 \in \mathbf{T}$.  Hence, there exists an $R \in \mathcal{L}$
such that $\ell ( \Pi_{e_n^{\perp}}(R) ) = \ell ( \Pi_{e_n^{\perp}}(\widehat{P}) )$ and  $ R \supseteq  R_{t_0} $.  

Let $Q $ be the largest cube in $ \mathcal{G}$ such that $\xi_{\mathbf{T}} \in Q \subseteq Q_{t_0} $  and $\scl (R) \scl (Q) \in [c_n^{-1} , c_n)$. Define $t \coloneqq R \times Q$ and note that $t \geq t_0 $ and $G_P \subseteq  P \cap E_t$. As 
\[
|P| \eqsim  |R|,\qquad \mathrm{Sy}^1_R  \chi_{10n}  (x)  \gtrsim (1+\rho_{R_\T}(P))^{-10n} | R |^{-1} \qquad \forall x\in P,
\]
we have
\[\begin{split} \quad &
| G_P | \lesssim | P |   \int_{E_t} \frac{1}{ | R |} \cic{1}_P (x) \d x\\ &  \lesssim (1+\rho_{R_\T}(P))^{10n} |P| \int_{E_t} \mathrm{Sy}^1_R \chi_{10n} (x) \d x  \lesssim  (1+\rho_{R_\T}(P))^{10n}  |P|  \dense(\mathbf{T}), \end{split}
\]
and the proof is complete.
\end{proof}
%%%%%%%%%%%%%%%%%%%%%%%%%%%%%% PROOF PROOF PROOF

We can decompose  $\mathbf{T} = \mathbf{T}_{\text{ov}}\cup_{\tau=1} ^N \mathbf{T}_{\text{lac}_\tau}$, where $\mathbf{T}_{\text{ov}}$ is an overlapping tree and $\mathbf{T}_{\text{lac}_\tau}$ is a $\tau$-lacunary tree for each $\tau\in\{1,\ldots,N\}.$ For $P \in \mathcal{P}$, we set 
$\mathbf{T}_{P, \text{ov}}^+ \coloneq \mathbf{T}_{ \text{ov}} \cap \mathbf{T}_P^+$ and  
$ \mathbf{T}_{P, \text{lac}_\tau }^+ \coloneq \mathbf{T}_{ \text{lac}_\tau} \cap \mathbf{T}_P^+ $ for $\tau\in\{1,\ldots,N\}$. We define
\[
L_{P,\mathrm{type}}^{+} (x) \coloneqq  \sum_{t \in \mathbf{T}_{P,\mathrm{type}} ^+} \eps_t \langle f, \varphi_t  \rangle    \vartheta_t (x, \sigma(x)) g(x)\cic{1}_{{{{E_{t,\tau_0}}}}}(x)  ,  \quad \mathrm{type} \in \left\{ \mathrm{ov},\cup_{\tau}\mathrm{lac}_\tau \right\},\quad x\in\R^n.
\]
{
For reasons of space, until the end of the proof we use the local notation
\[
F\coloneqq \sum_{t\in\T_{\mathrm{lac}_{\tau_o}} ^+} \eps_t \l f,\varphi_t\r\widetilde{\vartheta}_t.
\]
We shall prove the estimates
\begin{align}\label{L+_ov}
&  \|L_{P,\mathrm{type}}^{+} \|_{L^\infty(P)}\lesssim \size(\T)(1+\rho_{R_\T}(P))^{-49 n},\quad \mathrm{type}\in\{\mathrm{ov},\cup_{\tau\neq\tau_o} \mathrm{lac}_\tau\}
\\ 
  \label{L+_lac}
&  \|L_{P,\mathrm{lac}_{\tau_o}}^{+} \|_{L^\infty(P)}\lesssim\frac{1}{(1+ \rho_{R_\T}(P))^{20n}} \inf_P \M_{\mathrm S}F+\frac{\size(\T)}{(1+\rho_{R_\T}(P))^{49n}},
\end{align}
}
where $\{\widetilde{\vartheta}_t:\, t\in\T_{\mathrm{lac}} ^+\}\subset \mathcal A_t ^{30n}$ and $\M_{\mathrm{S}}$ is the strong maximal function. Before doing so let us show that \eqref{L+_ov} and \eqref{L+_lac} imply \eqref{eq:L-+} for $\omega=+$. Indeed, using \eqref{L+_ov} and Lemma~\ref{lem:support} we have {for $\mathrm{type}\in\{\mathrm{ov},\cup_{\tau\neq\tau_o} \mathrm{lac}_\tau\}$}
\[
\begin{split}
\left|\sum_{P\in\mathcal P} \int_P {L^+ _{P,{\mathrm{type}}}}{(x)\, \d x} \right|&\lesssim\size(\T)\dense(\T) \sum_{P\in\mathcal P}|P|(1+\rho_{R_\T}(P))^{-39n}\lesssim \size(\T)\dense(\T)|R_\T|.
\end{split}
\]
On the other hand, using \eqref{L+_lac} and Lemma~\ref{lem:support} and the same calculation as in the last display we get
\[
\begin{split}
\int\left|\sum_{P\in\mathcal P} L_{P,\mathrm{lac}_{\tau_o}} ^+(x)\, \d x\right| &\lesssim \size(\T)\dense(\T)|R_\T|
  + \dense(\T)\sum_{P\in\mathcal P} (1+ \rho_{R_\T}(P))^{-10n} \int_P \M_{\mathrm{S}}F.
\end{split}
\]
Observe that
\[
\begin{split}
	\sum_{\substack{P\in\mathcal P\\P\subseteq 100R_\T}} \int_P  \M_{\mathrm{S}}F \lesssim |R_\T| \size(\T)
\end{split}
\]
by the Cauchy--Schwarz inequality, the boundedness of the strong maximal function on $L^2(\R^n)$ and Lemma~\ref{lem:ortho}. Remembering that $\scl(R_\T)\geq 2\scl(P)$ we get
\[
\begin{split}
&\sum_{\substack{P\in\mathcal P\\P\nsubseteq 100R_\T}} {\textstyle \frac{1}{(1+\rho_{R_\T}(P))^{10n}}}\int_P  \M_{\mathrm{S}}F
 \lesssim  \sum_{\ell\geq 2} 2^{-10\ell n} \sum_{\substack{  P\in\mathcal P\\\rho_{R_\T}(P)\eqsim 2^\ell}} \int_{P} \M_{\mathrm{S}}F \\ &
  \lesssim \sum_{\ell\geq 2} 2^{-10\ell n} |2^{\ell+2} R_\T|^{\frac12}|R_\T|^{\frac12}\size(\T)\lesssim |R_\T|\size(\T)
 \end{split}
\]
where we also used that $\rho_{R_\T}(P)\eqsim 2^\ell\implies P\subseteq 2^{\ell+2}R_\T$ in passing to the penultimate approximate inequality.
%%%%%%%%%%%%%%%%%%%%%%%%%%%%%% SECTION SECTION SECTION
\subsubsection*{Proof of\, \texorpdfstring{\eqref{L+_ov}}{(6.19)}} {Let $\mathrm{type}\in\{\mathrm{ov},\cup_{\tau\neq\tau_o} \mathrm{lac}_\tau\}$ be fixed.} Observe that for fixed $x \in \R^n$ if, for $t,t' \in  \mathbf{T}_{P, \text{type}}^+$, $ \vartheta_t (x , \sigma (x)) \cic{1}_{{{{E_{t,\tau_0}}}}}(x)  \neq 0$ and $ \vartheta_{t'} (x , \sigma (x)) \cic{1}_{{{{E_{t',\tau_0}}}}}(x) \neq 0$ then by property (iv) one has $\scl (R_t) = \scl (R_{t'})$.
 Hence, 
\begin{equation}\label{ess_disj_supp}
\|L_{P,{\mathrm{type}}} ^+\|_{L^\infty(P)}\leq \size(\T)\sup_{ \substack{x \in  P\\ \scl\in 2^\N} }   \sum_{\substack{ t \in \mathbf{T}_{P, {\text{type}}}^+ \\\scl(R_t)=\scl}}|R_t|^{\frac12}| \vartheta_t (x , \sigma (x))  | |g(x)| \cic{1}_{E_t} (x)  .   
\end{equation}
{Define $\omega_t(y,x_n)\coloneqq t^{-d} \chi_{50n}(y/t,x_n)$. For any $\scl\in 2^\N$ and fixed $\sigma$,by   adaptedness of $\vartheta_t(\cdot,\sigma)$
\[
\sum_{\substack{ t \in \mathbf{T}_{P, \text{type}}^+ \\\scl(R_t)=\scl}}  |R_t|^{\frac12}| \vartheta_t (\cdot , \sigma )|  \lesssim \sum_{\substack{ t \in \mathbf{T}_{P, {\text{type}}}^+ \\\scl(R_t)=\scl}}   \mathrm{Sy} _{R_t}^{\infty} \chi_{50n}  \lesssim  \sum_{\substack{ t \in \mathbf{T}_{P, {\text{type}}}^+ \\\scl(R_t)=\scl}}    \omega_{\scl}*\ind_{R_t} \lesssim  \omega_{\scl}*\ind_{R_\T}
 \]
using that the collection $\{R_t\}_{\scl(R_t)=\scl}$ is pairwise disjoint and $R_t\subseteq R_\T$ for all $t\in\T$. Because of the trivial estimate $\omega_{\scl}*\ind_{R_\T} \lesssim 1$ it will be enough to consider $P$ such that $P\cap 3R_\T=\varnothing$; indeed if $P\cap 3R_\T\neq \varnothing$ then $\rho_{R_\T}(P )\lesssim 1$ and the desired estimate follows. So fix such a $P$ and let $c_{R_\T}$ denote the center of $R_\T$. If $x=(y,x_n)\in P$ then 
\[
\begin{split}
\omega_{\scl}*\ind_{R_\T} (y,x_n)&\lesssim \frac{|R_\T|}{\scl^d} \left( |x_n-c_{R_\T}| + \frac{|y-c_{R_\T}|}{\scl}\right)^{-50n}
\\
&\lesssim \frac{\scl(R_\T)^d}{\scl^d} \left( |x_n-c_{R_\T}| + \frac{|y-c_{R_\T}|}{\scl}\right)^{-49n} \left(\frac{\scl(R_\T)}{\scl}\right)^{-n}\lesssim \rho_{R_{\T,\scl}}(P) ^{-49n}
\end{split}
\]
where $R_{\T,\scl}$ is an axis parallel rectangular parallelepiped centered at $c_{R_\T}$ with sidelengths $\scl^d\times 4$. Thus we have proved
\[
\|\omega_{\scl}*\ind_{R_\T} \|_{L^\infty(P)} \lesssim \left(1+\rho_{R_{\T,\scl}}(P) \right)^{-49n}\leq \left(1+\rho_{R_{\T}}(P) \right)^{-49n}
\]
since $R_{\T,\scl}\subseteq R_\T$. This completes the proof of \eqref{L+_ov} since $\|g\|_\infty\le1$.
  }

%%%%%%%%%%%%%%%%%%%%%%%%%%%%%% SECTION SECTION SECTION
\subsubsection*{Proof of \texorpdfstring{\eqref{L+_lac}}{(6.20)}} Remember that $\xi_\T=\Pi_{e_n ^\perp} e_n=0\in\R^d$ which means that property (v) of \S\ref{sec:classes} applies for $\vartheta(\cdot,\sigma(\cdot))-\vartheta(\cdot,\R^d)$ where $\R^d\in\Gr(d,n)$ {is} regarded as an element of the Grassmanian. Writing $\vartheta_t\coloneqq \vartheta_t(\cdot,\R^d)$ we preliminary split
\[
\begin{split}
L ^+ _{P,\mathrm{lac}} & = g\left( \sum_{t \in \mathbf{T}_{P,\mathrm{lac}_{\tau_o}} ^+} \eps_t \langle f, \varphi_t  \rangle    \vartheta_t  \cic{1}_{{{{E_{t,\tau_0}}}}}(x) \right)+g\left( \sum_{t \in \mathbf{T}_{P,\mathrm{lac}_{\tau_o}} ^+} \eps_t \langle f, \varphi_t  \rangle    [\vartheta_t(\cdot,\sigma(\cdot))-\vartheta_t  ]\cic{1}_{{{{E\cap \alpha_{t,\tau_o}}}}}(x) \right)
\\
&\eqqcolon g L_{P,1}+gL_{P,2}.
\end{split}
\] As $\|g\|_\infty\leq 1$ it suffices to prove the required pointwise estimate is satisfied by $L_{P,j}$, $j=1,2$.
We first deal with the term $L_{P,1}$. {Note that the sets   $\{Q_{t,\tau_o}\}_{t\in\T_{\mathrm{lac},\tau_o}}=\{\Pi_{e_n^{\perp}}(\alpha_{t,\tau_o})\}_{t\in\T_{\mathrm{lac}_{\tau_o}}}$  are nested since they all contain $\xi_\T$. Furthermore, for a fixed $x\in\R^n$ a term in the  sum defining $L_{P,1}(x)$ is nonzero if and only if $v_\sigma(x)\in \alpha_{t,\tau_0}$. We can then conclude that there exist positive numbers  $\scl(P)< m(x)\leq  M(x)$ such that the condition $\ind_{E_{t,\tau_0}}(x)\neq 0$ is satisfied if and only if $m(x)\leq \scl(R_t)\leq M(x)$. Letting $t_{R_\T}\coloneqq \mathrm{Sy} _{R_\T} ^\infty \chi_{20n}$ we then have for any $c>0$} 
\[
L  _{P,1}(x)= c^{-1}t_{R_\T}(x) \ind_E(x) \sum_{\substack  {t\in\T_{\mathrm{lac}_{\tau_o} } ^+ \\ m(x) \leq \scl(R_t)\leq M(x)} }\eps_t \l f,\varphi_t\r \widetilde{\vartheta}_t(x)
\]
where $\widetilde {\vartheta}_t \coloneqq c t_{R_\T} ^{-1} \vartheta_t $. Note that for a choice of $c$ depending only upon dimension we will have that $\{\widetilde{\vartheta} _t:\, t\in \T^+ _{\mathrm{lac}_{\tau_o}}\}\subset \mathcal A_t ^{30n}$. In order to conclude the desired estimate for $L_{P,1}$ we consider  $\eta \in\mathcal S(\R^n)$  with  $ \supp ( \widehat{\eta} ) \subseteq [-3,3]^d\times [1/4,4]$ and $ \widehat{\eta} (\xi) = 1 $ for all $ \xi \in  [-2,2]^d\times [1/2,2] $. Then, as $\mathrm{supp} \ (\widetilde{\vartheta}_t) ^\wedge  \subseteq \omega_t $, we may write for any $B>\beta>0$
\begin{align*}
& \sum_{\substack{ t \in \mathbf{T}^+ _{\text{lac}_{\tau_o}}   \\ \beta \leq  \scl( R_t )  \leq B }} \eps_t \langle f, \phi_t \rangle \widetilde{\vartheta}_t  =    \left(  \eta_{\gamma_n B} - \eta_{\gamma_n\beta} \right) *   \sum_{t \in \mathbf{T}_{\text{lac}_{\tau_o}} ^+} \eps_t \langle f, \phi_t \rangle \widetilde{\vartheta}_t   
\end{align*}
for a constant  $\gamma_n>0$, where $\eta_r (x) \coloneqq r^{-d} \sigma (r^{-1} \Pi_{e_n^{\perp}}x + \Pi_{e_n}x )$ for $r>0$ and $x \in \mathbb{R}^n$. Thus
\[
\sup_{x\in P} |L_{P,1}(x)| \lesssim (1+\rho_{R_\T}(P))^{-20n}\inf_P \M_{\mathrm{S}}\left(\sum_{t\in\T_{P,\mathrm{lac}_{\tau_o}} ^+} \eps_t \l f,\varphi_t\r\widetilde{\vartheta}_t\right)
\]
as desired.

We turn to the estimate for $L_{P,2}$. We use the fact that $\scl(t)\lesssim \dist(\sigma(x),\R^d)^{-1}$ for $t\in\T$ together with (v) of \S\ref{sec:classes} to estimate for $x\in P$
\[
\begin{split}
|L_{P,2}(x)|&\leq \size(\T)   \sum_{t \in \mathbf{T}_{P, \text{lac}_{\tau_o}}^+}  |R_t|^{1/2}  \left| \vartheta_t \left(x, \sigma(x) \right) - \vartheta_t \left(x \right) \right| 
\\
& \lesssim  \size (\T)\dist( \sigma (x), \R^d) \sum_{\substack{\scl\in 2^\N \\ \scl \lesssim  \dist( \sigma (x), \R^d )  ^{-1} } } \scl\sum_{\substack{t \in \mathbf{T}_{P, \text{lac}_{\tau_o}}^+\\ \scl(R_t) = \scl }}   \mathrm{Sy}_{R_t}^{\infty} \chi_{50n} (x)  + 
\\
&\qquad + \frac{\size (\T)} { \log \left( e
+ \dist( \sigma (x), \R^d )  ^{-1} \right) }   \sum_{\substack{\scl \in 2^\N \\ \scl  \lesssim  \dist( \sigma (x), \R^d )  ^{-1} } }   \sum_{\substack{t \in \mathbf{T}_{P, \text{lac}_{\tau_o}}^+ \\ \scl(R_t) = \scl }}   \mathrm{Sy}_{R_t}^{\infty} \chi_{50n} (x) 
\\
& \lesssim \size(\T) (1+\rho_{R_\T} (P))^{-49n}.
\end{split}
\]
This concludes the proof of \eqref{L+_lac} and with that the proof of the tree estimate.
\end{proof}
%%%%%%%%%%%%%%%%%%%%%%%%%%%%%% PROOF PROOF PROOF

%%%%%%%%%%%%%%%%%%%%%%%%%%%%%% SECTION SECTION SECTION
\subsection{The proof of \texorpdfstring{\eqref{eq:model}}{(5.27)}-\texorpdfstring{\eqref{eq:model2}}{(5.28)}} Fix a finite collection $\mathbb P$, $g\in L^\infty_0(\R^n)$ with $\|g\|_\infty=1$ throughout this discussion. Also fix a set $E\subset \R^n$ of finite measure. The function $\dense$ below corresponds to this choice of $E$.

We first prove \eqref{eq:model}. For this, fix $f\in L^\infty_0(\R^n)$ so that the  $\size$ map is computed with respect to $f$. {We note that for any collection $\mathbb P$ we have the \emph{apriori} bounds 
\begin{equation}\label{e:sizecontrol}
\dense(\mathbb P)\lesssim \min(1,|E|),\qquad \size(\mathbb P) \lesssim \|f\|_\infty \eqqcolon 2^{-\frac{K(f)}{2}} \|f\|_2,
\end{equation}
the first one being easily verified by checking the definition of $\dense$ and the second one being a consequence of  2.\ from Lemma \ref{lem:ortho}}.  By consecutive applications of the size lemma, Lemma~\ref{lem:size}, and the density lemma, Lemma~\ref{lem:dense}, {and starting from the initial bounds for $\dense$ and $\size$ above,} we can decompose the collection of tiles $\mathbb P$ in the form
\begin{equation}\label{eq:decomp}
\begin{split}
&
\mathbb P =\bigcup_{k\geq K(f)}\bigcup_{{\nu}=1}^{N_k} \T_{k,{\nu}},\\ & \size(\T_{k,\nu})\leq 2^{-\frac {k}{2}}\|f\|_2,\qquad  \dense(\T_{k,\nu}){\lesssim} \min\left\{1,2^{-k}|E|\right\}, \qquad \sum_{\nu=1}^{N_k}  |R_{\T_{k,\nu}}|\lesssim 2^{k}. \end{split}
\end{equation}
Consequently, employing  the tree   Lemma~\ref{lem:tree} we have {for each $\tau\in\{1,\ldots,N\}$}
\[
\begin{split}
\Lambda_{\mathbb P;{\tau}}(f,g\cic{1}_E)&\lesssim \sum_{k\geq K(f)}\sum_{{\nu}=1}^{N_k}\size(\T_{k,\tau}) \dense(\T_{k,{\nu}})|R_{\T_{k,{\nu}}}|  
\lesssim \|f\|_2\sum_{k\in \mathbb Z} 2^{\frac{-k}{2}}\min(1,2^{-k}|E|) \sum_{{\nu}=1}^{N_k}|R_{\T_{k,{\nu}}}|  
\\ &\lesssim \|f\|_2|E|^{\frac 12},
\end{split}
\]
which is the sought after estimate \eqref{eq:model}.

Turning to \eqref{eq:model2}, fix  $F\subset\R^n$ of finite measure. Accordingly, in what follows, we compute the $\size$ with $\cic{1}_F$ in place of $f$. 
If $|F|\leq |E|$ the desired estimate \eqref{eq:model2} actually follows from \eqref{eq:model}, as  \[
\Lambda_{\mathbb P;{\tau}}(\cic{1}_F,g\cic{1}_E)\lesssim \|\cic{1}_F\|_2  |E|^{\frac12} =|F|^{\frac 1p} |F|^{\frac12 - \frac 1p}|E|^{\frac12}\leq    |F|^{\frac1p}|E|^{1-\frac{1}{p}}.
\]
We can therefore assume that $|F|>|E|$. Then decomposition \eqref{eq:decomp}, estimates \eqref{e:sizecontrol} with $\cic{1}_F$ in place of $f$,  and the tree lemma imply that for each $\tau\in\{1,\ldots,N\}$ we have
\begin{multline*}
\quad \Lambda_{\mathbb P;{\tau}}(\cic{1}_F,g\cic{1}_E)\lesssim \sum_{k\in \mathbb Z} \min(1,2^{-\frac k2}|F|^{\frac12})\min(1,2^{-k}|E|)2^{k}
\\
\lesssim \sum_{2^{k}\geq |F|}
2^{-\frac k2}|F|^{\frac12}|E|+\sum_{|F| >2^k\geq |E|}|E|+\sum_{2^k<|E|} 2^{k}
\lesssim |E| \left(1+\log\frac{|F|}{|E|} \right) \lesssim |F|^{\frac1p}|E|^{1-\frac{1}{p}}.
\end{multline*}
whence \eqref{eq:model2} follows.
%%%%%%%%%%%%%%%%%%%%%%%%%%%%%% SECTION SECTION SECTION
\section{Complements} \label{sec:compl}

%%%%%%%%%%%%%%%%%%%%%%%%%%%%%% SECTION SECTION SECTION
\subsection{Maximal truncations and differentiation of Besov spaces}
\label{sub:besov} 

Let $\gamma\in \mathcal S(\R^d)$ be a radial function satisfying
\begin{equation}\label{eq:gamma}
\mathrm{supp}\widehat \gamma \subseteq B^{d}(1),\qquad \widehat \gamma (0)=1,\qquad \|\gamma\|_N \coloneqq \sup_{|\alpha|,|\beta|\leq N} \| x^\alpha D ^\beta \gamma\|_{L^\infty(\R^n)}\leq {C_{d}}.
\end{equation}
A few minor tweaks in the proof yield that the maximally truncated version of \eqref{e:thisismain}
\begin{equation} \label{e:thisismainmax}
T^{\star,\star}_{{\bf{m}}} f(x) \coloneqq \sup_{\sigma \in \mathcal \Gr(d,n)} \sup_{Q\in \mathrm{SO(d)}} \sup_{h>0} \left|\int_{{\R^n}} m_{\sigma}(Q O_\sigma \Pi_\sigma\xi)  \widehat{\gamma}( h{O_\sigma}\Pi_\sigma\xi)\widehat f(\xi)\e^{2\pi i \langle x,\xi \rangle}\d\xi \right|, \quad x\in {\R^n},
\end{equation} also satisfies  the conclusions of Theorem \ref{thm:main},
under the same assumptions therein, provided that $N$ in \eqref{eq:gamma} is sufficiently large, depending upon dimension. Indeed, reductions similar to those in Section \ref{sec:tiles+model} may be performed, observing that the smooth cutoff at frequency scale $h^{-1}$ does not modify adaptedness of wave packets at frequency scales $s^{-1}\lesssim h^{-1}$ while erasing the contribution of frequency scales $\gg h^{-1}$. This  leads, in analogy  with \eqref{e:wp3}, to a  model sum of type
\[
\Lambda^\star_{\mathbb P;\sigma,h,\tau,M} (f,g) =\sum_{t\in \mathbb P}F_M[f](t) A^\star_{\sigma,\tau,M}[g] (t) ,
\]
where the coefficients $A_{\sigma,h,\tau,M}$ have been   replaced by the modified version
\[
A^\star_{\sigma,h,\tau,M}[g] (t) \coloneqq \sup_{\vartheta \in \mathcal {A}_{t} ^M } \left| \left\langle g, \vartheta\left(\cdot, \sigma(\cdot) \right) \cic{1}_{\alpha_{t,\tau}}\left(v_{\sigma(\cdot)}\right)  \cic{1}_{[\scl(t),\infty)}(h(\cdot))\right\rangle \right|,
\] 
corresponding to {measurable selector functions} $h:\R^n\to (0,\infty)$ and $\sigma:\R^n\to \Gr(d,n)$. All arguments of Section \ref{sec:trees} work just as well for the modified coefficients with purely notational change: details are left to the interested reader.

If $m_\sigma\equiv 1$, the maximal operator \eqref{e:thisismainmax} coincides up to a constant factor with the maximal subspace averaging operator
\[
T^{\star,\star} f(x)= \sup_{\sigma \in \mathcal \Gr(d,n)}  \sup_{h>0} \left| A_{{\sigma, h}} f(x)\right|, \quad  A_{\sigma, h} f (x) \coloneqq   \int_{\R^d} f(x - O_\sigma ^{-1} t) \mathrm{Dil}_{h}^1\gamma(t){\, \d t},\quad x\in\R^n.
\]
Thus Theorem~\ref{thm:main} implies the weak $(2,2)$ and strong $(p,p)$, $p>2$, bounds for $T^{\star,\star}{\circ P_0}$, uniformly over $\gamma$ satisfying \eqref{eq:gamma} with $N$ sufficiently large; for example $N=50(d+1)$ is sufficient. {Moreover, using scale invariance and the decay of Schwartz tails we can get that the same boundedness property holds for $T^{\star,\star}$ if $A_{\sigma,h}$ is defined via a Schwartz function $\gamma\in \mathcal S(\R^d)$ satisfying
\[
\widehat \gamma(0)=1,\qquad \|\gamma\|_{100d}\leq 1;
\]
namely we can drop the compact Fourier support assumption.} We can use these bounds to conclude Lebesgue differentiation theorems for functions in the nonhomogeneous Be\-sov spaces $B_{p,1}^s(\R^n)$, $s\geq 0$, with norm
  \[
  \|f\|_{B_{p,1}^s(\R^n)}\coloneqq \| Q_0 f\|_{L^p(\R^n)} + \sum_{k\geq 0 } 2^{ks}\|P_{k} f\|_{L^p(\R^n)},
  \]
 where $Q_0, \{P_k: \,k\geq 0\}$ is any fixed smooth nonhomogeneous Littlewood--Paley decomposition. {In the case that $s\in (0,1)$ we can use an alternative characterization of Besov spaces in terms of finite differences, as for example in \cite{Triebel}, that shows that} $|f|\in B_{p,1} ^s$ whenever $f\in B_{p,1} ^s$, with a corresponding inequality of the norms, and thus we can conclude that the rough maximal averages
\[
M^{\star,\star} f(x)= \sup_{\sigma \in \mathcal \Gr(d,n)}  \sup_{h>0} \langle |f|\rangle_{{x,\sigma, h}},  \quad   \langle f\rangle_{{x,\sigma, h}}\coloneqq    \avgint_{B^n(0,h)\cap \sigma} f(x - O_\sigma ^{-1} t)\, \d t ,\quad x\in\R^n,
\]
have the same boundedness properties as $T^{\star,\star}$.

 %%%%%%%%%%%%%%%%%%%%%%%%%%%%%% THEOREM THEOREM THEOREM
\begin{theorem} \label{thm:cvg}Let $\sigma:\R^n\to \mathrm{Gr}(n-1,n)$ be any measurable function and let $2\le p<\infty$. If $f\in B_{p,1}^0(\R^n) $, then
\[
f(x) = \lim_{h\to 0^+} A_{\sigma(x), h}f(x) \qquad \textrm{a.e.} \,\,x\in 
\R^n,
\]
{whenever $A_{\sigma,h}$ is defined via a bump function with $\widehat \gamma(0)=1$ and $\|\gamma\|_{100n}\leq 1$.} Moreover, if $f\in B_{p,1} ^s$ for some $s>0$ then
\[
 f(x) = \lim_{h\to 0^+} \langle f\rangle_{{x,\sigma, h}}\qquad \textrm{a.e.} \,\,x\in \R^n.
 \]
\end{theorem}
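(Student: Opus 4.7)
Both assertions follow the classical Nikodym--Lebesgue scheme, combining the maximal bounds for $T^{\star,\star}$ and $M^{\star,\star}$ collected in the paragraphs preceding the theorem with pointwise convergence on a dense subclass provided by the Littlewood--Paley decomposition inherent to $B_{p,1}^s$. Concretely, for $f\in B_{p,1}^s$ write $f=Q_0f+\sum_{k\geq 0}P_kf$, which converges in $L^p(\R^n)$ since $\|Q_0f\|_{L^p(\R^n)}+\sum_k \|P_kf\|_{L^p(\R^n)}\leq \|f\|_{B_{p,1}^s(\R^n)}$, and set $S_N\coloneqq Q_0f+\sum_{k=0}^N P_kf$. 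Each $S_N$ is a Schwartz-type function, in particular bounded and uniformly continuous, and hence by a standard approximation-to-the-identity calculation, for every $x\in\R^n$ and every measurable $\sigma$,
\[
\lim_{h\to 0^+} A_{\sigma(x),h}S_N(x) = S_N(x), \qquad \lim_{h\to 0^+}\langle S_N\rangle_{x,\sigma(x),h} = S_N(x).
\]

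Introduce the nonnegative sublinear $\limsup$-defects
\[
\Lambda^{\mathrm{sm}}f(x)\coloneqq \limsup_{h\to 0^+}\bigl|A_{\sigma(x),h}f(x)-f(x)\bigr|, \qquad \Lambda^{\mathrm{rg}}f(x)\coloneqq \limsup_{h\to 0^+}\bigl|\langle f\rangle_{x,\sigma(x),h}-f(x)\bigr|.
\]
Sublinearity and the pointwise identities $\Lambda^{\mathrm{sm}} S_N\equiv 0\equiv \Lambda^{\mathrm{rg}} S_N$ yield $\Lambda^{\mathrm{sm}}f\leq T^{\star,\star}(f-S_N)+|f-S_N|$ and $\Lambda^{\mathrm{rg}}f\leq M^{\star,\star}(f-S_N)+|f-S_N|$; for the second estimate one uses $|\langle g\rangle_{x,\sigma,h}|\leq \langle |g|\rangle_{x,\sigma,h}\leq M^{\star,\star}g(x)$. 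Invoking the weak $(2,2)$ or strong $(p,p)$ maximal bound and Chebyshev's inequality, for each $\lambda>0$
\[
\bigl|\{\Lambda^{\bullet}f>\lambda\}\bigr|^{1/\mathfrak q}\leq C\lambda^{-1}\|f-S_N\|_{L^p(\R^n)}\leq C\lambda^{-1}\sum_{k>N}\|P_kf\|_{L^p(\R^n)},
\]
with $\bullet\in\{\mathrm{sm},\mathrm{rg}\}$, $\mathfrak q=2$ if $p=2$ and $\mathfrak q=p$ if $p>2$. Sending $N\to\infty$ forces $\Lambda^{\bullet}f=0$ a.e., and the stated a.e.\ convergence follows.

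The heavy lifting is entirely in the maximal bounds used above; the rest of the deduction is routine. The only delicate point, and the reason for the sharp threshold $s=0$ versus $s>0$ between the two assertions, lies with the rough averages: the $L^p$ control of $M^{\star,\star}$ passes through the majorization $M^{\star,\star}f\leq C\, T^{\star,\star}|f|$ obtained by dominating the characteristic function of the unit ball by a nonnegative Schwartz bump, and this reduction requires $|f|$ to enjoy the same integrability or Besov regularity as $f$, which is guaranteed precisely when $s>0$ via the finite-difference characterization invoked in the prelude to the theorem. Once this ingredient is in place, no further obstacle is encountered in the two Nikodym--Lebesgue arguments.
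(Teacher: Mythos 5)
Your overall scheme is exactly the paper's: approximate $f$ by the band-limited truncations $S_N$ of its Littlewood--Paley series, use pointwise convergence of the averages on the smooth approximants, control the tail by the maximal operator, and run the standard Nikodym--Lebesgue $\limsup$-defect argument; the reduction of the rough averages ($s>0$) to the smooth ones via $M^{\star,\star}f\lesssim T^{\star,\star}|f|$ and the stability of $B^s_{p,1}$ under $f\mapsto |f|$ is also the same as in the paper's prelude.

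There is, however, one step that as written is false, and it sits precisely at the crux of why the theorem is stated for $B^0_{p,1}$ rather than $L^p$. In the chain $\bigl|\{\Lambda^{\bullet}f>\lambda\}\bigr|^{1/\mathfrak q}\leq C\lambda^{-1}\|f-S_N\|_{L^p}$ you are applying a weak $(2,2)$ or strong $(p,p)$ bound for $T^{\star,\star}$ (resp.\ $M^{\star,\star}$) to the function $f-S_N$ itself. No such bound exists: without the band-limited restriction these maximal operators are unbounded on every $L^p$, $p<\infty$, by the Kakeya--Nikodym counterexamples recalled in the introduction; what Theorem \ref{thm:main} (and its maximally truncated variant) provides is a bound for $T^{\star,\star}\circ P_k$, uniform in $k$, not for $T^{\star,\star}$ on $L^p$. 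The repair is immediate and is what the paper does: since $f-S_N=\sum_{k>N}P_kf$, sublinearity gives $T^{\star,\star}(f-S_N)\leq \sum_{k>N}T^{\star,\star}(P_kf)$ pointwise, and then the triangle inequality in $L^{p,\infty}$ (normable for $p>1$) together with the uniform band-limited bound yields $\|T^{\star,\star}(f-S_N)\|_{L^{p,\infty}}\lesssim \sum_{k>N}\|P_kf\|_{L^p}$ directly, bypassing the intermediate quantity $\|f-S_N\|_{L^p}$; it is exactly this band-by-band summation that consumes the $\ell^1$ structure of the $B^0_{p,1}$ norm. With that correction your final displayed bound, and hence the conclusion, stands, and the rest of your argument agrees with the paper's proof.
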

 %%%%%%%%%%%%%%%%%%%%%%%%%%%%%% THEOREM THEOREM THEOREM

%%%%%%%%%%%%%%%%%%%%%%%%%%%%%% PROOF PROOF PROOF
\begin{proof}  By the comments preceding the statement of the theorem it suffices to consider the case $s=0$. By assumption, for each $\eps>0$ fixed there exists $k=k(\eps)\geq 0$ so that 
\[
\sum_{k> k(\eps)} \|P_k f\|_p <\eps.
\]
Let $g=Q_0 f + \sum_{k=0}^{k(\eps)} P_k f$. Note that $g\in C^\infty(\R^n)\cap L^2(\R^n)$ and that
\[
\| T^{\star,\star}  (f-g)\|_{p,\infty} \lesssim \sum_{k\geq k(\eps)} \| T^{\star,\star}  P_k f\|_{p,\infty} \lesssim  \sum_{k\geq k(\eps)} \|   P_k f\|_{p}< \eps
\] 
by an application of the uniform weak-$L^{p}(\R^n)$ estimate for $T^{\star,\star}  P_k$. The remaining part of the  argument is totally analogous to that of the proof of the Lebesgue differentiation theorem using the Hardy--Littlewood maximal theorem and is therefore omitted.
\end{proof}
%%%%%%%%%%%%%%%%%%%%%%%%%%%%%% PROOF PROOF PROOF
The results of Theorem~\ref{thm:cvg} recover a corresponding result from \cite{Mur} for Besov spaces $B_{p,1} ^s$ with $s>0$ and $d=1$; see also \cite{AFN,Naibo}. The codimension $n-d=1$ results as well as the results for zero smoothness appear to be new.

\subsection{Remarks on the bi-parameter problem} In this paragraph, $d$ is no longer necessarily equal to $n-1$ and  we go back to considering values $1\leq d<n$. The rotated codimension $n-d$ singular integral \eqref{eq:defdir-2}, \eqref{eq:defdir-1} may be naturally generalized to the bi-parameter version
\[
Tf(x; \sigma, Q, P) = \int_{\R^n} m(QO_\sigma\Pi_{\sigma} \xi,  PO_\sigma\Pi_{\sigma^{\perp}}\xi) \widehat f(\xi) \e^{2\pi i \langle x,\xi \rangle} \d \xi
\]
associated to a bi-parameter H\"ormander--Mihlin multiplier $m=m(\eta,\zeta)$ on $\R^n=\R^d \otimes \R^{n-d}$, that is, satisfying the estimates
\[\sup_{0\leq |\alpha|, |\beta| \leq A}\, \sup_{\xi \in \R^d   } \sup_{\zeta \in \R^{n-d}   } |\eta|^{|\alpha|} |\zeta|^{|\beta|} \left| D_\eta^\alpha D_\zeta^\beta    m(\eta,\zeta )\right|<\infty
\]
and parametrized by $ Q\in \mathrm{SO}(d), P\in \mathrm{SO}(n-d) $ and $\sigma \in \mathrm{Gr}(d,n)$ or equivalently $\sigma^\perp \in \Gr(n-d,n)$.   By arguments analogous to those in Section \ref{sec:redux} for the removal of the $\mathrm{SO}(d), \mathrm{SO}(n-d) $ invariances and finite splitting,  the  study of the corresponding maximal operator may be reduced to estimates for
\[
T^\star f(x)= \sup_{\sigma \in \Sigma} |Tf(x,\sigma)|, \qquad Tf(x,\sigma)\coloneqq  \int_{\R^n} m( O_\sigma\xi) \widehat f(\xi) \e^{2\pi i \langle x,\xi \rangle} \d \xi, \quad x\in \R^n
\] 
where $\Sigma \subset \mathrm{Gr}(d,n)$ is a small $2^{-9}\eps$-neighborhood of $\R^{d}$. When acting on band-limited functions, the  maximal operator $T^\star$ may be further reduced, up to $L^p$-bounded differences,  to the sum of a codimension $n-d$ and a codimension $d$ maximal singular integral operator. More specifically, let $\{\Gamma_j:j=-1,0,1\}$ be a partition of unity on the sphere $\mathbb S^{n-1}$ subordinated to the covering
\[
\begin{split} &
\Gamma_{-1}\coloneqq\{\xi \in \mathbb{S}^{n-1}: |\Pi_{\R^d} \xi| >1-2\eps\},\\ &\Gamma_0\coloneqq\{\xi \in \mathbb{S}^{n-1}: \eps< |\Pi_{\R^d} \xi| < 1-\eps\}, \\ &\Gamma_1\coloneqq\{\xi \in \mathbb{S}^{n-1}:  |\Pi_{\R^d} \xi| < 2\eps\}.
\end{split}
\]
and $P_{\mathrm{cn}, j}$, $j=-1,0,1$ be the corresponding smooth conical Fourier cutoff to $\Gamma_j$. 
  In analogy to what was done in \eqref{eq:nonsing},  the splitting 
  \[
  T^\star \circ P_0 f\leq  \sum_{j=-1,0,1}  T^\star [ P_0 P_{\mathrm{cn}, j} f]
  \]
  may be performed.
As  the singularities along $\sigma,\sigma^\perp$ sit away from the support of $ P_0 P_{\mathrm{cn}, {0}} f$ when $\sigma\in \Sigma$, $ T^\star [ P_0 P_{\mathrm{cn}, 0} f]$ may be easily controlled by the strong maximal function. Furthermore, when $\sigma \in \Sigma$
$
|\Pi_{\sigma^\perp  } \xi|\sim |  \xi| \sim 1 
$ on the support of $P_0 P_{\mathrm{cn}, -1}$ while $
|\Pi_{\sigma   } \xi|\sim |  \xi| \sim 1 
$ on the support of $P_0 P_{\mathrm{cn}, 1}$. A discretization procedure similar to that of Subsection \ref{sec:model} then shows that e.g.\ $ T^\star [ P_0 P_{\mathrm{cn}, -1} f]$ lies in the convex hull of the model operators
\[
\mathrm{T}f(x)=\sum_{t\in \mathbb P} \langle f, \varphi_t \rangle  \psi_t (x, \sigma(x) ) \cic{1}_{\left[\frac{a}{\scl(t)},\frac{b}{\scl(t)}\right]} (\dist(\sigma(x), \sigma(t))
\]
for suitable constants  $b>a\gg 1$, $ \mathbb{P}$ is a collection of tiles  $t$ whose spatial localization is a parallelepiped with $d$ sides oriented along the subspace $\sigma(t)$ of sidelength $\scl(t)\gg 1$ and $n-d$ short sides of sidelength 1  along  $\sigma(t)^\perp$. Also, for a fixed dyadic value of $\scl(t)=\scl$,  $\sigma(t)$ varies within a $\sim \scl^{-1}$ net in $\mathrm{Gr}(d,n)$ and for each fixed $\sigma$ the spatial localizations of the tiles $t\in \mathbb P$ with $\sigma(t)=\sigma$ tile $\R^n$. The model for $T^\star [ P_0 P_{\mathrm{cn}, 1} f]$ is analogous, up to exchanging $\mathrm{Gr}(d,n)$.

When $n=2,d=1$, then 
$T^\star \circ [ P_0 P_{\mathrm{cn}, \pm 1}]$ are thus completely symmetric and both may be handled with the methods of Section \ref{sec:trees}. When either $d$ or $n-d$ are greater than $1$, the corresponding model sums are substantially harder. One essential reason is that their scope includes Nikodym maximal averaging operators along subspaces of codimension $k$ larger than one, whose critical exponent, conjectured to be $p=k+1$, is above $L^2$: see \cite{DPP_adv} for more details on this conjecture.

%%      ---------------------------------------------------------------------
%%      --------------------------- BIBLIOGRAPHY ----------------------------
%%      ---------------------------------------------------------------------
%% PUT HERE THE BIBLIOGRAPHY IN YOUR FAVOURITE FORMAT
%% Please check that the format of the bibliography is uniform and coherent

\bibliographystyle{amsplain}
\bibliography{Genlib}

\end{document}